\theoremstyle{plain}
\newtheorem{theorem}{\bf Theorem}[section]
\newtheorem{proposition}[theorem]{\bf Proposition}
\newtheorem{lemma}[theorem]{\bf Lemma}
\theoremstyle{definition}
\newcommand{\N}{\mathbb N}
\newcommand{\R}{\mathbb R}
\newcommand{\Div}{\mathrm{div}}
\numberwithin{equation}{section}
\begin{document}
\title[]{Sign-changing concentration phenomena of an anisotropic sinh-Poisson type equation with a Hardy or H\'{e}non term}

\thanks{$^\dag$ This work was supported by the National Natural Science Foundation of China (no. 12001298) and Inner Mongolia Autonomous Region Natural Science Foundation(no. 2024LHMS01001)}

\author[]{Qiang Ren}
\address{School of Mathematical Sciences \\ Inner Mongolia University \\ Hohhot 010021 \\ People's Republic of China} \email{renq@imu.edu.cn}

\keywords{sinh-Possion type equation, Lyapunov-Schmidt reduction}

\subjclass{35B33, 35J20, 35J60}

\begin{abstract}
We consider the following anisotropic sinh-Poisson type equation with a Hardy or H\'{e}non term:
\begin{equation}\label{115}
\left\{\begin{array}{ll}
-\Div (a(x)\nabla u)+ a(x)u=\varepsilon^2a(x)|x-q|^{2\alpha}(e^u-e^{-u}) &\mbox{in $\Omega$,} \\
\frac{\partial u}{\partial n}=0, &\mbox{on $\Omega$,}
\end{array}\right.
\end{equation}
where $\varepsilon>0$, $q\in \bar\Omega\subset \R^2$, $\alpha \in(-1,\infty)\char92 \N$, $\Omega\subset \R^2$ is a smooth bounded domain, $n$ is the unit outward normal vector of $\partial \Omega$ and anisotropic coefficient $a(x)$ is a smooth positive function defined on $\bar\Omega$. From finite dimensional reduction method, we proved that the problem \eqref{115} has a sequence of sign-changing solutions with arbitrarily many interior spikes accumulating to $q$, provided $q\in \Omega$ is a local maximizer of $a(x)$. However, if $q\in \partial \Omega$ is a strict local maximum point of $a(x)$ and satisfies $\langle \nabla a(q),n \rangle=0$, we proved that  \eqref{115} has a family of sign-changing solutions with arbitrarily many mixed interior and boundary spikes accumulating to $q$.

Under the same condition, we could also construct a sequence of blow-up solutions for the following problem
\begin{equation*}
\left\{\begin{array}{ll}
-\Div (a(x)\nabla u)+ a(x)u=\varepsilon^2a(x)|x-q|^{2\alpha}e^u &\mbox{in $\Omega$,} \\
\frac{\partial u}{\partial n}=0, &\mbox{on $\partial\Omega$.}
\end{array}\right.
\end{equation*}
\end{abstract}
\maketitle

\section{Introduction}

Standard sinh-Poisson equation is of the following form:
\[-\Delta u=\varepsilon^2\left(e^u-e^{-u}\right).\]
It was used to describe the vortex-type configurations for 2D turbulent Euler flow and the geometry of constant mean curvature surfaces( c.f. \cite{Bartolucci_Pistoia2007,Gurarie_Chow2004,Chow_Ko_Leung_Tang1998,Kuvshinov_Schep2000, Mallier_Maslowe1993,Wente1986}). In the recent decades, the Dirichlet problem of the sinh-Poisson equation and its variant attracted widespread attention(see \cite{Figueroa2021, Pistoia_Ricciardi2016,D'Aprile2015,Wei2011} and references therein).

The following problem was first considered by Spruck:
\begin{equation}\label{101}
\left\{\begin{array}{ll}
-\Delta u=\varepsilon^2(e^u-e^{-u}) &\mbox{in $\Omega$,} \\
u=0, &\mbox{on $\partial\Omega$.}
\end{array}\right.
\end{equation}
Under the condition that $\Omega\subset \R^2$ is a rectangle containing the origin, Spruck \cite{Spruck1988}  constructed a family of positive solutions of \eqref{101} which blow up at the origin as $\varepsilon\to 0$.
Later, Jost \textit{et al.}\cite{Jost_Wang_Ye_Zhou2008} studied the blow-up behaviours of the solutions of \eqref{101} as $\varepsilon \to 0$. They proved that the number of blow-up points of each solution to \eqref{101} is finite and the mass of each blow-up point of any solution to \eqref{101} is a multiple of $8\pi$. In the case that the domain $\Omega\subset \R^2$  is symmetric with respect to the origin and $0\in \Omega$, Grossi and Pistoia \cite{Grossi_Pistoia2013} constructed a sequences of sign-changing radial solutions of \eqref{101} with a multiple blow-up point at the origin. Bartsch \textit{et al.} \cite{Bartsch_Pistoia_Weth2010,Bartsch_Pistoia_Weth2015} considered the relationship between the blow-up behaviours of the solutions to \eqref{101} and the critical points of the following general Kirchhoff-Routh path function
\[H_{\Omega}(x_1, x_2,\cdots, x_N)=\sum_{i=1}^N \Gamma_i^2h(x_i)+\sum_{i\not=j}\Gamma_i\Gamma_j G(x_i,x_j),\]
where $h(\cdot)$ and $G(\cdot,\cdot)$ are the Robin function and Green function of the Dirichlet Laplacian on $\Omega$, respectively. They found some sign-changing blow-up solutions of \eqref{101}, and the locations of these blow-up points constitute a critical point of $H_{\Omega}$.

The following sinh-Poisson equation with H\'{e}non term arise from the study of Chern-Simons vortex theory(see \cite{Yang2001,Bethuel_Brezis_Helein1994})
\begin{equation}\label{118}
\left\{\begin{array}{ll}
-\Delta u=\varepsilon^2|x|^{2\alpha}(e^u-e^{-u}) &\mbox{in $\Omega$,} \\
u=0, &\mbox{on $\partial\Omega$.}
\end{array}\right.
\end{equation}
On the smooth domain $\Omega$ containing the origin, D'Aprile \cite{D'Aprile2015} constructed a family of sign-changing blow-up solutions of \eqref{118}, whose blow-up points are uniformly away from the origin and the boundary $\partial\Omega$. Moreover, the number of blow-up points of the solutions constructed in \cite{D'Aprile2015} is less than 4. However, for $\Omega=B_1(0)\subset \R^2$ and  any integer $m>0$, Zhang and Yang \cite{Zhang_Yang2013} constructed a family of sign-changing solutions to \eqref{118} which blow up at the origin and $m$ points on a circle in $B_1(0)$.

Only few results available considering the following Neumann problem of sinh-Poisson equation:
\begin{equation*}
\left\{\begin{array}{ll}
-\Delta u=\varepsilon^2(e^u-e^{-u}) &\mbox{in $\Omega$,} \\
\frac{\partial u}{\partial n}=0, &\mbox{on $\partial\Omega$.}
\end{array}\right.
\end{equation*}
Given any non-negative numbers $k$ and $l$ satisfying $k+l\geq 1$, Wei \textit{et al.}\cite{Wei_Wei_Zhou2011,Wei2009} found a sequence of solutions which blow up at $k$ points in $\Omega$ and $l$ points on $\partial \Omega$ as $\varepsilon\to 0$.

Motivated by these work above, Wang and Zheng \cite{Wang_Zheng2022} considered the following problem:
\begin{equation}\label{100}
\left\{\begin{array}{ll}
-\Delta u+ u=\varepsilon^2|x-q_1|^{2\alpha_1}\cdots|x-q_s|^{2\alpha_s}(e^u-e^{-u}) &\mbox{in $\Omega$,} \\
\frac{\partial u}{\partial n}=0, &\mbox{on $\partial\Omega$,}
\end{array}\right.
\end{equation}
where $q_1, q_2,\cdots, q_s\in \Omega$, and $\alpha_1, \cdots, \alpha_s\in (0,+\infty)\char92 \N$.
Via Lyapunov-Schmidt reduction method, they proved that given any two nonnegative integer $k$ and $l$ satisfying $k+l\geq 1$, there exists a solution $u_\varepsilon$ to \eqref{100} satisfying
\[\varepsilon^2|x-q_1|^{2\alpha_1}\cdots|x-q_s|^{2\alpha_s}(e^{u_\varepsilon}-e^{-u_\varepsilon})\rightharpoonup \sum_{j=1}^k 8\pi \delta_{\xi_j}+\sum_{j=1}^l4\pi \delta_{\xi_{k+j}}+8\pi \sum_{i=1}^s(1+\alpha)\delta_{q_i},\quad \mathrm{as} \quad \varepsilon\to 0,\]
where $\xi_1, \cdots, \xi_l \in \Omega$ and $\xi_{l+1}, \cdots, \xi_{k+l}\in \partial \Omega$.

In the case of $s=1$, problem \eqref{100} is equivalent to the following one:
\begin{equation}\label{102}
\left\{\begin{array}{ll}
-\Delta v+ v=\varepsilon^2|x-q|^{2\alpha}(e^v-e^{-v}) &\mbox{in $\Omega$,} \\
\frac{\partial v}{\partial n}=0, &\mbox{on $\partial \Omega$.}
\end{array}\right.
\end{equation}
However, in the case of $s=1$ and  $\alpha_1=0$, problem \eqref{100} is reduced to
\begin{equation}\label{123}
\left\{\begin{array}{ll}
-\Delta u+ u=\varepsilon^2(e^u-e^{-u}) &\mbox{in $ D$,} \\
\frac{\partial u}{\partial n}=0, &\mbox{on $\partial D$,}
\end{array}\right.
\end{equation}
which is similar to
\begin{equation}\label{113}
\left\{\begin{array}{ll}
-\Delta u+ u=\varepsilon^2e^u &\mbox{in $\Omega$,} \\
\frac{\partial u}{\partial n}=0, &\mbox{on $\partial\Omega$.}
\end{array}\right.
\end{equation}
The equation \eqref{113} is used to represent steady-states of the Keller-Segal system(c.f. \cite{delPino_Wei2006, Agudelo_Pistoia2016, Keller_Segel1970} and references therein). The blow-up behaviours of the solutions to \eqref{113} were analysed in \cite{Nagasaki_Suzuki1990}. Given any nonnegative integers $k$ and $l$ such that $k+l\geq1$, del Pino and Wei \cite{delPino_Wei2006} found a sequence of solutions to \eqref{113} which blow up at $k$ points in $\Omega$ and $l$ points on $\partial \Omega$ as $\varepsilon\to 0$. In the case of $\Omega=B_{r_0}(0)\subset \R^2$, Pistoia and Vaira \cite{Pistoia_Vaira2015} found a sequence of solutions to \eqref{113} which are concentrated near the whole boundary $\partial B_{r_0}(0)$. Later, del Pino \textit{et al.}\cite{delPino_Pistoia_Vaira2016} extended the results in \cite{Pistoia_Vaira2015} to more general domain in $\R^2$. To explore the boundary concentration phenomena of \eqref{113} in high dimensional case, Agudelo and Pistoia \cite{Agudelo_Pistoia2016} considered the following problem
\begin{equation}\label{114}
\left\{\begin{array}{ll}
-\Div (a(x)\nabla u)+ a(x)u=\varepsilon^2a(x)e^u &\mbox{in $\Omega$,} \\
\frac{\partial u}{\partial n}=0, &\mbox{on $\partial\Omega$,}
\end{array}\right.
\end{equation}
where $\Omega\subset \R^2$. For any integer $m\geq1$, they found solutions of \eqref{114} with $m$ interior layers or $m$ boundary layers which blow up at $m$ strict local minimum points or strict local maximum points of $a(x)$ restricted to $\partial \Omega$. They also found a sequence of solutions to \eqref{114} with $m$ boundary layers which blow up at the same strict local maximum point of $a(x)$ restricted to $\partial \Omega$.

Equation \eqref{114} is called isotropic if $a(x)\equiv 1$ and it is called anisotropic if the function $a(x)\not\equiv 1$( see \cite{Wei_Ye_Zhou2007}). Anisotropic problems attract much attention in recent years.  Using a parabolic approach, Ciani \textit{et al.} \cite{Ciani_Skrypnik_Vespri2023} establish some elliptic estimate of the following more general singular anisotropic elliptic equations:
\[\sum_{i=1}^s \partial_{ii}u+\sum_{i=s+1}^N \partial_i(A_i(x,u,\nabla u))=0, \qquad in \;\; \Omega\subset \R^N,\]
where $1\leq s\leq N-1$. However,  Clop \textit{et al.} \cite{Clop_Gentile_PassarellidiNapoli2023} studied the minimizer of the following the following non-autonomous non-homogeneous functional
\[\mathcal{F}(w, \Omega)=\int_\Omega \left[F(x, Dw(x))-f(x)w(x)\right]dx.\]
Under some sub-quadratic growth condition of $F$, they proved the minimizer of the functional $\mathcal{F}(w, \Omega)$ belongs to $W^{2,p}(\Omega)$, where $p\in(0,1)$. Papageorgiou \textit{et al.} \cite{Papageorgiou_Radulescu_Sun2024} studied the following more general anisotropic singular Dirichlet problem
\begin{equation}\label{120}
\left\{\begin{array}{ll}
-\Div a(z, Du)=\beta(z)u(z)^{-\eta(z)}+f(z, u(z)), \qquad \mbox{in $\Omega$}, \\
u|_{\partial \Omega}=0, \qquad u>0,
\end{array}\right.
\end{equation}
where $a: \Omega \times \R^N\to \R^N$ is measurable in $z\in \Omega$, continuous and monotone in $x\in \R^N$, positive function $\beta\in L^{+\infty}(\Omega)$ and $\eta\in C(\bar(\Omega))$ with range in $(0,1)$. From variational method, they proved \eqref{120} has at least two smooth solutions.

Eigenvalue problem is also a class of interesting problem in partial differential equations. In \cite{Du_Mao_Wang_Xia_Zhao2023}, Du \textit{et al.} established Li-Yau-Kr\"{o}ger-type estimates of the eigenvalue of the following Neumann-type eigenvalue problem
\begin{equation*}
\left\{
\begin{array}{ll}
\Delta^2 u - \tau \Delta u=\Lambda u, &\mbox{in $\Omega$}, \\
(1-\sigma)\frac{\partial^2 u}{\partial n^2}+\sigma \Delta u=0,  &\mbox{on $\partial\Omega$}, \\
\tau \frac{\partial u}{\partial n}-(1-\sigma)\Div_{\partial \Omega}(D^2 u\cdot n)_{\partial \Omega}-\frac{\partial \Delta u}{\partial n}=0,  &\mbox{on $\partial\Omega$}.
\end{array}
\right.
\end{equation*}
In the same paper, they also give some Reilly-type estimates of the following Steklov-type eigenvalue problem
\begin{equation*}
\left\{
\begin{array}{ll}
\Delta^2 u - \tau \Delta u=0, &\mbox{in $\Omega$}, \\
(1-\sigma)\frac{\partial^2 u}{\partial n^2}+\sigma \Delta u=0,  &\mbox{on $\partial\Omega$}, \\
\tau \frac{\partial u}{\partial n}-(1-\sigma)\Div_{\partial \Omega}(D^2 u\cdot n)_{\partial \Omega}-\frac{\partial \Delta u}{\partial n}=\lambda u,  &\mbox{on $\partial\Omega$}.
\end{array}
\right.
\end{equation*}

Now we consider the following anisotropic problem
\begin{equation}\label{112}
\left\{\begin{array}{ll}
-\Div (a(x)\nabla u)+ a(x)u=\varepsilon^2a(x)|x-q|^{2\alpha}e^u &\mbox{in $\Omega$,} \\
\frac{\partial u}{\partial n}=0, &\mbox{on $\partial\Omega$,}
\end{array}\right.
\end{equation}
where $\Omega\subset \R^2$ is a smooth bounded domain, $q\in \bar\Omega$, $\alpha \in(-1,\infty)\char92 \N$, and $a(x)$ is a smooth positive function defined on $\bar\Omega$.
Here we impose a Hardy or H\'{e}non weight on the right hand side of the first equation in \eqref{114}. It is natural to ask whether we could find some blow-up solutions to \eqref{112} as $\varepsilon \to 0$. Similarly, we could also  consider the following anisotropic sinh-Poisson type equation with a Hardy or H\'{e}non weight:
\begin{equation}\label{1}
\left\{\begin{array}{ll}
-\Div (a(x)\nabla u)+ a(x)u=\varepsilon^2a(x)|x-q|^{2\alpha}(e^u-e^{-u}) &\mbox{in $\Omega$,} \\
\frac{\partial u}{\partial n}=0, &\mbox{on $\partial\Omega$,}
\end{array}\right.
\end{equation}
where $\Omega\subset \R^2$ is some smooth bounded domain, $q\in \bar\Omega$, $\alpha \in(-1,\infty)\char92 \N$, and the anisotropic coefficient $a(x)$ is a smooth positive function defined on $\bar\Omega$.

Equation \eqref{1} is a natural generalization of \eqref{102}. In the isotropic case  $a(x)\equiv 1$, \eqref{1} is reduced to  \eqref{102}. One may expect similar result hold in the anisotropic case(where $a(x)$ is a function). In the case that $\alpha=0$, problem \eqref{1} arise from the problem of looking for the high-dimensional concentration phenomenon of the solution to \eqref{123} under the condition $D\subset \R^N$( $N\geq3$).

Let $b\in \{1,2\}$ to be fixed and $\Omega$ be a smooth bounded domain in $\R^2$ such that
\[\bar{\Omega}\subset \{(x_1,x_2)\in \R^b\times \R^{2-b}: x_i>0, i=1,b\}.\]
Denote $M:=\sum_{i=1}^b M_i$, where $M_i\geq 2$ are positive integers. Then
\[D:=\{(y_1,y_2, x')\in \R^{M_1}\times \R^{M_b}\times \R^{2-b}:(|y_1|, |y_2|, x')\in \Omega\}\]
is a smooth bounded domain in $\R^N$, where $N=M+2-b$. Now we consider the solution of \eqref{102} which is invariant under the action of the group $\Gamma:=\mathcal{O}(M_1)\times \mathcal{O}(M_b)$. Here $\mathcal{O}(M_i)$ is the group of linear isometries of $\R^{M_i}$. From direct computation, we know  that if the function $v$  defined by
\[v(y_1, y_b, x')=u(|y_1|, |y_b|, x')\]
solves \eqref{123}, then $u$ solves the following problem
\begin{equation}\label{103}
\left\{\begin{array}{ll}
-\Delta u+\sum_{i=1}^b \frac{M_i-1}{x_i}\frac{\partial u}{\partial x_i}+u=\varepsilon^2(e^u-e^{-u}), &\mbox{in $\Omega$;} \\
\frac{\partial u}{\partial n}=0, &\mbox{on $\partial\Omega$.}
\end{array}\right.
\end{equation}
By choosing the function $a(x_1,x_b):=x_1^{M_1-1}\cdot x_b^{M_b-1}$, we see that problem \eqref{103} is just \eqref{1} in the case of $\alpha=0$. However, in the case of $\alpha\not=0$, it is necessary to point out that \eqref{1} can not be seen as a special version of \eqref{102} in high dimensional case, due to the presence of the Hardy-H\'{e}non term.

In this paper, we will show that \eqref{1} has some non-simple blow-up solution due to the presence of Hardy-H\'{e}non term.

Since the classical sinh-Poisson Dirichlet problem \eqref{101} and \eqref{118} have a plenty of sign-changing solutions, it is natural to ask if \eqref{1} also has some sign-changing ones? We will deal with this problem in this paper. We also show that \eqref{1} has some solutions which are different from that of the solutions to \eqref{102}.

Notice the energy functional to \eqref{1} is
\begin{equation}\label{6}
J(u)=\frac12\int_\Omega a(x)\left[|\nabla u|^2+u^2\right]-\varepsilon^2 \int_\Omega a(x)|x-q|^{2\alpha}(e^u+e^{-u}).
\end{equation}

Let $G(x,y)$ be the Green function with respect to the Neumann operator $-\Delta-\nabla(\log a(x))\cdot \nabla +1$ on $\Omega$, \textit{i.e.} $G(x,y)$ is the solution of the following problem:
\begin{equation}
\left\{\begin{array}{ll}
-\Delta G(x,y)-\nabla(\log a(x))\cdot \nabla  G(x,y)+G(x,y)=\delta_y(x), &\mbox{$x\in \Omega$,} \\
\frac{\partial G}{\partial n}=0,  &\mbox{$x\in \partial\Omega$.}
\end{array}\right.
\end{equation}
The regular part of $G(x,y)$ is defined by
\begin{equation}\label{104}
H(x,y)=\left\{\begin{array}{ll}
G(x,y)+\frac1{2\pi}\log |x-y|, &\mbox{$y\in \Omega$,} \\
G(x,y)+\frac1\pi \log |x-y|,   &\mbox{$y\in \partial\Omega$.}
\end{array}\right.
\end{equation}
Let $\kappa=m(3m+\alpha+1)$. Via the Lyapunov-Schmidt reduction method, we get the following two theorems.
\begin{theorem}\label{th1}
Let $\alpha \in (-1, +\infty)\char92\N$ and $q\in \Omega$ be a strict local maximum point of $a(x)$. For any integer $m\geq 1$ and $b_i\in\{-1,1\}$, $i=0,1\cdots,m$, there exists $\varepsilon_m>0$ such that for any constant $\varepsilon\in (0,\varepsilon_m)$,  \eqref{1} has a family of solutions $u_\varepsilon$  of the form
\begin{eqnarray*}
u_\varepsilon(x)&=& \sum_{i=1}^m b_i \left[\log\frac1{[(\varepsilon \mu_i)^2+|x-\xi_i^\varepsilon|^2]^2} +8\pi H(x, \xi_i^\varepsilon)\right]+b_0\log\frac1{[(\varepsilon\mu_0)^{2(1+\alpha)}+|x-q|^{2(1+\alpha)}]^2} \\
 &&+ 8\pi(\alpha +1)b_0 H(x,q)+o(1),
\end{eqnarray*}
where  $o(1)\to 0$ on any compact subset of $\bar{\Omega}\char92\{q, \xi_1^\varepsilon, \xi_2^\varepsilon, \cdots, \xi_m^\varepsilon\}$ as $\varepsilon \to 0$ and $\xi_i^\varepsilon \to q$   as $\varepsilon \to 0$. The constants $\mu_i$'s satisfy
\begin{equation*}
 \frac{C_0}{|\log\varepsilon|^{C_2}}\leq \mu_0^{2(1+\alpha)}\varepsilon^{2\alpha}\leq C_1|\log\varepsilon|^{C_2} \quad \mathrm{and} \quad \frac{C_3}{|\log\varepsilon|^{C_5}}\leq \frac{\mu_j^2}{|\xi_j^\varepsilon-q|^{2\alpha}}\leq C_4|\log\varepsilon|^{C_5}, \quad j=1,2,\cdots, m.
\end{equation*}
The points $\xi_i^\varepsilon$'s satisfy
\[|\xi_i^\varepsilon-\xi_j^\varepsilon|\geq \frac1{|\log\varepsilon|^\kappa}\quad \mathrm{for} \quad i\not=j \quad \mathrm{and} \quad|\xi_i^\varepsilon-q|\geq \frac1{|\log\varepsilon|^\kappa}\quad \mathrm{for} \quad i=1,2,\cdots,m.\]
\end{theorem}

\begin{theorem}\label{th2}
Let $\alpha \in (-1, +\infty)\char92 \N$. $q\in \partial \Omega$ is a strict local maximum point of $a(x)$ on $\bar{\Omega}$ and satisfies $\partial_{n} a(q)=0$. For any integers $m\geq 1$, $l\in[0,m]\cap\N$ and $b_i\in\{-1,1\}$, $i=0,1\cdots,m$, there exists constant $\varepsilon_m>0$ such that for any constant $\varepsilon\in (0,\varepsilon_m)$, problem \eqref{1} has a family of solutions $u_\varepsilon$ with $m-l+1$ different boundary spikes and $l$ different interior spikes which accumulate to $q$ as $\varepsilon \to 0$. More precisely,
\begin{eqnarray*}
  u_\varepsilon(x) &=&\sum_{i=1}^lb_i\left[ \log\frac1{[(\varepsilon\mu_i)^2+|x-\xi_i|^2]^2} +8\pi H(x, \xi_i)\right] \\
   &&+ \sum_{i=l+1}^m b_i\left[\log \frac1{[(\varepsilon\mu_i)^2+|x-\xi_i|^2]^2} +4\pi H(x, \xi_i) \right]\\
   && +b_0\log\frac1{[(\varepsilon \mu_0)^{2(1+\alpha)}+|x-q|^{2(1+\alpha)}]^2} +4\pi(1+\alpha)b_0H(x,q)+o(1),
\end{eqnarray*}
where $(\xi_1^\varepsilon, \xi_2^\varepsilon, \cdots, \xi_m^\varepsilon)\in \Omega^l \times (\partial \Omega)^{m-l}$ satisfy
\[|\xi_i^\varepsilon-\xi_j^\varepsilon|\geq \frac1{|\log\varepsilon|^\kappa}\quad  \mathrm{for} \quad i\not=j, \qquad \mathrm{and} \qquad|\xi_i^\varepsilon-q|\geq \frac1{|\log\varepsilon|^\kappa}\quad  \mathrm{for} \quad i=1,2,\cdots,m, \]
the term $o(1)\to 0$ as $\varepsilon \to 0$, on any compact subset of $\bar{\Omega}\char92\{q, \xi_1^\varepsilon, \xi_2^\varepsilon, \cdots, \xi_m^\varepsilon\}$, $\xi_i^\varepsilon \to q$ as $\varepsilon \to 0$, and the constants $\mu_i$'s satisfy
\begin{equation*}
 \frac{C_0}{|\log\varepsilon|^{C_2}}\leq \mu_0^{2(1+\alpha)}\varepsilon^{2\alpha}\leq C_1|\log\varepsilon|^{C_2} \quad \mathrm{and} \quad
\frac{C_3}{|\log\varepsilon|^{C_5}}\leq \frac{\mu_j^2}{|\xi_j^\varepsilon-q|^{2\alpha}}\leq C_4|\log\varepsilon|^{C_5}, \quad j=1,2,\cdots, m.
\end{equation*}
\end{theorem}
Notice that if $\{b_i:i=0,1,\cdots,m\}=\{-1,1\}$, blow-up solutions  we found in Theorem \ref{th1} and Theorem \ref{th2} are sign-changing ones. From Theorem \ref{th1} and Theorem \ref{th2}, we know that the interaction between the anisotropic function $a(x)$ and $q$ will make the concentration point $q$ of solutions to \eqref{1} to be non-simple. There are arbitrarily many spiky sequences of points converging to $q$. However, In the isotropic case $a(x)\equiv 1$, we cannot find solutions with these property.

For \eqref{112}, we also get the following theorems.
\begin{theorem}\label{th3}
Let $\alpha \in (-1, +\infty)\char92\N$ and $q\in \Omega$ be a strict local maximum point of $a(x)$. For any integer $m\geq 1$, there exists $\varepsilon_m>0$ such that for any $\varepsilon$ saitisfying $0<\varepsilon<\varepsilon_m$,  problem \eqref{112} has a family of solutions $u_\varepsilon$ which satisfies
\begin{eqnarray*}
u_\varepsilon(x)&=& \sum_{i=1}^m \left[\log\frac1{[(\varepsilon \mu_i)^2+|x-\xi_i^\varepsilon|^2]^2} +8\pi H(x, \xi_i^\varepsilon)\right]+\log\frac1{[(\varepsilon\mu_0)^{2(1+\alpha)}+|x-q|^{2(1+\alpha)}]^2} \\
 &&+ 8\pi(\alpha +1) H(x,q)+o(1),
\end{eqnarray*}
where  $o(1)\to 0$ as $\varepsilon \to 0$, on any compact subset of $\bar{\Omega}\char92\{q, \xi_1^\varepsilon, \xi_2^\varepsilon, \cdots, \xi_m^\varepsilon\}$ and $\xi_i^\varepsilon \to q$ as $\varepsilon \to 0$. Moreover,  the constants $\mu_i$'s satisfy
\begin{equation*}
 C_0\leq \mu_0^{2(1+\alpha)}\varepsilon^{2\alpha}\leq C_1|\log\varepsilon|^{C_2}, \quad \mathrm{and} \quad C_3\leq \frac{\mu_j^2}{|\xi_j^\varepsilon-q|^{2\alpha}}\leq C_4|\log\varepsilon|^{C_5}, \quad j=1,2,\cdots, m.
\end{equation*}
The points $\xi_i^\varepsilon$'s satisfy
\[|\xi_i^\varepsilon-\xi_j^\varepsilon|\geq \frac1{|\log\varepsilon|^\kappa}\quad \mathrm{for} \quad i\not=j\]
and
\[|\xi_i^\varepsilon-q|\geq \frac1{|\log\varepsilon|^\kappa}\quad \mathrm{for} \quad i=1,2,\cdots,m.\]
\end{theorem}

\begin{theorem}\label{th4}
Let $\alpha \in (-1, +\infty)\char92 \N$. $q\in \partial \Omega$ is a strict local maximum point of $a(x)$ on $\bar{\Omega}$ and satisfies $\partial_{n} a(q)=0$. For any integers $m\geq 1$ and $l\in[0,m]\cap\N$, there exists constant $\varepsilon_m>0$ such that for $0<\varepsilon<\varepsilon_m$,  problem \eqref{112} has a family of solutions $u_\varepsilon$ with $m-l+1$ different boundary spikes and $l$ different interior spikes which accumulate to $q$ as $\varepsilon \to 0$. More precisely
\begin{eqnarray*}
  u_\varepsilon(x) &=&\sum_{i=1}^l\left[ \log\frac1{[(\varepsilon\mu_i)^2+|x-\xi_i|^2]^2} +8\pi H(x, \xi_i)\right] + \sum_{i=l+1}^m \left[\log \frac1{[(\varepsilon\mu_i)^2+|x-\xi_i|^2]^2} +4\pi H(x, \xi_i) \right]\\
   && +\log\frac1{[(\varepsilon \mu_0)^{2(1+\alpha)}+|x-q|^{2(1+\alpha)}]^2} +4\pi(1+\alpha)H(x,q)+o(1),
\end{eqnarray*}
where  $(\xi_1^\varepsilon, \xi_2^\varepsilon, \cdots, \xi_m^\varepsilon)\in \Omega^l \times (\partial \Omega)^{m-l}$, $o(1)\to 0$ as $\varepsilon \to 0$, on any compact subset of $\bar{\Omega}\char92\{q, \xi_1^\varepsilon, \xi_2^\varepsilon, \cdots, \xi_m^\varepsilon\}$.  The constants $\mu_i$'s satisfy
\begin{equation*}
 C_0\leq \mu_0^{2(1+\alpha)}\varepsilon^{2\alpha}\leq C_1|\log\varepsilon|^{C_2}, \quad \mathrm{and} \quad C_3\leq \frac{\mu_j^2}{|\xi_j^\varepsilon-q|^{2\alpha}}\leq C_4|\log\varepsilon|^{C_5}, \quad j=1,2,\cdots, m,
\end{equation*}
And $\xi_i^\varepsilon \to q$ as $\varepsilon \to 0$. What is more, the points $\xi_i^\varepsilon$'s  satisfy
\[|\xi_i^\varepsilon-\xi_j^\varepsilon|\geq \frac1{|\log\varepsilon|^\kappa}\quad  \mathrm{for} \quad i\not=j\]
and
\[|\xi_i^\varepsilon-q|\geq \frac1{|\log\varepsilon|^\kappa}\quad  \mathrm{for} \quad i=1,2,\cdots,m.\]
\end{theorem}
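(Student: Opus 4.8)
The plan is to adapt, essentially verbatim, the Lyapunov-Schmidt scheme already developed for Theorems \ref{th1}--\ref{th3}, specializing it to the single-exponential equation \eqref{112} (equivalently: take all the signs $b_i$ equal to $+1$ in the analysis behind Theorem \ref{th2}). First I would construct the approximate solution $W=W_{\xi,\mu}$ as a superposition of projected bubbles: for each of the $l$ interior concentration points $\xi_i\in\Omega$ a standard Liouville bubble $U_{\mu_i,\xi_i}(x)=\log\frac{8\mu_i^2\varepsilon^2}{(\varepsilon^2\mu_i^2+|x-\xi_i|^2)^2}$ corrected by $8\pi H(x,\xi_i)$; for each of the $m-l$ boundary points $\xi_i\in\partial\Omega$ the corresponding half-bubble, obtained after flattening $\partial\Omega$ and reflecting so that $\partial u/\partial n=0$ is respected, corrected by $4\pi H(x,\xi_i)$; and at the boundary point $q$ a singular bubble modeled on the explicit solutions of $-\Delta w=|y|^{2\alpha}e^w$ on a half-plane, carrying mass $4\pi(1+\alpha)$ and corrected by $4\pi(1+\alpha)H(x,q)$. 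The admissible parameter set is exactly the one described in the statement: $\xi_i^\varepsilon\to q$, mutual separations $\gtrsim|\log\varepsilon|^{-\kappa}$, and the size constraints on $\mu_0^{2(1+\alpha)}\varepsilon^{2\alpha}$ and on $\mu_j^2/|\xi_j^\varepsilon-q|^{2\alpha}$. Writing $u=W+\phi$, I would expand the error $S(W)=-\Div(a\nabla W)+aW-\varepsilon^2a|x-q|^{2\alpha}e^W$ and bound $\|S(W)\|_*$ in a weighted $L^\infty$ (or $L^p$) norm adapted to the bubbles, the point being that the anisotropy $a$, the weight $|x-q|^{2\alpha}$ and the interaction of the collapsing bubbles all enter at order $o(1)$ relative to the leading logarithmic terms once the separation and size conditions are imposed.

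Next comes the linear theory. I would study the linearized operator $L(\phi)=-\Div(a\nabla\phi)+a\phi-\varepsilon^2a|x-q|^{2\alpha}e^W\phi$ and show that, restricted to the space orthogonal (in a suitably weighted pairing) to the approximate kernel spanned by the dilation and translation modes $Z_i^0,Z_i^1,Z_i^2$ of each regular interior bubble, by $Z_i^0,Z_i^1$ for each boundary bubble (the normal translation being killed by the reflection), and by the single dilation mode $Z_0^0$ for the singular bubble at $q$ (its two translation modes being destroyed by the weight and by the boundary, and its dilation mode being automatically compatible with the Neumann condition by radial symmetry), the operator $L$ is invertible with norm $\lesssim|\log\varepsilon|^{C}$. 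This rests on the nondegeneracy of the regular Liouville bubble in $\R^2$ and of the singular model $-\Delta w=|y|^{2\alpha}e^w$; it is exactly here that the hypothesis $\alpha\notin\N$ is used, since for non-integer $\alpha$ the bounded kernel of the singular model is one-dimensional, whereas an integer $\alpha$ produces extra oscillatory modes. A blow-up/Fredholm argument, localized near each concentration point and glued with cut-offs, yields the a priori estimate, after which a contraction-mapping argument solves the projected nonlinear problem $L(\phi)+N(\phi)+S(W)=\sum c_{ij}Z_{ij}$ for a unique small $\phi=\phi_{\xi,\mu}$ with $\|\phi\|_*=O(\varepsilon^{\sigma})$, $\sigma>0$, depending smoothly on $(\xi,\mu)$.

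Finally I would set up the finite-dimensional reduction. Substituting $u=W+\phi_{\xi,\mu}$ into the energy $J$ produces a function $\mathcal J(\xi,\mu)=J(W+\phi_{\xi,\mu})$ whose critical points in the admissible set correspond to honest solutions of \eqref{112} (all multipliers $c_{ij}$ vanish). Expanding $\mathcal J$, the leading part is an explicit combination of the self-interaction constants of the bubbles, the Robin values $H(\xi_i,\xi_i)$ and $H(q,q)$, the Green interactions $G(\xi_i,\xi_j)$, $G(\xi_i,q)$, the quantities $\log a(\xi_i)$, $\log a(q)$, and the dilation terms $\log\mu_i$. The hypothesis that $q$ is a \emph{strict local maximum} of $a$ on $\bar\Omega$ with $\partial_n a(q)=0$ makes the configuration in which all spikes cluster at $q$ a critical configuration of the $a$-dependent part, while the logarithmic self- and mutual-interaction terms pin down the $\mu_i$'s and keep the $\xi_i$'s separated at the scale $|\log\varepsilon|^{-\kappa}$; a minimax or topological-degree argument over the (compactified) admissible set then yields a critical point in its interior, for every choice of $l\in[0,m]\cap\N$, giving the stated mixed interior/boundary configuration and the claimed bounds on $\mu_i$. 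The step I expect to be the main obstacle is precisely the linear theory around $q$: establishing uniform invertibility of $L$ when several bubbles of different scales simultaneously collapse onto the same point, handling the singular weight with non-integer $\alpha$ in the weighted norms, and bookkeeping the contributions of $a$ and of $|x-q|^{2\alpha}$ so that they stay strictly subcritical; once this is in place, the remainder is a lengthy but routine adaptation of the arguments for Theorems \ref{th2} and \ref{th3}.
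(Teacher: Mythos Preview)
Your proposal is correct and follows essentially the same Lyapunov--Schmidt scheme the paper uses; indeed the paper does not prove Theorem~\ref{th4} separately but states that it follows from the argument for Theorem~\ref{th2} by taking all signs $b_i=+1$, exactly as you suggest. Two minor points where the paper streamlines what you describe: (i) the scaling parameters $\mu_i$ are not treated as independent reduction variables but are \emph{slaved} to $\xi$ via the explicit system~\eqref{4}, so the reduced functional depends on $\xi$ alone; (ii) the critical point of the reduced functional is found not by minimax or degree but by direct maximization over the closure $\bar{\mathcal{O}}_\varepsilon$, followed by a case-by-case check (cases (C1)--(C5) in the proof of Theorem~\ref{th2}) that the maximizer cannot lie on $\partial\mathcal{O}_\varepsilon$.
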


We use the finite dimensional reduction method in \cite{Agudelo_Pistoia2016,Zhang2021,Zhang2022} to prove these theorems. Since the proof of Theorem \ref{th3} and Theorem \ref{th4} is similar to that of Theorem \ref{th1} and Theorem \ref{th2}. So we only prove Theorem \ref{th1} and Theorem \ref{th2} for simplicity.

In the proof of Theorem \ref{th1} and Theorem \ref{th2}, we will frequently use the estimates of Green function and its regular part established in \cite[Section 2]{Agudelo_Pistoia2016}. Readers can also refer this fact in \cite[Appendix A]{Zhang2022}.

This paper is organized as follows. An approximate solution of \eqref{1} is constructed in Section \ref{sect1} and its error term is estimated in Section \ref{sect7}. Then the Lyapunov-Schmidt reduction is conducted, where the linear problem and nonlinear problem are investigated in Section \ref{sect2} and Section \ref{sect3}, respectively. Section \ref{sect5} and Section \ref{sect6} is devoted to prove Theorem \ref{th1} and Theorem \ref{th2}. The expansion of the energy functional and some technical lemmata which are involved are proved in Section \ref{sect4}.

In this paper, $C$ denotes a constant which is varying from line to line.

\section{Approximate solution}\label{sect1}
It is well known(c.f. \cite{Prajapat_Tarantello2001}) that for $\alpha \in (-1, +\infty)\char92 \N$, all the solutions of
\begin{equation*}
\left\{\begin{aligned}
&-\Delta u=|x|^{2\alpha}e^u, \quad \mbox{in $\R^2$,} \\
&\int_{\R^2} |x|^{2\alpha}e^u<\infty
\end{aligned}\right.
\end{equation*}
are of the form
\begin{equation*}
\tilde{V}_{\delta}(x)=\log \frac{8(1+\alpha)^2\delta^{2(1+\alpha)}}{(\delta^{2(1+\alpha)}+|x|^{2(1+\alpha)})^2}.
\end{equation*}
However, all the solutions of
\begin{equation*}
\left\{\begin{aligned}
&-\Delta u=e^u, \quad \mbox{in $\R^2$,} \\
&\int_{\R^2} e^u<\infty
\end{aligned}\right.
\end{equation*}
are of the form
\[V_{\delta, \xi}(x)=\log \frac{8\delta^2}{(\delta^2+|x-\xi|^2)^2},\]
where $\delta>0$ and $\xi\in \R^2$( see \cite{Chen_Li1991,Chen_Lin2002}). We will use these functions to construct approximate solution to \eqref{1}.

For $m\geq 1$ and $l\in [0,m]\cap \N$, we define
\begin{equation}\label{12}
\begin{aligned}
\mathcal{O}_\varepsilon=&\left\{\xi=(\xi_1, \cdots, \xi_m)\in(B_d(q)\cap \Omega)^l\times(B_d(q)\cap \partial\Omega)^{m-l}\;:\;\min_{1\leq i\leq l} \mathrm{dist}(\xi_i, \partial \Omega)>\frac1{|\log \varepsilon|^\kappa}\right., \\
&\qquad\qquad\qquad\qquad\qquad \left.\min_{i=1,\dots, m}|\xi_i-q|>\frac1{|\log \varepsilon|^\kappa}, \; \min_{i,j=1,\cdots, m; i\not=j}|\xi_i-\xi_j|>\frac1{|\log \varepsilon|^\kappa}\right\}.
\end{aligned}
\end{equation}
where $d>0$ is a small constant defined in Section \ref{sect2}.

It is apparent that for  $\xi=(\xi_1, \cdots, \xi_m)\in \mathcal{O}_\varepsilon$, the functions
\[U_0(x):=\log\frac{8(1+\alpha)^2\mu_0^{2(1+\alpha)}\varepsilon^{2\alpha}}{\left[(\varepsilon \mu_0)^{2(1+\alpha)}+|x-q|^{2(1+\alpha)}\right]^2}\]
and
\[U_i(x):=\log \frac{8\mu_i^2}{\left[(\varepsilon \mu_i)^2+|x-\xi_i|^2\right]^2|\xi_i-q|^{2\alpha}},\qquad i=1,2,\cdots,m,\]
solve the following problems
\begin{equation}\label{84}
-\Delta U_0=\varepsilon^2|x-q|^{2\alpha}e^{U_0} \quad \mathrm{in}\quad \R^2
\end{equation}
and
\begin{equation}\label{85}
-\Delta U_i=\varepsilon^2 |\xi_i-q|^{2\alpha}e^{U_i} \quad \mathrm{in}\quad \R^2, \qquad i=1,2,\cdots,m,
\end{equation}
respectively. In the expressions above,  $\mu_i$'s are constants which are defined in \eqref{4}.

Now we define an approximate solution of \eqref{1} by
\begin{equation}\label{5}
U_\xi(x)=\sum_{i=0}^m b_iPU_i(x)=\sum_{i=0}^mb_i\left[U_i(x)+H_i(x)\right],
\end{equation}
where $b_i\in\{-1,1\}$ for $i=0,1,\cdots,m$ and $H_i(x)$ is the unique solution of
\begin{equation}\label{86}
\left\{\begin{array}{ll}
-\Delta H_i-\nabla(\log a(x))\cdot\nabla H_i+H_i=\nabla(\log a(x))\cdot \nabla U_i-U_i, &\mbox{in $\Omega$,} \\
\frac{\partial H_i}{\partial n}=-\frac{\partial U_i}{\partial n}, &\mbox{on $\partial\Omega$.}
\end{array}\right.
\end{equation}
For notation simplicity, we denote
\[c_0=\left\{\begin{array}{ll}
8\pi(1+\alpha), &\mbox{$q\in \Omega$,} \\
4\pi(1+\alpha), &\mbox{$q\in \partial\Omega$,}
\end{array}\right.\quad \mathrm{and} \quad
c_i=\left\{\begin{array}{ll}
8\pi, &\mbox{$\xi_i\in \Omega$,} \\
4\pi, &\mbox{$\xi_i\in \partial\Omega$.}
\end{array}\right.\]
From the definition of regular part of Green function $H(\cdot,\cdot)$ in \eqref{104}, we know the function $H(x, q)$ solves the following problem
\[
\left\{\begin{array}{ll}
-\Delta H(x,q) -\nabla (\log a(x))\cdot \nabla H(x, q)+ H(x, q)=&\frac{4(1+\alpha)}{c_0}\log|x-q| \\
&-\frac{4(1+\alpha)}{c_0}\frac{\nabla(\log a(x))\cdot (x-q)}{|x-q|^2}, \qquad\mbox{in $\Omega$,} \\
\frac{\partial H(x,q)}{\partial n}=\frac{4(1+\alpha)}{c_0}\frac{(x-q,n)}{|x-q|^2}, \qquad\qquad\mbox{on $\partial \Omega$,}
\end{array}\right.\]
and $H(x,\xi_i)$ solves
\[
\left\{\begin{array}{ll}
-\Delta H(x,\xi_i) -\nabla (\log a(x))\cdot \nabla H(x, \xi_i)+ H(x, \xi_i)=-\frac{4}{c_i}\frac{\nabla(\log a(x))\cdot (x-\xi_i)}{|x-\xi_i|^2}+ \frac{4}{c_i}\log|x-\xi_i|, &\mbox{in $\Omega$,} \\
\frac{\partial H(x,\xi_i)}{\partial n}=\frac{4}{c_i}\frac{(x-\xi_i,n)}{|x-\xi_i|^2}, &\mbox{on $\partial \Omega$.}
\end{array}\right.\]
In order to get some estimate of $U_\xi$, we need to find the relationship between $H_i$ and the regular part of Green functions $H(\cdot, \cdot)$.
\begin{lemma}\label{lm1}
For $\forall p\in(1,2)\char92\{\frac1{1+\alpha},\frac2{3+2\alpha}, \frac1{2(1+\alpha)}\}$ and $\forall \xi=(\xi_1, \xi_2, \cdots, \xi_m)\in \mathcal{O}_\varepsilon$, the following estimates hold in $C^1(\Omega)$ sense:
\begin{equation}\label{2}
H_0(x)=c_0H(x,q)-\log8(1+\alpha)^2\mu_0^{2(1+\alpha)}\varepsilon^{2\alpha}+O\left((\varepsilon \mu_0)^{\min\left\{\frac2p-1, 2(1+\alpha)\right\}}\right),
\end{equation}
and
\begin{equation}\label{3}
H_i(x)=c_iH(x,\xi_i)-\log \frac{8\mu_i^2}{|\xi_i-q|^{2\alpha}}+O((\varepsilon \mu_i)^{\frac2p-1}), \quad i=1,2,\cdots, m.
\end{equation}
\end{lemma}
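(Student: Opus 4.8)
The plan is to reduce \eqref{2} to an elliptic estimate for the remainder that \eqref{2} asserts to be small; \eqref{3} is obtained by the identical scheme, and is in fact easier, since by the definition of $\mathcal{O}_\varepsilon$ in \eqref{12} the spike $\xi_i$ stays at distance $\gtrsim|\log\varepsilon|^{-\kappa}$ from $q$ and from $\partial\Omega$, so the H\'enon weight $|x-q|^{2\alpha}$ is smooth and bounded near $\xi_i$. Throughout write $L:=-\Delta-\nabla(\log a)\cdot\nabla+1=-\tfrac1a\,\Div(a\nabla\,\cdot\,)+1$, so that $G$ is the Neumann Green function of $L$; recall also that $q$ is a critical point of $a$ in both Theorems \ref{th1} and \ref{th2}, so $\nabla a(q)=0$ and $|\nabla(\log a)(x)\cdot(x-q)|\le C|x-q|^2$ near $q$.

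\emph{Step 1: the equation for the remainder.} By \eqref{84} and \eqref{86}, the projected bubble $PU_0=U_0+H_0$ satisfies $L(PU_0)=\varepsilon^2|x-q|^{2\alpha}e^{U_0}$ in $\Omega$ and $\partial_nPU_0=0$ on $\partial\Omega$. Using the identity $-U_0(x)+\log\bigl(8(1+\alpha)^2\mu_0^{2(1+\alpha)}\varepsilon^{2\alpha}\bigr)=2\log\bigl[(\varepsilon\mu_0)^{2(1+\alpha)}+|x-q|^{2(1+\alpha)}\bigr]$, the explicit $\nabla U_0(x)=-4(1+\alpha)|x-q|^{2\alpha}(x-q)\bigl[(\varepsilon\mu_0)^{2(1+\alpha)}+|x-q|^{2(1+\alpha)}\bigr]^{-1}$, and the equation satisfied by $c_0H(\cdot,q)$ (displayed just before the lemma), a direct computation shows that $\rho_0:=H_0-c_0H(\cdot,q)+\log(8(1+\alpha)^2\mu_0^{2(1+\alpha)}\varepsilon^{2\alpha})$ solves
\[
L\rho_0=2\log\!\Bigl[1+\bigl(\tfrac{\varepsilon\mu_0}{|x-q|}\bigr)^{2(1+\alpha)}\Bigr]+4(1+\alpha)\,\frac{(\varepsilon\mu_0)^{2(1+\alpha)}\,\nabla(\log a)(x)\cdot(x-q)}{|x-q|^2\,\bigl[(\varepsilon\mu_0)^{2(1+\alpha)}+|x-q|^{2(1+\alpha)}\bigr]}=:f_0\quad\text{in }\Omega,
\]
with $\partial_n\rho_0=-\partial_nU_0-c_0\,\partial_nH(\cdot,q)=:g_0$ on $\partial\Omega$. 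When $q\in\Omega$ one has $|x-q|\ge\mathrm{dist}(q,\partial\Omega)>0$ on $\partial\Omega$, so $g_0$ and its derivatives are $O((\varepsilon\mu_0)^{2(1+\alpha)})$; when $q\in\partial\Omega$, inserting $(x-q)\cdot n(x)=O(|x-q|^2)$ on the smooth curve $\partial\Omega$ gives, near $q$, the pointwise bound $|g_0(x)|\le C(\varepsilon\mu_0)^{2(1+\alpha)}\bigl[(\varepsilon\mu_0)^{2(1+\alpha)}+|x-q|^{2(1+\alpha)}\bigr]^{-1}$, together with the matching bound for the tangential derivative.

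\emph{Step 2: estimating the data and closing by elliptic regularity.} Splitting $\Omega$ and $\partial\Omega$ into $\{|x-q|\le\varepsilon\mu_0\}$ and its complement, and using $\log(1+t)\le t$ together with $|\nabla(\log a)(x)\cdot(x-q)|\le C|x-q|^2$ (so that $f_0$ is bounded by $C$ near $q$ and by $C(\varepsilon\mu_0)^{2(1+\alpha)}|x-q|^{-2(1+\alpha)}$ away from it), one obtains, for $p\in(1,2)$ outside the three excluded values,
\[
\|f_0\|_{L^p(\Omega)}+\|g_0\|_{W^{1-1/p,\,p}(\partial\Omega)}\le C(\varepsilon\mu_0)^{\min\{\frac{2}{p}-1,\;2(1+\alpha)\}} .
\]
The loss of a full power and the exclusion of $p=\tfrac1{1+\alpha},\tfrac2{3+2\alpha},\tfrac1{2(1+\alpha)}$ come precisely from the critical integrability thresholds of the tails $|x-q|^{-2(1+\alpha)p}$, $|x-q|^{-(3+2\alpha)p}$ and (on the one-dimensional $\partial\Omega$) $|x-q|^{-2(1+\alpha)p}$, where logarithms appear, and from the fact that, in the case $q\in\partial\Omega$, $g_0$ makes an $O(1)$-size transition over a scale $\varepsilon\mu_0$, so its $W^{1-1/p,p}(\partial\Omega)$-norm is of order $(\varepsilon\mu_0)^{2/p-1}$. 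Feeding these into the $W^{2,p}$ a priori estimate for the Neumann problem of $L$ — which is invertible because of the zeroth-order term $+1$ and has smooth coefficients on the smooth domain $\Omega$ — and then into the Sobolev embedding (passing, as is usual for a gradient bound, through the conjugate exponent $\tfrac{2p}{2-p}>2$), one gets $\|\rho_0\|_{C^1(\Omega)}\le C(\varepsilon\mu_0)^{\min\{\frac{2}{p}-1,\;2(1+\alpha)\}}$, which is \eqref{2}. Equivalently, one may run the Green representation $\rho_0(x)=\int_\Omega G(x,y)f_0(y)\,dy+\int_{\partial\Omega}G(x,y)g_0(y)\,d\sigma(y)$ against the pointwise estimates for $G$, $H$ and $\nabla_xG$ from \cite[Section~2]{Agudelo_Pistoia2016} (see also \cite[Appendix~A]{Zhang2022}).

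The main obstacle is the behaviour at the concentration point $q$, where the non-integral H\'enon exponent $\alpha$, the anisotropy $\nabla(\log a)$ and the demand for a $C^1$ (rather than merely H\"older) estimate all interact: the regular part $H(\cdot,q)$ of $G$ is itself only as smooth at $q$ as its source $\tfrac1{2\pi}\log|x-q|-\tfrac1{2\pi}\nabla(\log a)\cdot(x-q)/|x-q|^2$ allows, which is why $\nabla a(q)=0$ must be invoked, and it is the careful evaluation of the rescaled integrals $\int_{\R^2}|z|^{2\alpha+k}\bigl(1+|z|^{2(1+\alpha)}\bigr)^{-2}dz$ (for $k=0,1,2$) and of their one-dimensional boundary analogues that simultaneously produces the exponent $\min\{2/p-1,2(1+\alpha)\}$ and singles out the forbidden values of $p$. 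The remaining steps are lengthy but routine manipulations with the explicit profiles $U_i$ introduced before \eqref{5}.
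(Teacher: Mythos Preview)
Your scheme is exactly the paper's: set $\rho_0=Y_0:=H_0-c_0H(\cdot,q)+\log\bigl(8(1+\alpha)^2\mu_0^{2(1+\alpha)}\varepsilon^{2\alpha}\bigr)$, compute the elliptic Neumann problem it solves, estimate the source and boundary data in $L^p$, and close by elliptic regularity plus Sobolev embedding. Two corrections are worth making.

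First, your claim that \eqref{3} is ``easier'' because $\xi_i$ stays at distance $\gtrsim|\log\varepsilon|^{-\kappa}$ from $\partial\Omega$ is wrong for $i=l+1,\dots,m$: by the very definition \eqref{12} of $\mathcal{O}_\varepsilon$ these $\xi_i$ lie \emph{on} $\partial\Omega$. For such boundary spikes the Neumann datum $\partial_n Y_i$ is not uniformly small on $\partial\Omega$, and one must run the same geometric argument $(x-\xi_i)\cdot n(x)=O(|x-\xi_i|^2)$ that you use for $q\in\partial\Omega$; the paper handles the interior case $1\le i\le l$ and the boundary case $l+1\le i\le m$ separately and obtains $\|\partial_n Y_i\|_{L^p(\partial\Omega)}\le C(\varepsilon\mu_i)^{1/p}$ in the latter.

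Second, the paper does \emph{not} invoke $\nabla a(q)=0$ here. Its $L^p$ bound on the anisotropy term uses only $|\nabla(\log a)(x)\cdot(x-q)|\le C|x-q|$, which after rescaling $t=(x-q)/(\varepsilon\mu_0)$ already yields $\|f_0\|_{L^p(\Omega)}\le C(\varepsilon\mu_0)^{\min\{2/p-1,\,2(1+\alpha)\}}$ for $p\in(1,2)$, the excluded values of $p$ being precisely the integrability thresholds of the rescaled tails. Your sharper bound $O(|x-q|^2)$ is indeed available under the hypotheses of Theorems~\ref{th1}--\ref{th2} (where $q$ is a critical point of $a$), but the lemma as stated and proved in the paper does not rely on it; the paper then lands in $W^{1+\theta,p}$ with $0<\theta<1/p$ and invokes Sobolev embedding, rather than going through $W^{2,p}$ as you do.
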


\begin{proof}
Let
\[Y_0(x):=H_0(x)-c_0H(x, q)+\log 8(1+\alpha)^2\mu_0^{2(1+\alpha)}\varepsilon^{2\alpha}.\]
It is a solution of the following problem
\begin{equation*}
\left\{\begin{array}{ll}
-\Delta Y_0-\nabla(\log a(x))\cdot \nabla Y_0 +Y_0=&\frac{4(1+\alpha)(\varepsilon \mu_0)^{2(1+\alpha)}\nabla(\log a)\cdot (x-q)}{|x-q|^2[(\varepsilon\mu_0)^{2(1+\alpha)}+|x-q|^{2(1+\alpha)}]} \\
& -2\log \frac{|x-q|^{2(1+\alpha)}}{(\varepsilon \mu_0)^{2(1+\alpha)}+|x-q|^{2(1+\alpha)}}, \qquad \mbox{in $\Omega$,} \\
\frac{\partial Y_0}{\partial n}=-\frac{4(1+\alpha)(\varepsilon\mu_0)^{2(1+\alpha)}(x-q,n)}{|x-q|^2[(\varepsilon\mu_0)^{2 (1+\alpha)}+|x-q|^{2(1+\alpha)}],} &\qquad\qquad\qquad\qquad\qquad\qquad\qquad\mbox{on $\partial\Omega$.}
\end{array}\right.
\end{equation*}
It is apparent
\begin{eqnarray*}
  \int_{\Omega}\left|\frac{4(1+\alpha)(\varepsilon \mu_0)^{2(1+\alpha)}\nabla(\log a)\cdot (x-q)}{|x-q|^2[(\varepsilon\mu_0)^{2(1+\alpha)}+|x-q|^{2(1+\alpha)}]}\right|^pdx  &\leq& C(\varepsilon \mu_0)^{2-p}\int_{\tilde{\Omega}_{\varepsilon \mu_0}}\left|\frac1{|t|[1+|t|^{2(1+\alpha)}]}\right|^pdt \\
   &\leq& C(\varepsilon \mu_0)^{\min\left\{2-p,2(1+\alpha)p\right\}},
\end{eqnarray*}
where $\tilde{\Omega}_{\varepsilon \mu_0}=(\varepsilon\mu_0)^{-1}\left(\Omega-\{q\}\right)$.
Then we get
\[
\left\|\frac{4(1+\alpha)(\varepsilon \mu_0)^{2(1+\alpha)}\nabla(\log a)\cdot (x-q)}{|x-q|^2[(\varepsilon\mu_0)^{2(1+\alpha)}+|x-q|^{2(1+\alpha)}]} \right\|_{L^p(\Omega)}\leq C(\varepsilon \mu_0)^{\min\left\{\frac2p-1, 2(1+\alpha)\right\}}.
\]
From the same method, we also get
\[
\left\|2\log \frac{|x-q|^{2(1+\alpha)}}{(\varepsilon \mu_0)^{2(1+\alpha)}+|x-q|^{2(1+\alpha)}}\right\|_{L^p(\Omega)}\leq C(\varepsilon \mu_0)^{\min\left\{\frac2p, 2(1+\alpha)\right\}} .
\]

Now we estimate the boundary terms. In the case of $q\in \Omega$, we get
\[\frac{\partial Y_0}{\partial n}=O\left((\varepsilon \mu_0)^{2(1+\alpha)}\right).\]
However in the case of $q\in \partial\Omega$, we have
\begin{eqnarray*}
\int_{\partial \Omega\cap B_{\delta_1}(q)}\left|\frac{4(1+\alpha)(\varepsilon\mu_0)^{2(1+\alpha)}(x-q,n)}{|x-q|^2[ (\varepsilon\mu_0)^{2(1+\alpha)}+|x-q|^{2(1+\alpha)}]}\right|^pdx&\leq& C\varepsilon \mu_0 \int_{\tilde{\Omega}_{\varepsilon \mu_0}} \left|\frac1{1+|t|^{2(1+\alpha)}}\right|^p dt \\
&\leq& C (\varepsilon\mu_0)^{\min\{1, 2(1+\alpha)p\}},
\end{eqnarray*}
with the help of \cite[Lemma 3]{Agudelo_Pistoia2016}, where $\delta_1>0$ is a small constant. It is obvious that
\[\int_{\partial \Omega\cap B^c_{\delta_1}(q)}\left|\frac{4(1+\alpha)(\varepsilon\mu_0)^{2(1+\alpha)}(x-q,n)}{|x-q|^2[ (\varepsilon\mu_0)^{2(1+\alpha)}+|x-q|^{2(1+\alpha)}]}\right|^pdx\leq C(\varepsilon \mu_0)^{2(1+\alpha)p}.\]
Then
\[\left\|\frac{\partial Y_0}{\partial n}\right\|_{L^p(\partial\Omega)}\leq C(\varepsilon \mu_0)^{\min\left\{\frac1p, 2(1+\alpha)\right\}}.\]
From elliptic estimate and Sobolev embedding theorem,  we get
\[\|Y_0\|_{C^\tau(\bar{\Omega})}\leq C\|Y_0\|_{W^{1+\theta, p}}\leq C(\varepsilon \mu_0)^{\min\{\frac2p-1, 2(1+\alpha)\}}\]
where $0<\theta<\frac1p$ and $0<\tau<\frac12+\frac1p$.
Hence \eqref{2} holds.

To prove \eqref{3}, we define
\[Y_i(x):=H_i(x)-c_iH(x,\xi_i)+\log\frac{8\mu_i^2}{|\xi_i-q|^{2\alpha}}.\]
It solves the following boundary value problem
\[
\left\{\begin{array}{ll}
-\Delta Y_i-\nabla(\log a(x))\cdot \nabla Y_i+Y_i=4\frac{\varepsilon^2\mu_i^2\nabla (\log a)\cdot (x-\xi_i)}{|x-\xi_i|^2(\varepsilon^2\mu_i^2+|x-\xi_i|^2)}-2\log \frac{|x-\xi_i|^2}{\varepsilon^2 \mu_i^2+|x-\xi_i|^2}, &\mbox{in $\Omega$,} \\
\frac{\partial Y_i}{\partial n}=-\frac{4\varepsilon^2\mu_i^2(x-\xi_i,n)}{|x-\xi_i|^2(\varepsilon^2\mu_i^2+|x-\xi_i|^2)}, &\mbox{on $\partial\Omega$.}
\end{array}\right.
\]
From direct computation, we have
\[\left\|4\frac{\varepsilon^2\mu_i^2\nabla (\log a)\cdot (x-\xi_i)}{|x-\xi_i|^2(\varepsilon^2\mu_i^2+|x-\xi_i|^2)}\right\|_{L^p(\Omega)}\leq C(\varepsilon \mu_i)^{\frac2p-1}\]
and
\[\left\|\log \frac{|x-\xi_i|^2}{\varepsilon^2 \mu_i^2+|x-\xi_i|^2}\right\|_{L^p(\Omega)}\leq C(\varepsilon \mu_i)^{\frac2p}.\]
For $1\leq i\leq l$, we get $\mathrm{dist}(\xi_i, \partial \Omega)\geq \frac1{|\log\varepsilon|^\kappa}$. There holds that
\[\left\|\frac{\partial Y_i}{\partial n}\right\|_{L^\infty(\partial\Omega)}\leq C\varepsilon^2\mu_i^2|\log \varepsilon|^{3\kappa}.\]
However, in the case of $l+1\leq i\leq m$, $\xi_i\in \partial \Omega$. We get
\[\int_{\partial \Omega\cap B_{\delta_1}(\xi_i)}\left|\frac{4\varepsilon^2\mu_i^2(x-\xi_i,n)}{|x-\xi_i|^2(\varepsilon^2\mu_i^2 +|x-\xi_i|^2)}\right|^pdx\leq C\varepsilon \mu_i,\]
with the help of \cite[Lemma 3]{Agudelo_Pistoia2016}. It is obvious that
\[\int_{\partial \Omega\cap B^c_{\delta_1}(\xi_i)}\left|\frac{4\varepsilon^2\mu_i^2(x-\xi_i,n)}{|x-\xi_i|^2(\varepsilon^2 \mu_i^2+|x-\xi_i|^2)}\right|^pdx\leq C(\varepsilon \mu_i)^{2p}.\]
Hence
\begin{equation}\label{74}
\left\|\frac{\partial Y_i}{\partial n}\right\|_{L^p(\partial\Omega)}\leq C(\varepsilon\mu_i)^{\frac1p}.
\end{equation}
From elliptic estimate, we get
\[\|Y_i\|_{W^{1+\theta, p}}\leq C(\varepsilon \mu_0)^{\frac2p-1}, \qquad \mathrm{where} \quad 1<p<2, \; 0<\theta<\frac1p.\]
Using Sobolev embedding theorem, we have
\[\|Y_i\|_{C^\tau(\bar{\Omega})}\leq C(\varepsilon \mu_0)^{\frac2p-1}, \qquad \mathrm{where}\quad 0<\tau<\frac12+\frac1p.\]
Then \eqref{2} holds.
\end{proof}

From now on, we always choose
\[p\in(1,2)\char92\{\frac1{1+\alpha},\frac2{3+2\alpha}, \frac1{2(1+\alpha)}\}.\]
With Lemma \ref{lm1}, we will derive some local expressions of $U_\xi(x)$.

In the case of $|x-q|<\frac1{|\log \varepsilon|^{2\kappa}}$, we get the following expansion with the help of Lemma \ref{lm1}:
\begin{align}\label{73}
 \nonumber U_\xi(x)= & \;b_0U_0(x)+c_0b_0 H(x,q) +\sum_{i=1}^mb_i \left[\log \frac1{[(\varepsilon \mu_i)^2+|x-\xi_i|^2]^2}+ c_i H(x, \xi_i)\right] \\
 \nonumber & \; -b_0\log 8(1+\alpha)^2 \mu_0^{2(1+\alpha)}\varepsilon^{2\alpha}+O\left(\sum_{i=1}^m \left(\varepsilon \mu_i\right)^{\frac2p-1}\right)+O\left((\varepsilon \mu_0)^{\min\left\{\frac2p-1, 2(1+\alpha)\right\}}\right) \\
  \nonumber= & \;b_0U_0(x) +b_0c_0H(q,q) -b_0\log 8(1+\alpha)^2 \mu_0^{2(1+\alpha)}\varepsilon^{2\alpha}+\sum_{i=1}^mb_i\left[-4\log|q-\xi_i|+c_i H(q, \xi_i)\right] \\
   &\;+O(|x-q|^\beta)+O\left(\sum_{i=1}^m \left(\varepsilon \mu_i\right)^{\frac2p-1}\right)+ O\left((\varepsilon \mu_0)^{\min\left\{\frac2p-1, 2(1+\alpha)\right\}}\right).
\end{align}
Hence
\begin{eqnarray*}
U_\xi(x)&=&b_0U_0(x)+b_0c_0H(q,q)-b_0\log 8(1+\alpha)^2\mu_0^{2(1+\alpha)}\varepsilon^{2\alpha}+\sum_{i=1}^mb_i c_iG(q, \xi_i)+O\left(|x-q|^\beta\right) \\
&&+O\left(\sum_{i=1}^m(\varepsilon\mu_i)^{\frac2p-1}\right)+O\left((\varepsilon \mu_0)^{\min\left\{\frac2p-1, 2(1+\alpha)\right\}}\right),
\end{eqnarray*}
where $1<p<2$ and $0<\beta<\frac12$.

However, in the case of $|x-\xi_j|<\frac1{|\log\varepsilon|^{2\kappa}}$, where $j\in\{1, \cdots, m\}$, we get
\begin{eqnarray*}
U_\xi(x)&=&b_jU_j(x)+b_jc_j H(\xi_j, \xi_j)-b_j\log \frac{8\mu_j^2}{|\xi_j-q|^{2\alpha}}+\sum_{i=1, i\not= j}^m b_ic_i G(\xi_j,\xi_i)+b_0c_0G(\xi_j, q)\\
&&+O\left(|x-\xi_j|^\beta\right)+O\left(\sum_{i=1}^m(\varepsilon\mu_i)^{\frac2p-1}\right)+O\left((\varepsilon \mu_0)^{\min\left\{\frac2p-1, 2(1+\alpha)\right\}}\right)
\end{eqnarray*}
from the same method as in \eqref{73}.

To construct a good approximate solution, we choose  $\{\mu_i\}_{i=0}^m$ to be the solution the following problem:
\begin{equation}\label{4}
\left\{
\begin{aligned}
&b_0\log8(1+\alpha)^2\mu_0^{2(1+\alpha)}\varepsilon^{2\alpha}=b_0c_0 H(q,q)+\sum_{i=1}^m b_ic_i G(q, \xi_i), \\
&b_j\log\frac{8\mu_j^2}{|\xi_j-q|^{2\alpha}}=b_jc_j H(\xi_j, \xi_j)+b_0c_0G(\xi_j, q)+\sum_{i=1, i\not=j}^m b_ic_i G(\xi_j, \xi_i).
\end{aligned}\right.
\end{equation}
From \eqref{4} and \cite[Appendix A]{Zhang2022}, we get that $\mu_i$'s satisfy
\begin{equation}\label{51}
\left\{
\begin{aligned}
& \frac{C_0}{|\log\varepsilon|^{C_2}}\leq \mu_0^{2(1+\alpha)}\varepsilon^{2\alpha}\leq C_1|\log\varepsilon|^{C_2} \\
&\frac{C_3}{|\log\varepsilon|^{C_5}}\leq \frac{\mu_j^2}{|\xi_j-q|^{2\alpha}}\leq C_4|\log\varepsilon|^{C_5}, \quad j=1,2,\cdots, m.
\end{aligned}\right.
\end{equation}
Here we notice that $\mu_k$'s are not always bounded away from $0$. In particular, we have
\[C_0\frac{\varepsilon^2}{|\log\varepsilon|^{C_2}}\leq (\varepsilon\mu_0)^{2(1+\alpha)}\leq C_1 \varepsilon^2 |\log\varepsilon|^{C_2}.\]
For $j=1,2,\cdots, m$ and any constant $\gamma>0$, we also get
\[\varepsilon^\gamma \mu_j\to0, \quad \mathrm{as}\quad \varepsilon\to 0.\]
Then $U_\xi$ satisfies the following estimates:
\begin{itemize}
  \item For $|x-q|<\frac1{|\log \varepsilon|^{2\kappa}}$,
  \begin{equation}\label{8}
  U_\xi(x)=b_0U_0(x)+O\left(|x-q|^\beta\right)+O\left(\sum_{i=0}^m(\varepsilon\mu_i)^{\frac2p-1}\right);
  \end{equation}
  \item For $|x-\xi_j|<\frac1{|\log \varepsilon|^{2\kappa}}$,
  \begin{equation}\label{75}
  U_\xi(x)=b_jU_j(x)+O\left(|x-\xi_j|^\beta\right)+O\left(\sum_{i=0}^m(\varepsilon\mu_i)^{\frac2p-1}\right);
  \end{equation}
  \item For any $x$ satisfying $|x-q|>\frac1{|\log \varepsilon|^{2\kappa}}$ and $|x-\xi_j|>\frac1{|\log \varepsilon|^{2\kappa}}$, $j=1,2,\cdots, m$,
  \begin{equation}\label{53}
  U_\xi(x)=b_0c_0G(x,q)+\sum_{i=1}^m b_ic_i G(x,\xi_i)+O\left(\sum_{i=0}^m(\varepsilon\mu_i)^{\frac2p-1}\right),
  \end{equation}
\end{itemize}
where we get \eqref{53} from Lemma \ref{lm1} and the same method as in \eqref{73}.

If $u$ is a solution of \eqref{1}, the function
\[v(y)=u(\varepsilon y)+4\log\varepsilon\]
solves the following problem:
\begin{equation}\label{15}
\left\{\begin{aligned}
&-\Delta v-\nabla(\log a(\varepsilon y))\cdot\nabla v +\varepsilon^2(v-4\log\varepsilon)=|\varepsilon y-q|^{2\alpha}(e^v-\varepsilon^8e^{-v}), \qquad\mbox{in $\Omega_\varepsilon$} \\
&\frac{\partial v}{\partial n}=0, \qquad \mathrm{on}\quad \partial \Omega; \qquad v>0,  \qquad \mathrm{in}\quad \Omega,
\end{aligned}\right.
\end{equation}
where $\Omega_\varepsilon=\Omega/\varepsilon$. From the relationship between \eqref{1} and \eqref{15}, we define an approximate solution of \eqref{15} by
\begin{equation}\label{17}
V(y):=U_\xi(\varepsilon y)+4\log\varepsilon=\sum_{i=0}^m b_i\left[U_i(\varepsilon y)+H_i(\varepsilon y)\right]+4\log\varepsilon
\end{equation}

If $v(y)=V(y)+\phi(y)$  is a solution of \eqref{15},  $\phi$ solves
\begin{equation}\label{124}
\left\{\begin{array}{ll}
L(\phi)=R+N(\phi),  & \mathrm{in}\quad  \Omega_\varepsilon, \\
 \frac{\partial \phi}{\partial n}=0, & \mathrm{on}\quad \partial \Omega_\varepsilon,
\end{array}\right.
\end{equation}
where
\begin{equation}\label{87}
L(\phi):=-\Delta \phi-\nabla(\log a(\varepsilon y))\cdot \nabla \phi+\varepsilon^2\phi-W\phi,
\end{equation}
\begin{equation}\label{16}
W(y)=|\varepsilon y-q|^{2\alpha}(e^V+\varepsilon^8e^{-V}),
\end{equation}
\begin{equation}\label{49}
R=\Delta V+\nabla(\log a(\varepsilon y))\cdot \nabla V-\varepsilon^2(V-4\log\varepsilon)+|\varepsilon y-q|^{2\alpha}(e^V-\varepsilon^8e^{-V})
\end{equation}
and
\begin{equation}\label{50}
N(\phi)=|\varepsilon y-q|^{2\alpha}\left[e^{V+\phi}-e^V-e^V\phi-\varepsilon^8\left(e^{-V-\phi}-e^{-V}+e^{-V} \phi\right)\right].
\end{equation}
To prove \eqref{15} has a solution of the form $v(y)=V(y)+\phi(y)$,  we only need to show \eqref{124} has a  solution $\phi$.

\section{Some estimate of the error}\label{sect7}

Denote $q'=q/\varepsilon$ and $\xi_i'=\xi_i/\varepsilon$ where $i=1,2\cdots,m$. In this section, we will give some estimate of the error term $R$ and its derivatives with respect to $\xi_{kl}'$. To finish this task, we first estimate the derivatives of $\mu_i$'s and $H_i$'s with respect to $\xi_{kl}'$.

Taking derivatives with respect to $\xi_{kl}'$ on the both sides of \eqref{4}, we get
\[\partial_{\xi_{kl}'} \mu_0=\frac{\varepsilon c_k b_k\mu_0}{2(1+\alpha)b_0}\partial_{\xi_{kl}}G(q, \xi_k),\]
\[\partial_{\xi_{kl}'} \mu_j=\frac{\varepsilon b_k c_k \mu_j}{2b_j}\partial_{\xi_{kl}}G(\xi_j, \xi_k);\qquad \mathrm{for}\qquad k\not=j \]
and
\[\partial_{\xi_{kl}'} \mu_k =  \frac{\varepsilon\mu_k}{2b_k}\left\{b_kc_k \partial_{\xi_{kl}}\left[H(\xi_k, \xi_k)\right] +b_0c_0 \partial_{\xi_{kl}}G(\xi_k, q)+\sum_{i=1, i\not=k}^m b_ic_i \partial_{\xi_{kl}}G(\xi_k, \xi_i)+2\alpha b_k\frac{(\xi_k-q)_l}{|\xi_k-q|^2}\right\}.
\]
From the properties of Green function \cite{Zhang2022}, we get
\[\left|\partial_{\xi_{kl}'}\mu_i\right|\leq C\varepsilon \mu_i |\log\varepsilon|^\kappa\qquad \mathrm{for} \qquad i=0,1, \cdots, m;\quad k=1,2, \cdots, m;\quad l=1,J_k .\]
Using the estimates above and the same method as in Lemma \ref{lm1}, we get the following lemma.
\begin{lemma}\label{lm7}
For $k=1,2,\cdots, m$ and $l=1, J_k$, we get
\begin{equation}\label{61}
\|\partial_{\xi_{kl}'}H_0\|_{C^1(\Omega)}\leq C\varepsilon|\log\varepsilon|^\kappa, \qquad \|\partial_{\xi_{kl}'}H_i\|_{C^1(\Omega)}\leq C\varepsilon|\log\varepsilon|^\kappa, \quad \mathrm{for} \quad k\not=i
\end{equation}
and
\begin{equation}\label{55}
\|\partial_{\xi_{kl}'}H_k\|_{C^1(\Omega)}\leq \frac{C}{\mu_k}(\varepsilon \mu_k)^{\frac2p-1}.
\end{equation}
\end{lemma}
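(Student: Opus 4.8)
The plan is to follow the blueprint of Lemma \ref{lm1} but now applied to the functions obtained by differentiating the defining equation \eqref{86} with respect to a point variable $\xi_{kl}'$. First I would fix $k\in\{1,\dots,m\}$ and a coordinate index $l$, and set $Z_0=\partial_{\xi_{kl}'}H_0$, $Z_i=\partial_{\xi_{kl}'}H_i$. Differentiating \eqref{86} under the integral sign (the dependence on $\xi$ enters only through the $\mu_i$'s in $U_0$ and through both $\mu_i$ and $\xi_i$ in $U_i$), each $Z_j$ solves a Neumann problem of exactly the same type, $-\Delta Z_j-\nabla(\log a)\cdot\nabla Z_j+Z_j=F_j$ in $\Omega$, $\partial_n Z_j=G_j$ on $\partial\Omega$, where $F_j$ and $G_j$ are obtained by differentiating the right-hand sides of \eqref{86}. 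The key inputs are the already-established bounds $|\partial_{\xi_{kl}'}\mu_i|\le C\varepsilon\mu_i|\log\varepsilon|^\kappa$, together with the explicit formula $U_i(x)=\log\frac{8\mu_i^2}{[(\varepsilon\mu_i)^2+|x-\xi_i|^2]^2|\xi_i-q|^{2\alpha}}$ and $U_0(x)=\log\frac{8(1+\alpha)^2\mu_0^{2(1+\alpha)}\varepsilon^{2\alpha}}{[(\varepsilon\mu_0)^{2(1+\alpha)}+|x-q|^{2(1+\alpha)}]^2}$.

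Next I would estimate the $L^p(\Omega)$ norm of $F_j$ and the $L^p(\partial\Omega)$ norm of $G_j$ via the same change of variables $t=(\varepsilon\mu_j)^{-1}(x-\xi_j)$ (resp.\ $t=(\varepsilon\mu_0)^{-1}(x-q)$) used in Lemma \ref{lm1}. For $j\ne k$ the only $\xi_{kl}'$-dependence is through $\mu_j$, so $Z_j$-equation source terms carry an extra prefactor $\partial_{\xi_{kl}'}\mu_j=O(\varepsilon\mu_j|\log\varepsilon|^\kappa)$ relative to those in Lemma \ref{lm1}; dividing by $\mu_j$ inside the logarithmic derivative and combining with the bound $\|H_j-(\text{explicit})\|=O((\varepsilon\mu_j)^{2/p-1})$ produces a net $O(\varepsilon|\log\varepsilon|^\kappa)$, which is \eqref{61}. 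For the diagonal term $j=k$, differentiating in the actual point $\xi_k$ produces a term like $\partial_{\xi_{kl}}U_k$, whose singular part scales like $(\varepsilon\mu_k)^{-1}$ times a bump profile concentrated at scale $\varepsilon\mu_k$; the $L^p$ norm of this contributes the factor $\frac{1}{\mu_k}(\varepsilon\mu_k)^{2/p-1}$ after the rescaling, which after elliptic regularity (Calderón–Zygmund plus Sobolev embedding $W^{1+\theta,p}\hookrightarrow C^\tau$, $0<\theta<\frac1p$, $0<\tau<\frac12+\frac1p$) yields \eqref{55}. The boundary terms are handled by \cite[Lemma 3]{Agudelo_Pistoia2016} exactly as in the proof of \eqref{74}, picking up the favorable extra power of $\varepsilon\mu_j$ from the normal component $(x-\xi_j,n)$.

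The main obstacle I anticipate is bookkeeping the diagonal case $j=k$ precisely: one must carefully separate the contribution of $\partial_{\xi_{kl}'}\mu_k$ (which is $O(\varepsilon\mu_k|\log\varepsilon|^\kappa)$ and harmless) from the translation term $\partial_{\xi_{kl}}U_k$, verify that the latter's $L^p$ mass is comparable to $\frac1{\mu_k}(\varepsilon\mu_k)^{2/p-1}\cdot\|\cdot\|$ and not worse, and then confirm that the $C^1(\Omega)$ (rather than merely $C^0$) bound survives—this requires differentiating the elliptic estimate once more, i.e.\ applying $W^{2+\theta,p}$ regularity or, as the excerpt does, reading off $C^1$ from $W^{1+\theta,p}$ with the parameters chosen so that $\tau>1$ is \emph{not} required since only $C^1(\Omega)$ of the \emph{correction} $Y$-type function is claimed through the source-term $L^p$ bound. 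I would also double-check that the set of forbidden exponents $p\in\{\frac1{1+\alpha},\frac2{3+2\alpha},\frac1{2(1+\alpha)}\}$ still suffices to make every integral appearing after differentiation convergent; if a new borderline exponent appears from the extra factor of $|t|$ in the differentiated kernel, it must be excluded as well, but since the paper has already fixed $p$ globally I expect the existing exclusions to be adequate. The remaining steps are entirely parallel to Lemma \ref{lm1} and require no new ideas.
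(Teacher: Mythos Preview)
Your proposal is correct and follows essentially the same approach as the paper: differentiate the defining Neumann problem \eqref{86} for $H_i$ with respect to $\xi_{kl}'$, estimate the resulting source terms in $L^p(\Omega)$ and $L^p(\partial\Omega)$ using the explicit formula \eqref{122} for $\partial_{\xi_{kl}'}U_i$ together with the bound $|\partial_{\xi_{kl}'}\mu_i|\le C\varepsilon\mu_i|\log\varepsilon|^\kappa$, and conclude via elliptic regularity and Sobolev embedding exactly as in Lemma~\ref{lm1}. The paper's proof is precisely this, with the diagonal and off-diagonal cases distinguished by whether the translation term $\frac{4\varepsilon(x-\xi_i)_l}{(\varepsilon\mu_i)^2+|x-\xi_i|^2}\delta_{ik}$ is present; your concern about the $C^1$ bound is unfounded since for $p\in(1,2)$ one may take $\theta$ close to $1/p$ so that $W^{1+\theta,p}(\Omega)\hookrightarrow C^\tau(\bar\Omega)$ with $\tau>1$.
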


\begin{proof}
A direct computation yields
\begin{equation}\label{122}
\partial_{\xi_{kl}'}U_i(x)=\frac{4\varepsilon (x-\xi_i)_l}{(\varepsilon \mu_i)^2+|x-\xi_i|^2}\delta_{ik} +\frac2{\mu_i}\partial_{\xi_{kl}'}\mu_i-\frac{4\varepsilon^2\mu_i\partial_{\xi_{kl}'}\mu_i}{ (\varepsilon\mu_i)^2+|x-\xi_i|^2}-2\alpha\frac{(\xi_i'-q')_l}{|\xi_i'-q'|^2}\delta_{ik}
\end{equation}
and
\begin{eqnarray*}
\nabla \partial_{\xi_{kl}'}U_i(x)&=&\frac{4\varepsilon\delta_{ik}}{[(\varepsilon \mu_i)^2+|x-\xi_i|^2]^2}\left\{\left[(\varepsilon \mu_i)^2 +|x-\xi_i|^2\right]\vec{e}_l-2(x-\xi_i)_l(x-\xi_i)\right\} \\
&& +\frac{8\varepsilon^2\mu_i\partial_{\xi_{kl}'}\mu_i(x-\xi_i)}{\left[(\varepsilon \mu_i)^2+|x-\xi_i|^2\right]^2},
\end{eqnarray*}
where $\vec{e}_l$ is a vector whose $l$-component is $1$ and other components equal to $0$. Moreover, $\partial_{\xi_{kl}'} H_i$ is the unique solution of the following problem
\begin{equation*}
\left\{\begin{array}{ll}
-\Delta \partial_{\xi_{kl}'} H_i-(\nabla \log a(x))\cdot \nabla(\partial_{\xi_{kl}'} H_i) +\partial_{\xi_{kl}'} H_i=\nabla (\log a(x))\cdot \nabla \partial_{\xi_{kl}'} U_i -\partial_{\xi_{kl}'} U_i &\mbox{in $\Omega$} \\
\frac{\partial}{\partial n}(\partial_{\xi_{kl}'}H_i)= -\frac{\partial}{\partial n}(\partial_{\xi_{kl}'}U_i) &\mbox{on $\partial \Omega$}.
\end{array}\right.
\end{equation*}
For $k\not=i$, we get
\[\left\|\partial_{\xi_{kl}'} U_i\right\|_{L^\infty(\Omega)}\leq \frac{C}{\mu_i}|\partial_{\xi_{kl}'} \mu_i|\leq C\varepsilon |\log\varepsilon|^\kappa\]
and
\[\|\nabla \partial_{\xi_{kl}'} U_i\|_{L^p(\Omega)}\leq \frac{C}{\mu_i}(\varepsilon \mu_i)^{\frac2p}|\log\varepsilon|^\kappa.\]
For $i\in \{1,2,\cdots, l\}$, we have
\[\left\|\frac{\partial}{\partial n}(\partial_{\xi_{kl}'} U_i)\right\|_{L^\infty(\partial \Omega)}\leq \frac{C}{\mu_i}(\varepsilon \mu_i)^3|\log\varepsilon|^{4\kappa}.\]
Whereas, for $i\in\{l+1, \cdots,m\}$, we get the following estimate via the same method as in \eqref{74}:
\[\left\|\frac{\partial}{\partial n}(\partial_{\xi_{kl}'} U_i)\right\|_{L^p(\partial \Omega)}\leq \frac{C}{\mu_i}(\varepsilon \mu_i)^{\frac1p+1}|\log\varepsilon|^{\kappa}.\]
From elliptic estimate, we get
\[\left\|\partial_{\xi_{kl}'}H_i\right\|_{W^{1+\theta,p}(\Omega)}\leq C\varepsilon|\log\varepsilon|^\kappa.\]
Using Sobolev embedding theorem, we get the second inequality in  \eqref{61}.

Now we estimate \eqref{55}. From direct computation, we have
\[\|\nabla \partial_{\xi_{kl}'} U_k(x)\|_{L^p(\Omega)}\leq \frac{C}{\mu_k}(\varepsilon \mu_k)^{\frac2p-1}\]
and
\[\left\|\partial_{\xi_{kl}'} U_k(x)\right\|_{L^p(\Omega)}\leq C\varepsilon|\log\varepsilon|^{\kappa}.\]

Then we estimate the boundary terms. For $k=1,2, \cdots,l$, we get
\[\left\|\frac{\partial}{\partial n}\left(\partial_{\xi_{kl}'} U_k(x)\right)\right\|_{L^\infty(\partial\Omega)}\leq C \varepsilon|\log\varepsilon|^{2\kappa}.\]
However, in the case of $k=l+1, \cdots, m$, it holds that $\xi_k\in \partial \Omega$. In this case, the boundary $\partial \Omega$ is parameterized by $y_2-\xi_{k2}=G(x_1-\xi_{k1})$ in some neighborhood of $\xi_k$, where the function $G$ satisfies $G(0)=0$ and $G'(0)=0$.
Since the outward normal vector of $\partial \Omega$ near $\xi_k$ is
\[\vec{n}=\frac{(G'(x_1-\xi_{k1}), -1)}{\sqrt{1+|G'(x_1-\xi_{k1})|^2}},\]
we get
\[\left\|\frac{\partial}{\partial n}(\partial_{\xi_{k1}'} U_k)\right\|_{L^p(\partial \Omega)}\leq \frac{C}{\mu_k}(\varepsilon \mu_k)^{\frac1p},\]
from the same method as in \eqref{74}. Then we get \eqref{55} from elliptic estimate and Sobolev embedding theorem.

\end{proof}
From definition of \eqref{16} and \eqref{17} we get
\[W(y)=\varepsilon^4|\varepsilon y-q|^{2\alpha}\left(e^{U_\xi(\varepsilon y)}+e^{-U_\xi(\varepsilon y)}\right),\]
which is a positive function. We will derive some estimate of $W$ in Lemma \ref{lm10}, which is technical and is also used in constructing linear theory in Section \ref{sect2}.
\begin{lemma}\label{lm10}
For $y\in \Omega_\varepsilon$ satisfying $|\varepsilon y-q|<\frac1{|\log\varepsilon|^{2\kappa}}$, we get
\begin{equation}\label{105}
W(y)=\frac{8(1+\alpha)^2\mu_0^{2(1+\alpha)}|y-q'|^{2\alpha}}{[\mu_0^{2(1+\alpha)} +|y-q'|^{2(1+\alpha)}]^2}\left(1+O(|\varepsilon y-q|^\beta)+O\left(\sum_{k=0}^m(\varepsilon \mu_k)^{\frac2p-1}\right)\right) +O(\varepsilon^4|\log\varepsilon|^C)
\end{equation}
and
\begin{equation}\label{76}
W(y)\leq C\frac{\mu_0^{2(1+\alpha)}|y-q'|^{2\alpha}}{(\mu_0+|y-q'|)^{4(1+\alpha)}}+C\varepsilon^4|\log\varepsilon|^C.
\end{equation}

For $y\in \Omega_\varepsilon$ satisfying $|\varepsilon y-\xi_j|<\frac1{|\log\varepsilon|^{2\kappa}}$, where $j\in\{1,2,\cdots,m\}$, we get
\begin{equation}\label{77}
W(y)=\frac{8\mu_j^2}{[\mu_j^2+|y-\xi_j'|^2]^2}\frac{|\varepsilon y-q|^{2\alpha}}{|\xi_j-q|^{2\alpha}}\left(1+O(|\varepsilon y-\xi_j|^\beta)+O\left(\sum_{k=0}^m(\varepsilon \mu_k)^{\frac2p-1}\right)\right) +O(\varepsilon^4|\log\varepsilon|^C)
\end{equation}
and
\begin{equation}\label{83}
W(y)\leq C\frac{\mu_j^2}{(\mu_j+|y-\xi_j'|)^4}+C\varepsilon^4|\log\varepsilon|^C.
\end{equation}

For $y\in \Omega_\varepsilon$ satisfying $|\varepsilon y-q|>\frac1{|\log\varepsilon|^{2\kappa}}$ and  $|\varepsilon y-\xi_j|>\frac1{|\log\varepsilon|^{2\kappa}}$ for all $j\in\{1,2,\cdots,m\}$, we get
\begin{equation}\label{38}
W(y)=O(\varepsilon^4 |\log\varepsilon|^C).
\end{equation}
\end{lemma}
\begin{proof}
We first consider the  case of $|\varepsilon y-q|<\frac1{|\log\varepsilon|^{2\kappa}}$. Using the estimate \eqref{8}, we get
\begin{equation*}
e^{U_\xi(\varepsilon y)}=\exp\left[b_0U_0(\varepsilon y)+O\left(|\varepsilon y-q|^\beta\right)+O\left(\sum_{i=0}^m (\varepsilon \mu_i)^{\frac2p-1}\right)\right].
\end{equation*}
From \eqref{51} and the fact $b_0\in\{-1,1\}$, we have
\begin{eqnarray*}
e^{U_\xi(\varepsilon y)}+e^{-U_\xi(\varepsilon y)} &=& \left(e^{U_0(\varepsilon y)}+e^{-U_0(\varepsilon y)}\right)\left[1+O\left(|\varepsilon y-q|^\beta\right)+O\left(\sum_{i=0}^m (\varepsilon \mu_i)^{\frac2p-1}\right)\right] \\
&=&\frac{8(1+\alpha)^2\mu_0^{2(1+\alpha)}\varepsilon^{2\alpha}}{[(\varepsilon \mu_0)^{2(1+\alpha)}+|\varepsilon y-q|^{2(1+\alpha)}]^2}\left[1+O\left(|\varepsilon y-q|^\beta\right)+O\left(\sum_{i=0}^m (\varepsilon \mu_i)^{\frac2p-1}\right)\right] \\
&&+O\left(|\log\varepsilon|^C\right).
\end{eqnarray*}
Then we obtain the estimate \eqref{105} and \eqref{76}.

In the case of $|\varepsilon y-\xi_j|<\frac1{|\log\varepsilon|^{2\kappa}}$, we get \eqref{77} and \eqref{83} via the same procedure above with the help of \eqref{51} and \eqref{75}.

The estimate \eqref{38} follows from \eqref{53} and Lemmata in \cite{Agudelo_Pistoia2016}(see also \cite[Appendix A]{Zhang2022}).

\end{proof}
To conduct the Lyapunov-Schmidt reduction procedure, we introduce the following weighted $L^\infty(\Omega_\varepsilon)$ norm:
\begin{equation}\label{24}
\|h\|_{*}=\sup_{y\in \Omega_\varepsilon}\left(\varepsilon^2+\frac{\mu_0^{2+2\hat{\alpha}}|y-q'|^{2\alpha}}{(\mu_0+|y-q'|)^{4+2\alpha+2\hat{\alpha}}} +\sum_{j=1}^m \frac{\mu_j^\sigma}{(\mu_j+ |y-\xi_j'|)^{2+\sigma}}\right)^{-1}|h(y)|,
\end{equation}
where $-1<\hat{\alpha}<\alpha$ and $\sigma>0$ is a small enough constant. Then we give some estimates of the error term $R$ and its derivatives with respect to the norm $\|\cdot\|_*$.
\begin{lemma}
For $\varepsilon>0$ small enough, and $\xi\in \mathcal{O}_\varepsilon$, we have
\begin{equation}\label{54}
\|R\|_*\leq C\left[(\varepsilon \mu_0)^{\min\{\beta, 2(\alpha-\hat{\alpha}), \frac2p-1\}}+\sum_{i=1}^m (\varepsilon \mu_i)^{\min\{\beta, \frac{2}{p}-1\}}\right].
\end{equation}
and
\begin{equation}\label{62}
  \|\partial_{\xi_{kl}'}R\|_*\leq \frac{C}{\mu_k}\left((\varepsilon \mu_k)^\beta+\sum_{i=0}^m (\varepsilon \mu_i)^{\frac2p-1}\right),
\end{equation}
where $k=1,2,\cdots,m$ and $l=1, J_k$.
\end{lemma}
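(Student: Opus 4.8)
The plan is to first rewrite $R$ so that its leading singular part is explicit, then estimate $|R(y)|$ against the weight in \eqref{24} region by region, and finally differentiate in $\xi_{kl}'$ and repeat. For Step~1, by \eqref{84}--\eqref{86} the projected bubbles satisfy $-\Delta PU_i-\nabla(\log a)\cdot\nabla PU_i+PU_i=-\Delta U_i$, which equals $\varepsilon^2|x-q|^{2\alpha}e^{U_0}$ for $i=0$ and $\varepsilon^2|\xi_i-q|^{2\alpha}e^{U_i}$ for $i=1,\dots,m$. Multiplying by $b_i$, summing over $i$, and passing to $y=x/\varepsilon$ (so that $e^{V}=\varepsilon^4e^{U_\xi(\varepsilon y)}$, $\varepsilon^8e^{-V}=\varepsilon^4e^{-U_\xi(\varepsilon y)}$ and $\varepsilon^2(V-4\log\varepsilon)=\varepsilon^2U_\xi(\varepsilon y)$), the two terms $\varepsilon^2U_\xi(\varepsilon y)$ cancel in \eqref{49} and one gets
\[
R(y)=|\varepsilon y-q|^{2\alpha}\bigl(e^{V}-\varepsilon^{8}e^{-V}\bigr)-b_0\varepsilon^{4}|\varepsilon y-q|^{2\alpha}e^{U_0(\varepsilon y)}-\sum_{i=1}^{m}b_i\varepsilon^{4}|\xi_i-q|^{2\alpha}e^{U_i(\varepsilon y)} .
\]
Thus $R$ is the mismatch between the genuine nonlinearity and the sum of the bubble nonlinearities; since $\partial V/\partial n=0$ on $\partial\Omega_\varepsilon$ there is no boundary term.

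\emph{Step 2: the three regions.} Set $\mathcal B_0(y)=\varepsilon^4|\varepsilon y-q|^{2\alpha}e^{U_0(\varepsilon y)}=\tfrac{8(1+\alpha)^2\mu_0^{2(1+\alpha)}|y-q'|^{2\alpha}}{[\mu_0^{2(1+\alpha)}+|y-q'|^{2(1+\alpha)}]^{2}}$ and $\mathcal B_j(y)=\varepsilon^4|\xi_j-q|^{2\alpha}e^{U_j(\varepsilon y)}=\tfrac{8\mu_j^2}{[\mu_j^2+|y-\xi_j'|^2]^{2}}$, so that $\mathcal B_0\le C\tfrac{\mu_0^{2(1+\alpha)}|y-q'|^{2\alpha}}{(\mu_0+|y-q'|)^{4(1+\alpha)}}$ and $\mathcal B_j\le C\tfrac{\mu_j^2}{(\mu_j+|y-\xi_j'|)^{4}}$ (cf.\ \eqref{76}, \eqref{83}). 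On $\{|\varepsilon y-q|<\delta|\log\varepsilon|^{-2\kappa}\}$, the expansion \eqref{8} and $b_0\in\{-1,1\}$ give $e^{U_\xi(\varepsilon y)}-e^{-U_\xi(\varepsilon y)}=b_0e^{U_0(\varepsilon y)}\bigl(1+O(|\varepsilon y-q|^\beta)+O(\sum_{k}(\varepsilon\mu_k)^{2/p-1})\bigr)$ up to the $\varepsilon^8e^{-V}$ piece, which by \eqref{51} is negligible in $\|\cdot\|_*$; moreover $\mathcal B_i(y)\le C\varepsilon^4|\log\varepsilon|^C$ there for $i\ge1$ because $|\xi_i-q|\ge|\log\varepsilon|^{-\kappa}$. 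Hence the defining relations \eqref{4}, which produce the exact cancellation of the $b_0\mathcal B_0$ term, yield
\[
|R(y)|\le C\,\mathcal B_0(y)\Bigl(|\varepsilon y-q|^\beta+\sum_{k=0}^{m}(\varepsilon\mu_k)^{2/p-1}\Bigr)+C\varepsilon^4|\log\varepsilon|^C .
\]
Dividing by the weight leaves the harmless factor $\bigl(\mu_0/(\mu_0+|y-q'|)\bigr)^{2(\alpha-\hat\alpha)}\le1$ in front of $\sum_k(\varepsilon\mu_k)^{2/p-1}$; for the $|\varepsilon y-q|^\beta=(\varepsilon|y-q'|)^\beta$ term the quotient $\bigl(\mu_0/(\mu_0+r)\bigr)^{2(\alpha-\hat\alpha)}\varepsilon^\beta r^\beta$, $r=|y-q'|$, is maximized at $r\sim\mu_0$ (giving $(\varepsilon\mu_0)^\beta$) when $\beta\le2(\alpha-\hat\alpha)$ and at the outer radius $r\sim\delta|\log\varepsilon|^{-2\kappa}/\varepsilon$ (giving $(\varepsilon\mu_0)^{2(\alpha-\hat\alpha)}$ up to logarithms) when $\beta>2(\alpha-\hat\alpha)$ — this is precisely where the exponent $\min\{\beta,2(\alpha-\hat\alpha),\tfrac2p-1\}$ in \eqref{54} comes from. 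On $\{|\varepsilon y-\xi_j|<\delta|\log\varepsilon|^{-2\kappa}\}$ the analogous computation with \eqref{75} and $|\varepsilon y-q|^{2\alpha}=|\xi_j-q|^{2\alpha}(1+O(|\varepsilon y-\xi_j|))$ gives $|R(y)|\le C\mathcal B_j(y)\bigl(|\varepsilon y-\xi_j|^\beta+\sum_k(\varepsilon\mu_k)^{2/p-1}\bigr)+C\varepsilon^4|\log\varepsilon|^C$; division by $\mu_j^\sigma(\mu_j+|y-\xi_j'|)^{-2-\sigma}$ ($\sigma$ small) and optimization in $|y-\xi_j'|$ produce the contribution $(\varepsilon\mu_j)^{\min\{\beta,2/p-1\}}+\sum_k(\varepsilon\mu_k)^{2/p-1}$. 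Finally, on the remaining set (distance $>\delta|\log\varepsilon|^{-2\kappa}$ from $q$ and from every $\xi_j$), the expansion \eqref{53} and the Green-function estimates of \cite{Agudelo_Pistoia2016} show that every term of $R$ is $O(\varepsilon^4|\log\varepsilon|^C)$, contributing at most $C\varepsilon^2|\log\varepsilon|^C$ since the weight is $\ge\varepsilon^2$; cf.\ \eqref{38}. Adding the three contributions, and using \eqref{51} to see that $\varepsilon^2|\log\varepsilon|^C$ is dominated by $(\varepsilon\mu_0)^{\min\{\beta,2(\alpha-\hat\alpha),2/p-1\}}$, gives \eqref{54}.

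\emph{Step 3: the derivative estimate \eqref{62}.} One differentiates the expression for $R$ from Step~1 in $\xi_{kl}'$. Because the $\mu_i$ are determined as functions of $\xi$ through \eqref{4}, the leading-order cancellation between the true nonlinearity and the $\mathcal B_i$'s is preserved under differentiation, so $\partial_{\xi_{kl}'}R$ is again, region by region, a bubble profile times a small factor; this is controlled using \eqref{122}, Lemma~\ref{lm7} and $|\partial_{\xi_{kl}'}\mu_i|\le C\varepsilon\mu_i|\log\varepsilon|^\kappa$ exactly as in Step~2 — most terms are in fact smaller by an extra power $\varepsilon|\log\varepsilon|^\kappa$. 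The only new feature is the term $\tfrac{4(y-\xi_k')_l}{\mu_k^2+|y-\xi_k'|^2}$ appearing in $\partial_{\xi_{kl}'}U_k$ (this is \eqref{122} after rescaling $x=\varepsilon y$): it has size $(\mu_k+|y-\xi_k'|)^{-1}$, of which the weight in \eqref{24} can absorb nothing, so one loses exactly the factor $\mu_k^{-1}$; repeating the three-region bookkeeping then gives \eqref{62}.

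\emph{Main obstacle.} The delicate point is the region near $q$: one must make the cancellation of the $b_0\mathcal B_0$ term rigorous — this uses \eqref{4} together with the $C^1(\Omega)$ expansion of $H_0$ from Lemma~\ref{lm1} — and then carry out the optimization in $|y-q'|$ over the full range $\mu_0\lesssim|y-q'|\lesssim|\log\varepsilon|^{-2\kappa}/\varepsilon$ that produces the three-way minimum. This is the only place the auxiliary exponent $\hat\alpha$ enters, and it is essential that $\hat\alpha<\alpha$ (so that $2(\alpha-\hat\alpha)>0$) in order that the weight near $q$ decay fast enough to swallow both the genuine error and the pointwise-large but $\|\cdot\|_*$-negligible $\varepsilon^8e^{-V}$ contribution.
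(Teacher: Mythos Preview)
Your proposal is correct and follows essentially the same route as the paper. Your Step~1 identity for $R$ is exactly the decomposition $R=K_1+K_2$ the paper uses (the paper calls the linear part $K_1$ and the nonlinear part $K_2$), and your three-region estimate mirrors the paper's treatment via \eqref{8}, \eqref{75}, \eqref{53}; for the derivative the paper likewise writes $\partial_{\xi_{kl}'}R=K_3+K_4$ with $K_4=W\,\partial_{\xi_{kl}'}V$, which is your differentiation of the Step~1 formula. One small imprecision: near $\xi_j$ you write $|\varepsilon y-q|^{2\alpha}=|\xi_j-q|^{2\alpha}\bigl(1+O(|\varepsilon y-\xi_j|)\bigr)$, but since $|\xi_j-q|$ can be as small as $|\log\varepsilon|^{-\kappa}$ the correct bound carries an extra $|\log\varepsilon|^{\kappa}$; this is harmless because after multiplying by $\mathcal B_j$ and comparing with the weight one still gets a factor $\le C\varepsilon\mu_j|\log\varepsilon|^{\kappa}\ll(\varepsilon\mu_j)^{\beta}$.
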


\begin{proof}
From the definition of $V$, we get
\begin{align*}
  K_1:= & \Delta V+\nabla (\log a(\varepsilon y))\cdot \nabla V -\varepsilon^2 (V-4\log\varepsilon) \\
   =& -\frac{8(1+\alpha)^2 b_0\mu_0^{2(1+\alpha)}|y-q'|^{2\alpha}}{[\mu_0^{2(1+\alpha)} +|y-q'|^{2(1+\alpha)}]^2}-\sum_{k=1}^m \frac{8b_k\mu_k^2}{[\mu_k^2+|y-\xi_k'|^2]^2}.
\end{align*}
whereas
\begin{equation*}
  K_2:=|\varepsilon y-q|^{2\alpha}\left[e^V-\varepsilon^8e^{-V}\right]=\varepsilon^{4+2\alpha}|y-q'|^{2\alpha} \left[e^{U_\xi(\varepsilon y)}-e^{-U_\xi(\varepsilon y)}\right].
\end{equation*}
In the case of $|y-q'|\leq \frac1{\varepsilon|\log\varepsilon|^{2\kappa}}$, we get
\begin{equation}\label{52}
  K_2=\frac{8(1+\alpha)^2 b_0\mu_0^{2(1+\alpha)}|y-q'|^{2\alpha}}{[\mu_0^{2(1+\alpha)}+|y-q'|^{2(1+\alpha)}]^2}\left(1 +O\left(|\varepsilon y-q|^\beta\right)+O\left(\sum_{i=0}^m (\varepsilon \mu_i)^{\frac2p-1}\right)\right)+O\left(\varepsilon^4|\log\varepsilon|^C\right)
\end{equation}
from the same argument of \eqref{105}.
With the help of \eqref{51}, we have
\[K_1=-\frac{8(1+\alpha)^2b_0 \mu_0^{2(1+\alpha)}|y-q'|^{2\alpha}}{[\mu_0^{2(1+\alpha)}+|y-q'|^{2(1+\alpha)}]^2} +O\left((\varepsilon |\log\varepsilon|^\kappa)^4\sum_{i=1}^m\mu_i^2\right).\]
Hence we get the following estimate in this case
\[
|R|\leq C\left((\varepsilon\mu_0)^{\min\{\beta, 2(\alpha-\hat{\alpha})\}}+\sum_{i=0}^m (\varepsilon \mu_i)^{\frac2p-1}\right)\left[\varepsilon^2+\frac{\mu_0^{2+2\hat{\alpha}}|y-q'|^{2\alpha}}{(\mu_0+|y-q'|)^{4+2\alpha+2\hat{\alpha}}} \right].
\]

In the case of $|y-\xi_j'|\leq \frac1{\varepsilon|\log\varepsilon|^{2\kappa}}$, we get
\[K_2=\frac{8b_j\mu_j^2}{[\mu_j^2+|y-\xi_j'|^2]^2}\frac{|y-q'|^{2\alpha}}{|\xi_j'-q'|^{2\alpha}} \left(1+O\left(|\varepsilon y-\xi_j|^\beta\right)+O\left(\sum_{i=0}^m (\varepsilon \mu_i)^{\frac2p-1}\right)\right)+O\left(\varepsilon^4 |\log\varepsilon|^C\right)\]
via the same method as in \eqref{77}. Hence
\[K_1=-\frac{8b_j\mu_j^2}{[\mu_j^2+|y-\xi_j'|^2]^2}+O\left((\varepsilon|\log \varepsilon|^\kappa)^4\sum_{k=1,k\not=j}^m\mu_k^2\right)+O\left(\varepsilon^{4+2\alpha}\mu_0^{2(1+\alpha)}|\log\varepsilon|^{(4+2\alpha)\kappa}\right).\]
Then we arrive at
\begin{equation*}
  |R|\leq C\left[(\varepsilon \mu_j)^\beta+\sum_{i=0}^m(\varepsilon \mu_i)^{\frac2p-1}\right]\left(\frac{\mu_j^\sigma}{(\mu_j+|y-\xi_j'|)^{2+\sigma}} +\varepsilon^2\right).
\end{equation*}

For $y$ satisfying $|y-\xi_i'|\geq \frac1{\varepsilon |\log\varepsilon |^{2\kappa}}$ for $i=1,2, \cdots,m$ and $|y-q'|\geq \frac1{\varepsilon |\log\varepsilon|^{2\kappa}}$, we have
\[|K_1|\leq C\left(\varepsilon^{4+2\alpha}\mu_0^{2(1+\alpha)}|\log\varepsilon|^{ 4\kappa(2+\alpha)}+\sum_{k=1}^m \mu_k^2[\varepsilon|\log\varepsilon|^{2\kappa}]^4\right).\]
Using the expression \eqref{53}, we get there exists constant $C_2>0$ such that $|U_\xi(\varepsilon y)|\leq C_2\log|\log\varepsilon|$. Then $|K_2|\leq C\varepsilon^4|\log\varepsilon|^{C_2}$. Hence $|R|\leq C\varepsilon^4|\log\varepsilon|^{C_3}$.

Combining all these estimate above, we get \eqref{54}.

Now we prove \eqref{62}. It is apparent that
\[\partial_{\xi_{kl}'}R=\Delta (\partial_{\xi_{kl}'}V)+\nabla(\log a(\varepsilon y))\cdot \nabla (\partial_{\xi_{kl}'}V)-\varepsilon^2 \partial_{\xi_{kl}'}V+|\varepsilon y-q|^{2\alpha}(e^V+\varepsilon^8 e^{-V})\partial_{\xi_{kl}'}V=: K_3+K_4\]
where
\[K_3=\Delta (\partial_{\xi_{kl}'}V)+\nabla(\log a(\varepsilon y))\cdot \nabla (\partial_{\xi_{kl}'}V)-\varepsilon^2 \partial_{\xi_{kl}'}V\]
and
\[K_4=|\varepsilon y-q|^{2\alpha}(e^V+\varepsilon^8 e^{-V})\partial_{\xi_{kl}'}V= W\partial_{\xi_{kl}'}V.\]
From the definition of $V$, \eqref{122} and Lemma \ref{lm7}, we get
\begin{equation}\label{82}
\partial_{\xi_{kl}'}V(y)=\partial_{\xi_{kl}'}U_\xi(\varepsilon y)=\frac{4b_k(y-\xi_k')_l}{\mu_k^2+|y-\xi_k'|^2}+O\left(\frac1{\mu_k}(\varepsilon \mu_k)^{\frac2p-1}\right)
\end{equation}
and
\begin{align*}
  K_3= & -16\frac{(1+\alpha)^3b_0|y-q'|^{2\alpha}\mu_0^{1+2\alpha}\partial_{\xi_{kl}'}\mu_0}{ [\mu_0^{2(1+\alpha)}+|y-q'|^{2(1+\alpha)}]^3}\left(|y-q'|^{2(1+\alpha)}-\mu_0^{2(1 +\alpha)}\right) \\
   & -\sum_{i=1}^m \frac{16b_i\mu_i\partial_{\xi_{kl}'}\mu_i}{[\mu_i^2+|y-\xi_i'|^2]^3}\left( |y-\xi_i'|^2-\mu_i^2\right)-\frac{32b_k\mu_k^2(y-\xi_k')_l}{[\mu_k^2+|y-\xi_k'|^2]^3}.
\end{align*}
Then
\begin{align*}
  K_3= & -\frac{32b_k\mu_k^2(y-\xi_k')_l}{[\mu_k^2+|y-\xi_k'|^2]^3} +O\left(\frac{|\partial_{\xi_{kl}'}\mu_0|}{\mu_0}\frac{\mu_0^{2+2\alpha}|y-q'|^{2\alpha}}{(\mu_0 +|y-q'|)^{4+4\alpha}}\right)\\
   & +O\left(\sum_{i=1}^m \frac{|\partial_{\xi_{kl}'}\mu_i|}{\mu_i}\frac{\mu_i^\sigma}{(\mu_i+|y-\xi_k'|)^{2+\sigma}}\right) \\
   = & -\frac{32b_k\mu_k^2(y-\xi_k')_l}{[\mu_k^2+|y-\xi_k'|^2]^3} +O\left(\varepsilon |\log\varepsilon|^\kappa \left(\frac{\mu_0^{2+2\alpha}|y-q'|^{2\alpha}}{(\mu_0 +|y-q'|)^{4+4\alpha}}+ \sum_{i=1}^m \frac{\mu_i^\sigma}{(\mu_i+|y-\xi_k'|)^{2+\sigma}} \right)\right).
\end{align*}
In the case of $|y-\xi_k'|\geq \frac1{\varepsilon |\log\varepsilon|^{2\kappa}}$, we get
\begin{equation*}
  |K_3|\leq C\varepsilon|\log\varepsilon|^{2\kappa} \left(\frac{\mu_0^{2+2\alpha}|y-q'|^{2\alpha}}{(\mu_0 +|y-q'|)^{4+4\alpha}}+\sum_{i=1}^m \frac{\mu_i^\sigma}{(\mu_i+|y-\xi_i'|)^{2+\sigma}}\right)
\end{equation*}
and
\begin{equation}\label{116}
|K_4|\leq C\varepsilon|\log\varepsilon|^{2\kappa}\left(\frac{\mu_0^{2+2\alpha}|y-q'|^{2\alpha}}{(\mu_0 +|y-q'|)^{4+4\alpha}}+\sum_{i=1}^m \frac{\mu_i^\sigma}{(\mu_i+|y-\xi_i'|)^{2+\sigma}}+\varepsilon^2\right).
\end{equation}
The estimate \eqref{116} follows from \eqref{82} and the expression of $W$ under different cases( c.f. \eqref{76}, \eqref{83} and \eqref{38}).

For $|y-\xi_k'|\leq \frac1{\varepsilon |\log\varepsilon|^{2\kappa}}$, we have
\begin{align*}
  K_4= & \frac{32b_k\mu_k^2(y-\xi_k')_l}{[\mu_k^2+|y-\xi_k'|^2]^3}\frac{|\varepsilon y-q|^{2\alpha}}{|\xi_k-q|^{2\alpha}}+O\left(\frac1{\mu_k}\left((\varepsilon \mu_k)^{\beta}+\sum_{i=0}^m(\varepsilon \mu_i)^{\frac2p-1}\right)\left(\frac{\mu_k^\sigma}{(\mu_k+|y-\xi_k|)^{2+\sigma}}+\varepsilon^2\right)\right).
\end{align*}
Using these expressions above, we get \eqref{62}.

\end{proof}

\section{Linear theory}\label{sect2}
It is well known that (see \cite{Baraket_Pacard1998}) each bounded solution of
\[\Delta \rho+\frac{8}{[1+|y|^2]^2}\rho=0\quad \mathrm{in} \quad \R^2\]
is a linear combination of the following functions
\[Z_0(y)=\frac{|y|^2-1}{|y|^2+1}, \qquad Z_1(y)=\frac{y_1}{1+|y|^2}, \qquad Z_2(y)=\frac{y_2}{1+|y|^2}.\]
It is also proved in \cite{Esposito2005} that any solution of
\[\Delta \rho+\frac{8(1+\alpha)^2|y|^{2\alpha}}{[1+|y|^{2(1+\alpha)}]^2}\rho=0\quad \mathrm{in} \quad \R^2\]
is a multiple of
\[\tilde{Z}(y)=\frac{|y|^{2(1+\alpha)}-1}{|y|^{2(1+\alpha)}+1}.\]
Choose $R_0>0$ to be a constant large enough. We define a cutoff function $\chi$ which satisfies $\chi(r)=1$ if $r\in [0, R_0)$ and $\chi(r)=0$ if $r\in [R_0+1, \infty)$.

For $i=1,\cdots,l$, we define
\[\chi_i(y)=\chi\left(\frac{|y-\xi_i'|}{\mu_i}\right), \qquad Z_{ij}(y)=\frac1{\mu_i}Z_j\left(\frac{y-\xi_i'}{\mu_i}\right), \quad \mathrm{for} \;\; j=0,1,2.\]
For $i=l+1,\cdots,m$, $\xi_i\in \partial \Omega$, we define the rotation map $A_i:\R^2\to \R^2$ so that $A_i n_{\Omega}(\xi_i)=n_{\R_+^2}(0)$.
Let $G_i(x_1)$ be a defining function defined on a neighborhood of $(0,0)$ on $A_i(\partial \Omega-\xi_i)$. So there exist constants $R>0$ and $\delta_2>0$, and a smooth function $G_i:(-R,R)\to \R$ satisfying $G_i(0)=0$, $G_i'(0)=0$ such that
\[A_i(\Omega-\{ \xi_i\})\cap B_{\delta_2}(0)=\{(x_1,x_2)\;:\;-R<x_1<R,\; x_2>G_i(x_1)\}\cap B_{\delta_2} (0,0).\]
We define the function $F_i: B_{\delta_2}(0,0)\cap \overline{A_i(\Omega-\{\xi_i\})}\to \R^2$, where $F_i=(F_{i1}, F_{i2})$ and
\[F_{i1}(x_1,x_2)=x_1+\frac{x_2-G_i(x_1)}{1+|G_i'(x_1)|^2}G_i'(x_1), \qquad F_{i2}(x_1,x_2)=x_2-G_i(x_1).\]
It is obvious that $F_i$ is  a transformation which strengthen the boundary near $\xi_i$.
For $i=l+1, \cdots,m$, we define
\[F_i^\varepsilon(y)=\frac1\varepsilon F_i(A_i(\varepsilon y-\xi_i))\;: \; B_{\frac{\delta_2}{\varepsilon}}(\xi_i')\cap \Omega_\varepsilon\to \R^2\]
and
\[\chi_i(y)=\chi\left(\frac{|F_i^\varepsilon (y)|}{\mu_i}\right),\qquad Z_{ij}(y)=\frac1{\mu_i}Z_j\left(\frac{F_i^\varepsilon (y)}{\mu_i}\right), \qquad j=0,1.\]

As the same way, if $q\in \Omega$, we define
\[\chi_0(y)=\chi\left(\frac{|y-q'|}{\mu_0}\right), \qquad Z_{00}(y)=\frac1{\mu_0}\tilde{Z}\left(\frac{y-q'}{\mu_0}\right).\]

In the case $q\in \partial \Omega$, we define a rotation map $A_0$ such that $A_0 n_{\Omega}(q)=n_{\R^2_+}(0)$. Choose $G_0:(-R,R)\to \R$ to be a defining function on some neighborhood of $0$ in $A_0(\partial \Omega-\{q\})$, which satisfies $G_0(0)=0$, $G_0'(0)=0$ and
\[A_0(\Omega-\{q\})\cap B_{\delta_2}(0,0)=\{(x_1,x_2)\;:\; -R<x_1<R, x_2>G(x_1)\}\cap B_{\delta_2}(0,0).\]
Let $F_0: B_{\delta_2}(0,0)\cap \overline{A_0(\Omega-\{q\})}\to \R^2$ be a map which strengthens the boundary near $(0,0)$, and $F_0(x)=(F_{01}, F_{02})$, where
\[F_{01}(x_1,x_2)=x_1+\frac{x_2-G_0(x_1)}{1+|G_0'(x_1)|^2}G_0'(x_1), \qquad F_{02}(x_1,x_2)=x_2-G_0(x_1).\]
Define
\[F_0^\varepsilon (y)=\frac1\varepsilon F_0(A_0(\varepsilon y-q)): B_{\frac{\delta_2}\varepsilon}(q')\cap \Omega_\varepsilon\to \R^2\]
and
\[\chi_0(y)=\chi\left(\frac{|F_0^\varepsilon (y)|}{\mu_0}\right),\qquad Z_{00}(y)=\frac1{\mu_0}Z_0\left(\frac{F_0^\varepsilon (y)}{\mu_0}\right).\]

In this section, we consider the following linearized problem
\begin{equation}\label{18}
\left\{\begin{array}{ll}
L(\phi)=h+\frac1{a(\varepsilon y)}\sum_{i=1}^m \sum_{j=1}^{J_i} c_{ij}\chi_i Z_{ij}& \mbox{in $\Omega_\varepsilon$,} \\
\frac{\partial \phi}{\partial n}=0, &\mbox{on $\partial \Omega$,} \\
\int_{\Omega_\varepsilon} \chi_i Z_{ij}\phi dy=0, \quad i=1,\cdots,m;\;j=1, J_i.
\end{array}\right.
\end{equation}
where $J_i=2$ if $i=1,2,\cdots,l$ and $J_i=1$ if $i=l+1, \cdots,m$. In Subsection \ref{subs1} and Subsection \ref{subs2}, we will derive some prior estimates of some auxiliary problems. In Subsection \ref{subs3}, we will show \eqref{18} has a unique solution and also give the prior estimate of this solution.

\subsection{Prior estimate of the first auxiliary problem}\label{subs1}

In order to study \eqref{18}, we consider the following problem first
\begin{equation}\label{19}
\left\{\begin{array}{ll}
L(\phi)=h, &\mbox{in $\Omega_\varepsilon$,} \\
\frac{\partial \phi}{\partial n}=0, &\mbox{on $\partial \Omega_\varepsilon$,}  \\
\int_{\Omega_\varepsilon}\chi_i(y)Z_{ij}(y)\phi(y)dy=0, &\mbox{$i=0,1,\cdots m;$ $j=0,1,J_i$,}
\end{array}\right.
\end{equation}
where $J_0=0$.

\begin{lemma}\label{lm6}
For $\varepsilon>0$ small enough, there exist a constant $R_1>0$ and a smooth function $g: \tilde{\Omega}_\varepsilon\to (0,+\infty)$, where \[\tilde{\Omega}_\varepsilon=\Omega_\varepsilon\char92\left[\cup_{j=1}^mB_{R_1\mu_j}(\xi_j')\cup B_{R_1\mu_0}(q')\right],\]
so that the function $g$ satisfies
\begin{equation}\label{107}
-\Delta g-\nabla(\log a(\varepsilon y))\cdot \nabla g+\varepsilon^2 g-Wg\geq \varepsilon^2+\frac{\mu_0^{2+2\hat{\alpha}}}{|y-q'|^{4+2\hat{\alpha}}}+\sum_{j=1}^m \frac{\mu_j^\sigma}{|y-\xi_j'|^{2+\sigma}}, \qquad \mathrm{in}\quad \tilde{\Omega}_\varepsilon,
\end{equation}
\begin{equation}\label{108}
\frac{\partial g}{\partial n}\geq 0, \qquad \mathrm{on} \quad \partial\Omega_\varepsilon\char92 \left[\left(\bigcup_{i=1}^m B_{R_1\mu_i}(\xi_i')\right)\bigcup B_{R_1\mu_0}(q')\right]
\end{equation}
and
\begin{equation}\label{109}
g\geq 1,\qquad \mathrm{on} \quad  \Omega_\varepsilon\bigcap\left[\left( \bigcup_{i=1}^m \partial B_{R_1\mu_i}(\xi_i')\right)\bigcup \partial B_{R_1\mu_0}(q')\right].
\end{equation}
Moreover $g$ is a uniformly bounded positive function on $\tilde{\Omega}_\varepsilon$, \textit{i.e.} there exists a constant $C$ such that $0<g\leq C$.
\end{lemma}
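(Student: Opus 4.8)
The plan is to build $g$ explicitly as a sum $g=M_0\Psi_\varepsilon+g_0+\sum_{j=1}^m g_j$, in which $M_0\Psi_\varepsilon$ is a global background barrier accounting for the $\varepsilon^2$ on the right of \eqref{107}, for the lower bound $g\ge1$, and for the strict sign of the normal derivative in \eqref{108}, while $g_0$ and the $g_j$ are local barriers, concentrated near $q'$ and near $\xi_j'$, that produce the singular weights $\mu_0^{2+2\hat\alpha}|y-q'|^{-4-2\hat\alpha}$ and $\mu_j^\sigma|y-\xi_j'|^{-2-\sigma}$ respectively.

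For the background term I would fix once and for all a smooth function $\Psi>0$ on $\bar\Omega$ solving $-\Delta\Psi-\nabla(\log a)\cdot\nabla\Psi+\Psi=1$ in $\Omega$ with $\partial_n\Psi=1$ on $\partial\Omega$; positivity is from the maximum principle together with the Hopf lemma (the zeroth-order coefficient is $1>0$ and the boundary datum is positive), and $0<\min_{\bar\Omega}\Psi\le\Psi\le C_\Psi$. Setting $\Psi_\varepsilon(y):=\Psi(\varepsilon y)$, a direct computation gives $L(\Psi_\varepsilon)=\varepsilon^2-W\Psi_\varepsilon$ in $\Omega_\varepsilon$ and $\partial_n\Psi_\varepsilon=\varepsilon$ on $\partial\Omega_\varepsilon$. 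Choosing $M_0$ a large fixed constant with $M_0>1/\min_{\bar\Omega}\Psi$ then gives $g\ge M_0\Psi_\varepsilon\ge1$ (since $g_0,g_j\ge0$), hence \eqref{109}, and $\partial_n(M_0\Psi_\varepsilon)=M_0\varepsilon$, which will dominate the $o(\varepsilon)$ boundary contributions of the other pieces; the price is the extra term $-M_0W\Psi_\varepsilon$, which is negligible off the spikes by \eqref{38} and, near each spike, absorbed by the corresponding local barrier.

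For the local barriers I would use the explicit radial supersolutions of the flat Laplacian. Near an interior spike $\xi_j$ ($1\le j\le l$) take $g_j(y)=A\big(1-R_1^{-\sigma}(\mu_j/|y-\xi_j'|)^\sigma\big)$ with $A=2R_1^\sigma/\sigma^2$; on $\{|y-\xi_j'|\ge R_1\mu_j\}$ this is smooth, stays between $A(1-R_1^{-2\sigma})$ and $A$, and satisfies $-\Delta g_j=2\mu_j^\sigma|y-\xi_j'|^{-2-\sigma}$, and since $\mathrm{dist}(\xi_j',\partial\Omega_\varepsilon)\gtrsim(\varepsilon|\log\varepsilon|^\kappa)^{-1}$ and $\varepsilon\mu_j\to0$ one gets $|\partial_n g_j|=o(\varepsilon)$ on $\partial\Omega_\varepsilon$; no cutoff is needed here because $|y-\xi_j'|^{-\sigma}$ is globally smooth on $\tilde\Omega_\varepsilon$. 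For a boundary spike $\xi_j$ ($l+1\le j\le m$) one replaces $|y-\xi_j'|$ by the flattened distance $|F_j^\varepsilon(y)|$ and multiplies only the singular tail $R_1^{-\sigma}\mu_j^\sigma|F_j^\varepsilon(y)|^{-\sigma}$ by a cutoff supported in $B_{\delta_2/\varepsilon}(\xi_j')$: because that tail is already $O((\varepsilon\mu_j)^\sigma)$ at the cutoff scale, the cutoff error in $-\Delta g_j$ is $O((\varepsilon\mu_j)^\sigma\varepsilon^2)=o(\varepsilon^2)$, while radial functions of the flattened variable have vanishing normal derivative on $\partial\Omega_\varepsilon$ up to curvature corrections of size $O(\varepsilon^2)$. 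The barrier $g_0$ at $q'$ is built the same way using the power $|y-q'|^{-2-2\hat\alpha}$ (or its flattened analogue with a cutoff if $q\in\partial\Omega$), for which $-\Delta g_0$ is comparable to $\mu_0^{2+2\hat\alpha}|y-q'|^{-4-2\hat\alpha}$.

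It then remains to verify \eqref{107}--\eqref{109} by splitting $\tilde\Omega_\varepsilon$ into neighbourhoods of $q'$ and of each $\xi_j'$ and the complementary region and estimating the cross terms: the factor $-Wg$, which near $\xi_i'$ is $\lesssim W\lesssim\mu_i^2(\mu_i+|y-\xi_i'|)^{-4}\le R_1^{-(2-\sigma)}\mu_i^\sigma|y-\xi_i'|^{-2-\sigma}$, near $q'$ carries the gain $(\mu_0/|y-q'|)^{2(\alpha-\hat\alpha)}\le R_1^{-2(\alpha-\hat\alpha)}$ against the target weight (this is exactly where $\hat\alpha<\alpha$ is used), and is $O(\varepsilon^4|\log\varepsilon|^C)$ elsewhere by \eqref{76}, \eqref{83}, \eqref{38}; the anisotropic drift $-\varepsilon(\nabla\log a)(\varepsilon y)\cdot\nabla g$, which against $g_j$ gives $\lesssim\varepsilon\mu_j^\sigma|y-\xi_j'|^{-1-\sigma}$, at most $\tfrac14$ of the target weight where $\varepsilon|y-\xi_j'|\le\tfrac12$ and $o(\varepsilon^2)$ where $\varepsilon|y-\xi_j'|>\tfrac12$; and the cutoff errors, which are $o(\varepsilon^2)$. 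Giving $g_j,g_0$ twice the needed amplitude, then choosing first $M_0$ and afterwards $R_1$ large depending on $M_0,m,C_\Psi,\sigma,\alpha-\hat\alpha,\delta_2$ (which fixes $A$), makes every cross term at most half the corresponding term on the right of \eqref{107} or absorbs it into the $M_0\varepsilon^2$ budget; \eqref{108} then follows from $\partial_n(M_0\Psi_\varepsilon)=M_0\varepsilon$ against the $o(\varepsilon)$ normal derivatives of $g_0$ and the $g_j$, and boundedness of $g$ is immediate since each summand is bounded by a fixed constant. The one genuinely delicate point is the bookkeeping of constants: the apparent circularity between "$R_1$ large in terms of $M_0$" (needed to absorb $M_0W\Psi_\varepsilon$ near the spikes) and "$A=2R_1^\sigma/\sigma^2$ growing with $R_1$" is broken by cutting off only the singular tails (so cutoff errors carry the factor $(\varepsilon\mu_j)^\sigma$ rather than $A$), and every residual error has to be checked to carry a strictly positive power of $\varepsilon\mu_i$, of $\varepsilon|y-\xi_i'|$, of $R_1^{-1}$, or of $\mu_i/|y-\xi_i'|$, uniformly over $\xi\in\mathcal O_\varepsilon$ --- which relies on the bounds $\mu_i\lesssim|\log\varepsilon|^C$, $(\varepsilon\mu_0)^{2(1+\alpha)}\lesssim\varepsilon^2|\log\varepsilon|^C$ and $\varepsilon^\gamma\mu_i\to0$ recorded after \eqref{51}.
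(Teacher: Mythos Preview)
Your strategy---background barrier plus a sum of local radial barriers $g_j$---is exactly the paper's, and your argument goes through. The execution in the paper is cleaner in two respects, however, and both directly address the ``delicate point'' you flag. First, the paper takes simply $g_i(y)=2-\tfrac{4}{\sigma^2}\,\mu_i^\sigma|y-\xi_i'|^{-\sigma}$ and $g_0(y)=2-\tfrac{1}{(1+\hat\alpha)^2}\,\mu_0^{2+2\hat\alpha}|y-q'|^{-2-2\hat\alpha}$, using the raw Euclidean distance even for boundary spikes; these are globally smooth on $\tilde\Omega_\varepsilon$ and bounded above by the \emph{fixed} constant~$2$, so no cutoff is needed and the term $Wg$ is bounded by a constant (independent of $R_1$) times $W$, which is then beaten by a single large choice of $R_1$---no circularity. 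Second, for a boundary spike $\xi_i\in\partial\Omega$ the paper does not flatten: since $\langle y-\xi_i',n\rangle=O(\varepsilon|y-\xi_i'|^2)$ on $\partial\Omega_\varepsilon$ one gets directly $|\partial_n g_i|\le C\varepsilon(\mu_i/|y-\xi_i'|)^\sigma\le C\varepsilon R_1^{-\sigma}$ on $\partial\Omega_\varepsilon\setminus B_{R_1\mu_i}(\xi_i')$, which is absorbed by the background term's normal derivative $\varepsilon$ once $R_1$ is large. Your flattening-plus-cutoff treatment works but is unnecessary machinery; the paper's version makes the constant-tracking trivial and lets one write down an explicit lower bound for $R_1$ in one line.
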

\begin{proof}
We define the functions
\[g_0(y)=2-\frac1{(\hat{\alpha}+1)^2}\frac{\mu_0^{2+2\hat{\alpha}}}{r_0^{2+2\hat{\alpha}}}\qquad \mathrm{and} \qquad g_i(y)=2-\frac4{\sigma^2}\frac{\mu_i^\sigma}{r_i^\sigma} \quad \mathrm{for} \quad i=1,2,\cdots,m,\]
where $r_0=|y-q'|$ and $r_i=|y-\xi_i'|$.

Denote
\[M=\sup_{x\in \Omega}|\nabla \log a(x)|.\]
It is easy to verify that the function $g_0$ satisfies the following equation:
\begin{eqnarray*}
   && -\Delta g_0-\nabla(\log a(\varepsilon y))\cdot \nabla g_0 +\varepsilon^2 g_0 \\
   &=& 4\frac{\mu_0^{2+2\hat{\alpha}}}{r_0^{4+2\hat{\alpha}}}-\frac{2\varepsilon \mu_0^{2+2\hat{\alpha}}\langle (\nabla \log a)(\varepsilon y), y-q'\rangle}{(1+\hat{\alpha})r_0^{4+2\hat{\alpha}}}+\varepsilon^2\left(2-\frac{\mu_0^{2+2\hat{\alpha}}}{(\hat{\alpha}+1)^2 r_0^{2+2\hat{\alpha}}}\right).
\end{eqnarray*}
Since
\[\left|\frac{2\varepsilon \mu_0^{2+2\hat{\alpha}}\langle (\nabla \log a)(\varepsilon y), y-q'\rangle}{(1+\hat{\alpha})r_0^{4+2\hat{\alpha}}}\right|\leq \frac{2\varepsilon \mu_0^{2+2\hat{\alpha}}M}{(\hat{\alpha}+1)r_0^{3+2\hat{\alpha}}}\leq \mu_0^{2+2\hat{\alpha}}\left(\frac2{r_0^{4+2\hat{\alpha}}}+\frac{\varepsilon^2 M^2}{2(\hat{\alpha}+1)^2r_0^{2+2\hat{\alpha}}}\right),\]
we get
\[-\Delta g_0-\nabla(\log a(\varepsilon y))\cdot \nabla g_0 +\varepsilon^2 g_0\geq \frac{2\mu_0^{2+2\hat{\alpha}}}{r_0^{4+2\hat{\alpha}}}+\varepsilon^2\left(2-\frac{\mu_0^{2+2\hat{\alpha}}(2+M^2)}{2(1+\hat{\alpha})^2 r_0^{2+2\hat{\alpha}}}\right).\]
From the same method, we also get that for $i=1,2,\cdots,m$,
\[-\Delta g_i-\nabla(\log a(\varepsilon y))\cdot \nabla g_i +\varepsilon^2 g_i\geq \frac{2 \mu_i^\sigma}{r_i^{\sigma+2}}+\varepsilon^2 \left(2- \frac{2(2+M^2)\mu_i^\sigma}{\sigma^2 r_i^\sigma}\right).\]
Combining the estimates \eqref{76}, \eqref{77} and \eqref{38}, we have
\begin{equation*}
0<W\leq C_1 \left(\frac{\mu_0^{2+2\alpha}}{r_0^{4+2\alpha}}+\sum_{j=1}^m \frac{\mu_j^2}{r_j^4}+\varepsilon^3\right).
\end{equation*}
Choose $\tilde{g}$ to be the unique solution of the following problem:
\begin{equation*}
\left\{\begin{array}{ll}
-\Delta \tilde{g}-\nabla(\log a(\varepsilon y))\cdot \nabla \tilde{g}+\varepsilon^2 \tilde{g}=2\varepsilon^2 &\mbox{in $\Omega_\varepsilon$}, \\
\frac{\partial \tilde{g}}{\partial n}=\varepsilon &\mbox{on $\partial \Omega$}.
\end{array}\right.
\end{equation*}
It is apparent that $\tilde{g}$ is a uniformly bounded positive function on $\Omega_\varepsilon$, \textit{i.e.} there exists a constant $C_0$, such that $0<\tilde{g}\leq 2C_0$.

Now we estimate the boundary terms. For $i=1,2,\cdots,l$, we have $\mathrm{dist}(\xi_i, \partial \Omega)\geq \frac1{|\log\varepsilon|^\kappa}$. Hence
\[\frac{\partial g_i}{\partial n}=\frac{4 \mu_i^\sigma}{\sigma r_i^{\sigma+2}}\langle y-\xi_i',n \rangle=O(\mu_i^\sigma (\varepsilon |\log\varepsilon|^\kappa)^{1+\sigma})\quad \mathrm{on} \quad \partial\Omega_\varepsilon.\]
For $\varepsilon>0$ small enough, we get
\[\left|\frac{\partial g_i}{\partial n}\right|<\frac{\varepsilon}{m+1}.\]
However, for $i=l+1, \cdots, m$, we have $\xi_i\in \partial \Omega$. Then there holds that
\begin{equation}\label{106}
\left|\frac{\partial g_i}{\partial n}\right|\leq C_2 \frac{\varepsilon \mu_i^\sigma}{r_i^\sigma}.
\end{equation}
In fact, in some neighborhood of $\xi_i'$, \textit{i.e.} $|y-\xi_i'|\leq \delta_2/ \varepsilon$, $A_i(\partial \Omega_\varepsilon)$ is represented by
\[y_2-\xi_{i2}'=\frac1\varepsilon G_i(\varepsilon y_1-\xi_{i1}).\]
We get
\[\frac{\partial g_i}{\partial n}=\frac{4\mu_i^\sigma}{\sigma r_i^{\sigma +2}}\langle y-\xi_i',n \rangle =\frac{4\mu_i^\sigma}{\sigma r_i^{\sigma +2}}\frac{(y_1-\xi_{i1}')G_i'(\varepsilon y_1-\xi_{i1})-\frac1\varepsilon G_i((\varepsilon y_1-\xi_{i1}))}{\sqrt{1+|G_i'(\varepsilon y_1-\xi_{i1})|^2}}.\]
Then
\[\left|\frac{\partial g_i}{\partial n}\right|\leq C\frac{\varepsilon \mu_i^\sigma}{r_i^\sigma}\]
However if $y\in \partial \Omega_\varepsilon$ satisfying $|y-\xi_i'|\geq \delta_2/ \varepsilon$, it also holds that
\[\left|\frac{\partial g_i}{\partial n}\right|\leq \frac{4\varepsilon}{\sigma\delta_2}\frac{ \mu_i^\sigma}{r_i^\sigma}.\]
Hence \eqref{106} follows.

In the case of $q\in \Omega$
\[\frac{\partial g_0}{\partial n}=\frac{2\mu_0^{2+2\hat{\alpha}}}{(1+\hat{\alpha})r_0^{4+2\hat{\alpha}}}\langle y-q', n \rangle=O(\varepsilon^{3+2\hat{\alpha}}\mu_0^{2+2\hat{\alpha}})=O(\varepsilon\cdot \varepsilon^{2\frac{1+\hat{\alpha}}{1+\alpha}}|\log\varepsilon|^C) \quad \mathrm{on} \quad \partial \Omega_\varepsilon.\]
Hence for $\varepsilon>0$ small enough, we get
\[\left|\frac{\partial g_0}{\partial n}\right|<\frac{\varepsilon}{m+1}.\]
However, in the case of $q\in \partial \Omega$
\[\left|\frac{\partial g_0}{\partial n}\right|\leq C_3\frac{\varepsilon \mu_0^{2+2\hat{\alpha}}}{r_0^{2+2\hat{\alpha}}},\]
where we get this estimate from the same method as in \eqref{106}.

Choose the constant
\begin{eqnarray*}
R_1&>&\max\left\{\left[2C_1(C_0+m+1)\right]^{\frac1{2(\alpha-\hat{\alpha})}}, [2C_1(C_0+m+1)]^{\frac1{2-\sigma}}, \left[\frac1{1+\hat{\alpha}}\right]^{\frac1{\hat{\alpha}+1}}\right.\\
&&\left.\left(\frac4{\sigma^2}\right)^{\frac1\sigma}, \left[\frac{2+M^2}{4(\hat{\alpha}+1)^2}\right]^{\frac1{2(\hat{\alpha}+1)}},\left[\frac{2+M^2}{\sigma^2}\right]^{\frac1\sigma}, [C_2(m+1)]^{\frac1\sigma}, [C_3(m+1)]^{\frac1{2+2\hat{\alpha}}}\right\},
\end{eqnarray*}
and define the function $g$ by
\[g=g_0+\sum_{i=1}^m g_i +\tilde{g}.\]
From direct computation, we see the function $g$ satisfies the estimates \eqref{107}, \eqref{108} and \eqref{109}.

\end{proof}

\begin{lemma}\label{lm2}
There exist positive constants $\varepsilon_0$ and $C$ such that for $0<\varepsilon<\varepsilon_0$, any solution of \eqref{19} satisfies the estimate
\begin{equation}\label{72}
  \|\phi\|_{L^\infty(\Omega_\varepsilon)}\leq C\|h\|_*.
\end{equation}
\end{lemma}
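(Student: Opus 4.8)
**Proof strategy for Lemma \ref{lm2} (the a priori estimate $\|\phi\|_{L^\infty}\le C\|h\|_*$).**

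The plan is to argue by contradiction, using the barrier function $g$ constructed in Lemma \ref{lm6} to control $\phi$ away from the concentration points, and a blow-up/rescaling argument near each point together with the non-degeneracy of the entire solutions (the explicit kernels $Z_0,Z_1,Z_2$ and $\tilde Z$) to control $\phi$ near the points. First I would suppose the estimate fails: there exist $\varepsilon_n\to 0$, data $h_n$ with $\|h_n\|_*\to 0$, parameters $\xi^{(n)}\in\mathcal O_{\varepsilon_n}$, and solutions $\phi_n$ of \eqref{19} with $\|\phi_n\|_{L^\infty(\Omega_{\varepsilon_n})}=1$. The goal is to show $\|\phi_n\|_{L^\infty}\to 0$, a contradiction.

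The outer estimate comes first. On the region $\tilde\Omega_{\varepsilon_n}=\Omega_{\varepsilon_n}\setminus[\cup_j B_{R_1\mu_j}(\xi_j')\cup B_{R_1\mu_0}(q')]$ I would use the operator $L$ together with the function $g$ from Lemma \ref{lm6}. Since $\|L(\phi_n)\|_* = \|h_n\|_*\le \eta_n\to 0$ on this set, $|L(\phi_n)|$ is bounded by $\eta_n$ times the weight appearing in \eqref{24}, which in $\tilde\Omega_{\varepsilon_n}$ is dominated by the right-hand side of \eqref{107}. Hence $C_n g$ (with $C_n:=\eta_n + \max_{\partial(\text{inner balls})}|\phi_n|$) is a supersolution of $L$ that dominates $\phi_n$ on the parabolic-type boundary of $\tilde\Omega_{\varepsilon_n}$ (using \eqref{108} on $\partial\Omega_{\varepsilon_n}$ and \eqref{109} on the spheres $\partial B_{R_1\mu_i}$), and $-C_n g$ is a subsolution. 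The maximum principle for $L=-\Delta-\nabla(\log a(\varepsilon y))\cdot\nabla+\varepsilon^2-W$ — valid because $W\ge 0$ is small in the $L^{n/2}$-type sense used in \cite{Agudelo_Pistoia2016} and the zeroth order term $\varepsilon^2-W$ causes no loss of the weak maximum principle on these domains — then gives $|\phi_n(y)|\le C_n g(y)\le C\,C_n$ on $\tilde\Omega_{\varepsilon_n}$. Since $g\le C$ uniformly, this reduces everything to showing $\max_{\partial B_{R_1\mu_i}}|\phi_n|\to 0$, i.e.\ to an interior estimate near each spike.

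For the inner estimate I would rescale around each concentration point. Near $q'$ set $\hat\phi_n(z)=\phi_n(q'+\mu_0^{(n)} z)$ (composed with the boundary-flattening map $F_0^{\varepsilon_n}$ when $q\in\partial\Omega$); by \eqref{105} the rescaled potential $\mu_0^2 W$ converges in $C^0_{loc}$ to $8(1+\alpha)^2|z|^{2\alpha}/(1+|z|^{2(1+\alpha)})^2$, and the rescaled right-hand side tends to $0$ in $L^\infty_{loc}$ because of the weight normalization built into $\|\cdot\|_*$. Elliptic estimates give $\hat\phi_n\to\hat\phi$ in $C^1_{loc}(\R^2$ or $\R^2_+)$, where $\hat\phi$ is bounded and solves $\Delta\hat\phi+\tfrac{8(1+\alpha)^2|z|^{2\alpha}}{(1+|z|^{2(1+\alpha)})^2}\hat\phi=0$ (with Neumann condition in the half-space case). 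By the non-degeneracy result of \cite{Esposito2005}, $\hat\phi$ is a multiple of $\tilde Z$; but the orthogonality conditions $\int\chi_0 Z_{00}\phi_n=0$ pass to the limit to force that multiple to be $0$, so $\hat\phi\equiv 0$. The same argument at each $\xi_j'$ uses \eqref{77}, the kernel classification via $Z_0,Z_1,Z_2$ from \cite{Baraket_Pacard1998}, and the orthogonality conditions $\int\chi_i Z_{ij}\phi_n=0$ for $j=0,1,J_i$. This yields $\|\phi_n\|_{L^\infty(B_{(R_0+1)\mu_i}(\xi_i'))}\to 0$, in particular on $\partial B_{R_1\mu_i}$ once $R_1\le R_0$; combined with the outer bound we conclude $\|\phi_n\|_{L^\infty}\to 0$, contradicting $\|\phi_n\|_{L^\infty}=1$. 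The main obstacle is organizing the interaction between the outer barrier step and the inner blow-up step — specifically, ensuring that the constant $C_n$ controlling $g$ is itself $o(1)$, which requires that the suprema on the inner spheres be handled \emph{before} (or simultaneously with) the outer estimate, typically via a bootstrap: one first shows $|\phi_n|\le C$ globally by a crude barrier, then extracts the blow-up limits to get $o(1)$ on the spheres, and finally reruns the barrier argument to propagate that $o(1)$ to all of $\tilde\Omega_{\varepsilon_n}$. Care is also needed with the weight $|y-q'|^{2\alpha}$ (singular or degenerate depending on the sign of $\alpha$), which is why the norm \eqref{24} uses the auxiliary exponent $\hat\alpha$ with $-1<\hat\alpha<\alpha$; this is exactly what makes $g_0$ an admissible barrier in \eqref{107}.
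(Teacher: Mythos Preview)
Your proposal is correct and follows essentially the same route as the paper: use the barrier $g$ from Lemma~\ref{lm6} and the maximum principle to obtain the outer bound $\|\phi\|_{L^\infty(\Omega_\varepsilon)}\le C(\|h\|_*+\|\phi\|_i)$, then argue by contradiction via rescaling near each spike and the non-degeneracy results of \cite{Esposito2005,Baraket_Pacard1998} together with the orthogonality conditions. Two small clarifications: the maximum principle on $\tilde\Omega_\varepsilon$ is guaranteed simply by the existence of the strictly positive supersolution $g$ with $Lg>0$ (not by any smallness of $W$ in $L^{n/2}$), and the $C^0_{\mathrm{loc}}$ convergence of $\hat\phi_n$ holds on every compact set, so no constraint $R_1\le R_0$ is needed.
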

\begin{proof}
From Lemma \ref{lm6}, we see that the following maximum principal hold:  if $\psi\in C^2(\tilde{\Omega}_\varepsilon)$ satisfies

\[-\Delta \psi -\nabla(\log a(\varepsilon y))\cdot \nabla \psi +\varepsilon^2\psi-W\psi\geq  0 \quad \mathrm{in} \quad \Omega_\varepsilon,\]
\[\frac{\partial \psi}{\partial n}\geq 0 \quad \mathrm{on} \quad \partial\Omega_\varepsilon\char92 \left(\bigcup_{i=1}^m B_{R_1\mu_i}(\xi_i')\cup B_{R_1\mu_0}(q')\right) \]
and
\[
  \psi\geq 0, \quad \mathrm{on} \quad \Omega_\varepsilon\cap \left(\bigcup_{i=1}^m \partial B_{R_1\mu_i}(\xi_i')\cup \partial B_{R_1\mu_0}(q')\right),
\]
we get $\psi\geq 0$ in $\tilde{\Omega}_\varepsilon$.

We first derive some prior estimate for \eqref{19}. Define
\[\|\phi\|_i=\sup_{y\in \Omega_\varepsilon \char92 \tilde{\Omega}_\varepsilon}|\phi(y)|.\]
We assert that the following estimate holds:
\begin{equation}\label{71}
  \|\phi\|_{L^\infty(\Omega_\varepsilon)}\leq C\left[\|h\|_*+\|\phi\|_i\right].
\end{equation}

Let $\bar{g}=\left(\|\phi\|_i+\|h\|_*\right)g$. On $\partial \Omega_\varepsilon \char92
\bigcup_{i=1}^m B_{R_1\mu_i }(\xi_i') \bigcup B_{R_1\mu_0}(q')$, it holds that
\[\frac{\partial \bar{g}}{\partial n}=\left(\|\phi\|_i+\|h\|_*\right)\frac{\partial g}{
\partial n} \geq 0=\frac{\partial \phi}{\partial n}.\]
Hence
\[\bar{g}\geq \|\phi\|_i\geq |\phi(y)|\qquad \mathrm{on} \qquad \Omega_\varepsilon\cap \left(\bigcup_{i=1}^m \partial B_{R\mu_i}(\xi_i')\bigcup \partial B_{R\mu_0}(q')\right).\]
However in $\Omega_\varepsilon$,
\[L(\bar{g})=\left(\|\phi\|_i+\|h\|_*\right)L(g)\geq \|h\|_*\left(\varepsilon^2 +\frac{\mu_0^{2+2\hat{\alpha}}}{|y-q'|^{4+2\hat{\alpha}}}+\sum_{j=1}^m \frac{\mu_j^\sigma}{|y-\xi_j'|^{2+\sigma}}\right)\geq |h(y)|=|L(\phi(y))|.\]
Using the maximum principal above, we get $|\phi(y)|\leq \bar{g}(y)$. Hence  \eqref{71} follows.

Suppose \eqref{72} does not hold. There exists a sequence $\{\varepsilon_k\}$ convergent to $0$ such that $\|\phi_k\|_{L^\infty(\Omega_\varepsilon)}=1$, but $\|h_k\|_*\to 0$. With the help of \eqref{72}, we find a constant $\kappa>0$ such that
\[\|\phi_k\|_i\geq \kappa.\]
If $\|\phi\|_i$ is archived in $B_{R_1\mu_0}(q')$, we have $\sup_{B_{R_1\mu_0}(q')}|\phi(y)|\geq \kappa$.

In the case of $q\in \Omega$, we define $\phi_k(y)=\tilde{\phi}_k(\mu_0^{-1}(y-q'))$. It holds that
\[-\Delta \tilde{\phi}_k-\varepsilon \mu_0 \nabla(\log a)(\varepsilon \mu_0 z+q)\cdot \nabla \tilde{\phi}_k +\varepsilon^2\mu_0^2\tilde{\phi}_k-\mu_0^2 W( \mu_0 z+q')\tilde{\phi}_k=\mu_0^2 h_k(\mu_0 z+q').\]
It is apparent that
\[\left|\mu_0^2 h_k(\mu_0 z+q')\right|\leq \mu_0^2\left(\varepsilon^2+\frac{|z|^{2\alpha}}{\mu_0^2(1+|z|)^{4+2\alpha+2\hat{\alpha}}} +\sum_{j=1}^m \frac{\mu_j^\sigma}{(\mu_j+|\mu_0 z+q'-\xi_j'|)^{2+\sigma}} \right)\|h_k\|_*\]
For $\alpha>0$, we get $\mu_0^2 h_k(\mu_0 z+q')\to 0$ on compact sets. However, for $\alpha \in (-1,0)$, we get the function $\mu_0^2 h_k(\mu_0 z+q')$ converges to 0 in $L^p_{loc}$ sense, where $1<p<\min\{-\frac1{\alpha}, 2\}$. From elliptic regularity, $\tilde{\phi}_k$ converges to the solution $\tilde{\phi}$ of the following equation in $C^{0, \tau}$ sense:
\[-\Delta \tilde{\phi}-\frac{8(1+\alpha)^2|z|^{2\alpha}}{(1+|z|^{ 2(1+\alpha)})^2}\tilde{\phi}=0 \qquad \mathrm{in} \qquad \R^2.\]
And the function $\tilde\phi$ also satisfies $\int_{\R^2}\chi(y)\tilde{Z}(y)\tilde{\phi}(y)dy=0$. Hence $\tilde\phi=0$.

In the case of $q\in \partial \Omega$, we define the function $\tilde{\phi}_k$ by $\phi_k(y)=\tilde{\phi}_k\left(\frac1{\mu_0}F_0^\varepsilon(y)\right)$. Then
\[\tilde{\phi}_k(z)= \phi_k\left[\frac1\varepsilon (F_0\circ A_0)^{-1}(\varepsilon \mu_0 z)+q'\right]\qquad \mathrm{where}\qquad z\in \mathcal{U}/(\varepsilon \mu_0).\]
Here $\mathcal{U}$ is some neighborhood of $0$. In  some neighborhood  of $q'$, we choose a local coordinate system by
\[\Phi(z)=\frac1\varepsilon (F_0\circ A_0)^{-1}(\varepsilon \mu_0 z)+q'.\]
Then the Laplacian-Beltrami operator is written into the following form
\[\Delta=a_{ij}^0 (\varepsilon \mu_0 z)\partial_{ij}+\varepsilon /\mu_0 b_j^0(\varepsilon \mu_0 z)\partial_j,\]
where $a_{ij}^0(\varepsilon \mu_0 z)=\mu_0^{-2}\left(\delta^{ij}+o(1)\right)$ and $b_j^0(\varepsilon \mu_0 z)=O(1)$. Hence $\tilde{\phi}_k$ solves the following problem
\begin{align*}
   & -\mu_0^{-2}(\delta^{ij}+o(1))\partial_{ij}\tilde{\phi}_k -\varepsilon \mu_0^{-1}b_j^0(\varepsilon \mu_0 z)\partial_j \tilde{\phi}_k -\varepsilon\mu_0^{-1}(\delta^{ij}+o(1))\partial_j(\log a)((F_0\circ A_0)^{-1}(\varepsilon \mu_0 z)+q)\partial_i\tilde{\phi}_k  \\
   & +\varepsilon^2 \tilde{\phi}_k -W\left(\frac1\varepsilon (F_0\circ A_0)^{-1}(\varepsilon \mu_0 z)+q'\right)\tilde{\phi}_k=\tilde{h}_k,\qquad \mathrm{in} \qquad \mathcal{U}/(\varepsilon \mu_0)
\end{align*}
where $\tilde{h}_k(z)=h_k\left(\frac1\varepsilon (F_0\circ A_0)^{-1}(\varepsilon \mu_0 z)+q'\right)$.
Then the function $\tilde{\phi}_k$ converges in $C^{0, \tau}$ sense to the solution of
\begin{equation*}
  \left\{\begin{array}{ll}
  \Delta \tilde{\phi} -\frac{8(1+\alpha)^2|z|^{2\alpha}}{(1+|z|^{ 2(1+\alpha)})^2}\tilde{\phi}=0, &\mbox{in $\R_+^2$,} \\
  \frac{\partial \tilde{\phi}}{\partial n}=0, &\mbox{on $\partial\R_+^2$.}
  \end{array}\right.
\end{equation*}
It also hold that $\int_{\R^2_+} \chi(y)\tilde{\phi}(y)\tilde{Z}dy=0$. Then  $\tilde{\phi}=0$. We get a contradiction.

If $\|\phi\|_i$ is achieved in $B_{R\mu_j}(\xi_j)$ for some $j\in\{1,2,\cdots,m\}$. Using the same procedure, we also get a contradiction. Hence this lemma follows.

\end{proof}

\subsection{Prior estimate of the second auxiliary problem}\label{subs2}
By changing the orthogonal condition in \eqref{19}, we we investigate the  prior estimate of the following problem
\begin{equation}\label{119}
\left\{\begin{array}{ll}
L(\phi)=h, &\mbox{in $\Omega_\varepsilon$,} \\
\frac{\partial \phi}{\partial n}=0, &\mbox{on $\partial \Omega_\varepsilon$,}  \\
\int_{\Omega_\varepsilon}\chi_i(y)Z_{ij}(y)\phi(y)dy=0, &\mbox{$i=1,\cdots m;$ $j=1,J_i$,}
\end{array}\right.
\end{equation}

In order to simplify the notations, we define $z_0=y-q'$ in the case of $q\in \Omega$ and $z_0=F_0^\varepsilon(y)$ in the case of $q\in \partial \Omega$. For $j=1,2,\cdots, l$, we define $z_j=y-\xi_j$ and for $j=l+1, \cdots,m$, we define $z_j=F_j^\varepsilon(y)$.

From the definition of $F_j^\varepsilon$, we get
\[z_j =A_j (y-\xi_j')(1+O(\varepsilon|y-\xi_j'|))\qquad \mathrm{for}\qquad j=l+1, \cdots,m.\]
In the case of $q\in \partial \Omega$, we also have
\[z_0 =A_0 (y-q')(1+O(\varepsilon|y-q'|)).\]
For any smooth function $\tilde{g}(y)=g(z_j)=g(F_j^\varepsilon(y))$, $j\in \{l+1,\cdots,m\}$, we have
\begin{equation}\label{88}
\nabla_y \tilde{g}=A_j^T\cdot \nabla_{z_j} g+O(\varepsilon |y-\xi_j'||\nabla_{z_j} g|)
\end{equation}
and
\begin{equation}\label{117}
\Delta_y \tilde{g}=\Delta_{z_j}g +O(\varepsilon |y-\xi_j'||\nabla_{z_j}^2 g|)+O(\varepsilon |\nabla_{z_j} g|).
\end{equation}

In the case of $q\in \partial\Omega$, we choose the constant $d\in (0, \frac1{20}\delta_2)$ small enough, so that
\[\frac12|y-\xi_j'|\leq |z_j|\leq 2|y-\xi_j'| \qquad \mathrm{for} \qquad y\in B_{\frac{20d}\varepsilon}(\xi_j')\]
and
\[\frac12|y-q'|\leq |z_0|\leq 2|y-q'| \qquad \mathrm{for} \qquad y\in B_{\frac{20d}\varepsilon}(q').\]
However in the case of $q\in \Omega$, we only choose the constant $d>0$ so that $B_{20d}(q)\subset \Omega$.

\begin{lemma}\label{lm4}
For all $\xi\in \mathcal{O}_\varepsilon$, there exists positive constants $\varepsilon_0$ and $C$, so that for $0<\varepsilon<\varepsilon_0$, any solution $\phi$ of problem \eqref{119} satisfies the following estimate
\[\|\phi\|_{L^\infty(\Omega)}\leq C|\log\varepsilon|\|h\|_*.\]
\end{lemma}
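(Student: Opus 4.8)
The plan is to reduce the estimate to Lemma~\ref{lm2}. Problem~\eqref{119} is obtained from \eqref{19} by dropping the $m+1$ orthogonality conditions attached to the \emph{dilation} directions $\chi_0Z_{00}$ and $\chi_iZ_{i0}$, $i=1,\dots,m$ (the remaining $Z_{ij}$ with $j\ge1$ are the translation directions, present in both problems). So, given a solution $\phi$ of \eqref{119}, I would split off the missing directions,
\[
\phi=\tilde\phi+\sum_{i=0}^{m}d_i\,\chi_iZ_{i0},
\]
where the $d_i\in\R$ are uniquely chosen so that $\int_{\Omega_\varepsilon}\chi_iZ_{i0}\,\tilde\phi\,dy=0$ for each $i$; this is legitimate because, for $\varepsilon$ small, the supports of the $\chi_iZ_{i0}$ — living at scale $\mu_i$ around mutually separated points, by the constraints defining $\mathcal{O}_\varepsilon$ — are pairwise disjoint, so the relevant Gram matrix is diagonal with entries $\int\chi_i^2Z_{i0}^2\,dy\ge c>0$. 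The corrected function $\tilde\phi$ then satisfies every orthogonality condition of \eqref{19} and the same Neumann condition, whence Lemma~\ref{lm2} gives $\|\tilde\phi\|_{L^\infty(\Omega_\varepsilon)}\le C\|L(\tilde\phi)\|_*$ with $L(\tilde\phi)=h-\sum_id_i\,L(\chi_iZ_{i0})$.

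The heart of the matter is to control the $d_i$ and the errors $L(\chi_iZ_{i0})$. For the latter, outside the annulus $\{R_0\mu_i<|y-\xi_i'|<(R_0+1)\mu_i\}$ I would use the pointwise expansions \eqref{105}--\eqref{83} for $W$ and the argument of Lemma~\ref{lm1} (the anisotropy $\nabla\log a$ and the H\'enon/Hardy factor $|\varepsilon y-q|^{2\alpha}/|\xi_i-q|^{2\alpha}$ only contribute, near $\xi_i'$, a factor $1+O(\varepsilon\mu_i|\log\varepsilon|^{\kappa})$ since $|\xi_i-q|\ge|\log\varepsilon|^{-\kappa}$), while on the annulus the leading term is the cutoff error $Z_{i0}\Delta\chi_i+2\nabla\chi_i\cdot\nabla Z_{i0}$. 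For the $d_i$, I would test \eqref{119} against $\chi_iZ_{i0}$ and use that $L$ is self-adjoint for the measure $a(\varepsilon y)\,dy$ (the Neumann condition annihilates the boundary term): inserting $\phi=\tilde\phi+\sum_jd_j\chi_jZ_{j0}$ into the resulting identity gives an almost-diagonal linear system for $(d_i)$ with right-hand side bounded in terms of $\|h\|_*$, $\|\tilde\phi\|_{L^\infty}$ and the quantities just estimated. Together with the elementary bounds $|d_i|\le C\mu_i\|\phi\|_{L^\infty}$ and $\|\chi_iZ_{i0}\|_{L^\infty}\le C\mu_i^{-1}$, one sees that the naive ``absorb $\|\phi\|_{L^\infty}$'' step fails only by a bounded factor; the way around this — as in \cite{Agudelo_Pistoia2016,Zhang2022} — is to run the blow-up/contradiction scheme of Lemma~\ref{lm2}: assuming $\varepsilon_k\to0$, $\xi^k\in\mathcal{O}_{\varepsilon_k}$ and $\|\phi_k\|_{L^\infty}=1$ with $|\log\varepsilon_k|\,\|h_k\|_*\to0$, one rescales $\tilde\phi_k$ about each spike to a bounded entire solution of the linearized (singular) Liouville equation on $\R^2$ or $\R^2_+$ orthogonal to the surviving kernel elements — hence $\equiv0$ by the non-degeneracy recalled at the start of Section~\ref{sect2} — and then shows $d_{ik}\to0$, forcing $1=\|\phi_k\|_{L^\infty}\to0$. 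It is in this last step that the dilation direction is genuinely different from the translation ones: since $Z_0$ (and the singular mode $\tilde Z$) tends to a nonzero constant at infinity instead of decaying, the control of the scaling component $d_{ik}$ is only of logarithmic quality, and this is precisely what makes the threshold $|\log\varepsilon_k|\,\|h_k\|_*\to0$ — equivalently, the loss of one power of $|\log\varepsilon|$ in the final bound — both necessary and sufficient.

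The main obstacle is this quantitative control of the dilation coefficients: one must push the testing/Pohozaev identity far enough to capture both the correct $\mu_i$-dependence and the single logarithm, while simultaneously (i) handling the boundary spikes in the straightened coordinates $z_j=F_j^\varepsilon(y)$, so that the perturbations $O(\varepsilon|y-\xi_j'|)$ in \eqref{88}--\eqref{117} and the boundary terms produced by the integrations by parts stay harmless, and (ii) coping with the mere H\"older regularity of $|\varepsilon y-q|^{2\alpha}$ at $q$, which only allows $W$ to be controlled through the expansions \eqref{105} and \eqref{77}.
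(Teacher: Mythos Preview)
Your overall strategy is right, but the decomposition with the \emph{naked} cut-off modes $\chi_iZ_{i0}$ cannot close, and your proposal does not supply the missing ingredient. The issue is quantitative: on the annulus $R_0\mu_i<|z_i|<(R_0+1)\mu_i$ the cutoff error $\Delta\chi_i\,Z_{i0}+2\nabla\chi_i\!\cdot\!\nabla Z_{i0}$ has size $\sim\mu_i^{-3}$, so $\|L(\chi_iZ_{i0})\|_*\sim\mu_i^{-1}$ with no $|\log\varepsilon|^{-1}$ gain. Combined with the trivial bound $|d_i|\le C\mu_i\|\phi\|_{L^\infty}$ this gives $\|\tilde\phi\|_{L^\infty}\le C\|h\|_*+C\|\phi\|_{L^\infty}$, which is useless. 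In the contradiction scheme you sketch, exactly the same obstruction reappears: nothing forces $d_{ik}\to0$ just from $\|\phi_k\|_{L^\infty}=1$ and $|\log\varepsilon_k|\,\|h_k\|_*\to0$, because $|d_{ik}|\,\|L(\chi_iZ_{i0})\|_*$ is merely $O(1)$. Your final paragraph correctly names this as ``the main obstacle'' but does not resolve it; invoking a ``testing/Pohozaev identity'' is not enough --- testing the equation against $\chi_iZ_{i0}$ reproduces the same loss.

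What the paper does (and what your argument is missing) is to replace $\chi_iZ_{i0}$ by a \emph{corrected} mode $\tilde Z_{i0}$ that extends well beyond the bubble scale: near $\xi_i'$ it equals $Z_{i0}$, and in the transition region it is matched to $Z_{i0}-\mu_i^{-1}+a_{i0}^\xi\,G(\varepsilon y,\xi_i)$, with $a_{i0}^\xi\sim(\mu_i|\log\varepsilon|)^{-1}$ chosen so that the asymptotic constant of $Z_{i0}$ is absorbed by the logarithmic singularity of the Green's function. This construction (equations \eqref{22}--\eqref{23}) is engineered precisely so that $\|L(\tilde Z_{i0})\|_*\le C\dfrac{\log|\log\varepsilon|}{\mu_i|\log\varepsilon|}$ (Lemma~\ref{lm3}) and the bilinear form $\int a(\varepsilon y)L(\tilde Z_{k0})\tilde Z_{j0}\,dy$ is nearly diagonal with diagonal entries $\sim(\mu_j^2|\log\varepsilon|)^{-1}$ (Lemma~\ref{lm5}). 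With these two facts the system for the $d_k$ is solvable with $|d_k|\le C\mu_k|\log\varepsilon|\,\|h\|_*$, and the argument closes directly, without contradiction. Because $\tilde Z_{k0}$ has support of diameter $\sim d/\varepsilon$, it is no longer orthogonal to the translation modes at the other spikes, which is why the paper also introduces the auxiliary coefficients $e_{ks}$; these are then shown to be of lower order via \eqref{47}. The Green-function correction is the key idea you are missing.
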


\begin{proof} In the proof of this lemma, we only consider the case $q\in \partial \Omega$, since in the case $q\in \Omega$ this lemma can be proven  by an easier method.
Let
\[a_0^q=\frac1{\mu_0\left[H(q,q)-\frac{4(1+\alpha)}{c_0}\log(\varepsilon \mu_0 R)\right]}, \qquad a_{j0}^\xi=\frac1{\mu_j\left[H(\xi_j,\xi_j)-\frac4{c_j}\log(\varepsilon \mu_j R)\right]},\]
where $j=1,2,\cdots,m$.
We define cutoff functions $\eta_1$ and $\eta_2$ by
\begin{equation*}
\eta_1(r)=\left\{\begin{array}{ll}
1, &\mbox{$r\leq R$}; \\
0, &\mbox{$r\geq R+1$},
\end{array}\right. \qquad
\eta_2(r)=\left\{\begin{array}{ll}
1, &\mbox{$r\leq 4d$,} \\
0, & \mbox{$r\geq 8d$.}
\end{array}\right.
\end{equation*}
Here we choose the constant $R\geq R_0+1$. Define
\[\eta_{j1}^{\xi}(y)=\left\{\begin{array}{ll}
\eta_1\left(\frac1{\mu_j}|y-\xi_j'|\right), &j=1,2,\cdots, l, \\
\eta_1\left(\frac1{\mu_j}|F_j^\varepsilon(y)|\right), &j=l+1, \cdots, m,
\end{array}\right.\]
and
\[\eta_1^q(y)=\eta_1\left(\frac1{\mu_0}|F_0^\varepsilon(y)|\right), \qquad \eta_2^q(y)=\eta_2(\varepsilon |F_0^\varepsilon (y)|).\]
Let
\begin{equation}\label{23}
\tilde{Z}_{00}=\eta_1^qZ_{00}+\eta_2^q(1-\eta_1^q)\hat{Z}_{00}
\end{equation}
and
\begin{equation}\label{22}
\tilde{Z}_{j0}=\eta_{j1}^{\xi}Z_{j0}+\eta_2^q(1-\eta_{j1}^\xi)\hat{Z}_{j0},
\end{equation}
where
\[\hat{Z}_{00}=Z_{00}-\frac1{\mu_0}+a_0^qG(\varepsilon y, q)\]
and
\[\hat{Z}_{j0}=\left\{\begin{array}{ll}
\mathring{Z}_{j0}-\frac1{\mu_j}+a_{j0}^{\xi}G(\varepsilon y, \xi_j), &j=1,2, \cdots,l, \\
Z_{j0}-\frac1{\mu_j}+a_{j0}^{\xi}G(\varepsilon y, \xi_j), &j=l+1, \cdots,m.
\end{array}\right.\]
Here $\mathring{Z}_{j0}$ is the unique solution of the following problem
\begin{equation*}
\left\{\begin{array}{ll}
-\Delta \mathring{Z}_{j0}-\nabla(\log a(\varepsilon y))\cdot \nabla \mathring{Z}_{j0}+\varepsilon^2 \mathring{Z}_{j0}= -\Delta Z_{j0}-\nabla(\log a(\varepsilon y))\cdot \nabla Z_{j0}+\varepsilon^2 Z_{j0} , &\mbox{in $\Omega_\varepsilon$,} \\
\frac{\partial \mathring{Z}_{j0}}{\partial n}=0, &\mbox{on $\partial\Omega_\varepsilon$.}
\end{array}\right.
\end{equation*}
From these definitions, we see $\tilde{Z}_{j0}$'s satisfy the Neumann boundary condition on $\partial \Omega_\varepsilon$, \textit{i.e.} $\frac{\partial \tilde{Z}_{j0}}{\partial n}=0$.
Let $\tilde{\phi}$ satisfy the following identities
\begin{equation*}
\left\{\begin{aligned}
&\phi=\tilde{\phi}+\sum_{k=0}^m d_k \tilde{Z}_{k0}+\sum_{k=1}^m \sum_{s=1}^{J_k} e_{ks}\chi_k Z_{ks} \\
&\int_{\Omega_\varepsilon} \tilde{\phi}\chi_iZ_{ij}dx=0, \qquad i=0, \cdots, m; j=0, \cdots, J_i.
\end{aligned}
\right.
\end{equation*}
Since $\xi\in \mathcal{O}_\varepsilon$, we get the constants $d_k$'s and $e_{ks}$'s satisfy the following identities
\begin{equation}\label{21}
e_{ij}\int_{\Omega_\varepsilon} \chi_i^2Z_{ij}^2dx=-\sum_{k=0, k\not=i}^m d_k \int_{\Omega_\varepsilon}\chi_i Z_{ij}\tilde{Z}_{k0}dy-d_i \chi_{\{i\geq l+1\}}\int_{\Omega_\varepsilon}\chi_i Z_{i1}Z_{i0}dy, \quad i=1,2,\cdots, m
\end{equation}
and
\begin{equation}\label{80}
\sum_{k=0}^m d_k \int_{\Omega_\varepsilon}\chi_i Z_{i0}\tilde{Z}_{k0}dy=\int_{\Omega_\varepsilon}\phi \chi_i Z_{i0}dy-e_{i1}\chi_{\{i\geq l+1\}}\int_{\Omega_\varepsilon} \chi_i^2 Z_{i1}Z_{i0}dy, \quad i=0,1,2,\cdots, m.
\end{equation}
Here $\chi_{\{i\geq l+1\}}$ denote a term which equals to 1 in the case of $i\geq l+1$ and equals to $0$ otherwise.
From the definition of $F_i^\varepsilon(y)$, we have
\begin{equation}\label{79}
F_i^\varepsilon(y)=A_i(y-\xi_i')\left(1+O(\varepsilon|y-\xi_i'|)\right), \qquad \nabla F_i^\varepsilon(y)=A_i+O(\varepsilon|y-\xi_i'|).
\end{equation}
For $i\geq l+1$, we have
\[\int_{\Omega_\varepsilon}\chi_i^2 Z_{i1}Z_{i0}dy =\int_{\R_+^2}\chi^2(|z|)Z_1(z)Z_0(z)\left(1+O(\varepsilon \mu_i|z|)\right)dz=O(\varepsilon \mu_i).\]
From the same method, we get
\[\int_{\Omega_\varepsilon} \chi_i Z_{i0}Z_{i1}dy=O(\varepsilon \mu_i),\]
\[\int_{\Omega_\varepsilon} \chi_i^2 Z_{ij}^2=\left\{\begin{array}{ll}
\int_{\R^2} \chi^2(|z|)Z_1^2dz, &i=1,2,\cdots, l; \\
\int_{\R_+^2}\chi^2(|z|)Z_1^2(z)dz+O(\varepsilon\mu_i), &i=l+1, \cdots,m
\end{array}\right.\]
and
\begin{equation*}
\int_{\Omega_\varepsilon} \chi_i Z_{i0}\tilde{Z}_{i0}dy=\int_{\Omega_\varepsilon} \chi_i Z^2_{i0}dy=\left\{\begin{array}{ll}
\int_{\R_+^2} \chi^2 \tilde{Z}^2dy+O(\varepsilon \mu_0)& i=0; \\
\int_{\R^2} \chi^2 Z_0^2dy & i=1,2,\cdots, l; \\
\int_{\R_+^2} \chi^2 Z_0^2dy+O(\varepsilon \mu_i)& i=l+1, \cdots, m.
\end{array}\right.
\end{equation*}
For $i=1,2,\cdots, m; j=0,1,J_i$ and $k\not=i$, we have
\[\int_{\Omega_\varepsilon} \chi_i Z_{ij}\tilde{Z}_{k0}dy=\int_{\Omega_\varepsilon} \chi_iZ_{ij}\eta_2^q \hat{Z}_{k0}dy.\]
If $\chi_i\not=0$, we get $|z_i'|\leq (R_0+1)\mu_i$. It is obvious that for $\varepsilon>0$ small enough, it hold that
\[|y-q'|\leq |y-\xi_i'|+|q'-\xi_i'|\leq 2|z_i|+|q'-\xi_i'|\leq 2(R_0+1)\mu_i+\frac{d}\varepsilon\leq \frac{2d}\varepsilon,\]
and
\[|z_k|\geq\frac12|y-\xi_k'|\geq\frac12\left(|\xi_i'-\xi_k'|-|y-\xi_i'|\right)\geq \frac12\left(|\xi_i'-\xi_k'|-2|z_i|\right) \geq \frac1{4\varepsilon |\log\varepsilon|^\kappa}.\]
In this case, we get
\[\hat{Z}_{k0}=Z_{k0}-\frac1{\mu_k}+a_{k0}^\xi G(\varepsilon y, \xi_k)+(\mathring{Z}_{k0}-Z_{k0})\chi_{\{1\leq k\leq l\}} =O\left(\frac{\log|\log\varepsilon|}{\mu_k|\log\varepsilon|}\right).\]
So
\[\int_{\Omega_\varepsilon} \chi_i Z_{ij}\tilde{Z}_{k0}=O\left(\frac{\mu_i\log|\log\varepsilon|}{\mu_k|\log\varepsilon|}\right).\]
From \eqref{21} and the estimate above, we get
\begin{equation}\label{47}
|e_{ij}|\leq C\sum_{k=0}^m |d_k| \frac{\mu_i\log|\log\varepsilon|}{\mu_k|\log\varepsilon|}.
\end{equation}

We also get $d_k$'s are uniquely determined according to \eqref{80}. From its definition, we get $\tilde{\phi}$ solves the following problem
\begin{equation}\label{20}
\left\{\begin{array}{ll}
L(\tilde{\phi})=h-\sum_{k=0}^m d_k L(\tilde{Z}_{k0})-\sum_{k=1}^m \sum_{s=1}^{J_k} e_{ks} L(\chi_k Z_{ks}), &\mbox{in $\Omega_\varepsilon$,} \\
\frac{\partial \tilde{\phi}}{\partial n}=0, &\mbox{on $\partial \Omega_\varepsilon$,}  \\
\int_{\Omega_\varepsilon}\chi_i(y)Z_{ij}(y)\tilde{\phi}(y)dy=0, &\mbox{$i=0,1,\cdots m;$ $j=0,1,J_i$,}
\end{array}\right.
\end{equation}
From Lemma \ref{lm2}, we get
\begin{equation}\label{46}
\|\tilde{\phi}\|_{L^{\infty}(\Omega_\varepsilon)}\leq C\left[\|h\|_*+\sum_{k=0}^m |d_k| \|L(\tilde{Z}_{k0})\|_*+\sum_{k=1}^m \sum_{s=1}^{J_k} |e_{ks}|\|L(\chi_k Z_{ks})\|_*\right].
\end{equation}
Now we have to derive some estimate of the constants $d_k$'s. Multiplying the both sides of \eqref{20} by $a(\varepsilon y) \tilde{Z}_{j0}$ and integrating, we get
\begin{align*}
  \sum_{k=0}^m d_k \int_{\Omega_\varepsilon} a(\varepsilon y)L(\tilde{Z}_{k0})\tilde{Z}_{j0}dy =& \int_{\Omega_\varepsilon} a(\varepsilon y)\tilde{Z}_{j0} h dy-\int_{\Omega_\varepsilon} a(\varepsilon y)L(\tilde{Z}_{j0})\tilde{\phi}dy \\
   & -\sum_{k=1}^m \sum_{l=1}^{J_k} e_{kl}\int_{\Omega_\varepsilon} a(\varepsilon y) L(\tilde{Z}_{j0})\chi_k Z_{kl}dy.
\end{align*}
It is obvious that
\[\left|\int_{\Omega_\varepsilon} a(\varepsilon y)\tilde{Z}_{j0}hdy\right|\leq \frac{C}{\mu_j}\|h\|_*.\]
From Lemma \ref{lm3}, \eqref{47} and \eqref{46},
\begin{align*}
  \left|\int_{\Omega_\varepsilon} a(\varepsilon y)L(\tilde{Z}_{j0})\tilde{\phi}\right|\leq & C\|L(\tilde{Z}_{j0})\|_*\|\tilde{\phi}\|_{L^\infty(\Omega_\varepsilon)} \leq C\frac{\log|\log\varepsilon|}{\mu_j|\log\varepsilon|} \|\tilde{\phi}\|_{L^\infty(\Omega_\varepsilon)}\\
  \leq & C\frac{\log|\log\varepsilon|}{\mu_j|\log\varepsilon|}\left[\|h\|_*+\sum_{k=0}^m |d_k| \frac{\log|\log\varepsilon|}{\mu_k|\log\varepsilon|}\right]  \\
  \leq & C\frac{\log|\log\varepsilon|}{\mu_j|\log\varepsilon|}\|h\|_*+C\sum_{k=0}^m |d_k| \frac{\log^2|\log\varepsilon|}{\mu_j\mu_k|\log\varepsilon|^2}.
\end{align*}
Using \eqref{47} and Lemma \ref{lm3}, we get
\begin{align*}
  \left|\sum_{k=1}^m \sum_{l=1}^{J_k} e_{kl}\int_{\Omega_\varepsilon} a(\varepsilon y) L(\tilde{Z}_{j0})\chi_k Z_{kl}dy\right|\leq & C\sum_{k=1}^m\sum_{l=1}^{J_k}\frac{|e_{kl}|}{\mu_k}\|L(\tilde{Z}_{j0})\|_* \\
  \leq & C\sum_{k=1}^m \sum_{l=1}^{J_l}\frac{\log|\log\varepsilon|}{\mu_j\mu_k|\log\varepsilon|}|e_{kl}| \\
  \leq & C\sum_{k=0}^m|d_k|\frac{\log^2|\log\varepsilon|}{\mu_j\mu_k|\log\varepsilon|^2}.
\end{align*}
According to Lemma \ref{lm5} and \eqref{47},
\begin{equation}\label{48}
|d_k|\leq C\mu_k|\log\varepsilon|\|h\|_*, \qquad |e_{ks}|\leq C(\mu_k \log|\log\varepsilon|) \|h\|_*.
\end{equation}
From \eqref{46}, \eqref{48} and Lemma \ref{lm3}, we arrive at
\[\|\phi\|_{L^\infty(\Omega_\varepsilon)}\leq C\left[\|\tilde{\phi}\|_{L^\infty(\Omega)}+|\log\varepsilon|\|h\|_*\right]\leq C|\log\varepsilon|\|h\|_*.\]
Hence this lemma follows.

\end{proof}

\subsection{Solutions to the linear problem}\label{subs3}
Now we turn around to consider problem \eqref{18}. We have the following proposition.

\begin{proposition}\label{prop2}
There exist positive constants $\varepsilon$ and $C$ such that for any $(\xi_1, \cdots, \xi_m)\in \mathcal{O}_\varepsilon$, problem \eqref{18} has a unique solution $\phi:=T(h)$. Moreover, the solution $\phi$ satisfies the following estimates
\begin{equation}\label{39}
\|\phi\|_{L^\infty(\Omega_\varepsilon)}\leq C|\log\varepsilon|\|h\|_*
\end{equation}
and
\begin{equation*}
\|\partial_{\xi_{kl}'}\phi\|_{L^\infty(\Omega_\varepsilon)}\leq C\frac{|\log\varepsilon|^2}{\mu_k}\|h\|_*.
\end{equation*}
\end{proposition}

\begin{proof}
We first prove the priori estimate \eqref{39}. According to Lemma \ref{lm4}, we get
\begin{equation}\label{44}
  \|\phi\|_{L^\infty(\Omega_\varepsilon)}\leq C|\log\varepsilon|\left[\|h\|_*+\sum_{i=1}^m\sum_{j=1}^{J_i}|c_{ij}|\|\chi_i Z_{ij}\|_*\right]\leq C|\log\varepsilon|\left[\|h\|_*+\sum_{i=1}^m\sum_{j=1}^{J_i}\mu_i|c_{ij}|\right].
\end{equation}
Now we estimate the constants $c_{ij}$'s. For $k=l+1, \cdots,m$, we multiply the both sides of the first equation in \eqref{18} by $a(\varepsilon y)\eta_2^q Z_{ks}$ and integrate by part. Then we have
\begin{equation}\label{43}
  \int_{\Omega_\varepsilon} a(\varepsilon y)L(\eta_2^q Z_{ks})\phi dy=\int_{\Omega_\varepsilon} a(\varepsilon y)\eta_2^q Z_{ks}hdy+\sum_{i=1}^m \sum_{j=1}^{J_i}c_{ij}\int_{\Omega_\varepsilon} \chi_iZ_{ij}\eta_2^q Z_{ks}dy.
\end{equation}
It is apparent that
\begin{eqnarray*}
  L(\eta_2^q Z_{ks})&=&\eta_2^q\left[\left(\frac{8\mu_k^2}{[\mu_k^2+|z_k|^2]^2}-W\right)Z_{ks}-\nabla (\log a(\varepsilon y))\cdot \nabla Z_{ks}+\varepsilon^2 Z_{ks}\right] -\Delta \eta_2^q Z_{ks} -2\nabla \eta_2^q\cdot \nabla Z_{ks} \\
  && -\nabla (\log a(\varepsilon y))\cdot \nabla \eta_2^q Z_{ks}+O\left(\frac{\varepsilon |\eta_2^q|}{(\mu_k+|z_k|)^2}\right).
\end{eqnarray*}
In the equality above, we notice
\begin{equation}\label{81}
-\Delta Z_{ks}=\frac{8\mu_k^2}{[\mu_k^2+|z_k|^2]^2}Z_{ks}+O\left(\frac{ \varepsilon}{(\mu_k+|z_k|)^2}\right),
\end{equation}
which follows from relationship \eqref{117}.

If $\frac{4d}{\varepsilon}\leq |z_0|\leq \frac{8d}\varepsilon$, we get $\frac{d}{2\varepsilon}\leq |z_j|\leq \frac{34d}\varepsilon$ for $\varepsilon$ small enough. There holds that
\[\Delta \eta_2^q Z_{ks}+2\nabla \eta_2^q \cdot \nabla Z_{ks}+\nabla (\log a(\varepsilon y))\cdot \nabla \eta_2^q Z_{ks}=O(\varepsilon^3).\]
Then
\[\int_{\Omega_\varepsilon} a(\varepsilon y)\left[\Delta \eta_2^q Z_{ks}+2\nabla \eta_2^q \cdot \nabla Z_{ks}+\nabla (\log a(\varepsilon y))\cdot \nabla \eta_2^q Z_{ks}\right]\phi dy=O\left(\varepsilon \|\phi\|_{L^\infty(\Omega_\varepsilon )}\right).\]
It is easy to get
\begin{eqnarray*}
  &&\int_{\Omega_\varepsilon} a(\varepsilon y)\eta_2^q\left[-\nabla(\log a(\varepsilon y))\cdot \nabla Z_{ks}+\varepsilon^2 Z_{ks}\right]\phi \\
  & \leq& C\|\phi\|_{L^\infty(\Omega_\varepsilon)} \left(\int_{\eta_2^q\not=0}\frac{\varepsilon}{(\mu_k+|y-\xi_k'|)^2}dy+\int_{\eta_2^q\not=0} \frac{\varepsilon^2}{(\mu_k+|y-\xi_k'|)}dy\right) \\
   &\leq& C\varepsilon |\log\varepsilon| \|\phi\|_{L^\infty(\Omega)}.
\end{eqnarray*}
From the same argument, we also get
\[\left|\int_{\Omega_\varepsilon}a(\varepsilon y)\frac{\varepsilon \eta_2^q}{(\mu_k+|z_k|)^2}\phi dy\right|\leq C\varepsilon |\log\varepsilon|\|\phi\|_{L^\infty(\Omega_\varepsilon)}.\]
Denote
\[A_k=\left\{y\in \Omega_\varepsilon\;:\; |y-\xi_k'|\leq \frac1{\varepsilon|\log\varepsilon|^{2\kappa}}\right\}, \qquad \mathrm{for} \qquad k=1,2,\cdots,m,\]
\[A_0=\left\{y\in \Omega_\varepsilon\;:\; |y-q'|\leq \frac1{\varepsilon|\log\varepsilon|^{2\kappa}}\right\},\]
and $A_{m+1}=\Omega_\varepsilon\char92 \cup_{j=0}^m A_j$. Then there holds
\begin{align*}
   & \int_{\Omega_\varepsilon} \eta_2^q\left(\frac{8\mu_k^2}{[\mu_k+|z_k|^2]^2}-W\right)Z_{ks}\phi dy \\
  \leq & C\|\phi\|_{L^\infty(\Omega_\varepsilon)}\int_{\Omega_\varepsilon} \eta_2^q \left|\frac{8\mu_k^2}{[\mu_k^2+|z_k|^2]^2}-W\right||Z_{ks}|dy \\
  \leq & C\|\phi\|_{L^\infty(\Omega_\varepsilon)}\left[\int_{A_k} \eta_2^q \left|\frac{8\mu_k^2}{[\mu_k^2+|z_k|^2]^2}-W\right||Z_{ks}|dy+ \sum_{i=0,i\not=k}^m  \int_{A_i}\eta_2^q \left|\frac{8\mu_k^2}{[\mu_k^2+|z_k|^2]^2}-W\right||Z_{ks}|dy.\right. \\
   & \left.+\int_{A_{m+1}} \eta_2^q \left|\frac{8\mu_k^2}{[\mu_k^2+|z_k|^2]^2}-W\right||Z_{ks}|dy\right]=:C\|\phi\|_{L^\infty (\Omega_\varepsilon)}(I_1+I_2+I_3).
\end{align*}
For $i\not=k$, it is apparent that
\begin{eqnarray*}
  &&\int_{A_i}\eta_2^q \left|\frac{8\mu_k^2}{[\mu_k^2+|z_k|^2]^2}-W\right||Z_{ks}|dy \\
   &\leq& C \int_{A_i} \eta_2^q\left[\frac{\mu_k^2}{(\mu_k+|y-\xi_k'|)^4}+\frac{\mu_i^2}{(\mu_i+|y-\xi_i'|)^4} +\varepsilon^4|\log\varepsilon| \right]\frac1{\mu_k+|z_k|}dy \\
   &\leq& C \varepsilon|\log\varepsilon|^\kappa.
\end{eqnarray*}
Then we get $|I_2|\leq C\varepsilon |\log\varepsilon|^\kappa$. For $I_3$, we have
\begin{equation*}
  |I_3| \leq \int_{A_{m+1}}\eta_2^q\left[\frac{\mu_k^2}{(\mu_k+|y-\xi_k'|)^4}+\varepsilon^4| \log\varepsilon|^C\right]\frac1{\mu_k+|z_k|}dy \leq C\varepsilon.
\end{equation*}
For the first term $I_1$, we have
\begin{align*}
  |I_1| \leq & \int_{A_k} \eta_2^q\left|\frac{8\mu_k^2}{[\mu_k^2+|z_k|^2]^2}-W\right|\frac1{\mu_k+|z_k|}dy \\
  \leq & C\int_{A_k} \eta_2^q\left|\frac{8\mu_k^2}{[\mu_k^2+|z_k|^2]^2}-\frac{8\mu_k^2}{[\mu_k^2+|y-\xi_k'|^2]^2} \right|\frac1{\mu_k+|y-\xi_k'|}dy \\
  & +C \int_{A_k} \eta_2^q \frac{8\mu_k^2}{[\mu_k^2+|y-\xi_k'|^2]^2}\left|\frac{|\varepsilon y-q|^{2\alpha}}{|\xi_k-q|^{2\alpha}}-1\right|\frac1{\mu_k+|y-\xi_k'|}dy \\
  & +C \int_{A_k} \eta_2^q \frac{8\mu_k^2}{[\mu_k^2+|y-\xi_k'|^2]^2} \left[\varepsilon^\beta|y-\xi_k'|^\beta+ \sum_{k=0}^m (\varepsilon \mu_k)^{\frac2p-1}\right] \frac1{\mu_k+|y-\xi_k'|}dy \\
  & +C\varepsilon^3\int_{A_k}\eta_2^q \frac1{\mu_k+|y-\xi_k'|}dy  \\
  \leq & \frac{C}{\mu_k}\left[(\varepsilon\mu_k)^\beta+\sum_{i=0}^m (\varepsilon \mu_i)^{\frac2p-1}\right].
\end{align*}
As a summary,
\begin{equation}\label{41}
\int_{\Omega_\varepsilon} a(\varepsilon y)L(\eta_2^q Z_{ks})\phi dy=O\left(\frac1{\mu_k}\left((\varepsilon\mu_k)^\beta+\sum_{i=0}^m (\varepsilon \mu_i)^{\frac2p-1}\right)\|\phi\|_{L^\infty(\Omega_\varepsilon)}\right).
\end{equation}
It is easy to get
\begin{equation}\label{42}
\left|\int_{\Omega_\varepsilon}a(\varepsilon y)\eta_2^q Z_{ks}h dy\right|\leq \frac{C}{\mu_k}\|h\|_*
\end{equation}
For $i\not=k$, we also get
\[\int_{\Omega_\varepsilon} \chi_i Z_{ij}\eta_2^q Z_{ks}dy=O\left(\varepsilon \mu_i|\log\varepsilon|^\kappa\right).\]
It is easy to derive that for $i=1,2,\cdots, l$,
\[\int_{\Omega_\varepsilon} \chi_i Z_{ij}\eta_2^q Z_{is}dy=\delta_{js}\int_{\R^2} \chi Z_1^2dz,\]
and for $i=l+1, \cdots,m$,
\[\int_{\Omega_\varepsilon} \chi_i Z_{ij}\eta_2^q Z_{is}dy =\delta_{js}\int_{\R_+^2} \chi Z_1^2dy +O(\varepsilon \mu_i).\]
According to \eqref{43}, \eqref{41} and \eqref{42}, we get
\begin{equation}\label{59}
|c_{ks}|\leq C\left[\frac1{\mu_k}\|h\|_*+\frac1{\mu_k}\left((\varepsilon\mu_k)^\beta+\sum_{i=0}^m (\varepsilon \mu_i)^{\frac2p-1}\right)\|\phi\|_{L^\infty(\Omega_\varepsilon)}\right].
\end{equation}
However, in the case of $k=1,2,\cdots,l$,  \eqref{59} still holds. To prove this fact, we multiply the both sides of \eqref{18} by $a(\varepsilon y)\eta_2^q \mathring{Z}_{ks}$, where $\mathring{Z}_{ks}$ is the unique solution of
\begin{equation*}
\left\{\begin{array}{ll}
-\Delta \mathring{Z}_{ks}-\nabla(\log a(\varepsilon y))\cdot \nabla \mathring{Z}_{ks}+\varepsilon^2 \mathring{Z}_{ks}= -\Delta Z_{ks}-\nabla(\log a(\varepsilon y))\cdot \nabla Z_{ks}+\varepsilon^2 Z_{ks} , &\mbox{in $\Omega_\varepsilon$} \\
\frac{\partial \mathring{Z}_{ks}}{\partial n}=0, &\mbox{on $\partial\Omega_\varepsilon$}
\end{array}\right.
\end{equation*}
Then the estimate \eqref{59} follows from Lemma \ref{lm9} and a similar procedure as above.

Plugging \eqref{59} into \eqref{44}, we get
\[\|\phi\|_{L^\infty(\Omega_\varepsilon)}\leq C|\log\varepsilon|\left[\|h\|_*+\left((\varepsilon\mu_k)^\beta+\sum_{i=0}^m (\varepsilon \mu_i)^{\frac2p-1}\right)\|\phi\|_{L^\infty(\Omega)}\right].\]
For $\varepsilon>0$ small enough, we get the prior estimate \eqref{39}.

Now we prove the existence and uniqueness of the solution to \eqref{prop2}. Define the Hilbert space
\[H=\left\{\phi\in H^1(\Omega_\varepsilon)\;:\; \int_{\Omega_\varepsilon}\chi_i Z_{ij}\phi dy=0, \; i=1,2,\cdots,m;\; j=1, J_i\right\}\]
with the norm
\[\|\phi\|=\left[\int_{\Omega_\varepsilon }|\nabla \phi|^2+\varepsilon^2 \phi^2dy\right]^{\frac12}.\]
Then  \eqref{18} is equivalent to
\[\int_{\Omega_\varepsilon}a(\varepsilon y)\left[\nabla \phi\cdot \nabla \varphi+\varepsilon^2 \phi\varphi -W\phi\varphi\right]dy=\int_{\Omega_\varepsilon} a(\varepsilon y)h\varphi dy.\]
According to the prior estimate \eqref{39} and Fredholm alternative, we find a unique solution to \eqref{18}.

Now we consider the dependence of $T$ on $\xi_{kl}'$. The function $\psi:=\partial_{\xi_{kl}'}\phi$ solves the following problem
\begin{equation*}\left\{
\begin{array}{ll}
L(\psi)=\left(\partial_{\xi_{kl}'}W\right)\phi+\frac1{a(\varepsilon y)}\sum_{i=1}^m\sum_{j=1}^{J_i} \left(\partial_{\xi_{kl}'} c_{ij}\right)\chi_iZ_{ij} +\frac1{a(\varepsilon y)}\sum_{i=1}^m\sum_{j=1}^{J_i} c_{ij}\partial_{\xi_{kl}'}\left(\chi_iZ_{ij}\right) &\mbox{in $\Omega_\varepsilon$}, \\
\frac{\partial \psi}{\partial n}=0 &\mbox{on $\partial \Omega_\varepsilon$}, \\
\int_{\Omega_\varepsilon} \chi_i Z_{ij}\psi dy=-\int_{\Omega_\varepsilon} \partial_{\xi_{kl}'}\left(\chi_i Z_{ij}\right)\phi dy, \qquad i=1,\cdots, m; j=1,J_i.
\end{array}\right.
\end{equation*}

We define $\tilde{\psi}$ by
\[\tilde{\psi}=\psi+\sum_{t=1}^m \sum_{s=1}^{J_t} b_{ts}\eta_{t1}^\xi Z_{ts},\]
where the constants $b_{ts}$'s satisfy the following identity
\begin{equation}\label{56}
\sum_{s=1}^{J_t} b_{is}\int_{\Omega_\varepsilon} \chi_i Z_{ij} Z_{is}dy=\int_{\Omega_\varepsilon} \partial_{\xi_{kl}'}\left(\chi_i Z_{ij}\right)\phi dy.
\end{equation}
Then the function $\tilde{\psi}$ is a solution of the following problem
\begin{equation*}\left\{
\begin{array}{ll}
L(\tilde{\psi})=\left(\partial_{\xi_{kl}'}W\right)\phi+\frac1{a(\varepsilon y)}\sum_{i=1}^m\sum_{j=1}^{J_i} \left(\partial_{\xi_{kl}'} c_{ij}\right)\chi_iZ_{ij} \\
 \qquad\qquad +\sum_{t=1}^m\sum_{s=1}^{J_t} b_{ts} L(\eta_{t1}^\xi Z_{ts})+\frac1{a(\varepsilon y)}\sum_{i=1}^m\sum_{j=1}^{J_i} c_{ij}\partial_{\xi_{kl}'}\left(\chi_iZ_{ij}\right) &\mbox{in $\Omega_\varepsilon$}, \\
\frac{\partial \tilde\psi}{\partial n}=0, &\mbox{on $\partial \Omega_\varepsilon$}, \\
\int_{\Omega_\varepsilon} \chi_i Z_{ij}\tilde\psi dy=0, \qquad i=1,\cdots, m; j=1,J_i.
\end{array}\right.
\end{equation*}
We first give some estimates of the constants $b_{ts}$'s.
For $t\not=i$, we have
\begin{equation*}
  \left|\int_{\Omega_\varepsilon} \chi_i Z_{ij}Z_{ts}\right|\leq C\int_{\Omega_\varepsilon} \frac{\chi_i}{\mu_i+|y-\xi_i'|}\frac1{\mu_t+|y-\xi_t'|}dy\leq C(\varepsilon \mu_i)|\log\varepsilon|^{\kappa}.
\end{equation*}
Whereas, for $t=i$, we have
\begin{align*}
  \int_{\Omega_\varepsilon}\chi_i Z_{ij}Z_{is}dy = & \int_{|z_i|\leq R+1} \chi(|z|)Z_j(z)Z_s(z)\left(1+O(\varepsilon \mu_i |z|)\right)dz  \\
   =& \delta_{js} \int_{\R} \chi(|z|)Z_1^2(z)dz +O(\varepsilon \mu_i).
\end{align*}
Now we estimate the right hand side of \eqref{56}. Since
\begin{align}\label{60}
  \nonumber\left|\partial_{\xi_{kl}'}(\chi_iZ_{ij})\right|\leq   &  C\chi\left(\frac{|z_i|}{\mu_i}\right)\left[\frac1{\mu_i^2}\left|\partial_{\xi_{kl}'} \mu_i\right| \left|Z_j\left(\frac{z_i}{\mu_i}\right)\right|+\frac{C}{\mu_i^3}|z_i| \left|\nabla Z_j\left(\frac{z_i}{\mu_i}\right)\right|\left|\partial_{\xi_{kl}'}\mu_i\right|\right] \\
  \nonumber& +\frac{C}{\mu_i^3}\left|\chi'\left(\frac{|z_i|}{\mu_i}\right)\right|\left|\partial_{\xi_{kl}'} \mu_i\right||z_i|\left|Z_j\left(\frac{z_i}{\mu_i}\right)\right| + \frac{C}{\mu_i^2}\left|\chi'\left(\frac{|z_i|}{\mu_i}\right)\right|\left|Z_j\left(\frac{z_i}{ \mu_i}\right)\right| \left|\partial_{\xi_{kl}'} z_i\right| \\
  & + \frac{C}{\mu_i^2}\chi\left(\frac{|z_i|}{\mu_i}\right) \left|\nabla Z_j\left(\frac{z_i}{\mu_i}\right)\right| \left|\partial_{\xi_{kl}'} z_i\right|,
\end{align}
we get
\begin{equation*}
  \left|\int_{\Omega_\varepsilon}\partial_{\xi_{kl}'}\left(\chi_i Z_{ij}\right)\phi dy\right|\leq C\left|\partial_{\xi_{kl}'}\mu_i\right|\|\phi\|_{L^\infty(\Omega_\varepsilon)}\leq C\varepsilon \mu_i|\log\varepsilon|^{\kappa} \|\phi\|_{L^\infty(\Omega_\varepsilon)}, \qquad \mathrm{for} \qquad i\not=k
\end{equation*}
and
\begin{equation*}
  \left|\int_{\Omega_\varepsilon}\partial_{\xi_{kl}'}\left(\chi_k Z_{kj}\right)\phi dy\right|\leq C\|\phi\|_{L^\infty(\Omega_\varepsilon)}.
\end{equation*}
Hence
\begin{equation*}
  |b_{ks}|\leq C\|\phi\|_{L^\infty(\Omega_\varepsilon)}, \qquad \mathrm{and} \qquad |b_{is}|\leq C\varepsilon \mu_i|\log\varepsilon|^\kappa \|\phi\|_{L^\infty(\Omega_\varepsilon)} \quad \mathrm{for} \quad i\not=k.
\end{equation*}
It is easy to see
\begin{equation*}
  \partial_{\xi_{kl}'}W=\varepsilon^{4+2\alpha}|y-q'|^{2\alpha}\left(e^{U_\xi(\varepsilon y)}-e^{-U_\xi(\varepsilon y)}\right) \partial_{\xi_{kl}'} U_\xi(\varepsilon y).
\end{equation*}
From \eqref{82},  we get
\begin{equation*}
  \|(\partial_{\xi_{kl}'} W)\phi\|_*\leq C\|W\|_*\|\partial_{\xi_{kl}'}U_{\xi}(\varepsilon y)\|_{L^\infty(\Omega_\varepsilon)}\|\phi\|_{L^\infty(\Omega_\varepsilon)}\leq \frac{C}{\mu_k}\|\phi\|_{L^\infty(\Omega_\varepsilon)}.
\end{equation*}
It is easy to derive
\begin{equation*}
  L(\eta_{t1}^\xi Z_{ts})=\eta_{t1}^\xi L(Z_{ts})-\Delta \eta_{t1}^\xi Z_{ts} -2\nabla \eta_{t1}^\xi \cdot \nabla Z_{ts}-\nabla(\log a(\varepsilon y))\cdot \nabla \eta_{t1}^\xi Z_{ts}.
\end{equation*}
However
\[\left|\Delta \eta_{t1}^\xi Z_{ts} +2\nabla \eta_{t1}^\xi \cdot \nabla Z_{ts} +\nabla(\log a(\varepsilon y))\cdot \nabla \eta_{t1}^\xi Z_{ts}\right|\leq \frac{C}{\mu_t}\frac{\mu_t^\sigma}{(\mu_t+|y-\xi_t'|)^{2+\sigma}}.\]
In the case of $|z_t|\leq (R+1)\mu_t$, we get the following estimate from \eqref{81}:
\[L(Z_{ts})=\left(\frac{8\mu_t^2}{[\mu_t^2+|z_t|^2]^2}-W\right)Z_{ts}+O\left(\frac{\varepsilon}{( \mu_t+|z_t|)^2}\right)+O\left(\frac{\varepsilon^2}{ \mu_t+|z_t|}\right)\]
For the first term above, we get
\begin{align*}
  \left|\left(\frac{8\mu_t^2}{[\mu_t^2+|z_t|^2]^2}-W\right)Z_{ts}\right| \leq & \frac{C}{\mu_t+|z_t|}\left( \frac{8\mu_t^2}{[\mu_t^2+|z_t|^2]^2}-\frac{8\mu_t^2}{ [\mu_t^2+|y-\xi_t'|^2]^2}\frac{|\varepsilon y-q|^{2\alpha}}{|\xi_t-q|^{2\alpha}}\right) \\
   & +\frac{C\mu_t^2}{(\mu_t+|y-\xi_t'|)^5}\left(|\varepsilon y-\xi_t|^\beta+\sum_{k=0}^m (\varepsilon \mu_k)^{\frac2p-1}\right)+\frac{C\varepsilon^4}{\mu_t+|y-\xi_t'|}.
\end{align*}
Hence there holds
\[\|L(\eta_{t1}^\xi Z_{ts})\|_*\leq \frac{C}{\mu_t}.\]
Using this fact, we get
\begin{align*}
  \left\|\sum_{t=1}^m\sum_{s=1}^{J_t} b_{ts} L(\eta_{t1}^\xi Z_{ts})\right\|_*\leq & \sum_{s=1}^{J_k} |b_{ks}|\|L(\eta_{k1}^\xi Z_{ks})\|_* +\sum_{t\not=k}\sum_{s=1}^{J_t} |b_{ts}|\|L(\eta_{t1}^\xi Z_{ts})\|_* \\
  \leq & \frac{C}{\mu_k}\|\phi\|_{L^\infty(\Omega_\varepsilon)}+\varepsilon|\log\varepsilon|^\kappa \|\phi\|_{L^\infty(\Omega_\varepsilon)} \\
  \leq &  \frac{C}{\mu_k}\|\phi\|_{L^\infty(\Omega_\varepsilon)}.
\end{align*}
From \eqref{39} and \eqref{59}, we have $|c_{ij}|\leq \frac{C}{\mu_i}\|h\|_*$. Using this fact and  \eqref{60}, we also get
\[\left\|\sum_{i\not=k}\sum_{j=1}^{J_i}c_{ij}\partial_{\xi_{kl}'}(\chi_i Z_{ij})\right\|_*\leq C\varepsilon |\log\varepsilon|^\kappa \|h\|_*\]
and
\[\left\|\sum_{j=1}^{J_i} c_{kj}\partial_{\xi_{kl}'}(\chi_k Z_{kj})\right\|_* \leq \frac{C}{\mu_k}\|h\|_*.\]
Hence
\[\|\tilde{\psi}\|_{L^\infty(\Omega_\varepsilon)}\leq \frac{C|\log\varepsilon|^2}{\mu_k}\|h\|_*.\]
From the definition of $\tilde{\psi}$, we get
\[\|\psi\|_{L^\infty(\Omega_\varepsilon)}\leq \frac{C|\log\varepsilon|^2}{\mu_k}\|h\|_*.\]
This proposition follows.

\end{proof}

\section{Nonlinear theory}\label{sect3}
In this section we solve the following problem
\begin{equation}\label{58}
\left\{\begin{array}{ll}
L(\phi)=R +N(\phi) +\frac1{a(\varepsilon y)}\sum_{k=1}^m\sum_{l=1}^{J_k} c_{kl} \chi_k Z_{kl}& \mbox{in $\Omega_\varepsilon$,} \\
\frac{\partial \phi}{\partial n}=0, & \mbox{on $\partial \Omega$}, \\
\int_{\Omega_\varepsilon} \chi_k Z_{kl} \phi dx=0, \qquad k=1,2,\cdots, m; l=1, J_k
\end{array}\right.
\end{equation}
where $R$ and $N(\phi)$ were defined in \eqref{49} and \eqref{50}.

\begin{proposition}\label{prop3}
For $\xi=(\xi_1, \xi_2, \cdots, \xi_m)\in \mathcal{O}_\varepsilon$, there exist constants $C>0$ and $\varepsilon_0>0$ such that for $\varepsilon \in (0, \varepsilon_0)$,  problem \eqref{58} has a unique solution $\phi$. Moreover, $\phi$ satisfies the following estimates
\begin{equation}\label{67}
  \|\phi\|_{L^\infty(\Omega_\varepsilon)}\leq C|\log\varepsilon|\left[(\varepsilon \mu_0)^{\min\{\beta, 2(\alpha-\hat{\alpha}), \frac2p-1\}}+\sum_{i=1}^m (\varepsilon \mu_i)^{\min\{\beta, \frac{2}{p}-1\}}\right].
\end{equation}
and
\begin{equation}\label{65}
  \|\partial_{\xi_{kl}'}\phi\|_{L^\infty(\Omega_\varepsilon)}\leq \frac{C|\log\varepsilon|^2}{\mu_k}\left[(\varepsilon \mu_0)^{\min\{\beta, 2(\alpha-\hat{\alpha}), \frac2p-1\}}+\sum_{i=1}^m (\varepsilon \mu_i)^{\min\{\beta, \frac{2}{p}-1\}}\right].
\end{equation}
\end{proposition}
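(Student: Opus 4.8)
The plan is to reduce \eqref{58} to a fixed point problem and solve it by the contraction mapping principle, using the linear operator $T$ built in Proposition~\ref{prop2}. Since $T$ inverts $L$ modulo the span of the $\chi_iZ_{ij}$'s and automatically produces the multipliers $c_{ij}$, equation \eqref{58} is equivalent to the identity
\[
\phi=\mathcal{A}_\xi(\phi):=T\big(R+N(\phi)\big),
\]
so it suffices to show $\mathcal{A}_\xi$ is a contraction on the ball $\mathcal{F}_\rho=\{\phi\in L^\infty(\Omega_\varepsilon):\|\phi\|_{L^\infty(\Omega_\varepsilon)}\le\rho\}$ for a suitable radius $\rho$. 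I would first record the scalar quantity
\[
\mathcal{E}:=(\varepsilon\mu_0)^{\min\{\beta,\,2(\alpha-\hat\alpha),\,\frac2p-1\}}+\sum_{i=1}^m(\varepsilon\mu_i)^{\min\{\beta,\,\frac2p-1\}},
\]
so that \eqref{54} reads $\|R\|_*\le C\mathcal{E}$; by \eqref{51} each $\varepsilon\mu_i$ is bounded by $\varepsilon^{c}|\log\varepsilon|^{C}$ for some $c>0$, so $|\log\varepsilon|^{N}\mathcal{E}\to0$ for every fixed $N$, which is the mechanism that makes all absorptions below succeed.

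The heart of the argument is controlling $N(\phi)$ in the weighted norm $\|\cdot\|_*$ of \eqref{24}. Taylor-expanding the exponentials in \eqref{50}, using $|e^t-1-t|\le\tfrac{t^2}2e^{|t|}$ and boundedness of $\|\phi\|_{L^\infty(\Omega_\varepsilon)}$, together with $|\varepsilon y-q|^{2\alpha}e^{\pm V}\le W$, one gets the pointwise bound $|N(\phi)(y)|\le C\,W(y)\,|\phi(y)|^2$. Comparing the upper bounds \eqref{76}, \eqref{83}, \eqref{38} for $W$ with the weight in \eqref{24} and using $-1<\hat\alpha<\alpha$ and $\sigma$ small, one checks $\|W\|_*\le C$; hence $\|N(\phi)\|_*\le C\|\phi\|_{L^\infty(\Omega_\varepsilon)}^2$ and, in the same way, $\|N(\phi_1)-N(\phi_2)\|_*\le C\big(\|\phi_1\|_{L^\infty(\Omega_\varepsilon)}+\|\phi_2\|_{L^\infty(\Omega_\varepsilon)}\big)\|\phi_1-\phi_2\|_{L^\infty(\Omega_\varepsilon)}$. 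With \eqref{39} this yields, on $\mathcal{F}_\rho$,
\[
\|\mathcal{A}_\xi(\phi)\|_{L^\infty(\Omega_\varepsilon)}\le C|\log\varepsilon|\big(\|R\|_*+\rho^2\big),\qquad \|\mathcal{A}_\xi(\phi_1)-\mathcal{A}_\xi(\phi_2)\|_{L^\infty(\Omega_\varepsilon)}\le C|\log\varepsilon|\rho\,\|\phi_1-\phi_2\|_{L^\infty(\Omega_\varepsilon)}.
\]
Choosing $\rho=C_0|\log\varepsilon|\,\|R\|_*$ with $C_0$ large and $\varepsilon$ small, the quantity $|\log\varepsilon|\rho\le C|\log\varepsilon|^2\mathcal{E}\to0$, so $\mathcal{A}_\xi$ maps $\mathcal{F}_\rho$ into itself and is a contraction; the unique fixed point $\phi$ satisfies $\|\phi\|_{L^\infty(\Omega_\varepsilon)}\le\rho=C|\log\varepsilon|\,\|R\|_*$, which together with \eqref{54} is exactly \eqref{67}. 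Uniqueness in the stated sense follows since any solution of \eqref{58} with small norm lies in $\mathcal{F}_\rho$.

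For the dependence on the concentration points I would use the implicit function theorem: the map $(\phi,\xi)\mapsto\phi-T\big(R+N(\phi)\big)$ is $C^1$ on $L^\infty(\Omega_\varepsilon)\times\mathcal{O}_\varepsilon$ and its $\phi$-differential is $\mathrm{Id}-T\circ D_\phi N(\phi)$, the second summand having operator norm $\le C|\log\varepsilon|\rho<1$; hence $\xi\mapsto\phi$ is $C^1$. Writing $\psi:=\partial_{\xi_{kl}'}\phi$ and differentiating the fixed point identity,
\[
\psi=(\partial_{\xi_{kl}'}T)\big(R+N(\phi)\big)+T\Big(\partial_{\xi_{kl}'}R+(\partial^{\mathrm{expl}}_{\xi_{kl}'}N)(\phi)+D_\phi N(\phi)[\psi]\Big).
\]
Now the last estimate of Proposition~\ref{prop2} gives $\|(\partial_{\xi_{kl}'}T)(R+N(\phi))\|_{L^\infty(\Omega_\varepsilon)}\le C\mu_k^{-1}|\log\varepsilon|^2(\|R\|_*+\rho^2)\le C\mu_k^{-1}|\log\varepsilon|^2\|R\|_*$; \eqref{62} gives $\|\partial_{\xi_{kl}'}R\|_*\le C\mu_k^{-1}\mathcal{E}$; the Taylor estimate above together with \eqref{82} (so $\|\partial_{\xi_{kl}'}V\|_{L^\infty(\Omega_\varepsilon)}\le C/\mu_k$) gives $\|(\partial^{\mathrm{expl}}_{\xi_{kl}'}N)(\phi)\|_*\le C\mu_k^{-1}\|\phi\|_{L^\infty(\Omega_\varepsilon)}^2$; and $\|D_\phi N(\phi)[\psi]\|_*\le C\|\phi\|_{L^\infty(\Omega_\varepsilon)}\|\psi\|_{L^\infty(\Omega_\varepsilon)}$. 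Feeding these into \eqref{39} and \eqref{54} yields
\[
\|\psi\|_{L^\infty(\Omega_\varepsilon)}\le \frac{C|\log\varepsilon|^2}{\mu_k}\,\|R\|_*+C|\log\varepsilon|\,\|\phi\|_{L^\infty(\Omega_\varepsilon)}\,\|\psi\|_{L^\infty(\Omega_\varepsilon)},
\]
and since $|\log\varepsilon|\,\|\phi\|_{L^\infty(\Omega_\varepsilon)}\le C|\log\varepsilon|\rho\to0$ the last term is absorbed, giving \eqref{65} after inserting \eqref{54}.

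I expect the only genuinely delicate point to be the bound $\|W\|_*\le C$ and, more generally, the nonlinear estimates on $N(\phi)$ and $\partial_{\xi_{kl}'}N$ in the weighted norm: near each concentration point $q',\xi_1',\dots,\xi_m'$ the weight of $W$ only \emph{barely} dominates the weight in \eqref{24}, so one must exploit the strict inequality $\hat\alpha<\alpha$ at $q'$ and the smallness of $\sigma$ at the $\xi_j'$ to close the comparison. Once that is in place, the rest is the a priori linear bounds of Proposition~\ref{prop2} plus routine bookkeeping of the powers of $\varepsilon$ and $|\log\varepsilon|$ already collected in \eqref{51}, \eqref{54} and \eqref{62}.
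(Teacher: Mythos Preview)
Your proposal is correct and follows essentially the same route as the paper: recast \eqref{58} as the fixed point $\phi=T(R+N(\phi))$, use the pointwise bound $|N(\phi)|\le CW|\phi|^2$ together with $\|W\|_*\le C$ and Proposition~\ref{prop2} to run the contraction mapping argument on a ball of radius $\sim|\log\varepsilon|\,\|R\|_*$, and then differentiate the fixed-point identity and absorb the $D_\phi N(\phi)[\psi]$ term using smallness of $|\log\varepsilon|\,\|\phi\|_{L^\infty}$. The paper's proof is organized identically, with only cosmetic differences (it writes the Lipschitz bound for $N$ as $C\big(\|\phi_2\|_{L^\infty}\|\phi_1-\phi_2\|_{L^\infty}+\|\phi_1-\phi_2\|_{L^\infty}^2\big)$ and appeals directly to continuous dependence rather than the implicit function theorem for the $C^1$ regularity in $\xi$).
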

\begin{proof}
Define the operator $A(\phi):=T(R+N(\phi))$ and the set
\[\mathcal{F}=\left\{\phi\in C(\bar\Omega_\varepsilon)\;;\;\|\phi\|_{L^\infty(\Omega_\varepsilon)}\leq \theta \left[(\varepsilon \mu_0)^{\min\{\beta, 2(\alpha-\hat{\alpha}), \frac2p-1\}}+\sum_{i=1}^m (\varepsilon \mu_i)^{\min\{\beta, \frac{2}{p}-1\}}\right]|\log\varepsilon|\right\},\]
where $\theta$ is a constant large enough. We consider the problem $\phi=A(\phi)$ on $\mathcal{F}$.

Since
\[|N(\phi)|\leq C\varepsilon^4|\varepsilon y-q|^{2\alpha}\left(e^{U_\xi(\varepsilon y)}+e^{-U_\xi(\varepsilon y)}\right)|\phi|^2=CW|\phi|^2,\]
we get
\begin{equation}\label{66}
\|N(\phi)\|_*\leq C\|W\|_*\|\phi\|^2_{L^\infty(\Omega_\varepsilon)}\leq C\|\phi\|_{L^\infty(\Omega_\varepsilon)}^2.
\end{equation}
Hence for $\phi\in \mathcal{F}$
\begin{equation*}
  \|A\phi\|_{L^\infty(\Omega_\varepsilon)}\leq C|\log\varepsilon|\left(\|R\|_*+\|N(\phi)\|_*\right)\leq C\left[(\varepsilon \mu_0)^{\min\{\beta, 2(\alpha-\hat{\alpha}), \frac2p-1\}}+\sum_{i=1}^m (\varepsilon \mu_i)^{\min\{\beta, \frac{2}{p}-1\}}\right]|\log\varepsilon|.
\end{equation*}
It is apparent that for $\phi_1, \phi_2\in \mathcal{F}$, we get
\begin{align*}
  &\left|N(\phi_1)-N(\phi_2)\right| \\
  \leq & C\varepsilon^4|\varepsilon y-q|^{2\alpha}\left\{e^{U_\xi(\varepsilon y)}\left|e^{\phi_1}-e^{\phi_2}-(\phi_1-\phi_2)\right|+ e^{-U_\xi(\varepsilon y)}\left|e^{-\phi_1}-e^{-\phi_2}+(\phi_1-\phi_2)\right|\right\} \\
  \leq & CW\left[|\phi_2||\phi_1-\phi_2|+|\phi_1-\phi_2|^2\right].
\end{align*}
Hence
\begin{align*}
  \|N(\phi_1)-N(\phi_2)\|_*\leq & C\left[\|\phi_2\|_{L^\infty(\Omega_\varepsilon)}\|\phi_1-\phi_2\|_{L^\infty( \Omega_\varepsilon)}+\|\phi_1-\phi_2\|_{L^\infty(\Omega_\varepsilon)}^2\right] \\
  \leq & C\left[(\varepsilon \mu_0)^{\min\{\beta, 2(\alpha-\hat{\alpha}), \frac2p-1\}}+\sum_{i=1}^m (\varepsilon \mu_i)^{\min\{\beta, \frac{2}{p}-1\}}\right]|\log\varepsilon|\|\phi_1-\phi_2\|_{L^\infty(\Omega_\varepsilon)}.
\end{align*}
From Proposition \ref{prop2}, we get
\begin{eqnarray*}
  &&\|A(\phi_1)-A(\phi_2)\|_{L^\infty(\Omega_\varepsilon)} \\
  &\leq& C|\log\varepsilon|\|N(\phi_1)-N(\phi_2)\|_* \\
  &\leq& C\left[(\varepsilon \mu_0)^{\min\{\beta, 2(\alpha-\hat{\alpha}), \frac2p-1\}}+\sum_{i=1}^m (\varepsilon \mu_i)^{\min\{\beta, \frac{2}{p}-1\}}\right]|\log\varepsilon|^2\|\phi_1-\phi_2\|_{L^\infty(\Omega_\varepsilon)}.
\end{eqnarray*}
According to Banach fixed point theorem, the problem $\phi=A(\phi)$ has a unique solution $\phi\in \mathcal{F}$ for $\varepsilon>0$ small enough. Moreover, the solution $\phi$ satisfies
\[\|\phi\|_{L^\infty(\Omega_\varepsilon)}\leq C\left[\|R\|_*+\|N(\phi)\|_*\right]\leq C\left[(\varepsilon \mu_0)^{\min\{\beta, 2(\alpha-\hat{\alpha}), \frac2p-1\}}+\sum_{i=1}^m (\varepsilon \mu_i)^{\min\{\beta, \frac{2}{p}-1\}}\right]|\log\varepsilon|.\]
Since $R$ is continuously dependent on $\xi_i$, $i=1, 2,\cdots,m$,  $\phi$ is $C^1$-continuous on $\xi_i$, $i=1, 2,\cdots,m$. Then $\partial_{\xi_{kl}'} \phi$ is a solution of
\begin{equation}\label{63}
\partial_{\xi_{kl}'}\phi=(\partial_{\xi_{kl}'}T)[R+N(\phi)]+T[\partial_{\xi_{kl}'} R+\partial_{\xi_{kl}'} N(\phi)].
\end{equation}
It is apparent
\begin{align*}
  \partial_{\xi_{kl}'} N(\phi)= & \varepsilon^{4+2\alpha}|y-q'|^{2\alpha}\left[\left(e^{U_\xi(\varepsilon y)+\phi}-e^{U_\xi(\varepsilon y)}-e^{U_\xi(\varepsilon y)}\phi\right)\right. \\
  &+\left. \left(e^{-U_\xi(\varepsilon y)-\phi}-e^{-U_\xi(\varepsilon y)}+e^{-U_\xi(\varepsilon y)}\phi\right)\right]\partial_{\xi_{kl}'} U_\xi(\varepsilon y) \\
   & +\varepsilon^{4+2\alpha}|y-q'|^{2\alpha}\left[\left(e^\phi-1\right)e^{U_\xi(\varepsilon y)} +\left(e^{-\phi}-1\right)e^{-U_\xi(\varepsilon y)}\right] \partial_{\xi_{kl}'}\phi.
\end{align*}
Then we get
\begin{equation*}
  |\partial_{\xi_{kl}'}N(\phi)|\leq C|W| \left[|\phi|^2 |\partial_{\xi_{kl}'}U_\xi(\varepsilon y)|+|\phi||\partial_{\xi_{kl}'}\phi|\right].
\end{equation*}
So
\begin{equation}\label{64}
\|\partial_{\xi_{kl}'}N(\phi)\|_*\leq C\|W\|_*\left[\frac1{\mu_k}\|\phi\|_{L^\infty(\Omega_\varepsilon)}^2+\|\phi\|_{L^\infty( \Omega_\varepsilon)}\|\partial_{\xi_{kl}'}\phi\|_{L^\infty( \Omega_\varepsilon)}\right].
\end{equation}
From \eqref{63} and Proposition \ref{prop2}, we get
\[\|\partial_{\xi_{kl}'}\phi\|_{L^\infty(\Omega_\varepsilon)}\leq \frac{C}{\mu_k}|\log\varepsilon|^2 \left[\|R\|_*+\|N(\phi)\|_*\right]+C|\log\varepsilon|\left[\|\partial_{\xi_{kl}'}R\|_* +\|\partial_{\xi_{kl}'} N(\phi)\|_*\right].\]
From \eqref{54}, \eqref{62}, \eqref{67}, \eqref{66} and \eqref{64}, we get \eqref{65}.

\end{proof}

\section{Variational reduction}\label{sect5}
According to Proposition \ref{prop3}, we recognize that if  all the constants $c_{kl}$'s in \eqref{58} equal to $0$, we will solve \eqref{124}.

Define the functional $F(\xi)$ on $\mathcal{O}_\varepsilon$ by
\[F(\xi)=J\left[V\left(\frac{x}{\varepsilon}\right)+ \phi\left(\frac{x}{\varepsilon}\right)-4\log\varepsilon\right].\]
Recall that the functional $J(u)$ is defined in \eqref{6}. We also define the functional
\[I_\varepsilon(v)=\frac12\int_{\Omega_\varepsilon}a(\varepsilon y)\left[|\nabla v|^2+\varepsilon^2v^2\right]dy-\varepsilon^4\int_{\Omega_\varepsilon}a(\varepsilon y)|\varepsilon y-q|^{2\alpha}\left(e^v+e^{-v}\right)dy.\]
It is apparent that
\[J\left[V\left(\frac{x}{\varepsilon}\right)+ \phi\left(\frac{x}{\varepsilon}\right)-4\log\varepsilon\right]=I_\varepsilon(V(y)+\phi(y) -4\log\varepsilon)\]
and
\[J\left[V\left(\frac{x}{\varepsilon}\right)+ \phi\left(\frac{x}{\varepsilon}\right)-4\log\varepsilon\right]-J\left[V\left(\frac{x}{ \varepsilon}\right)-4\log\varepsilon\right]=I_\varepsilon(V(y)+\phi(y) -4\log\varepsilon)-I_\varepsilon(V(y)-4\log\varepsilon).\]
\begin{proposition}\label{prop4}
$F: \mathcal{O}_\varepsilon\to \R$ is a $C^1$ functional. For $\varepsilon>0$ small enough, $DF(\xi)=0$ is equivalent to $c_{ij}(\xi')=0$ for $i=1,2,\cdots,m;$ $j=1, J_i$.
\end{proposition}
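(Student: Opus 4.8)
The plan is to argue by the standard variational (Lyapunov--Schmidt) scheme in four steps.

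\textbf{Step 1 ($C^1$ regularity).} By Proposition \ref{prop3} the solution $\phi=\phi_\xi$ of \eqref{58} depends on $\xi\in\mathcal{O}_\varepsilon$ in a $C^1$ fashion, while $V=V_\xi$ is smooth in $\xi$ because the parameters $\mu_i=\mu_i(\xi)$ defined implicitly by \eqref{4} and the corrections $H_i=H_i(\xi)$ (cf.\ Lemma \ref{lm7}) are. Since $I_\varepsilon$ is a $C^1$ functional on $H^1(\Omega_\varepsilon)$ and $F(\xi)=I_\varepsilon\!\big(V_\xi+\phi_\xi-4\log\varepsilon\big)$, the chain rule yields $F\in C^1(\mathcal{O}_\varepsilon)$.

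\textbf{Step 2 (reduction of $DF(\xi)=0$ to a linear system).} Put $w_\xi:=V_\xi+\phi_\xi-4\log\varepsilon$, so $F(\xi)=I_\varepsilon(w_\xi)$. Multiplying the first equation in \eqref{58} by $a(\varepsilon y)$ and inserting the definitions \eqref{49}, \eqref{50} of $R$ and $N(\phi)$ — the terms $W\phi$, $e^{V}\phi$, $e^{-V}\phi$ cancelling — one checks that $w_\xi$ solves weakly
\[
-\Div\!\big(a(\varepsilon y)\nabla w_\xi\big)+\varepsilon^2 a(\varepsilon y)w_\xi=\varepsilon^4 a(\varepsilon y)|\varepsilon y-q|^{2\alpha}\big(e^{w_\xi}-e^{-w_\xi}\big)+\sum_{i=1}^m\sum_{j=1}^{J_i}c_{ij}(\xi')\chi_iZ_{ij},
\]
together with $\partial_n w_\xi=0$ on $\partial\Omega_\varepsilon$ (which holds because $\partial_n V_\xi=0$ by \eqref{86} and $\partial_n\phi_\xi=0$). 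Hence $DI_\varepsilon(w_\xi)[\psi]=\sum_{i,j}c_{ij}(\xi')\int_{\Omega_\varepsilon}\chi_iZ_{ij}\psi\,dy$ for all $\psi\in H^1(\Omega_\varepsilon)$, and by the chain rule
\[
\partial_{\xi_{kl}'}F(\xi)=DI_\varepsilon(w_\xi)\big[\partial_{\xi_{kl}'}w_\xi\big]=\sum_{i=1}^m\sum_{j=1}^{J_i}c_{ij}(\xi')\,\mathcal{A}_{(ij),(kl)},\qquad \mathcal{A}_{(ij),(kl)}:=\int_{\Omega_\varepsilon}\chi_iZ_{ij}\,\partial_{\xi_{kl}'}w_\xi\,dy .
\]
Thus $DF(\xi)=0$ is equivalent to the homogeneous square linear system $\sum_{i,j}c_{ij}(\xi')\,\mathcal{A}_{(ij),(kl)}=0$ for all $k=1,\dots,m$, $l=1,\dots,J_k$, and it remains to show that the matrix $\mathcal{A}$ is invertible for $\varepsilon$ small.

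\textbf{Step 3 (invertibility of $\mathcal{A}$).} Write $\partial_{\xi_{kl}'}w_\xi=\partial_{\xi_{kl}'}V_\xi+\partial_{\xi_{kl}'}\phi_\xi$. By \eqref{82}, $\partial_{\xi_{kl}'}V_\xi=4b_kZ_{kl}+O\!\big(\tfrac1{\mu_k}(\varepsilon\mu_k)^{\frac2p-1}\big)$. Since $\chi_i$ is supported in $B_{(R_0+1)\mu_i}(\xi_i')$ whereas $|\xi_i'-\xi_k'|\ge(\varepsilon|\log\varepsilon|^\kappa)^{-1}$ for $\xi\in\mathcal{O}_\varepsilon$, a change of variables in $\int_{\Omega_\varepsilon}\chi_iZ_{ij}Z_{kl}\,dy$ gives $\gamma_k\,\delta_{ik}\delta_{jl}+O(\varepsilon\mu_i|\log\varepsilon|^\kappa)$, where $\gamma_k>0$ is a fixed constant depending only on whether $\xi_k$ is interior or on the boundary. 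For the $\phi_\xi$ part, differentiating the orthogonality relation $\int_{\Omega_\varepsilon}\chi_iZ_{ij}\phi_\xi\,dy=0$ yields $\int_{\Omega_\varepsilon}\chi_iZ_{ij}\partial_{\xi_{kl}'}\phi_\xi\,dy=-\int_{\Omega_\varepsilon}\partial_{\xi_{kl}'}(\chi_iZ_{ij})\phi_\xi\,dy$, which by \eqref{60}, the bound $|\partial_{\xi_{kl}'}\mu_i|\le C\varepsilon\mu_i|\log\varepsilon|^\kappa$ and Proposition \ref{prop3} is $O\!\big((\delta_{ik}+\varepsilon\mu_i|\log\varepsilon|^\kappa)\|\phi_\xi\|_{L^\infty(\Omega_\varepsilon)}\big)$. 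Collecting these estimates one obtains $\mathcal{A}_{(ij),(kl)}=4b_k\gamma_k\,\delta_{ik}\delta_{jl}+o(1)$ as $\varepsilon\to0$, uniformly for $\xi\in\mathcal{O}_\varepsilon$; here the two-sided bounds \eqref{51} are used to guarantee that $\mu_i$ and $\mu_i^{-1}$ are controlled by fixed powers of $|\log\varepsilon|$ (for $i=1,\dots,m$) and that $\|\phi_\xi\|_{L^\infty(\Omega_\varepsilon)}\to0$, so that $\varepsilon\mu_i|\log\varepsilon|^\kappa$, $\mu_i\mu_k^{-1}(\varepsilon\mu_k)^{\frac2p-1}$ and $\|\phi_\xi\|_{L^\infty(\Omega_\varepsilon)}$ all tend to $0$. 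Hence $\mathcal{A}$ converges, uniformly in $\xi$, to the invertible diagonal matrix $\mathrm{diag}(4b_k\gamma_k)$, and is therefore invertible once $\varepsilon$ is small enough. Consequently, by Step 2, for such $\varepsilon$ one has $DF(\xi)=0$ if and only if $c_{ij}(\xi')=0$ for all $i=1,\dots,m$, $j=1,\dots,J_i$.

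\textbf{Main obstacle.} The delicate point is Step 3, namely the uniform invertibility of the interaction matrix $\mathcal{A}$ over $\mathcal{O}_\varepsilon$: one must isolate the leading diagonal contribution $4b_k\gamma_k$ coming from \eqref{82} and estimate away all remainders (the off-diagonal spike--spike interactions, the lower-order part of $\partial_{\xi_{kl}'}V_\xi$, and the contribution of $\partial_{\xi_{kl}'}\phi_\xi$), which is precisely where the a priori control \eqref{51} of the concentration parameters and the smallness estimates for $\|\phi_\xi\|_{L^\infty(\Omega_\varepsilon)}$ and $\|\partial_{\xi_{kl}'}\phi_\xi\|_{L^\infty(\Omega_\varepsilon)}$ from Proposition \ref{prop3} are essential.
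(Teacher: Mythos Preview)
Your proof is correct and follows essentially the same strategy as the paper: reduce $DF(\xi)=0$ to a linear system $\sum_{i,j}c_{ij}\,\mathcal A_{(ij),(kl)}=0$, then use \eqref{82} to show the coefficient matrix is an $o(1)$ perturbation of an invertible diagonal matrix. The one minor difference is how you handle the contribution $\int_{\Omega_\varepsilon}\chi_iZ_{ij}\,\partial_{\xi'_{kl}}\phi_\xi\,dy$: you differentiate the orthogonality constraint to transfer the derivative onto $\chi_iZ_{ij}$ and then invoke only the smallness of $\|\phi_\xi\|_{L^\infty}$ from \eqref{67}, whereas the paper bounds this term directly by $C\mu_i\|\partial_{\xi'_{kl}}\phi_\xi\|_{L^\infty}$ and appeals to the derivative estimate \eqref{65}. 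Your route is slightly more economical (it avoids using \eqref{65}), but both lead to the same conclusion.
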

\begin{proof}
If $D_\xi F(\xi)=0$, we get $D_\xi I_\varepsilon(U_\xi(\varepsilon y)+\phi(y))=0$. That is
\[I_\varepsilon'[U_\xi(\varepsilon y)+\phi(y)][\partial_{\xi_{kl}'}V(y)+ \partial_{\xi_{kl}'}\phi]=0.\]
From Proposition \ref{prop3},  we notice that the equality above is equivalent to
\begin{equation*}
  \sum_{i=1}^m\sum_{j=1}^{J_i} c_{ij}\left[\int_{\Omega_\varepsilon} \chi_i Z_{ij}\partial_{\xi_{kl}'}V+\int_{\Omega_\varepsilon} \chi_i Z_{ij}\partial_{\xi_{kl}'}\phi\right]=0.
\end{equation*}
Now we calculate each term in the bracket. It is easy to see
\begin{eqnarray*}
  \left|\int_{\Omega_\varepsilon} \chi_i Z_{ij} \partial_{\xi_{kl}'} \phi dy \right| &\leq& \|\partial_{\xi_{kl}'}\phi\|_{L^\infty(\Omega_\varepsilon)}\int_{\Omega_\varepsilon}\chi_i|Z_{ij}|dy \leq C\mu_i\|\partial_{\xi_{kl}'}\phi\|_{L^\infty(\Omega_\varepsilon)} \\
  &\leq& C\frac{\mu_i}{\mu_k} \left[(\varepsilon \mu_0)^{\min\{\beta, 2(\alpha-\hat{\alpha}), \frac2p-1\}}+\sum_{i=1}^m (\varepsilon \mu_i)^{\min\{\beta, \frac{2}{p}-1\}}\right].
\end{eqnarray*}
From \eqref{82}, we get
\begin{equation*}
  \int_{\Omega_\varepsilon} \chi_i Z_{ij}\partial_{\xi_{kl}'} Vdy=\int_{\Omega_\varepsilon} \chi_i Z_{ij}\left[\frac{4b_k}{\mu_k}Z_l\left(\frac{y-\xi_k'}{\mu_k}\right)+O\left( \frac1{\mu_k}(\varepsilon \mu_k)^{\frac2p-1} \right)\right]
\end{equation*}
In the case of $k\not=i$, we have
\begin{equation*}
  \int_{\Omega_\varepsilon} \chi_i Z_{ij}\frac{4b_k}{\mu_k} Z_l\left(\frac{y-\xi_k'}{\mu_k}\right)= O\left( \varepsilon\mu_i|\log\varepsilon|^\kappa \right).
\end{equation*}
Whereas, in the case of $k=i$
\begin{align*}
  \int_{\Omega_\varepsilon} \chi_i Z_{ij}\frac{4b_k}{\mu_k} Z_l\left(\frac{y-\xi_k'}{\mu_k} \right)dy= & \frac{4b_i}{\mu_i^2}\int_{\Omega_\varepsilon} \chi\left(\frac{z_i}{\mu_i}\right) Z_j\left(\frac{z_i}{\mu_i}\right)\left[Z_l\left(\frac{z_i}{\mu_i}\right)+O(\varepsilon \mu_i)\right]dy \\
  = & \frac{4b_i}{\mu_i^2}\int_{\Omega_\varepsilon} \chi\left(\frac{z_i}{\mu_i}\right)Z_j\left(\frac{z_i}{\mu_i}\right)Z_l\left( \frac{z_i}{\mu_i}\right)dy +O(\varepsilon\mu_i) \\
  =& 4b_i\delta_{jl}\int_{\R_{\xi_i}^2} \chi(t) Z_1^2(t)dt+O(\varepsilon \mu_i).
\end{align*}
Then we get
\[\int_{\Omega_\varepsilon} \chi_i Z_{ij}\partial_{\xi_{kl}'}V dy=4b_i\delta_{jl}\delta_{ik}\int_{\R_{\xi_{i}}}\chi(t) Z_1^2(t)dt+o(1).\]
Hence we get $c_{ij}(\xi)=0$ for $\varepsilon>0$ small enough.

\end{proof}
According to Proposition \ref{prop4},  we notice that if we  find a critical point of $F$ in $\mathcal{O}_\varepsilon$, we will solve \eqref{124}.

\begin{lemma}\label{lm8}
For $\varepsilon>0$ small enough, we get
\begin{eqnarray}\label{68}
\nonumber&& I_{\varepsilon} (V-4\log\varepsilon+\phi) \\
&=&I_\varepsilon (V-4\log\varepsilon) +O\left(\left[(\varepsilon \mu_0)^{\min\{\beta, 2(\alpha-\hat{\alpha}), \frac2p-1\}}+\sum_{i=1}^m (\varepsilon \mu_i)^{\min\{\beta, \frac{2}{p}-1\}}\right]^2|\log\varepsilon|\right)
\end{eqnarray}
and
\begin{eqnarray}\label{70}
\nonumber&&\partial_{\xi_{kl}'} I_\varepsilon(V-4\log\varepsilon+\phi) \\
&=& \partial_{\xi_{kl}'} I_\varepsilon(V-4\log\varepsilon) +O\left(\frac1{\mu_k}\left[(\varepsilon \mu_0)^{\min\{\beta, 2(\alpha-\hat{\alpha}), \frac2p-1\}}+\sum_{i=1}^m (\varepsilon \mu_i)^{\min\{\beta, \frac{2}{p}-1\}}\right]^2|\log\varepsilon|^2\right).
\end{eqnarray}
\end{lemma}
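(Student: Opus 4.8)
Throughout write $w:=V-4\log\varepsilon=U_\xi(\varepsilon y)$; by the construction \eqref{86} of the $H_i$'s one has $\partial w/\partial n=0$ on $\partial\Omega_\varepsilon$, and by \eqref{16}, $W=\varepsilon^{4}|\varepsilon y-q|^{2\alpha}(e^{w}+e^{-w})$. Since $e^{V}=\varepsilon^{4}e^{w}$ and $\varepsilon^{8}e^{-V}=\varepsilon^{4}e^{-w}$, the definition \eqref{49} of $R$ may be rewritten as the pointwise identity
\[-\Delta w-\nabla(\log a(\varepsilon y))\cdot\nabla w+\varepsilon^{2}w-\varepsilon^{4}|\varepsilon y-q|^{2\alpha}\big(e^{w}-e^{-w}\big)=-R .\]
The plan is to expand $I_\varepsilon(w+\phi)-I_\varepsilon(w)$ to second order in $\phi$ and then use the projected equation solved by $\phi$ to trade one factor of $\|\phi\|_{L^\infty}$ for the smaller quantity $\|R\|_*$. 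Put $\rho_\varepsilon:=(\varepsilon\mu_0)^{\min\{\beta,2(\alpha-\hat\alpha),2/p-1\}}+\sum_{i=1}^{m}(\varepsilon\mu_i)^{\min\{\beta,2/p-1\}}$, so that \eqref{54} reads $\|R\|_*\le C\rho_\varepsilon$ and Proposition \ref{prop3} reads $\|\phi\|_{L^\infty}\le C|\log\varepsilon|\rho_\varepsilon$; since \eqref{51} forces $\varepsilon\mu_i\to0$ polynomially in $\varepsilon$, one has $|\log\varepsilon|^{k}\rho_\varepsilon\to0$ for every $k$.

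By Taylor's formula $I_\varepsilon(w+\phi)-I_\varepsilon(w)=DI_\varepsilon(w)[\phi]+\tfrac12 D^{2}I_\varepsilon(w)[\phi,\phi]+\mathcal R_3$, where, with $g:=\varepsilon^{4}a(\varepsilon y)|\varepsilon y-q|^{2\alpha}$,
\[\mathcal R_3=-\int_{\Omega_\varepsilon}g\,\Big[\big(e^{w+\phi}-e^{w}-e^{w}\phi-\tfrac12 e^{w}\phi^{2}\big)+\big(e^{-w-\phi}-e^{-w}+e^{-w}\phi-\tfrac12 e^{-w}\phi^{2}\big)\Big].\]
Integrating by parts (boundary terms vanish because $\partial w/\partial n=\partial\phi/\partial n=0$) and using the identity above, $DI_\varepsilon(w)[\phi]=-\int_{\Omega_\varepsilon}a(\varepsilon y)R\phi$. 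Also $D^{2}I_\varepsilon(w)[\phi,\phi]=\int_{\Omega_\varepsilon}a(\varepsilon y)\big[|\nabla\phi|^{2}+\varepsilon^{2}\phi^{2}-W\phi^{2}\big]$, and testing $L(\phi)=R+N(\phi)+a(\varepsilon y)^{-1}\sum_{i,j}c_{ij}\chi_iZ_{ij}$ against $a(\varepsilon y)\phi$ and invoking the orthogonality $\int_{\Omega_\varepsilon}\chi_iZ_{ij}\phi=0$ makes this equal to $\int_{\Omega_\varepsilon}a(\varepsilon y)\big(R+N(\phi)\big)\phi$. Hence
\[I_\varepsilon(w+\phi)-I_\varepsilon(w)=-\tfrac12\int_{\Omega_\varepsilon}a(\varepsilon y)R\phi+\tfrac12\int_{\Omega_\varepsilon}a(\varepsilon y)N(\phi)\phi+\mathcal R_3 .\]

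To estimate the right-hand side I would use three facts. First, since $\|\phi\|_{L^\infty}\to0$, the integrand of $\mathcal R_3$ is $O\big((e^{w}+e^{-w})|\phi|^{3}\big)$, so $|\mathcal R_3|\le C\int_{\Omega_\varepsilon}W|\phi|^{3}$. Second, $\int_{\Omega_\varepsilon}W\,dy=O(1)$ and the integral over $\Omega_\varepsilon$ of the weight appearing in $\|\cdot\|_*$ is $O(1)$; both follow from \eqref{76}, \eqref{83}, \eqref{38} after the rescalings $y-\xi_j'=\mu_j t$, $y-q'=\mu_0 t$ (the resulting model integrals converge because $2\alpha>-2$, $\hat\alpha>-1$ and $\sigma>0$). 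Third, $\|N(\phi)\|_*\le C\|\phi\|_{L^\infty}^{2}$ by \eqref{66}. Combining these with the boundedness of $a$, the right-hand side is $O\big(\|R\|_*\|\phi\|_{L^\infty}+\|\phi\|_{L^\infty}^{3}\big)=O\big(|\log\varepsilon|\rho_\varepsilon^{2}\big)+O\big(|\log\varepsilon|^{3}\rho_\varepsilon^{3}\big)=O\big(|\log\varepsilon|\rho_\varepsilon^{2}\big)$, which is exactly \eqref{68}.

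For \eqref{70} I would differentiate the last displayed identity in $\xi_{kl}'$ (note that $a(\varepsilon y)$ is $\xi$-independent). The leading piece $\partial_{\xi_{kl}'}\!\big(\int a(\varepsilon y)R\phi\big)=\int a(\varepsilon y)(\partial_{\xi_{kl}'}R)\phi+\int a(\varepsilon y)R\,\partial_{\xi_{kl}'}\phi$ is $O\big(\|\partial_{\xi_{kl}'}R\|_*\|\phi\|_{L^\infty}+\|R\|_*\|\partial_{\xi_{kl}'}\phi\|_{L^\infty}\big)$, which by \eqref{62}, \eqref{65} and the bounds above is $O\big(\mu_k^{-1}|\log\varepsilon|^{2}\rho_\varepsilon^{2}\big)$; the derivatives of $\int a(\varepsilon y)N(\phi)\phi$ and of $\mathcal R_3$ are handled exactly as the differentiation of $N(\phi)$ in the proof of Proposition \ref{prop3}, using $\partial_{\xi_{kl}'}W=\varepsilon^{4}|\varepsilon y-q|^{2\alpha}(e^{w}-e^{-w})\partial_{\xi_{kl}'}U_\xi(\varepsilon y)$, the expansion \eqref{82} of $\partial_{\xi_{kl}'}U_\xi$, the estimate \eqref{64}, and $\int_{\Omega_\varepsilon}W\,(\mu_k+|y-\xi_k'|)^{-1}dy\le C\mu_k^{-1}$, so that each such term carries an extra factor $\|\phi\|_{L^\infty}$ or $\varepsilon\mu_k$ and is $o\big(\mu_k^{-1}|\log\varepsilon|^{2}\rho_\varepsilon^{2}\big)$; this gives \eqref{70}. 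The step I expect to be the main obstacle is controlling $\tfrac12 D^{2}I_\varepsilon(w)[\phi,\phi]$: it contains $\int_{\Omega_\varepsilon}a(\varepsilon y)|\nabla\phi|^{2}$, for which no direct $L^\infty$ bound is available, and even after rewriting it as $\tfrac12\int a(R+N(\phi))\phi$ a naive estimate of $\int aN(\phi)\phi$ would cost $\|\phi\|_{L^\infty}^{2}\le C|\log\varepsilon|^{2}\rho_\varepsilon^{2}$ — one power of $|\log\varepsilon|$ too many for \eqref{68}. The resolution is that $\int aN(\phi)\phi$ and $\mathcal R_3$ are in fact $O(\|\phi\|_{L^\infty}^{3})=o(|\log\varepsilon|\rho_\varepsilon^{2})$ via the cubic Taylor estimates together with $\int_{\Omega_\varepsilon}W=O(1)$, leaving the $\phi$-linear term $\int aR\phi=O(|\log\varepsilon|\rho_\varepsilon^{2})$ as the only genuine error, and the same mechanism must then be replayed after differentiation while carefully tracking the $\mu_k^{-1}$ weights produced when $\partial_{\xi_{kl}'}$ hits the concentrated bubbles.
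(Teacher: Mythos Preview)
Your proposal is correct and follows essentially the same route as the paper. The only cosmetic difference is in how the Taylor expansion is organized: the paper writes $I_\varepsilon(w+\phi)-I_\varepsilon(w)=I_\varepsilon'(w+\phi)[\phi]-\int_0^1 t\,I_\varepsilon''(w+t\phi)[\phi,\phi]\,dt$ and uses $I_\varepsilon'(w+\phi)[\phi]=0$ (by orthogonality) at the outset, whereas you expand at $w$ and compute $I_\varepsilon'(w)[\phi]=-\int aR\phi$ explicitly; in both versions the quadratic form is converted to $\int a(R+N(\phi))\phi$ by testing $L(\phi)=R+N(\phi)+\cdots$ against $a\phi$, the cubic remainders are bounded via $\int W|\phi|^3\le C\|\phi\|_{L^\infty}^3$, and the derivative estimate \eqref{70} is obtained by differentiating the resulting identity and bounding term by term exactly as you describe.
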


\begin{proof}
From Proposition \ref{prop3}, we get
\[I'_\varepsilon(V-4\log\varepsilon+\phi)\phi=\int_{\Omega_\varepsilon}a(\varepsilon y)\left[L(\phi)-R-N(\phi) \right]\phi dy=\sum_{k=1}^m\sum_{l=1}^{J_k}c_{kl}\int_{\Omega_\varepsilon} \chi_k Z_{kl}\phi dy=0.\]
Then
\begin{align}\label{69}
  \nonumber &I_\varepsilon(V-4\log\varepsilon +\phi)-I_\varepsilon(V-4\log\varepsilon) \\
  \nonumber = -& \int_0^1tI_\varepsilon''(V-4\log\varepsilon+t\phi)[\phi, \phi]dt \\
  \nonumber = -& \int_0^1tdt\int_{\Omega_\varepsilon} a(\varepsilon y)\left[-\Delta \phi-\nabla(\log a(\varepsilon y))\cdot \nabla \phi +\varepsilon^2 \phi-|\varepsilon y-q|^{2\alpha}\left(e^{V+t\phi}+\varepsilon^8 e^{-V-t\phi}\right)\phi\right]\phi dy \\
  \nonumber=& -\int_0^1tdt\int_{\Omega_\varepsilon} a(\varepsilon y)\left\{R+N(\phi)-|\varepsilon y-q|^{2\alpha}\left[e^{V+t\phi}-e^V+\varepsilon^8\left(e^{-V-t\phi}-e^{-V}\right)\right]\phi \right\}\phi dy \\
  \nonumber=& -\int_0^1 tdt\int_{\Omega_\varepsilon} a(\varepsilon y)\left[R+N(\phi)\right]\phi dy \\
  &+ \int_0^1 tdt \int_{\Omega_\varepsilon} a(\varepsilon y)|\varepsilon y-q|^{2\alpha}\left[(e^{V+t\phi}-e^V)+\varepsilon^8 (e^{-V-t\phi}-e^{-V})\right]\phi^2dy.
\end{align}
It is apparent that
\begin{align*}
   \left|\int_0^1 tdt\int_{\Omega_\varepsilon} a(\varepsilon y)\left[R+N(\phi)\right]\phi dy\right| \leq & C\|\phi\|_{L^\infty(\Omega_\varepsilon)}\left[\|R\|_*+\|N(\phi)\|_*\right] \\
   \leq & C\|\phi\|_{L^\infty(\Omega_\varepsilon)}\left[\|R\|_*+\|\phi\|_{L^\infty( \Omega_\varepsilon)}^2\right] \\
   \leq & C\left[(\varepsilon \mu_0)^{\min\{\beta, 2(\alpha-\hat{\alpha}), \frac2p-1\}}+\sum_{i=1}^m (\varepsilon \mu_i)^{\min\{\beta, \frac{2}{p}-1\}}\right]^2|\log\varepsilon|
\end{align*}
and
\begin{align*}
   & \int_0^1tdt \int_{\Omega_\varepsilon} a(\varepsilon y) |\varepsilon y-q|^{2\alpha}\left[(e^{V+t\phi}-e^V)+\varepsilon^8(e^{-V-t\phi}-e^{-V})\right]\phi^2dy \\
  \leq & C\int_{\Omega_\varepsilon} W |\phi|^3dy \leq C\|\phi\|_{L^\infty(\Omega_\varepsilon)}^3\leq C\left[\left((\varepsilon \mu_0)^{\min\{\beta, 2(\alpha-\hat{\alpha}), \frac2p-1\}}+\sum_{i=1}^m (\varepsilon \mu_i)^{\min\{\beta, \frac{2}{p}-1\}}\right)|\log\varepsilon|\right]^3.
\end{align*}
Hence we get \eqref{68}.

Taking derivatives on $\xi_{kl}'$ on the both sides of \eqref{69}, we get
\begin{align*}
   & \partial_{\xi_{kl}'}\left[I_\varepsilon(V-4\log\varepsilon +\phi)-I_\varepsilon(V-4\log\varepsilon)\right] \\
  = & -\frac12\int_{\Omega_\varepsilon} a(\varepsilon y)\left[\partial_{\xi_{kl}'} R+\partial_{\xi_{kl}'} N(\phi)\right]\phi dy -\frac12 \int_{\Omega_\varepsilon} a(\varepsilon y) \left[R+N(\phi)\right]\partial_{\xi_{kl}'}\phi dy \\
  & +2\int_0^1tdt \int_{\Omega_\varepsilon} a(\varepsilon y)|\varepsilon y-q|^{2\alpha}\left[(e^{V+t\phi}-e^V)+\varepsilon^8(e^{-V-t\phi}-e^{-V})\right]\phi \partial_{\xi_{kl}'}\phi dy \\
  & +\int_0^1tdt \int_{\Omega_\varepsilon} a(\varepsilon y)|\varepsilon y-q|^{2\alpha}\left[(e^{V+t\phi}-e^V)-\varepsilon^8(e^{-V-t\phi}-e^{-V})\right]\phi^2 \partial_{\xi_{kl}'}Vdy \\
  & +\int_0^1t^2dt\int_{\Omega_\varepsilon} a(\varepsilon y)|\varepsilon y-q|^{2\alpha}\left[e^{V+t\phi}-\varepsilon^8 e^{-V-t\phi}\right]\phi^2\partial_{\xi_{kl}'}\phi dy.
\end{align*}
It is easy to get
\begin{eqnarray*}
  &&\left|\int_{\Omega_\varepsilon} a(\varepsilon y)\left[\partial_{\xi_{kl}'}R+ \partial_{\xi_{kl}'}N(\phi)\right]\phi dy\right|\leq C\|\phi\|_{L^\infty(\Omega_\varepsilon)} \left[\|\partial_{\xi_{kl}'}R\|_*+\|\partial_{\xi_{kl}'}N(\phi)\|_*\right] \\
  &\leq & C\|\phi\|_{L^\infty(\Omega_\varepsilon)} \left[\|\partial_{\xi_{kl}'}R\|_*+\frac1{\mu_k}\|\phi\|_{L^\infty(\Omega_\varepsilon)}^2 +\|\phi\|_{L^\infty(\Omega_\varepsilon)}\|\partial_{\xi_{kl}'}\phi\|_{L^\infty(\Omega_\varepsilon)}\right] \\
  &\leq& \frac{C}{\mu_k}\left((\varepsilon \mu_0)^{\min\{\beta, 2(\alpha-\hat{\alpha}), \frac2p-1\}}+\sum_{i=1}^m (\varepsilon \mu_i)^{\min\{\beta, \frac{2}{p}-1\}}\right)^2|\log\varepsilon|
\end{eqnarray*}
and
\begin{eqnarray*}
  &&\left|\int_{\Omega_\varepsilon} a(\varepsilon y)\left[R+N(\phi)\right]\partial_{\xi_{kl}'}\phi dy\right|\leq C\|\partial_{\xi_{kl}'}\phi\|_{L^\infty(\Omega_\varepsilon)}\left[\|R\|_*+\|N(\phi)\|_*\right] \\
  &\leq& \frac{C}{\mu_k}\left((\varepsilon \mu_0)^{\min\{\beta, 2(\alpha-\hat{\alpha}), \frac2p-1\}}+\sum_{i=1}^m (\varepsilon \mu_i)^{\min\{\beta, \frac{2}{p}-1\}}\right)^2|\log\varepsilon|^2.
\end{eqnarray*}
As the same way, we have
\begin{align*}
   &  \left|\int_0^1tdt \int_{\Omega_\varepsilon} a(\varepsilon y)|\varepsilon y-q|^{2\alpha}\left[(e^{V+t\phi}-e^V)+\varepsilon^8(e^{-V-t\phi}-e^{-V})\right]\phi \partial_{\xi_{kl}'}\phi dy\right|\\
  \leq & C\|W\|_*\|\phi\|_{L^\infty(\Omega_\varepsilon)}^2\|\partial_{\xi_{kl}'}\phi\|_{L^\infty( \Omega_\varepsilon)}\leq \frac{C}{\mu_k}\left[(\varepsilon \mu_0)^{\min\{\beta, 2(\alpha-\hat{\alpha}), \frac2p-1\}}+\sum_{i=1}^m (\varepsilon \mu_i)^{\min\{\beta, \frac{2}{p}-1\}}\right]^3|\log\varepsilon|^4,
\end{align*}
\begin{align*}
   & \left|\int_0^1tdt \int_{\Omega_\varepsilon} a(\varepsilon y)|\varepsilon y-q|^{2\alpha}\left[(e^{V+t\phi}-e^V)-\varepsilon^8(e^{-V-t\phi}-e^{-V})\right]\phi^2 \partial_{\xi_{kl}'}Vdy\right| \\
  \leq & C\|W\|_*\|\phi\|_{L^\infty(\Omega_\varepsilon)}^3\|\partial_{\xi_{kl}'}V\|_{L^\infty( \Omega_\varepsilon)}\leq \frac{C}{\mu_k}\left[(\varepsilon \mu_0)^{\min\{\beta, 2(\alpha-\hat{\alpha}), \frac2p-1\}}+\sum_{i=1}^m (\varepsilon \mu_i)^{\min\{\beta, \frac{2}{p}-1\}}\right]^3|\log\varepsilon|^3
\end{align*}
and
\begin{align*}
   & \left|\int_0^1t^2dt\int_{\Omega_\varepsilon} a(\varepsilon y)|\varepsilon y-q|^{2\alpha}\left[e^{V+t\phi}-\varepsilon^8 e^{-V-t\phi}\right]\phi^2\partial_{\xi_{kl}'}\phi dy\right| \\
  \leq & C\|W\|_*\|\phi\|_{L^\infty(\Omega_\varepsilon)}^2\|\partial_{\xi_{kl}'}\phi\|_{L^\infty( \Omega_\varepsilon)}\leq \frac{C}{\mu_k}\left[(\varepsilon \mu_0)^{\min\{\beta, 2(\alpha-\hat{\alpha}), \frac2p-1\}}+\sum_{i=1}^m (\varepsilon \mu_i)^{\min\{\beta, \frac{2}{p}-1\}}\right]^3|\log\varepsilon|^4.
\end{align*}
Hence \eqref{70} follows.

\end{proof}

\section{Proof of our main theorem}\label{sect6}
From the Proposition \ref{prop1} and Lemma \ref{lm8}, we get
\begin{align}\label{94}
\nonumber  F(\xi)= &-\frac12 c_0 a(q)\left[4\log \varepsilon+4-2\log8(1+\alpha)^2+c_0H(q,q)+\sum_{i=1}^m c_i G(q, \xi_i)\right] \\
\nonumber& -\frac12\sum_{i=1}^m c_ia(\xi_i)\left[4\log\varepsilon +4-2\log 8+c_i H(\xi_i, \xi_i)+c_0 G(\xi_i, q)+\sum_{k=1, k\not=i}^m c_k G(\xi_i, \xi_k)\right.\\
&\left.+4\alpha \log |\xi_i-q|\right]+O\left((\varepsilon \mu_0)^{\min\{\beta, 2(\alpha-\hat{\alpha}), \frac2p-1\}}\right)+O\left(\sum_{i=1}^m (\varepsilon \mu_i)^{\min\{\beta, \frac{2}{p}-1\}}\right).
\end{align}

\noindent\textit{Proof of Theorem \ref{th1}:}
We consider the following maximization problem
\[\max_{\xi\in \bar{\mathcal{O}}_\varepsilon} F(\xi).\]
Let $\xi^\varepsilon=(\xi^\varepsilon_1, \xi^\varepsilon_2, \cdots, \xi^\varepsilon_m)$ be the maximizer of the problem above. We will prove the maximizer belongs to $\mathcal{O}_\varepsilon$. Then we will get a critical point of $F(\xi)$ in $\mathcal{O}_\varepsilon$.

Let
\[\xi_i^0=q+\frac1{|\log\varepsilon|}\hat{\xi}_i, \qquad i=1,2,\cdots,m,\]
where the vectors $\hat{\xi}_i$'s form a $m$-regular polygon in $\R^2$. Since $\kappa>1$, we get $\xi^0:=(\xi_1^0, \xi_2^0, \cdots, \xi_m^0)\in \mathcal{O}_\varepsilon$. Then we get
\begin{equation}\label{91}
\max_{\xi\in \bar{\mathcal{O}}_\varepsilon} F(\xi)\geq F(\xi^0)=16\pi a(q)\left[(1+\alpha+m)|\log\varepsilon|-m(m+1+\alpha)\log|\log\varepsilon|\right]+O(1).
\end{equation}
To prove $\xi^\varepsilon\in\mathcal{O}_\varepsilon$, we argue indirectly. Suppose $\xi^\varepsilon\in\partial\mathcal{O}_\varepsilon$, the following cases occur:
\begin{description}
  \item[(C1)] There exists $i_0\in \{1,2,\cdots,m\}$ such that $\xi_{i_0}\in\partial B_d(q)$;
  \item[(C2)] There exist $i_0, j_0\in \{1,2,\cdots,m\}$ such that $|\xi_{i_0}-\xi_{j_0}|=\frac1{|\log\varepsilon|^\kappa}$.
  \item[(C3)] There exists $k_0\in \{1,2,\cdots,m\}$ such that $|\xi_{k_0}-q|=\frac1{|\log\varepsilon|^\kappa}$
\end{description}

In the case (C1), there exists constant $d_0>0$, such that $a(\xi_{i_0})<a(q)-d_0$. Then we have
\begin{equation}\label{92}
\max_{\xi\in \bar{\mathcal{O}}_\varepsilon} F(\xi)=F(\xi^\varepsilon)<16\pi a(q)(1+m+\alpha-d_0)|\log\varepsilon|+O(\log|\log\varepsilon|).
\end{equation}
We will get a contradiction for $\varepsilon>0$ small enough according to \eqref{91}.

In the case of (C2), we get
\begin{equation}\label{93}
\max_{\xi\in \bar{\mathcal{O}}_\varepsilon} F(\xi)=F(\xi^\varepsilon)<16\pi a(q)(1+\alpha+m)|\log\varepsilon|-32\pi \kappa a(q)\log|\log\varepsilon|+O(1).
\end{equation}
From the definition of $\kappa$, we see \eqref{93} contradict to \eqref{91}.

In the case of (C3), we get
\begin{equation}\label{95}
\max_{\xi\in \bar{\mathcal{O}}_\varepsilon} F(\xi)=F(\xi^\varepsilon)<16\pi a(q)(1+\alpha+m)|\log\varepsilon|-16\pi \kappa a(q)\log|\log\varepsilon|+O(1).
\end{equation}
It also contradict to \eqref{91}. Hence $\xi^\varepsilon \in \mathcal{O}_\varepsilon$. Then $DF(\xi^\varepsilon)=0$. According to Proposition \ref{prop4}, $c_{kl}(\xi^\varepsilon)=0$.
Then Theorem \ref{th1} follows.

$\hfill\Box$

\noindent\textit{Proof of Theorem \ref{th2}:}
From Green formula, we get $a(x)G(x,y)=a(y)G(y,x)$ for $x,y\in \bar{\Omega}$. Hence  $a(q)G(q, \xi_i)=a(\xi_i)G(\xi_i, q)$, $a(\xi_j)G(\xi_j, \xi_i)=a(\xi_i)G(\xi_i, \xi_j)$. From these fact and Lemmata in \cite[Appendix A]{Zhang2022}, we get
\begin{eqnarray}\label{96}
\nonumber F(\xi)&=&8\pi(1+\alpha)a(q)\left[-\log\varepsilon -4\pi \sum_{i=1}^l G(q, \xi_i)-2\pi \sum_{i=l+1}^m G(q, \xi_i)\right] \\
\nonumber&&+ 16\pi \sum_{i=1}^l a(\xi_i) \left[-\log\varepsilon -2\pi H(\xi_i, \xi_i)-2\pi \sum_{k=1, k\not=i}^l G(\xi_i, \xi_k)-\alpha \log|\xi_i -q|\right] \\
\nonumber&&+8\pi \sum_{i=l+1}^m a(\xi_i) \left[-\log\varepsilon -\pi \sum_{k=l+1, k\not=i}^m G(\xi_i, \xi_k)-\alpha \log|\xi_i-q|\right] \\
&& -32\pi^2 \sum_{i=1}^l\sum_{k=l+1}^m a(\xi_k)G(\xi_k, \xi_i)+O(1).
\end{eqnarray}
In order to prove this theorem, we consider the following maximum problem and let $\xi^\varepsilon=(\xi_1^\varepsilon, \xi_2^\varepsilon, \cdots, \xi_m^\varepsilon)$ be its maximizer:
\[\max_{(\xi_1, \xi_2, \cdots, \xi_m)\in \bar{\mathcal{O}}_\varepsilon} F(\xi).\]
Assume $H_q: B_d(q)\to \mathcal{M}$ be a diffeomorphism, where $\mathcal{M}$ is some neighborhood of $0$ in $\R^2$. It satisfies $H_q(B_d(q)\cap \Omega)=\mathcal{M}\cap \R^2_+$, and $H_q(B_d(q)\cap \partial\Omega)=\mathcal{M}\cap \partial\R^2_+$. Let $n(q)$ be the outward unit normal vector of $\Omega$ at $q$. We define
\[\xi_i^0=q-\frac{t_i}{\sqrt{|\log\varepsilon|}}n(q), \qquad \mathrm{where}\qquad t_i>0 \quad \mathrm{and} \quad t_{i+1}-t_i=\tilde{\sigma}, \qquad i=1,2,\cdots,l.\]
and
\[\xi_i^0=H_q^{-1}\left(\frac1{\sqrt{|\log\varepsilon|}}\hat{\xi}_i^0\right),\qquad \mathrm{where}\qquad \hat{\xi}_i\in \mathcal{M}\cap \R^2_+ \quad \mathrm{and} \quad |\hat{\xi}_i^0-\hat{\xi}_{i+1}^0|=\tilde{\sigma},  \qquad i=l+1,\cdots,m,\]
where $\tilde{\sigma}>0$ is a fixed constant.
From the definition of $H_q$, we have
\[\xi_i^0=q+\frac1{\sqrt{|\log\varepsilon|}}\hat{\xi}_i^0 +O\left(\frac1{|\log\varepsilon|}|\hat{\xi}_i^0|\right), \qquad \mathrm{for}\qquad i=l+1,\cdots,m.\]
It is obvious that $\xi^0\in \mathcal{O}_\varepsilon$. Since $q\in \partial \Omega$ is a strictly local maximum point of $a(x)$ satisfying $\partial_n a(q)=0$. Then we get
\[a(q)> a(\xi_i^0)>a(q)-\frac{C}{|\log\varepsilon|}, \qquad \mathrm{for}\qquad i=1,\cdots,m.\]
From Lemmata in \cite[Appendix A]{Zhang2022}, we have
\[H(\xi_i^0,\xi_i^0)=\frac1{4\pi}\log|\log\varepsilon|+O(1), \qquad \mathrm{for}\qquad i=1,\cdots,l;\]
\[G(q, \xi_i^0)=\frac1{2\pi}\log|\log\varepsilon|+O(1),  \qquad \mathrm{for}\qquad i=1,\cdots,l;\]
and
\[G(\xi_j^0, \xi_i^0)=\frac1{2\pi}\log|\log\varepsilon|+O(1),  \qquad \mathrm{for}\qquad i=1,\cdots,l; \quad j=1,2,\cdots,m.\]
From \eqref{96} and the facts above, we get
\begin{eqnarray}\label{97}
\nonumber\max_{(\xi_1, \xi_2, \cdots, \xi_m)\in \bar{\mathcal{O}}_\varepsilon} F(\xi)&\geq& F(\xi^0)\geq8\pi a(q)\left\{(m+l+\alpha+1)|\log\varepsilon| -\left[l(2l+\alpha+1)\right.\right. \\
&&\left.\left.+\frac{m-l}2(m+3l+1+\alpha)\right]\log|\log\varepsilon|\right\}.
\end{eqnarray}
Next, we will prove the maximizer $\xi^\varepsilon\in \Omega_\varepsilon$. Suppose $\xi^\varepsilon\in \partial\Omega_\varepsilon$, the following cases occur:
\begin{description}
  \item[(C1)] There exists $i_0\in \{1,2, \cdots,l\}$ such that $\xi_{i_0}\in \partial B_d(q)$;
  \item[(C2)] There exists $j_0\in \{l+1, \cdots,m\}$ such that $\xi_{j_0}\in \partial B_d(q)$;
  \item[(C3)] There exists $i_0\in \{1,2, \cdots,l\}$ such that $\mathrm{dist}(\xi_{i_0}, \partial \Omega)=\frac1{|\log\varepsilon|^\kappa}$.
  \item[(C4)] There exists $i_0\in \{1,2, \cdots m\}$ such that $|\xi_{i_0}-q|=\frac1{|\log\varepsilon|^\kappa}$
  \item[(C5)] There exist $i_0, k_0\in \{1,2,\cdots, m\}$ such that $i_0\not=k_0$ and $|\xi_{i_0}-\xi_{k_0}|=\frac1{|\log\varepsilon|^\kappa}$.
\end{description}

In the case of (C1) or (C2), there exists $d_0>0$ such that $a(\xi_{i_0})<a(q)-d_0$, since $q$ is a local maximum point of $a(x)$ in $\bar{\Omega}$ and $\partial_n a(q)=0$. From \eqref{96}, we get
\[F(\xi^\varepsilon)\leq 8\pi a(q)(m+l+\alpha+1)|\log\varepsilon|-8\pi d_0 |\log\varepsilon|+O(\log|\log\varepsilon|).\]
We will get a contradiction to \eqref{97} for $\varepsilon>0$ small enough.

In the case of (C3),  for $i=1,2,\cdots,l$ and  $k=1,2,\cdots,m$, we get
\[G(q, \xi_i)>0, \quad G(\xi_k, \xi_i)>0, \quad H(\xi_k,\xi_i)>0\]
and
\[H(\xi_i, \xi_i)=-\frac1{2\pi}\log(\mathrm{dist}(\xi_i, \partial \Omega))+O(1).\]
Then we have
\begin{eqnarray*}
F(\xi^\varepsilon)&<&8\pi(1+\alpha)a(q)\left[-\log\varepsilon -4\pi\sum_{i=1}^l G(q, \xi_i) -2\pi \sum_{i=l+1}^m G(q, \xi_i) \right] \\
&&+16\pi a(q)\sum_{i=1}^l\left[-\log\varepsilon -\alpha \log|\xi_i-q|\right] -32\pi^2 a(q)H(\xi_{i_0}, \xi_{i_0})  \\
&& +8\pi a(q) \sum_{i={l+1}}^m\left[-\log\varepsilon -\alpha |\xi_i-q|\right]+O(1) \\
&=&8\pi (1+\alpha+m+l)|\log\varepsilon|-16 \pi \kappa a(q)\log|\log\varepsilon| \\
&& +8\pi a(q)\sum_{i=1}^l\left[-2\alpha \log|\xi_i-q|-4\pi (1+\alpha)G(q, \xi)\right] \\
&& +8\pi a(q)\sum_{i=l+1}^m \left[-\alpha \log|\xi_i-q|-2\pi (1+\alpha)G(q, \xi_i)\right]+O(1).
\end{eqnarray*}
However, for $i=1,2,\cdots, l$, it holds that
\begin{eqnarray}\label{98}
\nonumber  -2\alpha\log|\xi_i-q|-4\pi(1+\alpha)G(q, \xi_i) &=& 2\log|\xi_i-q|-4\pi(1+\alpha)H(q,\xi_i) \\
   &=& 2\log|\xi_i-q|+2(1+\alpha)\log|q-\xi_i^*|+O(1)\leq C.
\end{eqnarray}
Whereas, for $i=l+1, \cdots, m$,
\begin{equation}\label{99}
  -\alpha\log|\xi_i-q|-2\pi(1+\alpha)G(q, \xi_i) = (2+\alpha)\log|\xi_i-q|-2\pi(1+\alpha)H(q,\xi_i) \leq C.
\end{equation}
Hence we get
\[F(\xi^\varepsilon)<8\pi (1+\alpha +m+l)|\log\varepsilon|-16\pi \kappa a(q)\log|\log\varepsilon|+O(1).\]

In the case of (C4), we have
\begin{eqnarray*}
  F(\xi^\varepsilon) &<& 8\pi (1+\alpha) a(q)\left[-\log\varepsilon -4\pi \sum_{i=1}^l G(q, \xi_i)-2\pi \sum_{i=l+1}^m G(q, \xi_i)\right] \\
   &&+16\pi a(q)\sum_{i=1}^l \left[-\log\varepsilon -\alpha \log|\xi_i-q|\right] \\
   && +8\pi a(q)\sum_{i=l+1}^m \left[-\log\varepsilon-\alpha \log|\xi_i-q|\right]+O(1)  \\
   &=&8\pi a(q)(m+l+\alpha+1)|\log\varepsilon|+8\pi a(q)\sum_{i=1}^l\left[-2\alpha \log|\xi_i-q|-4\pi (1+\alpha)G(q, \xi_i)\right] \\
   &&+8\pi a(q)\sum_{i=l+1}^m \left[-\alpha \log|\xi_i-q|-2\pi (\alpha+1)G(q, \xi_i)\right]+O(1).
\end{eqnarray*}
If $i_0\in \{1,2,\cdots,l\}$, from \eqref{98} and \eqref{99}, we get
\begin{eqnarray*}
F(\xi^\varepsilon)&<&8\pi a(q)(m+l+\alpha+1)|\log\varepsilon|+8\pi a(q)\left[2\log|\xi_i-q|+2(1+\alpha)\log|q-\xi_i^*|\right]+O(1) \\
&=& 8\pi a(q)\left[(m+l+\alpha+1)|\log\varepsilon|-(4+2\alpha)\kappa\log|\log\varepsilon|\right]+O(1).
\end{eqnarray*}
If $i_0\in \{l+1,\cdots,m\}$, we also get the following estimate via the same method:
\begin{eqnarray*}
F(\xi^\varepsilon)&<&8\pi a(q)(m+l+\alpha+1)|\log\varepsilon|+8\pi a(q)(2+\alpha)\log|\xi_i-q|+O(1) \\
&=& 8\pi a(q)\left[(m+l+\alpha+1)|\log\varepsilon|-(2+\alpha)\kappa\log|\log\varepsilon|\right]+O(1).
\end{eqnarray*}

In the case of (C5), we see if $i_0\in \{1,2,\cdots,l\}$ and $k_0\in \{l+1, \cdots, m\}$, we get
\[\frac1{|\log\varepsilon|^\kappa}\leq \mathrm{dist}(\xi_{i_0}, \partial \Omega)\leq |\xi_{i_0}-\xi_{k_0}|=\frac1{|\log\varepsilon|^\kappa}.\]
Hence $\mathrm{dist}(\xi_{i_0}, \partial \Omega)=\frac1{|\log\varepsilon|^\kappa}$. The case (C3) occurs. Therefore we only need to deal with the cases that $i_0,k_0\in \{1,2,\cdots, l\}$ and the case $i_0,k_0\in \{l+1,\cdots, m\}$, respectively.

If $i_0,k_0\in \{1,2,\cdots, l\}$, we get
\begin{eqnarray*}
  F(\xi^\varepsilon) &<& 8\pi(1+\alpha)a(q)\left[-\log\varepsilon -4\pi \sum_{i=1}^l G(q, \xi_i)-2\pi \sum_{i=l+1}^m G(q, \xi_i)\right] \\
  &&+16\pi a(q)\sum_{i=1}^l \left[-\log\varepsilon -\alpha \log|\xi_i-q|\right] +8\pi a(q)\sum_{i=l+1}^m \left[-\log\varepsilon -\alpha\log|\xi_i-q|\right] \\
  && -32\pi^2 a(q)\left[G(\xi_{i_0}, \xi_{k_0})+G(\xi_{k_0},\xi_{i_0} )\right] +O(1) \\
  &\leq& 8\pi a(q)(m+l+\alpha+1)|\log\varepsilon|+8\pi a(q)\sum_{i=1}^l\left[-2\alpha \log|\xi_i-q|-4\pi (1+\alpha)G(q, \xi_i)\right] \\
  &&+8\pi a(q)\sum_{i=l+1}^m \left[-\alpha \log|\xi_i-q|-2\pi (\alpha+1)G(q, \xi_i)\right] \\
  && -32\pi^2 a(q)\left[G(\xi_{i_0}, \xi_{k_0})+G(\xi_{k_0},\xi_{i_0} )\right] +O(1) \\
  &\leq& 8\pi a(q)\left[(m+l+\alpha+1)|\log\varepsilon|-8 \kappa \log|\log\varepsilon|\right]+O(1).
\end{eqnarray*}
If $i_0,k_0\in \{l+1,\cdots, m\}$, we also get
\begin{eqnarray*}
  F(\xi^\varepsilon) &<& 8\pi(1+\alpha)a(q)\left[-\log\varepsilon -4\pi \sum_{i=1}^l G(q, \xi_i)-2\pi \sum_{i=l+1}^m G(q, \xi_i)\right] \\
  &&+16\pi a(q)\sum_{i=1}^l \left[-\log\varepsilon -\alpha \log|\xi_i-q|\right] +8\pi a(q)\sum_{i=l+1}^m \left[-\log\varepsilon -\alpha\log|\xi_i-q|\right] \\
  && -8\pi^2 a(q)\left[G(\xi_{i_0}, \xi_{k_0})+G(\xi_{k_0},\xi_{i_0} )\right] +O(1) \\
  &\leq& 8\pi a(q)\left[(m+l+\alpha+1)|\log\varepsilon|-2 \kappa \log|\log\varepsilon|\right]+O(1).
\end{eqnarray*}
Since
\[\kappa \geq \max\{\frac12, \frac1{2+\alpha}\}\left[l(2l+\alpha+1)+\frac{m-l}2(m+3l+1+\alpha)\right],\]
we get contradiction in the case (C3)-(C5). Hence $\xi^\varepsilon \in \mathcal{O}_\varepsilon$. Theorem \ref{th2} follows.

$\hfill\Box$

\appendix

\section{Useful estimate and Detailed computation}\label{sect4}
In this section, we will prove some  lemmata which are technical and also used in this paper.

Recall the functions $Z_{ks}$'s were defined in Section \ref{sect2}. Let $\mathring{Z}_{ks}$ be the unique solution of the following problem
\begin{equation*}
\left\{\begin{array}{ll}
-\Delta \mathring{Z}_{ks}-\nabla(\log a(\varepsilon y))\cdot \nabla \mathring{Z}_{ks}+\varepsilon^2 \mathring{Z}_{ks}= -\Delta Z_{ks}-\nabla(\log a(\varepsilon y))\cdot \nabla Z_{ks}+\varepsilon^2 Z_{ks} , &\mbox{in $\Omega_\varepsilon$} \\
\frac{\partial \mathring{Z}_{ks}}{\partial n}=0, &\mbox{on $\partial\Omega_\varepsilon$}
\end{array}\right.
\end{equation*}
where $k=1,2,\cdots,l$ and $s=0,1,2$.
\begin{lemma}\label{lm9}
For $k=1,2,\cdots, l$, we get the following estimates
\begin{equation}\label{110}
\mathring{Z}_{k0}=Z_{k0}+O\left(\varepsilon^2|\log\varepsilon|^C\right),\qquad \nabla\mathring{Z}_{k0}=\nabla Z_{k0}+O\left(\varepsilon^3|\log\varepsilon|^C\right),
\end{equation}
and
\begin{equation}\label{111}
\mathring{Z}_{kl}=Z_{kl}+O\left(\varepsilon|\log\varepsilon|^C\right), \qquad \nabla\mathring{Z}_{kl}=\nabla Z_{kl}+O\left(\varepsilon^2|\log\varepsilon|^C\right),
\end{equation}
where $l=1, 2$.
\end{lemma}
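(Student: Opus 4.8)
The plan is to reduce the lemma to an estimate for a function that solves the \emph{homogeneous} version of the operator $L_0:=-\Delta-\nabla(\log a(\varepsilon y))\cdot\nabla+\varepsilon^2$ with a \emph{small} Neumann datum on $\partial\Omega_\varepsilon$, the smallness being a consequence of the fact that for $k\le l$ the point $\xi_k$ lies at distance at least $|\log\varepsilon|^{-\kappa}$ from $\partial\Omega$. First I would set $V_{ks}:=\mathring Z_{ks}-Z_{ks}$. Since $\mathring Z_{ks}$ and $Z_{ks}$ solve the same interior equation while $\partial_n\mathring Z_{ks}=0$, subtracting gives
\begin{equation*}
\left\{\begin{array}{ll}
-\Delta V_{ks}-\nabla(\log a(\varepsilon y))\cdot\nabla V_{ks}+\varepsilon^2 V_{ks}=0, &\mbox{in $\Omega_\varepsilon$,}\\
\frac{\partial V_{ks}}{\partial n}=-\frac{\partial Z_{ks}}{\partial n}, &\mbox{on $\partial\Omega_\varepsilon$.}
\end{array}\right.
\end{equation*}

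Next I would estimate the datum. Because $k\le l$ we have $|y-\xi_k'|\ge(\varepsilon|\log\varepsilon|^\kappa)^{-1}$ for all $y\in\partial\Omega_\varepsilon$; combining the explicit forms $Z_0(z)=\frac{|z|^2-1}{|z|^2+1}$, $Z_l(z)=\frac{z_l}{1+|z|^2}$, the decays $|\nabla Z_0(z)|\le C|z|^{-3}$, $|\nabla Z_l(z)|\le C|z|^{-2}$ (and $|\nabla^2 Z_s(z)|\le C|z|^{-3}$) for $|z|$ large, the scaling defining $Z_{ks}$, and $\mu_k\le C|\log\varepsilon|^{C}$ from \eqref{51}, I obtain on $\partial\Omega_\varepsilon$
\begin{equation*}
\left|\frac{\partial Z_{k0}}{\partial n}\right|\le C\varepsilon^3|\log\varepsilon|^{C},\qquad
\left|\frac{\partial Z_{kl}}{\partial n}\right|\le C\varepsilon^2|\log\varepsilon|^{C}\quad(l=1,2),
\end{equation*}
together with a bound one order better for the tangential derivatives, so that $\partial_n Z_{ks}$ is small also in $C^1(\partial\Omega_\varepsilon)$.

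For the $L^\infty$ bounds I would use the maximum principle for $L_0$ together with the barrier $\tilde g$ constructed in the proof of Lemma \ref{lm6}, which satisfies $L_0\tilde g=2\varepsilon^2$ in $\Omega_\varepsilon$, $\partial_n\tilde g=\varepsilon$ on $\partial\Omega_\varepsilon$, and $0<\tilde g\le C$. Comparing $V_{ks}$ with $\pm\,\varepsilon^{-1}\|\partial_n Z_{ks}\|_{L^\infty(\partial\Omega_\varepsilon)}\,\tilde g$ yields $\|V_{ks}\|_{L^\infty(\Omega_\varepsilon)}\le C\varepsilon^{-1}\|\partial_n Z_{ks}\|_{L^\infty(\partial\Omega_\varepsilon)}$, which with the previous step gives the first estimates in \eqref{110} and \eqref{111}. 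Equivalently one can use $V_{ks}(y)=-\int_{\partial\Omega_\varepsilon}G_\varepsilon(y,\eta)\,\partial_n Z_{ks}(\eta)\,dS_\eta$, where $G_\varepsilon(y,\eta)=G(\varepsilon y,\varepsilon\eta)$ is the Green function of $L_0$ on $\Omega_\varepsilon$ with Neumann condition; the change of variables $dS_\eta=\varepsilon^{-1}dS_\zeta$ and the integrability of $|G(x,\cdot)|$ on the one-dimensional set $\partial\Omega$ reproduce the factor $\varepsilon^{-1}$.

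For the gradient bounds I would differentiate the Green representation and exploit $\nabla_y G_\varepsilon(y,\eta)=\varepsilon(\nabla_x G)(\varepsilon y,\varepsilon\eta)$: after $\zeta=\varepsilon\eta$ the extra $\varepsilon$ cancels the Jacobian $\varepsilon^{-1}$, so $\nabla_y V_{ks}(y)=-\int_{\partial\Omega}(\nabla_x G)(\varepsilon y,\zeta)\,\partial_n Z_{ks}(\zeta/\varepsilon)\,dS_\zeta$ and $|\nabla V_{ks}(y)|\le\|\partial_n Z_{ks}\|_{L^\infty(\partial\Omega_\varepsilon)}\int_{\partial\Omega}|(\nabla_x G)(\varepsilon y,\zeta)|\,dS_\zeta$, which recovers the missing power of $\varepsilon$ and gives $O(\varepsilon^3|\log\varepsilon|^C)$, resp.\ $O(\varepsilon^2|\log\varepsilon|^C)$. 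The hard part is making this uniform up to $\partial\Omega_\varepsilon$, where $\int_{\partial\Omega}|(\nabla_x G)(x,\cdot)|\,dS$ is no longer bounded: there I would instead appeal to boundary Schauder estimates for $V_{ks}$ (the drift and potential of $L_0$ being $O(\varepsilon)$, resp.\ $O(\varepsilon^2)$, in $C^{0,\tau}$, and $\partial\Omega_\varepsilon$ being uniformly $C^2$), bounding $\|\nabla V_{ks}\|_{C^{0,\tau}}$ by $\|V_{ks}\|_{L^\infty}+\|\partial_n Z_{ks}\|_{C^{0,\tau}(\partial\Omega_\varepsilon)}$, and then patch the interior and near-boundary regimes. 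Everything else is of the routine, already-used type (cf.\ Lemmata \ref{lm1} and \ref{lm7}); the only genuinely delicate point is this bookkeeping of the powers of $\varepsilon$ and $|\log\varepsilon|$ near $\partial\Omega_\varepsilon$.
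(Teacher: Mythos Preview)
Your setup is right: $V_{ks}:=\mathring Z_{ks}-Z_{ks}$ solves $L_0V_{ks}=0$ in $\Omega_\varepsilon$ with $\partial_n V_{ks}=-\partial_n Z_{ks}$ on $\partial\Omega_\varepsilon$, and your boundary estimates $|\partial_n Z_{k0}|\le C\varepsilon^3|\log\varepsilon|^C$, $|\partial_n Z_{kl}|\le C\varepsilon^2|\log\varepsilon|^C$ are correct. The barrier argument with $\tilde g$ cleanly gives the $L^\infty$ bounds in \eqref{110}--\eqref{111}.

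The gap is in the gradient estimate. Your boundary Schauder bound reads $\|\nabla V_{ks}\|_{C^{0,\tau}}\le C(\|V_{ks}\|_{L^\infty}+\|\partial_n Z_{ks}\|_{C^{0,\tau}(\partial\Omega_\varepsilon)})$, and the first term is only $O(\varepsilon^2|\log\varepsilon|^C)$ for $s=0$ (resp.\ $O(\varepsilon|\log\varepsilon|^C)$ for $s=1,2$), one power of $\varepsilon$ short of the claim. Unit-scale Schauder on $\Omega_\varepsilon$ cannot remove this $\|V\|_{L^\infty}$ contribution, so the interior/boundary patching does not close.

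The paper avoids this by rescaling \emph{before} invoking regularity: set $\tilde Y_{ks}(x):=V_{ks}(x/\varepsilon)$ on the fixed domain $\Omega$. Then $\tilde Y_{ks}$ solves the $\varepsilon$-independent problem $-\Delta\tilde Y-\nabla(\log a)\cdot\nabla\tilde Y+\tilde Y=0$ in $\Omega$, $\partial_n\tilde Y=-\varepsilon^{-1}(\partial_n Z_{ks})(\cdot/\varepsilon)$ on $\partial\Omega$, with $\|\partial_n\tilde Y\|_{C^{0,\tau}(\partial\Omega)}\le C\varepsilon^2|\log\varepsilon|^C$ (resp.\ $C\varepsilon|\log\varepsilon|^C$). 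Standard elliptic regularity on the fixed domain then gives $\|\tilde Y_{ks}\|_{C^1(\bar\Omega)}$ of the same order, and the relation $\nabla_yV_{ks}=\varepsilon\,(\nabla_x\tilde Y_{ks})(\varepsilon\cdot)$ supplies the missing factor $\varepsilon$ automatically. Your Green-function computation in the interior is really a fragment of this rescaling; pulling the whole problem back to $\Omega$ is what makes the gradient bound uniform up to the boundary.
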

\begin{proof}
Let
\[\tilde{Y}_{kl}(x)=\mathring{Z}_{kl}\left(\frac x\varepsilon\right)-Z_{kl}\left(\frac x\varepsilon\right).\]
Then it solves the following problem
\begin{equation*}
\left\{\begin{array}{ll}
-\Delta \tilde{Y}_{kl}-\nabla(\log a(\varepsilon y))\cdot \nabla \tilde{Y}_{kl}+ \tilde{Y}_{kl}=0 , &\mbox{in $\Omega$,} \\
\frac{\partial \tilde{Y}_{kl}}{\partial n}=-\frac1\varepsilon\frac{\partial Z_{kl}}{\partial n}\left(\frac x\varepsilon\right), &\mbox{on $\partial\Omega$.}
\end{array}\right.
\end{equation*}
On $\partial \Omega$, it is apparent that
\[\frac1\varepsilon\frac{\partial Z_{k0}}{\partial n}\left(\frac x\varepsilon\right)=O(\varepsilon^2|\log\varepsilon|^C), \qquad
\frac1\varepsilon\frac{\partial Z_{kl}}{\partial n}\left(\frac x\varepsilon\right)=O(\varepsilon|\log\varepsilon|^C).\]
From elliptic estimate and Sobolev embedding theorem, we get
\[\|\tilde{Y}_{k0}\|_{C^1(\Omega)}\leq C\varepsilon^2|\log\varepsilon|^C, \quad \|\tilde{Y}_{kl}\|_{C^1(\Omega)}\leq C\varepsilon|\log\varepsilon|^C.\]
Hence \eqref{110} and \eqref{111} hold.
\end{proof}

We first derive an expansion of $J(U_\xi)$. Recall $U_\xi$ is defined in \eqref{5}.
\begin{proposition}\label{prop1}
For $\xi\in \mathcal{O}_\varepsilon$, we have
\begin{align*}
J(U_\xi)=&-\frac12 c_0 a(q)\left[4\log \varepsilon+4-2\log8(1+\alpha)^2+c_0H(q,q)+\sum_{i=1}^m c_i G(q, \xi)\right] \\
& -\frac12\sum_{i=1}^m c_ia(\xi_i)\left[4\log\varepsilon +4-2\log 8+c_i H(\xi_i, \xi_i)+c_0 G(\xi_i, q) \right. \\
&\left.+\sum_{k=1, k\not=i}^m c_k G(\xi_i, \xi_k)+4\alpha \log |\xi_i-q|\right] +O\left(\sum_{k=0}^m (\varepsilon \mu_k)^{\min\{\frac2p-1, \beta\}}\right).
\end{align*}
\end{proposition}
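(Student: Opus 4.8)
The plan is to compute $J(U_\xi)$ by first using the equations satisfied by the projected bubbles to rewrite the quadratic part of $J$ as an explicit concentration integral, and then performing a localized asymptotic analysis near each of the points $q,\xi_1,\dots,\xi_m$. For the first step, integrate by parts: since $\partial_n PU_i=0$ on $\partial\Omega$ by \eqref{86}, the boundary term vanishes and
\[
\tfrac12\int_\Omega a\big(|\nabla U_\xi|^2+U_\xi^2\big)
=-\tfrac12\int_\Omega a\,U_\xi\big(\Delta U_\xi+\nabla(\log a)\cdot\nabla U_\xi-U_\xi\big).
\]
Combining \eqref{86} with \eqref{84}--\eqref{85} one has $-\Delta PU_i-\nabla(\log a)\cdot\nabla PU_i+PU_i=-\Delta U_i=\varepsilon^2 w_i e^{U_i}$, where $w_0(x)=|x-q|^{2\alpha}$ and $w_i\equiv|\xi_i-q|^{2\alpha}$ is a constant for $i\ge1$. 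Hence
\[
J(U_\xi)=\frac{\varepsilon^2}{2}\sum_{i=0}^m b_i\int_\Omega a\,U_\xi\,w_i\,e^{U_i}
-\varepsilon^2\int_\Omega a\,|x-q|^{2\alpha}\big(e^{U_\xi}+e^{-U_\xi}\big).
\]

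Next I localize. Each density $\varepsilon^2 w_i e^{U_i}$ concentrates around $\xi_i$ (around $q$ when $i=0$) at scale $\varepsilon\mu_i$, so I split $\Omega$ into the balls $B_\rho(q),B_\rho(\xi_1),\dots,B_\rho(\xi_m)$ with $\rho=|\log\varepsilon|^{-2\kappa}$ and their complement. On the complement I use the expansion \eqref{53} together with the Green-function estimates of \cite{Agudelo_Pistoia2016} to bound every integrand by $O(\varepsilon^4|\log\varepsilon|^C)$; inside $B_\rho(\xi_i)$ I substitute the local expansions \eqref{8}, \eqref{75} for $U_\xi$, write $a(x)=a(\xi_i)+O(|x-\xi_i|)$, and drop $e^{-U_\xi}$ there (it is $O(\varepsilon^4|\log\varepsilon|^C)$ by the pointwise bounds on $W$). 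Because $U_\xi=b_iU_i+O(|x-\xi_i|^\beta)+O\big(\sum_k(\varepsilon\mu_k)^{2/p-1}\big)$ on $B_\rho(\xi_i)$ and $b_i^2=1$, the first term becomes $\frac{\varepsilon^2}2\sum_i\int_{B_\rho(\xi_i)}a\,U_i w_i e^{U_i}+(\text{error})$, while the second term contributes $-\sum_{i=0}^m a(\xi_i)c_i+(\text{error})$ (with $\xi_0:=q$), the value $c_i$ coming from whether $\xi_i$ is interior or on $\partial\Omega$.

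Then I evaluate the remaining model integrals. Rescaling $x=\xi_i+\varepsilon\mu_i z$ (resp.\ $x=q+\varepsilon\mu_0 y$) reduces everything to
\[
\int_{D}\frac{8\,dz}{(1+|z|^2)^2}=c_i,\qquad
\int_{D}\frac{8}{(1+|z|^2)^2}\log\frac1{(1+|z|^2)^2}\,dz=-2c_i,
\]
and, for the $q$-bubble, the analogues with weight $\frac{8(1+\alpha)^2|y|^{2\alpha}}{(1+|y|^{2(1+\alpha)})^2}$, which again integrate to $c_0$ and $-2c_0$; here $D=\R^2$ for an interior point and $D=\R_+^2$ for a boundary point (for boundary spikes one first straightens $\partial\Omega$ via the maps $F_i^\varepsilon$ of Section~\ref{sect2} and invokes \cite[Lemma 3]{Agudelo_Pistoia2016}), which is precisely the source of $c_i\in\{8\pi,4\pi\}$ and $c_0\in\{8\pi(1+\alpha),4\pi(1+\alpha)\}$. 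Collecting the $z$-independent parts of $U_i$ after rescaling produces the $4\log\varepsilon$, the $2\log 8(1+\alpha)^2$ (resp.\ $2\log 8$), the $4\alpha\log|\xi_i-q|$, and the terms $\log\big(8(1+\alpha)^2\mu_0^{2(1+\alpha)}\varepsilon^{2\alpha}\big)$ and $\log\frac{8\mu_i^2}{|\xi_i-q|^{2\alpha}}$; adding the $-a(\xi_i)c_i$ coming from the $e^{U_\xi}$ integral upgrades the resulting ``$+2$'' to ``$+4$''. Finally, inserting the algebraic relations \eqref{4} defining the $\mu_i$ replaces $\log\big(8(1+\alpha)^2\mu_0^{2(1+\alpha)}\varepsilon^{2\alpha}\big)$ by $c_0H(q,q)+\sum_i c_iG(q,\xi_i)$ and $\log\frac{8\mu_i^2}{|\xi_i-q|^{2\alpha}}$ by $c_iH(\xi_i,\xi_i)+c_0G(\xi_i,q)+\sum_{k\ne i}c_kG(\xi_i,\xi_k)$, yielding the stated expansion, with all remainders absorbed into $O\big(\sum_{k=0}^m(\varepsilon\mu_k)^{\min\{2/p-1,\beta\}}\big)$ via \eqref{51} and Lemma~\ref{lm1}.

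I expect the main obstacle to be the uniform error control in the localization step: one has to check that every truncation tail at $|x-\cdot|\sim\rho$, every Taylor remainder of $a$, and every cross term between two distinct bubbles is genuinely of size $\lesssim(\varepsilon\mu_k)^{\min\{2/p-1,\beta\}}$ uniformly for $\xi\in\mathcal O_\varepsilon$ — in particular near boundary spikes, where the half-ball geometry and the change of variables $F_i^\varepsilon$ introduce $O(\varepsilon|y-\xi_i'|)$ distortions in both the Laplacian and the boundary measure, and in the regime where $\mu_0$ degenerates, so that $\varepsilon\mu_0$ is only $O(\varepsilon^{1/(1+\alpha)})$ and $\log(\varepsilon\mu_0)$ must be controlled through the two-sided bounds \eqref{51} rather than treated as $O(1)$.
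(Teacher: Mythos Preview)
Your outline is correct and follows essentially the same route as the paper: integrate by parts using \eqref{84}--\eqref{86} to turn the quadratic part of $J(U_\xi)$ into the concentration integrals \eqref{7}, localize on balls of radius $|\log\varepsilon|^{-2\kappa}$ around $q,\xi_1,\dots,\xi_m$ using \eqref{8}, \eqref{75}, \eqref{53}, evaluate the resulting model integrals over $\R^2$ or $\R_+^2$, and finally invoke \eqref{4} to replace the $\log\mu_i$ terms by Green/Robin expressions. One small wording point: ``drop $e^{-U_\xi}$'' should be read as ``keep only the dominant summand of $e^{U_\xi}+e^{-U_\xi}=e^{U_i}+e^{-U_i}+\text{error}$'' (the paper makes this explicit), since for $b_i=-1$ it is $e^{-U_\xi}$ that carries the mass; with that clarification your error bookkeeping matches the paper's.
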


\begin{proof}
It is apparent that
\begin{equation*}
J(U_\xi)=\frac12\int_\Omega a(x)\left[|\nabla U_\xi|^2+U_\xi^2\right]dx-\varepsilon^2 \int_\Omega a(x)|x-q|^{2\alpha}(e^{U_\xi}+e^{-U_\xi})dx=: I_A-I_B,
\end{equation*}
where
\[I_A=\frac12\int_\Omega a(x)\left[|\nabla U_\xi|^2+U_\xi^2\right]dx\]
and
\[I_B=\varepsilon^2 \int_\Omega a(x)|x-q|^{2\alpha}(e^{U_\xi}+e^{-U_\xi})dx.\]
For notation simplicity, we define
\[\R_{z}=\left\{\begin{array}{ll}
\R^2,  &\mbox{if $z\in \Omega$,} \\
\R_+^2, &\mbox{if $z\in \partial\Omega$.}
\end{array}\right.\]

We calculate $I_A$ first. From \eqref{84}, \eqref{85} and \eqref{86}, we get
\begin{align}\label{7}
\nonumber I_A&= \frac12\int_\Omega a(x)\left[\varepsilon^2|x-q|^{2\alpha}e^{U_0}+\sum_{i=1}^m\varepsilon^2|\xi_i-q|^{2\alpha} e^{U_i}\right]U_\xi dx \\
&=\frac12\int_\Omega a(x)\left[\frac{8(1+\alpha)^2b_0(\varepsilon\mu_0)^{2(1+\alpha)}|x-q|^{2\alpha}}{[(\varepsilon \mu_0)^{2(1+\alpha)}+|x-q|^{2(1+\alpha)}]^2}+\sum_{i=1}^m \frac{8b_i(\varepsilon \mu_i)^2}{\left[(\varepsilon \mu_i)^2+|x-\xi_i|^2\right]^2}\right]U_\xi(x)dx.
\end{align}
For the first term in \eqref{7}, we have
\begin{align}\label{9}
\nonumber&\int_\Omega a(x)\frac{8(1+\alpha)^2b_0(\varepsilon\mu_0)^{2(1+\alpha)}|x-q|^{2\alpha}}{\left[(\varepsilon \mu_0)^{2(1+\alpha)}+|x-q|^{2(1+\alpha)}\right]^2}U_\xi(x)dx \\
\nonumber=&\int_{\Omega\cap B_{\frac1{|\log\varepsilon|^{2\kappa}}}(q)} a(x)\frac{8(1+\alpha)^2b_0(\varepsilon\mu_0)^{2(1+\alpha)}|x-q|^{2\alpha}}{\left[(\varepsilon \mu_0)^{2(1+\alpha)}+|x-q|^{2(1+\alpha)}\right]^2}U_\xi(x)dx \\
&+\sum_{i=1}^m\int_{\Omega\cap B_{\frac1{|\log\varepsilon|^{2\kappa}}}(\xi_i)} a(x)\frac{8(1+\alpha)^2b_0(\varepsilon\mu_0)^{2(1+\alpha)}|x-q|^{2\alpha}}{\left[(\varepsilon \mu_0)^{2(1+\alpha)}+|x-q|^{2(1+\alpha)}\right]^2}U_\xi(x)dx \\
\nonumber&+\int_{\Omega\char92 \left\{ B_{\frac1{|\log\varepsilon|^{2\kappa}}}(q)\cup \left[\cup_{i=1}^m B_{\frac1{|\log\varepsilon|^{2\kappa}}}(\xi_i)\right]\right\}} a(x)\frac{8(1+\alpha)^2b_0(\varepsilon\mu_0)^{2(1+\alpha)}|x-q|^{2\alpha}}{\left[(\varepsilon \mu_0)^{2(1+\alpha)}+|x-q|^{2(1+\alpha)}\right]^2}U_\xi(x)=:I_4+I_5+I_6.
\end{align}
From expression \eqref{8} and \eqref{51}, we have
\begin{align*}
I_4=& \int_{\Omega\cap B_{\frac1{|\log\varepsilon|^\kappa}}(q)} a(x)\frac{8(1+\alpha)^2(\varepsilon\mu_0)^{2(1+\alpha)}|x-q|^{2\alpha}}{\left[(\varepsilon \mu_0)^{2(1+\alpha)}+|x-q|^{2(1+\alpha)}\right]^2}U_0(x)dx \\
&+O\left(\sum_{k=0}^m(\varepsilon \mu_k)^{\frac2p-1}\right)+O((\varepsilon \mu_0)^{\min\{\beta,2+2\alpha\}}) \\
=& a(q)\int_{\R_q}\frac{8(1+\alpha)^2|t|^{2\alpha}}{[1+|t|^{2(1+\alpha)}]^2}\log \frac{8(1+\alpha)^2}{[1+|t|^{2(1+\alpha)}]^2}dt  \\
 &-\left(4\log\varepsilon +\log(\mu_0^{2(1+\alpha)}\varepsilon^{2\alpha})\right)a(q)\int_{\R_q}\frac{8(1+\alpha)^2 |t|^{2\alpha}}{[1+|t|^{2(1+\alpha)}]^2}dt \\
& +O\left(\sum_{k=0}^m(\varepsilon \mu_k)^{\frac2p-1}\right)+O((\varepsilon \mu_0)^\beta).
\end{align*}
For each term in $I_5$, we get the following estimate from \eqref{75}:
\begin{align*}
& \left|\int_{\Omega\cap B_{\frac1{|\log\varepsilon|^{2\kappa}}}(\xi_i)} \frac{b_0(\varepsilon\mu_0)^{2(1+\alpha)}|x-q|^{2\alpha}}{\left[(\varepsilon\mu_0)^{2(1+\alpha)} +|x-q|^{2(1+\alpha)}\right]^2}U_\xi(x)dx\right| \\
\leq &C(\varepsilon \mu_0)^{2(1+\alpha)}|\log\varepsilon|^{4(2+\alpha)\kappa}\int_{\Omega\cap B_{\frac1{|\log\varepsilon|^{2\kappa}}}(\xi_i)}\left|U_i(x)+O\left(\sum_{k=0}^m(\varepsilon \mu_k)^{\frac2p-1}\right)+O((\varepsilon \mu_0)^\beta)\right| \\
\leq & C(\varepsilon \mu_0)^{2(1+\alpha)}|\log \varepsilon|^{4(1+\alpha) \kappa}(\log|\log\varepsilon|+\log(\varepsilon\mu_i)),
\end{align*}
Using \eqref{53}, we also get
\begin{align*}
I_6=& \int_{\Omega\char92 \left\{ B_{\frac1{|\log\varepsilon|^{2\kappa}}}(q)\cup \left[ \cup_{i=1}^m B_{\frac1{|\log\varepsilon|^{2\kappa}}}(\xi_i)\right]\right\}} a(x)\frac{8(1+\alpha)^2b_0(\varepsilon\mu_0)^{2(1+\alpha)}|x-q|^{2\alpha}}{\left[(\varepsilon \mu_0)^{2(1+\alpha)}+|x-q|^{2(1+\alpha)}\right]^2} \\
&\times\left(c_0 G(x, q)+\sum_{i=1}^m c_i G(x, \xi_i)+O\left(\sum_{k=0}^m(\varepsilon \mu_k)^{\frac2p-1}\right)\right)dx \\
\leq &C\log|\log\varepsilon|\int_{\Omega\char92 B_{\frac1{|\log\varepsilon|^{2\kappa}}}(q)} \frac{(\varepsilon\mu_0)^{2(1+\alpha)}|x-q|^{2\alpha}}{\left[(\varepsilon\mu_0)^{2(1+\alpha)} +|x-q|^{2(1+\alpha)}\right]^2}dx \\
\leq & C(\varepsilon \mu_0)^{2+2\alpha}|\log\varepsilon|^{4\kappa(1+\alpha)}\log|\log\varepsilon|.
\end{align*}
Hence there holds
\begin{align}\label{11}
\nonumber&\int_\Omega a(x)\frac{8(1+\alpha)^2b_0(\varepsilon\mu_0)^{2(1+\alpha)}|x-q|^{2\alpha}}{\left[(\varepsilon \mu_0)^{2(1+\alpha)}+|x-q|^{2(1+\alpha)}\right]^2}U_\xi(x)dx \\
= & a(q)\left[ -\left(4\log\varepsilon \nonumber+\log(\mu_0^{2(1+\alpha)}\varepsilon^{2\alpha})\right)\int_{\R_q}\frac{8(1+\alpha)^2 |t|^{2\alpha}}{[1+|t|^{2(1+\alpha)}]^2}dt+\int_{\R_q}\frac{8(1+\alpha)^2|t|^{2\alpha}}{[1+|t|^{ 2(1+\alpha)}]^2}\log \frac{8(1+\alpha)^2}{[1+|t|^2]}dt \right] \\
& +O\left(\sum_{k=0}^m(\varepsilon \mu_k)^{\frac2p-1}\right)+O((\varepsilon \mu_0)^\beta).
\end{align}
Now we estimate the second term in \eqref{7}. As in \eqref{9}, we have
\begin{align*}
&\int_\Omega a(x)\frac{8b_i(\varepsilon \mu_i)^2}{\left[(\varepsilon \mu_i)^2+|x-\xi_i|^2\right]^2}U_\xi(x)dx \\
\nonumber=&\int_{\Omega\cap B_{\frac1{|\log\varepsilon|^{2\kappa}}}(\xi_i)} a(x)\frac{8b_i(\varepsilon \mu_i)^2}{\left[(\varepsilon \mu_i)^2+|x-\xi_i|^2\right]^2}U_\xi(x)dx \\
& +\sum_{j=1, j\not=i}^m\int_{\Omega\cap B_{\frac1{|\log\varepsilon|^{2\kappa}}}(\xi_j)} a(x)\frac{8b_i(\varepsilon \mu_i)^2}{\left[(\varepsilon \mu_i)^2+|x-\xi_i|^2\right]^2}U_\xi(x)dx \\
& +\int_{\Omega\cap B_{\frac1{|\log\varepsilon|^{2\kappa}}}(q)} a(x)\frac{8b_i(\varepsilon \mu_i)^2}{\left[(\varepsilon \mu_i)^2+|x-\xi_i|^2\right]^2}U_\xi(x)dx\\
\nonumber&+\int_{\Omega\char92 \left[\cup_{i=1}^m B_{\frac1{|\log\varepsilon|^{2\kappa}}}(\xi_i)\cup B_{\frac1{|\log\varepsilon|^{2\kappa}}}(q)\right]} a(x)\frac{8b_i(\varepsilon \mu_i)^2}{\left[(\varepsilon \mu_i)^2+|x-\xi_i|^2\right]^2}U_\xi(x)dx=:I_7+I_8+I_9+I_{10}.
\end{align*}
For the term $I_7$, we get
\begin{align*}\
I_7=& \int_{\Omega\cap B_{\frac1{|\log\varepsilon|^{2\kappa}}}(\xi_i)} a(x)\frac{8b_i(\varepsilon \mu_i)^2}{\left[(\varepsilon \mu_i)^2+|x-\xi_i|^2\right]^2}\left(b_iU_i(x)+O(|x-\xi_i|^\beta)+O\left(\sum_{k=0}^m(\varepsilon \mu_k)^{\frac2p-1}\right)\right)dx \\
=& a(\xi_i)\left[-\left(4\log\varepsilon+\log (\mu_i^2|\xi_i-q|^{2\alpha})\right)\int_{\R_{\xi_i}}\frac{8}{[1+|t|^2]^2}dt +\int_{\R_{\xi_i}}\frac{8}{[1+|t|^2]^2}\log \frac{8}{[1+|t|^2]^2}dt \right] \\
& +O\left((\varepsilon\mu_i)^\beta\right)+O\left(\sum_{k=0}^m(\varepsilon \mu_k)^{\frac2p-1}\right).
\end{align*}
For $I_8$, we get
\begin{align*}
|I_8|\leq & C\sum_{j=1}^m (\varepsilon \mu_i)^2|\log|^{8\kappa}\int_{\Omega\cap B_{\frac1{|\log\varepsilon|^{2\kappa}}}(\xi_j)}\left|b_jU_j(x)+O(|x-\xi_j|^\beta)+O\left( \sum_{k=0}^m(\varepsilon \mu_k)^{\frac2p-1}\right)\right|dx \\
\leq & C\sum_{j=1}^m (\varepsilon\mu_i)^2|\log|^{4\kappa}\left[\log|\log\varepsilon|+\log(\varepsilon\mu_j)\right].
\end{align*}
As the same way, we get
\begin{align*}
|I_9|\leq & (\varepsilon \mu_i)^2|\log|^{4\kappa}\int_{\Omega\cap B_{\frac1{|\log\varepsilon|^{2\kappa}}}(q)}\left|b_0U_0(x)+O(|x-\xi_j|^\beta)+O\left(\sum_{k=0}^m( \varepsilon \mu_k)^{\frac2p-1}\right)\right| \\
\leq & C(\varepsilon\mu_i)^2|\log|^{4\kappa}\left[\log|\log\varepsilon|+\log(\varepsilon\mu_0)\right].
\end{align*}
For the last term, we have
\begin{align*}
|I_{10}|\leq & \left|\int_{\Omega\char92 \left[\cup_{i=1}^m B_{\frac1{|\log\varepsilon|^{2\kappa}}}(\xi_i)\cup B_{\frac1{|\log\varepsilon|^{2\kappa}}}(q)\right]} \frac{8b_i(\varepsilon \mu_i)^2}{\left[(\varepsilon \mu_i)^2+|x-\xi_i|^2\right]^2} \right. \\
&\left.\times\left(b_0c_0 G(x, q)+\sum_{i=1}^m b_ic_i G(x, \xi_i)+O\left(\sum_{k=0}^m(\varepsilon \mu_k)^{\frac2p-1}\right)\right)dx\right| \\
\leq & C(\varepsilon \mu_i)^2|\log\varepsilon|^{4\kappa}\log|\log\varepsilon|.
\end{align*}
Hence
\begin{align}\label{10}
&\nonumber\int_\Omega a(x)\frac{8b_i(\varepsilon \mu_i)^2}{\left[(\varepsilon \mu_i)^2+|x-\xi_i|^2\right]^2}U_\xi(x)dx \\
=&  a(\xi_i)\left[ -\left(4\log\varepsilon +\log(\mu_i^2|\xi_i-q|^{2\alpha})\right)\int_{\R_{\xi_i}}\frac8{[1+|t|^2]^2}dt\right. \\
\nonumber&\left.+\int_{\R_{\xi_i}}\frac8{[1+|t|^2]^2}\log \frac8{[1+|t|^2]}dt \right]+O\left(\sum_{k=0}^m(\varepsilon \mu_k)^{\frac2p-1}\right)+O((\varepsilon \mu_i)^\beta).
\end{align}
From \eqref{11} and \eqref{10}, we get
\begin{align}\label{14}
\nonumber I_A=& \frac12a(q) \left[-\left(4\log\varepsilon+\log (\mu_0^{2(1+\alpha)}\varepsilon^{2\alpha})\right)\int_{\R_q}\frac{8(1+\alpha)^2|t|^{2\alpha}}{ [1+|t|^{2(1+\alpha)}]^2}dt \right.\\
&\left.+\int_{\R_q}\frac{8(1+\alpha)^2|t|^{2\alpha}}{[1+|t|^{2(1+\alpha)}]^2}\log \frac{8(1+\alpha)^2}{[1+|t|^2]^2}dt\right] +\frac12\sum_{i=1}^m a(\xi_i)\left[\int_{\R_{\xi_i}}\frac{8}{[1+|t|^2]^2}\log \frac{8}{[1+|t|^2]^2}dt\right.\\
\nonumber & \left. -\left(4\log\varepsilon+\log (\mu_i^2|\xi_i-q|^{2\alpha})\right)\int_{\R_{\xi_i}}\frac{8}{[1+|t|^2]^2}dt\right] + O\left(\sum_{k=0}^m(\varepsilon \mu_k)^{\min\{\frac2p-1, \beta\}}\right).
\end{align}
Now we estimate the term $I_B$. It is apparent that
\begin{align*}
I_B=& \left(\int_{\Omega\cap B_{\frac1{|\log\varepsilon|^{2\kappa}}}(q)}+\sum_{i=1}^m\int_{\Omega\cap B_{\frac1{|\log\varepsilon|^{2\kappa}}}(\xi_i)}+\int_{\Omega \char92 \left[\left(\cup_{i=1}^m B_{\frac1{|\log\varepsilon|^{2\kappa}}}(\xi_i)\right)\cup B_{\frac1{|\log\varepsilon|^{2\kappa}}}(q)\right]}\right)\varepsilon^2 a(x)|x-q|^{2\alpha} \\
& \times\left(e^{U_\xi(x)}+e^{-U_\xi(x)}\right)dx=: I_{11}+I_{12}+I_{13}.
\end{align*}
For $x\in \Omega\cap B_{\frac1{|\log\varepsilon|^{2\kappa}}}(q)$, we have
\[e^{U_\xi(x)}=e^{b_0U_0(x)}\left(1+O(|x-q|^\beta)+O\left(\sum_{k=0}^m(\varepsilon \mu_k)^{\frac2p-1}\right)\right)\]
and
\begin{equation*}
e^{U_\xi(x)}+e^{-U_\xi(x)}=\left(e^{U_0(x)}+e^{-U_0(x)}\right)\left(1+O(|x-q|^\beta)+O\left(\sum_{k=0}^m( \varepsilon \mu_k)^{\frac2p-1}\right)\right)
\end{equation*}
Hence we get
\begin{align*}
  I_{11} =& \int_{\Omega\cap B_{\frac1{|\log\varepsilon|^{2\kappa}}}(q)} a(x)\frac{8(1+\alpha)^2\mu_0^{2+2\alpha}\varepsilon^{2+2\alpha}|x-q|^{2\alpha}}{[(\varepsilon \mu_0)^{2(1+\alpha)}+|x-q|^{2(1+\alpha)}]^2}\left(1+O(|x-q|^\beta)+O\left(\sum_{k=0}^m( \varepsilon \mu_k)^{\frac2p-1}\right)\right) \\
  &+ \varepsilon^{2-2\alpha}\int_{\Omega\cap B_{\frac1{|\log\varepsilon|^{2\kappa}}}(q)}  a(x)|x-q|^{2\alpha}\frac{[(\varepsilon \mu_0)^{2(1+\alpha)}+|x-q|^{2(1+\alpha)}]^2}{8(1+\alpha)^2 \mu_0^{2+2\alpha}} \\
  &\quad \times\left(1+O(|x-q|^\beta)+O\left(\sum_{k=0}^m(\varepsilon \mu_k)^{\frac2p-1}\right)\right)dx \\
  =& \int_{\Omega\cap B_{\frac1{|\log\varepsilon|^{2\kappa}}}(q)} a(x) \frac{8(1+\alpha)^2\mu_0^{2+2\alpha}\varepsilon^{2(1+\alpha)}|x-q|^{2\alpha}}{[(\varepsilon \mu_0)^{2(1+\alpha)}+|x-q|^{2(1+\alpha)}]^2}dx +O((\varepsilon\mu_0)^\beta)+O\left(\sum_{k=0}^m(\varepsilon \mu_k)^{\frac2p-1}\right) \\
  =& a(q) \int_{\R_q} \frac{8(1+\alpha)^2|t|^{2\alpha}}{[1+|t|^{2(1+\alpha)}]^2}dt +O((\varepsilon\mu_0)^\beta)+O\left(\sum_{k=0}^m(\varepsilon \mu_k)^{\frac2p-1}\right).
\end{align*}
However, in $\Omega\cap B_{\frac1{|\log\varepsilon|^{2\kappa}}(\xi_i)}$, we have
\[e^{U_\xi(x)}=e^{b_iU_i(x)}\left[1+O(|x-\xi_i|^\beta)+O\left(\sum_{k=0}^m(\varepsilon \mu_k)^{\frac2p-1}\right) \right] \]
and
\[e^{U_\xi(x)}+E^{-U_\xi(x)}=\left[e^{U_i(x)}+e^{U_i(x)}\right]\left[1+O(|x-\xi_i|^\beta)+O\left(\sum_{k=0}^m(\varepsilon \mu_k)^{\frac2p-1}\right) \right] \]
Thus
\begin{align*}
  I_{12}= & \sum_{i=1}^m \int_{\Omega\cap B_{\frac1{|\log\varepsilon|^{2\kappa}}}(\xi_i)}\frac{8(\varepsilon \mu_i)^2 a(x)|x-q|^{2\alpha}}{[(\varepsilon\mu_i)^2+|x-\xi_i|^2]^2|\xi_i-q|^{2\alpha}}\left[ 1+O(|x-\xi_i|^\beta)+O\left(\sum_{k=0}^m(\varepsilon \mu_k)^{\frac2p-1}\right) \right] \\
  & +\varepsilon^2\sum_{i=1}^m\frac{|\xi_i-q|^{2\alpha}}{8\mu_i^2}\int_{\Omega\cap B_{\frac1{|\log\varepsilon|^{2\kappa}}}(\xi_i)}a(x)|x-q|^{2\alpha}[(\varepsilon \mu_i)^2+|x-\xi_i|^2]^2 \\
  &\qquad\times\left[1+O(|x-\xi_i|^\beta)+O\left(\sum_{k=0}^m(\varepsilon \mu_k)^{\frac2p-1}\right) \right] \\
  =& \sum_{i=1}^m \int_{\Omega\cap B_{\frac1{|\log\varepsilon|^{2\kappa}}}(\xi_i)}a(x)\frac{8(\varepsilon \mu_i)^2|x-q|^{2\alpha}}{[(\varepsilon\mu_i)^2+|x-\xi_i|^2]^2|\xi_i-q|^{2\alpha}}dx + O\left(\sum_{k=0}^m(\varepsilon \mu_i)^{\min\{\frac2p-1, \beta\}}\right) \\
  =&\sum_{i=1}^m a(\xi_i)\int_{\R_{\xi_i}}\frac{8}{[1+|t|^2]^2}dt+ O\left(\sum_{k=0}^m(\varepsilon \mu_i)^{\min\{\frac2p-1, \beta\}}\right).
\end{align*}
For $x\in \Omega \char92 \left[\left(\cup_{i=1}^m B_{\frac1{|\log\varepsilon|^{2\kappa}}}(\xi_i)\right)\cup B_{\frac1{|\log\varepsilon|^{2\kappa}}}(q)\right]$, we get the following estimate from definition of $\mathcal{O}_\varepsilon$ in \eqref{12}:
\[0<G(x,q)<C\log|\log\varepsilon|  \quad \mathrm{and} \quad 0<G(x, \xi_i)<C\log|\log\varepsilon|\]
Hence
\[
  |I_{13}| = \varepsilon^2 \int_{\Omega \char92 \left[\left(\cup_{i=1}^m B_{\frac1{|\log\varepsilon|^{2\kappa}}}(\xi_i)\right)\cup B_{\frac1{|\log\varepsilon|^{2\kappa}}}(q)\right]} a(x)|x-q|^{2\alpha} \left(e^{U_\xi(x)}+e^{-U_\xi(x)}\right)dx \leq C\varepsilon^2|\log\varepsilon|^C.
\]
Therefore
\begin{equation}\label{13}
  I_B= a(q)\int_{\R_q}\frac{8(1+\alpha)^2|t|^{2\alpha}}{[1+|t|^{2(1+\alpha)}]^2}dt+\sum_{i=1}^m a(\xi_i)\int_{\R_{\xi_i}}\frac{8}{[1+|t|^2]^2}dt + O\left(\sum_{k=0}^m(\varepsilon \mu_i)^{\min\{\frac2p-1, \beta\}}\right).
\end{equation}
Then this lemma follows from \eqref{14}, \eqref{13}, \eqref{4}, the definition of $c_i$'s and the following identities:
\[\int_{\R^2} \frac8{[1+|t|^2]^2}dt=8\pi, \qquad \int_{\R^2}\frac{8(1+\alpha)^2|t|^{2\alpha}}{[1+|t|^{2(1+\alpha)}]^2}dt=8\pi(1+\alpha),\]
\[\int_{\R^2}\frac{8(1+\alpha)^2|t|^{2\alpha}}{[1+|t|^{2(1+\alpha)}]^2}\log \frac{8(1+\alpha)^2}{[1+|t|^2]^2}dt=8\pi(1+\alpha)\left[\log(8(1+\alpha)^2)-2\right],\]
and
\[\int_{\R^2}\frac{8}{[1+|t|^2]^2}\log \frac{8}{[1+|t|^2]^2}dt=8\pi\left[\log8-2\right],\]
which is easy to derive.

\end{proof}

Now we will prove some estimates which are needed in Lemma \ref{lm4}. Recall that $\tilde{Z}_{k0}$'s are defined in \eqref{22} and \eqref{23} for $k=0,1,\cdots, m,$ and the norm $\|\cdot\|_*$ is defined in \eqref{24}. The operator $L$ is defined in \eqref{87}.
\begin{lemma}\label{lm3}
For $\varepsilon>0$ small enough and $\mu_j$'s satisfying \eqref{4}, we have
\begin{equation}\label{25}
\|L(\tilde{Z}_{k0})\|_*\leq C\frac{\log|\log \varepsilon|}{\mu_k|\log\varepsilon|},\qquad \mathrm{for} \qquad k=0,1,\cdots,m.
\end{equation}
and
\begin{equation}\label{26}
\|L(\chi_k Z_{ks})\|\leq \frac C{\mu_k}, \qquad  \mathrm{for} \qquad k=1,2,\cdots,m; s=1, J_k.
\end{equation}
\end{lemma}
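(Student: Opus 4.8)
The plan is to bound $L(\tilde Z_{k0})$ and $L(\chi_kZ_{ks})$ pointwise and then divide by the weight appearing in \eqref{24}, decomposing $\Omega_\varepsilon$ into the concentric regions where the cut-offs $\eta_{k1}^\xi$, $\eta_2^q$, $\chi_k$ are locally constant together with the thin transition annuli where they vary. The inputs are: the exact (for interior $k$) identity $-\Delta Z_{ks}=\frac{8\mu_k^2}{(\mu_k^2+|z_k|^2)^2}Z_{ks}$ and, for $k\ge l+1$, its version with the $F_k^\varepsilon$-error \eqref{117}/\eqref{81}; the expansions \eqref{77} and \eqref{38} of $W$; the fact that $G(\varepsilon\,\cdot\,,\xi_k)$ lies in the kernel of $L+W=-\Delta-\nabla(\log a(\varepsilon y))\cdot\nabla+\varepsilon^2$ away from $\xi_k$, so $L(G(\varepsilon y,\xi_k))=-WG(\varepsilon y,\xi_k)$, together with $L(1)=\varepsilon^2-W$; Lemma~\ref{lm9} (to replace $\mathring Z$ by $Z$ up to $O(\varepsilon^2|\log\varepsilon|^C)$); the distance lower bounds built into $\mathcal O_\varepsilon$ in \eqref{12}; and the bounds \eqref{51}, which give $C|\log\varepsilon|^{-C}\le\mu_k\le C|\log\varepsilon|^{C}$.

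For \eqref{25}, recall $\tilde Z_{k0}=\eta_{k1}^\xi Z_{k0}+\eta_2^q(1-\eta_{k1}^\xi)\hat Z_{k0}$. On the inner region $|z_k|\le R\mu_k$ one has $\tilde Z_{k0}=Z_{k0}$ and $L(Z_{k0})=\big(\frac{8\mu_k^2}{(\mu_k^2+|z_k|^2)^2}-W\big)Z_{k0}-\nabla(\log a(\varepsilon y))\cdot\nabla Z_{k0}+\varepsilon^2Z_{k0}+O\big(\frac{\varepsilon}{(\mu_k+|z_k|)^2}\big)$; by \eqref{77} and $|\xi_k-q|\ge|\log\varepsilon|^{-\kappa}$ the factor $\frac{8\mu_k^2}{(\mu_k^2+|z_k|^2)^2}-W$ is of relative size $O\big((\varepsilon\mu_k)^\beta+\varepsilon\mu_k|\log\varepsilon|^\kappa+\sum_i(\varepsilon\mu_i)^{2/p-1}\big)$, and dividing by the \emph{spatial} part of the weight (comparable to $(1+R)^{-2-\sigma}\mu_k^{-2}$ there) this region contributes $o\big(\frac{1}{\mu_k|\log\varepsilon|}\big)$. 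On the region where $\tilde Z_{k0}=\hat Z_{k0}$ one writes $L(\hat Z_{k0})=\big(\frac{8\mu_k^2}{(\mu_k^2+|z_k|^2)^2}-W\big)Z_{k0}+\frac{W}{\mu_k}\big(1-\mu_k a_{k0}^\xi G(\varepsilon y,\xi_k)\big)-\nabla(\log a(\varepsilon y))\cdot\nabla Z_{k0}+\varepsilon^2\big(Z_{k0}-\frac1{\mu_k}\big)$ (up to the $O(\varepsilon^2|\log\varepsilon|^C)$ error from Lemma~\ref{lm9} and the $F_k^\varepsilon$-terms); here the choice of $a_{k0}^\xi$ is exactly what makes the middle term small, since with $G(x,\xi_k)=-\frac4{c_k}\log|x-\xi_k|+H(x,\xi_k)$ one gets $|1-\mu_k a_{k0}^\xi G(\varepsilon y,\xi_k)|\le C\big(\big|\log\tfrac{|\varepsilon y-\xi_k|}{\varepsilon\mu_k R}\big|+\varepsilon|z_k|\big)/|\log\varepsilon|$, and on the support $|\varepsilon y-\xi_k|\ge c|\log\varepsilon|^{-2\kappa}$ together with $|z_k|\lesssim(\varepsilon|\log\varepsilon|^{2\kappa})^{-1}$ forces this logarithm to be $\le C\log|\log\varepsilon|$; after division by the weight (using \eqref{38} where all spikes are far and $W=O(\varepsilon^4|\log\varepsilon|^C)$) this yields $\frac{C\log|\log\varepsilon|}{\mu_k|\log\varepsilon|}$. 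Finally, on the two transition annuli ($|z_k|\sim R\mu_k$, and $\varepsilon|z_0|\sim d$) one expands $L(\eta w)=\eta L(w)-2\nabla\eta\cdot\nabla w-w\,\Delta\eta-w\,\nabla(\log a(\varepsilon y))\cdot\nabla\eta$ with $w=Z_{k0}-\hat Z_{k0}=\frac1{\mu_k}-a_{k0}^\xi G(\varepsilon y,\xi_k)$ (resp. $w=\hat Z_{k0}$), using $|\nabla\eta_{k1}^\xi|\le C/\mu_k$, $|\Delta\eta_{k1}^\xi|\le C/\mu_k^2$, $|\nabla\eta_2^q|\le C\varepsilon$, $|\Delta\eta_2^q|\le C\varepsilon^2$, the gradient bounds $|\nabla Z_{k0}|\le C\mu_k/(\mu_k+|z_k|)^3$ and $|a_{k0}^\xi\nabla G(\varepsilon y,\xi_k)|\le Ca_{k0}^\xi\varepsilon/|\varepsilon y-\xi_k|$, and $a_{k0}^\xi=O\big(\frac1{\mu_k|\log\varepsilon|}\big)$; dividing by the spatial part of the weight on the $\eta_{k1}^\xi$-annulus and by the $\varepsilon^2$-part on the $\eta_2^q$-annulus, each piece is $O\big(\frac1{\mu_k|\log\varepsilon|}\big)$. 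Collecting the worst contribution proves \eqref{25}.

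For \eqref{26} (with $\|\cdot\|$ understood as $\|\cdot\|_*$) the same scheme applies with the single cut-off $\chi_k$ at scale $\mu_k$: $L(\chi_kZ_{ks})=\chi_kL(Z_{ks})-2\nabla\chi_k\cdot\nabla Z_{ks}-Z_{ks}\Delta\chi_k-Z_{ks}\nabla(\log a(\varepsilon y))\cdot\nabla\chi_k$. On $\supp\chi_k$ we have $L(Z_{ks})=\big(\frac{8\mu_k^2}{(\mu_k^2+|z_k|^2)^2}-W\big)Z_{ks}-\nabla(\log a(\varepsilon y))\cdot\nabla Z_{ks}+\varepsilon^2Z_{ks}+O\big(\frac{\varepsilon}{(\mu_k+|z_k|)^2}\big)$, with $|Z_{ks}|\le C/(\mu_k+|z_k|)$, $|\nabla Z_{ks}|\le C/(\mu_k+|z_k|)^2$, and the difference $\frac{8\mu_k^2}{(\mu_k^2+|z_k|^2)^2}-W$ again small by \eqref{77}; since the spatial part of the weight on $\supp\chi_k$ is $\gtrsim(R_0+2)^{-2-\sigma}\mu_k^{-2}$, this part is $O(1/\mu_k)$. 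On the transition annulus $|z_k|\in[R_0\mu_k,(R_0+1)\mu_k]$ one uses $|\nabla\chi_k|\le C/\mu_k$, $|\Delta\chi_k|\le C/\mu_k^2$ to get each commutator term $\le C/\mu_k^3$ times a fixed $R_0$-dependent factor, hence $O(1/\mu_k)$ after division. Thus \eqref{26} follows.

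The main obstacle is the bookkeeping in the transition annuli and, above all, verifying the cancellation encoded in $a_{k0}^\xi$: one must show that on the whole region where $\eta_{k1}^\xi\neq1$ and $\eta_2^q\neq0$ the combination $\frac{W}{\mu_k}-a_{k0}^\xi WG(\varepsilon y,\xi_k)$, together with the leftover terms $\frac{8\mu_k^2}{(\mu_k^2+|z_k|^2)^2}Z_{k0}$ and $\varepsilon^2(Z_{k0}-\frac1{\mu_k})$, is no larger than $\frac{\log|\log\varepsilon|}{\mu_k|\log\varepsilon|}$ times the weight; this requires the precise splitting $G(x,\xi_k)=-\frac4{c_k}\log|x-\xi_k|+H(x,\xi_k)$ together with the distance lower bounds of $\mathcal O_\varepsilon$ and the estimates \eqref{51} for $\mu_k$, and it is the only place where the $\log|\log\varepsilon|$ factor is genuinely needed. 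Every remaining term is $O(\varepsilon^2|\log\varepsilon|^C)$ or smaller.
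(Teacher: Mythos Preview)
Your overall scheme and the treatment of the inner ball, the transition annuli, and the proof of \eqref{26} are essentially the same as the paper's. The genuine gap is in your handling of $L(\hat Z_{k0})$ on the region $\{\eta_{k1}^\xi=0,\ \eta_2^q=1\}$.

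The claim that ``on the support $|\varepsilon y-\xi_k|\ge c|\log\varepsilon|^{-2\kappa}$ together with $|z_k|\lesssim(\varepsilon|\log\varepsilon|^{2\kappa})^{-1}$'' is simply false: that region is $\{(R+1)\mu_k\le|z_k|,\ |z_0|\le 4d/\varepsilon\}$, so $|\varepsilon y-\xi_k|$ may be as small as $c(R+1)\varepsilon\mu_k$ and $|z_k|$ as large as $c\,d/\varepsilon$. Consequently $\big|\log\tfrac{|\varepsilon y-\xi_k|}{\varepsilon\mu_kR}\big|$ can be of order $|\log\varepsilon|$, and your bound $|1-\mu_k a_{k0}^\xi G|\le C\log|\log\varepsilon|/|\log\varepsilon|$ fails: away from $\xi_k$ this quantity is $1+o(1)$. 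Likewise, your bound on the first term $(\frac{8\mu_k^2}{(\mu_k^2+|z_k|^2)^2}-W)Z_{k0}$ relies on \eqref{77}, which only holds for $|\varepsilon y-\xi_k|\le|\log\varepsilon|^{-2\kappa}$; near another spike $\xi_i$ or near $q$ one has $\frac{8\mu_k^2}{(\mu_k^2+|z_k|^2)^2}\approx0$, $Z_{k0}\approx\tfrac1{\mu_k}$, so this term is $\approx-\tfrac{W}{\mu_k}$, which after division by the weight is $O(1/\mu_k)$, not $O(\log|\log\varepsilon|/(\mu_k|\log\varepsilon|))$.

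What actually happens near the other spikes is that your first and middle terms \emph{cancel} to leading order; regrouped, they give
\[
W\Big(\tfrac1{\mu_k}-Z_{k0}\Big)-a_{k0}^\xi\,W\,G(\varepsilon y,\xi_k)+\tfrac{8\mu_k^2}{(\mu_k^2+|z_k|^2)^2}Z_{k0},
\]
and \emph{each} of these is individually small there: $\tfrac1{\mu_k}-Z_{k0}=\tfrac{2\mu_k}{\mu_k^2+|z_k|^2}$ is small because $|z_k|\gtrsim(\varepsilon|\log\varepsilon|^\kappa)^{-1}$; $a_{k0}^\xi G(\varepsilon y,\xi_k)=O\!\big(\tfrac{\log|\log\varepsilon|}{\mu_k|\log\varepsilon|}\big)$ since $|\varepsilon y-\xi_k|\ge c|\log\varepsilon|^{-\kappa}$ there forces $G=O(\log|\log\varepsilon|)$; and the last term is negligible. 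This is exactly the decomposition the paper uses (cases 2)--4) in its proof), and it is where the $\log|\log\varepsilon|$ factor genuinely arises. In the annulus near $\xi_k$ itself (case 1) in the paper) your formula is fine but the mechanism is different: the logarithm \emph{does} grow to size $|\log\varepsilon|$, and it is the extra decay $W/\text{weight}=O\big((\mu_k/(\mu_k+|z_k|))^{2-\sigma}\big)$ that absorbs it. You should split the outer region into the pieces $A_0,\dots,A_{m+1}$ and use the appropriate grouping in each.
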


\begin{proof}
We first prove \eqref{25}. For $j=1,2, \cdots, m$, we have
\begin{align}\label{27}
\nonumber  L(\tilde{Z}_{j0}) =& \eta_{j1}^\xi L(Z_{j0}) +\eta_2^q(1-\eta_{j1}^\xi) L(\hat{Z}_{j0}) -\Delta \eta_{j1}^\xi(Z_{j0}-\hat{Z}_{j0}) -2\nabla \eta_{j1}^\xi \cdot \nabla(Z_{j0}-\hat{Z}_{j0}) \\
\nonumber    & -\varepsilon (\nabla\log a)(\varepsilon y )\cdot \nabla \eta_{j1}^\xi (Z_{j0}-\hat{Z}_{j0})-\Delta \eta_2^q \hat{Z}_{j0}-2\nabla \eta_2^q\cdot \nabla \hat{Z}_{j0} \\ &-\varepsilon (\nabla\log a)(\varepsilon y)\cdot \nabla \eta_2^q \hat{Z}_{j0}.
\end{align}
From the definition of $Z_{j0}$, we get
\[\Delta Z_{j0}=-\frac{8\mu_j^2}{[\mu_j^2+|z_j|^2]^2}\frac1{\mu_j}Z_0\left(\frac{z_j}{\mu_j}\right)+O\left( \frac{\varepsilon \mu_j}{(\mu_j+|z_j|)^3}\right).\]

For $|z_j|\leq (R+1)\mu_j$, we have
\begin{align*}
  L(Z_{j0})= &\left(\frac{8\mu_j^2}{[\mu_j^2+|z_j|^2]^2}- \frac{8\mu_j^2}{[\mu_j^2+|y-\xi_j'|^2]^2}\frac{|\varepsilon y-q|^{2\alpha}}{|\xi_j-q|^{2\alpha}}\right)\frac1{\mu_j} Z_0\left(\frac {z_j}{\mu_j}\right) +O\left(\frac{\varepsilon^2}{\mu_j}\right)  \\
  & +\frac{8\mu_j^2}{[\mu_j^2+|z_j|^2]^2}\left(O(|\varepsilon y-\xi_j|^\beta)+O\left(\sum_{k=0}^m (\varepsilon \mu_k)^{\frac2p-1}\right)\right)\frac1{\mu_j}Z_0\left(\frac{z_j}{\mu_j}\right).
\end{align*}
Hence
\begin{equation}\label{31}
\left|\eta_{j1}L(Z_{j0})\right|\leq \frac{C}{\mu_j}\left((\varepsilon \mu_j)^\beta+\sum_{k=0}^m(\varepsilon \mu_k)^{\frac2p-1}\right)\left[\varepsilon^2+\frac{\mu_j^\sigma}{(\mu_j+|y-\xi_j'|)^{2+\sigma}}\right].
\end{equation}

For $R\mu_j\leq |z_j|\leq (R+1)\mu_j$, we get
\begin{equation*}
Z_{j0}-\hat{Z}_{j0}=-\frac{\log\left|\frac{y-\xi_j'}{\mu_j R}\right|}{\mu_j |\log(\varepsilon\mu_j R)|}\left(1+O\left(\frac{\log|\log\varepsilon|}{|\log\varepsilon|}\right)\right).
\end{equation*}
In this case
\begin{equation}\label{89}
\left|Z_{j0}-\hat{Z}_{j0}\right|=O\left(\frac1{\mu_j|\log\varepsilon|}\right), \qquad \left|\nabla Z_{j0}-\nabla \hat{Z}_{j0}\right|=O\left(\frac1{\mu_j^2|\log\varepsilon|}\right).
\end{equation}
Hence we get
\begin{equation}\label{30}
\left|\Delta \eta_{j1}^\xi (Z_{j0}-\hat{Z}_{j0})+2\nabla \eta_{j1}^\xi\cdot \nabla(Z_{j0}-\hat{Z}_{j0})+\varepsilon (\log a)(\varepsilon y)\cdot \nabla \eta_{j1}^\xi(Z_{j0}-\hat{Z}_{j0})\right|\leq \frac{C}{\mu_j^3|\log\varepsilon|}.
\end{equation}

In the case of $\frac{4d}\varepsilon\leq |z_0|\leq \frac{8d}\varepsilon$, we have $\frac{d}{\varepsilon}\leq |y-\xi_j'|\leq \frac{17d}\varepsilon$. Then we get
\begin{equation}\label{90}
\hat{Z}_{j0}=Z_{j0}-\frac1{\mu_j}+a_{j0}^\xi G(\varepsilon y, \xi_j)+(\mathring{Z}_{j0}-Z_{j0})\chi_{\{1\leq j\leq l\}}=O\left(\frac1{\mu_j|\log\varepsilon|}\right)
\end{equation}
and
\begin{equation}\label{121}
\nabla \hat{Z}_{j0}=\nabla Z_{j0}+\varepsilon a_{j0}^\xi\nabla G(\varepsilon y, \xi_j)+(\nabla\mathring{Z}_{j0}-\nabla Z_{j0})\chi_{\{1\leq j\leq l\}}=O\left(\frac{\varepsilon}{\mu_j |\log\varepsilon|}\right).
\end{equation}
Hence
\begin{equation}\label{29}
\Delta \eta_2^q \hat{Z}_{j0}+2\nabla \eta_2^q\cdot \nabla \hat{Z}_{j0}+\varepsilon(\nabla\log a)(\varepsilon y)\cdot \nabla \eta_2^q \hat{Z}_{j0}=O\left(\frac{\varepsilon^2}{\mu_j |\log\varepsilon|}\right).
\end{equation}

At last, we consider the case that $R\mu_j\leq |z_j|$ and $|z_0|\leq \frac{8d}\varepsilon$. We consider different cases.

1) If $|\varepsilon y-\xi_j|\leq \frac1{|\log\varepsilon|^{2\kappa}}$, we have
\begin{align*}
  L(\hat{Z}_{j0})= &-\Delta Z_{j0} -WZ_{j0}-\varepsilon (\nabla \log a)(\varepsilon y)\cdot \nabla Z_{j0}+\varepsilon^2\left(Z_{j0}-\frac1{\mu_j}\right) +W\left(\frac1{\mu_j}-a_{j0}^\xi G(\varepsilon y, \xi_j)\right) \\
  &-W(\mathring{Z}_{j0}-Z_{j0})\chi_{\{1\leq j\leq l\}}\\
  =& \left[\frac{8\mu_j^2}{[\mu_j^2+|z_j|^2]^2}-\frac{8\mu_j^2}{[\mu_j^2+|y-\xi_j'|^2]^2}\frac{ |\varepsilon y-q|^{2\alpha}}{|\xi_j-q|^{2\alpha}}\right]\frac1{\mu_j}Z_0\left(\frac{z_j}{\mu_j}\right) +O\left(\frac{\varepsilon^4|\log\varepsilon|^C}{\mu_j}\right) \\
  & +O\left(\frac{\varepsilon \mu_j}{(\mu_j+|z_j|)^3}\right)+\frac{8\mu_j^2}{[\mu_j^2+|y-\xi_j'|^2]^2}\left(O\left(|\varepsilon y-\xi_j|^\beta\right)+O\left(\sum_{k=0}^m (\varepsilon \mu_k)^{\frac2p-1}\right)\right)\frac1{\mu_j}Z_0\left( \frac{z_j}{\mu_j}\right) \\
  &+O\left(\frac{\mu_j^2}{(\mu_j+|y-\xi_j'|)^4}\frac{1+\log\left|\frac{y-\xi_j'}{\mu_j} \right|}{ \mu_j|\log\varepsilon|}\right).
\end{align*}

2) If $|\varepsilon y-\xi_k|\leq \frac1{|\log\varepsilon|^{2\kappa}}$, where $k\not=j$. In this case we get $|\varepsilon y-\xi_j|\geq \frac1{2|\log\varepsilon|^{\kappa}}$ for $\varepsilon>0$ small enough. Hence
\begin{align*}
  L(\hat{Z}_{j0})= & -\Delta Z_{j0}+W\left(\frac1{\mu_j}-Z_{j0}\right) -\varepsilon (\nabla \log a)(\varepsilon y)\cdot \nabla Z_{j0}+\varepsilon^2 \left(Z_{j0}-\frac1{\mu_j}\right) -a_{j0}^\xi W G(\varepsilon y, \xi_j) \\
  &-W(\mathring{Z}_{j0}-Z_{j0})\chi_{\{1\leq j\leq l\}}\\
   =& \frac{8\mu_j^2}{[\mu_j^2+|z_j|^2]^2}\frac1{\mu_j}Z_0\left(\frac{z_j}{\mu_j}\right) +O\left(\frac{\mu_k^2}{(\mu_k+|y-\xi_k'|)^4}\frac{\mu_j}{(\mu_j+|z_j|)^2}\right) +O\left(\frac{\varepsilon \mu_j}{(\mu_j+|z_j|)^3}\right) \\
   & +O\left(\frac{\mu_j^2}{(\mu_j+|y-\xi_j'|)^4}\frac{\log|\log\varepsilon|}{\mu_j| \log\varepsilon|}\right)+O\left(\varepsilon^2\frac{\log|\log\varepsilon|}{|\log\varepsilon|}\right).
\end{align*}

3) If $|\varepsilon y-q|\leq \frac1{|\log\varepsilon|^{2\kappa}}$, we have
\begin{align*}
  L(\hat{Z}_{j0})= & -\Delta Z_{j0}+W\left(\frac1{\mu_j}-Z_{j0}\right) -\varepsilon (\nabla \log a)(\varepsilon y)\cdot \nabla Z_{j0}+\varepsilon^2 \left(Z_{j0}-\frac1{\mu_j}\right) -a_{j0}^\xi W G(\varepsilon y, \xi_j) \\
  & -W(\mathring{Z}_{j0}-Z_{j0})\chi_{\{1\leq j\leq l\}}\\
   =& \frac{8\mu_j^2}{[\mu_j^2+|z_j|^2]^2}\frac1{\mu_j}Z_0\left(\frac{z_j}{\mu_j}\right) +O\left(\frac{\mu_0^{2+2\alpha}|y-q'|^{2\alpha}}{(\mu_0+|y-q'| )^{4+4\alpha}}\frac{\mu_j}{(\mu_j+|z_j|)^2}\right) +O\left(\frac{\varepsilon \mu_j}{(\mu_j+|z_j|)^3}\right) \\
   & +O\left(\frac{\mu_0^{2+2\alpha}|y-q'|^{2\alpha}}{(\mu_0+|y-q'| )^{4+4\alpha}}\frac{\log| \log\varepsilon|}{\mu_j|\log\varepsilon|}\right)+O\left(\varepsilon^2\frac{\log|\log\varepsilon|}{|\log\varepsilon|}\right).
\end{align*}

4) If $y$ satisfies $|\varepsilon y-q|\geq \frac1{|\log\varepsilon |^{2\kappa}}$ and $|\varepsilon y-\xi_k|\geq \frac1{|\log\varepsilon |^{2\kappa}}$ for $k=1,2,\cdots,m$, we also have
\begin{align*}
  L(\hat{Z}_{j0})= & -\Delta Z_{j0}+W\left(\frac1{\mu_j}-Z_{j0}\right) -\varepsilon (\nabla \log a)(\varepsilon y)\cdot \nabla Z_{j0}+\varepsilon^2 \left(Z_{j0}-\frac1{\mu_j}\right) -a_{j0}^\xi W G(\varepsilon y, \xi_j) \\
  & -W(\mathring{Z}_{j0}-Z_{j0})\chi_{\{1\leq j\leq l\}} \\
   =& \frac{8\mu_j^2}{[\mu_j^2+|z_j|^2]^2}\frac1{\mu_j}Z_0\left(\frac{z_j}{\mu_j}\right) +O\left(\frac{\varepsilon^4|\log\varepsilon|^C\mu_j}{(\mu_j+|z_j|)^2}\right) +O\left(\frac{\varepsilon \mu_j}{(\mu_j+|z_j|)^3}\right) \\
   & +O\left(\varepsilon^4|\log\varepsilon|^C\frac{\log|\log\varepsilon|}{\mu_j| \log\varepsilon|}\right).
\end{align*}
Combining these cases above, we get
\begin{equation}\label{28}
\left|\eta_2^q(1-\eta_{j1}^\xi)L(\hat{Z}_{j0})\right|\leq C\frac{\log|\log\varepsilon|}{\mu_j|\log\varepsilon|}\left(\varepsilon^2+\sum_{j=1}^m \frac{\mu_j^\sigma}{(\mu_j+|y-\xi_j'|)^{2+\sigma}}+\frac{\mu_0^{2+2\hat{\alpha}}|y-q'|^{2\alpha}}{(\mu_0+|y-q'| )^{4+2\hat{\alpha}+2\alpha}}\right).
\end{equation}
From \eqref{27}, \eqref{31}, \eqref{30}, \eqref{29} and \eqref{28}, we get \eqref{25} for $j=1,2,\cdots,m$. From the same argument, we see that \eqref{25} still holds in the case of $j=0$.

Next, we prove \eqref{26}. It is easy to see
\[L(\chi_k Z_{ks})=\chi_k L(Z_{ks})-\Delta \chi_k Z_{ks}-2\nabla \chi_k\cdot \nabla Z_{ks}-\nabla(\log a(\varepsilon y))Z_{ks}, \quad \mathrm{where} \quad k=1,2,\cdots, m; s=1, J_k.\]
In the case of $R_0\mu_k\leq |z_k|\leq (R_0+1)\mu_k$, we have
\begin{align}\label{32}
 \nonumber \left|\Delta \chi_k Z_{ks}+2\nabla \chi_k\cdot \nabla Z_{ks}+\nabla(\log a(\varepsilon y))\cdot \nabla \chi_k Z_{ks}\right|\leq  & C\left(\frac1{\mu_k^2}+\frac{\varepsilon}{\mu_k}\right)\frac1{\mu_k+|z_k|} +\frac{C}{\mu_k}\frac1{(\mu_k+|z_k|)^2} \\
  \leq  & \frac{C}{\mu_k}\frac{\mu_k^\sigma}{(\mu_k+|y-\xi_k'|)^{2+\sigma}}.
\end{align}
In the case of $|z_k|\leq (R_0+1)\mu_k$, we get
\begin{align*}
  L(Z_{ks}) =& \left(\frac{8\mu_k^2}{[\mu_k^2+|z_k|^2]^2}-\frac{8\mu_k^2}{[\mu_k^2+|y-\xi_k'|^2]^2} \frac{|\varepsilon y-q|^{2\alpha}}{|\xi_k-q|^{2\alpha}}\right) \frac1{\mu_k}Z_s\left( \frac{z_k}{\mu_k}\right) +O\left(\frac{\varepsilon}{(\mu_k+|z_k|)^2}\right)\\
   & +O\left(\frac{\varepsilon^2}{\mu_k}\right) +\frac{8\mu_k^2}{[\mu_k^2+|y-\xi_k'|^2]^2}\left(O\left(|\varepsilon y-\xi_k|^\beta\right)+O\left(\sum_{l=0}^m (\varepsilon \mu_l)^{\frac2p-1}\right)\right)\frac1{\mu_k}Z_s\left( \frac{z_k}{\mu_k}\right).
\end{align*}
Hence
\begin{equation}\label{33}
\left|\chi_k L(Z_{ks})\right|\leq \frac1{\mu_k}\left(\varepsilon^2+\frac{\mu_k^\sigma}{(\mu_k+|y-\xi_k'|)^{2+\sigma}}\right).
\end{equation}
The estimate \eqref{26} follows from \eqref{32} and \eqref{33}.

\end{proof}

\begin{lemma}\label{lm5}
Under the same condition of Lemma \ref{lm3}, we have
\begin{equation}\label{34}
\int_{\Omega_\varepsilon} a(\varepsilon y)L(\tilde{Z}_{00})\tilde{Z}_{00}dy=\frac{c_0a(q)}{4(1+\alpha)\mu_0^2|\log(\varepsilon\mu_0 R)|}+O\left(\frac1{\mu_0^2|\log\varepsilon|R}\right),
\end{equation}
\begin{equation}\label{35}
\int_{\Omega_\varepsilon} a(\varepsilon y)L(\tilde{Z}_{j0})\tilde{Z}_{j0}dy=\frac{c_j a(\xi_j)}{4\mu_j^2|\log(\varepsilon \mu_j R)|}+O\left(\frac1{\mu_j^2|\log\varepsilon|R}\right), \quad \mathrm{where}\quad j=1,2,\cdots,m
\end{equation}
and
\begin{equation}\label{36}
\int_{\Omega_\varepsilon} a(\varepsilon y)L(\tilde{Z}_{k0})\tilde{Z}_{j0}dy=O\left(\frac{\log^2|\log\varepsilon|}{\mu_j\mu_k |\log\varepsilon|^2}\right)\quad \mathrm{for}\quad j\not=k.
\end{equation}
\end{lemma}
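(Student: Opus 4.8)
The plan is to first \emph{symmetrize} and then \emph{localize}. Since $\tilde Z_{j0}$ satisfies $\partial_n\tilde Z_{j0}=0$ on $\partial\Omega_\varepsilon$ (this is built into \eqref{22}--\eqref{23} via the auxiliary $\mathring Z_{j0}$'s and the flattening maps $F_j^\varepsilon$), integration by parts gives the manifestly symmetric identity
\[\int_{\Omega_\varepsilon}a(\varepsilon y)L(\tilde Z_{k0})\tilde Z_{j0}\,dy=\int_{\Omega_\varepsilon}a(\varepsilon y)\bigl[\nabla\tilde Z_{k0}\cdot\nabla\tilde Z_{j0}+\varepsilon^2\tilde Z_{k0}\tilde Z_{j0}-W\tilde Z_{k0}\tilde Z_{j0}\bigr]\,dy,\]
so in particular \eqref{36} for the orderings $(j,k)$ and $(k,j)$ follows at once. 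For \eqref{34}--\eqref{35} I would take $j=k$ and split $\Omega_\varepsilon=\bigl(B_{R\mu_j}(\xi_j')\cap\Omega_\varepsilon\bigr)\cup\bigl(\Omega_\varepsilon\setminus B_{R\mu_j}(\xi_j')\bigr)$ in the $z_j$-variables (a half-ball when $\xi_j\in\partial\Omega$; replace $\xi_j,Z_{j0}$ by $q,Z_{00}$ when $k=0$).

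On the inner piece $\tilde Z_{j0}=Z_{j0}$ exactly, and $Z_{j0}$ solves $-\Delta Z_{j0}=\tfrac{8\mu_j^2}{(\mu_j^2+|z_j|^2)^2}Z_{j0}$ (resp.\ the $\alpha$-profile equation when $k=0$), which matches $W$ — by the local expansions \eqref{77} (resp.\ \eqref{105}) — up to corrections whose integral against $Z_{j0}$ is $o(\mu_j^{-2}|\log\varepsilon|^{-1})$; here the parity of the leading radial profile kills the $\nabla(\log a)$-type corrections. Hence the inner integral reduces to the boundary term $a(\xi_j)\oint_{|z_j|=R\mu_j}Z_{j0}\,\partial_rZ_{j0}$. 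On the outer piece I would first replace $\tilde Z_{j0}$ by $\hat Z_{j0}$ (the gluing in the annulus $\{R\mu_j\le|z_j|\le(R+1)\mu_j\}$ and the far cutoff $\eta_2^q$ contribute only $O(\mu_j^{-2}|\log\varepsilon|^{-1}R^{-1})$) and then integrate by parts again: the structural point is that $\hat Z_{j0}$, modulo the bounded piece $Z_{j0}-\mu_j^{-1}$ and $\mathring Z_{j0}-Z_{j0}=O(\varepsilon^2|\log\varepsilon|^C)$ (Lemma \ref{lm9}), equals the multiple $a_{j0}^\xi G(\varepsilon\,\cdot\,,\xi_j)$ of a Green function, hence solves $-\Delta-\nabla(\log a(\varepsilon y))\cdot\nabla+\varepsilon^2=0$ away from $\xi_j'$ and satisfies the Neumann condition on $\partial\Omega_\varepsilon$; using this, $W\lesssim\varepsilon^4|\log\varepsilon|^C$ off the spikes, and the pointwise control of $\hat Z_{j0}$, the whole outer integral collapses to $-a(\xi_j)\oint_{|z_j|=R\mu_j}\hat Z_{j0}\,\partial_r\hat Z_{j0}$ up to errors $O(\mu_j^{-2}|\log\varepsilon|^{-1}R^{-1})$.

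Adding the two boundary terms and using $Z_{j0}=\hat Z_{j0}$ on $\{|z_j|=R\mu_j\}$, I get $a(\xi_j)\oint_{|z_j|=R\mu_j}Z_{j0}\,\partial_r(Z_{j0}-\hat Z_{j0})$; since $Z_{j0}-\hat Z_{j0}=\mu_j^{-1}-a_{j0}^\xi G(\varepsilon\,\cdot\,,\xi_j)+O(\varepsilon^2|\log\varepsilon|^C)$ and $\partial_r^xG(x,\xi_j)=-\tfrac4{c_j}|x-\xi_j|^{-1}+O(1)$, the radial derivative integrates over the circle (resp.\ half-circle) of length $2\pi R\mu_j$ (resp.\ $\pi R\mu_j$) to exactly $a_{j0}^\xi$, the factor $\tfrac4{c_j}$ balancing $c_j\in\{8\pi,4\pi\}$. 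So the main term is $a(\xi_j)\,\mu_j^{-1}Z_0(R)\,a_{j0}^\xi$; inserting $a_{j0}^\xi=\mu_j^{-1}[H(\xi_j,\xi_j)-\tfrac4{c_j}\log(\varepsilon\mu_j R)]^{-1}$, $Z_0(R)=1+O(R^{-2})$ and $H(\xi_j,\xi_j)-\tfrac4{c_j}\log(\varepsilon\mu_j R)=\tfrac4{c_j}|\log(\varepsilon\mu_j R)|\,(1+O(\log|\log\varepsilon|/|\log\varepsilon|))$ yields \eqref{35}, and the identical computation with the $\alpha$-bubble (where $c_0$ carries the factor $1+\alpha$) gives \eqref{34}. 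For \eqref{36} with $j\neq k$, the two centres are $|\log\varepsilon|^{-\kappa}$-separated, so in the symmetric form every term is the product of a factor concentrated near $\xi_j'$ (resp.\ $q'$) and one concentrated near $\xi_k'$; using $\tilde Z_{k0}\approx a_{k0}^\xi G(\varepsilon\,\cdot\,,\xi_k)$ near $\xi_j'$, $|G(\xi_j,\xi_k)|\le C\log|\log\varepsilon|$ on $\mathcal O_\varepsilon$, the $\|\cdot\|_*$-bound from Lemma \ref{lm3}, and the vanishing moment $\int_{\mathbb R_{\xi_j}}\tfrac{8}{(1+|z|^2)^2}Z_0\,dz=0$ (which removes the would-be leading term), each product is $O\!\bigl(\tfrac{\log|\log\varepsilon|}{\mu_j|\log\varepsilon|}\bigr)\cdot O\!\bigl(\tfrac{\log|\log\varepsilon|}{\mu_k|\log\varepsilon|}\bigr)$, which is \eqref{36}.

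I expect the main obstacle to be the diagonal case. The naive splitting at $|z_j|=R\mu_j$ produces inner and outer boundary terms each of size $O(\mu_j^{-2}R^{-2})$, which for fixed $R$ is \emph{not} negligible against the asserted $O(\mu_j^{-2}|\log\varepsilon|^{-1})$; one must verify that these, and indeed all powers $R^{-p}$ with $p\ge2$, cancel between the two regions, so that only the logarithmic term survives (then $R$ is fixed large enough that the remaining error is genuinely subordinate to the main term). In parallel one has to check that the long list of lower-order pieces — the $\varepsilon^2$- and $\nabla(\log a)$-terms, the variation of $a(\varepsilon y)$ across a spike, the $O(|\varepsilon y-\xi_j|^\beta)$ and $O((\varepsilon\mu_k)^{2/p-1})$ corrections of $W$, the geometric $O(\varepsilon|y-\xi_j'|)$ distortion coming from $F_j^\varepsilon$ via \eqref{117}, the difference $\mathring Z_{j0}-Z_{j0}$, and the outer cutoff $\eta_2^q$ — all contribute at most $O(\mu_j^{-2}|\log\varepsilon|^{-1}R^{-1})$.
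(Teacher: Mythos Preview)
Your approach is correct and closely related to the paper's, but organized differently. The paper does not pass to the symmetric bilinear form; instead it expands $L(\tilde Z_{j0})$ by the product rule (equation \eqref{27}) and writes $\int aL(\tilde Z_{j0})\tilde Z_{j0}=K_5+K_6$, where $K_6$ collects all terms carrying derivatives of the cutoffs $\eta_{j1}^\xi,\eta_2^q$. One integration by parts in $K_6$ (equation \eqref{37}) reduces it to four annular integrals, and the leading contribution comes precisely from $-\int a\,\nabla\eta_{j1}^\xi\cdot\nabla(Z_{j0}-\hat Z_{j0})\,\hat Z_{j0}$, computed directly from $\nabla(Z_{j0}-\hat Z_{j0})=-\varepsilon a_{j0}^\xi\nabla_x G(\varepsilon y,\xi_j)+O(\varepsilon^2)$ to give $a(\xi_j)a_{j0}^\xi/\mu_j$; the remaining piece $K_5$ is then estimated region by region over the sets $A_0,\dots,A_{m+1}$ and shown to be $O(\mu_j^{-2}|\log\varepsilon|^{-1}R^{-1})$. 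This is the smooth-cutoff analogue of your sharp split: the annular integral against $\nabla\eta_{j1}^\xi$ plays exactly the role of your boundary term on $\{|z_j|=R\mu_j\}$, and the final arithmetic is the same. Your version is cleaner conceptually, but the cancellation of the $O(\mu_j^{-2}R^{-2})$ pieces that you flag as the main obstacle is automatic in the paper's setup, because it never separates $\oint Z_{j0}\partial_r Z_{j0}$ from $\oint\hat Z_{j0}\partial_r\hat Z_{j0}$ --- only their difference ever appears, through $\nabla(Z_{j0}-\hat Z_{j0})$. For \eqref{36} the paper again works region by region; the potentially dangerous contribution $a_{k0}^\xi\int_{A_j}aWG(\varepsilon y,\xi_k)Z_{j0}$ is handled by freezing $aG$ at $\xi_j$ and writing $WZ_{j0}\approx-\Delta Z_{j0}$, so that the divergence theorem on $A_j$ leaves only a tiny flux on $\partial A_j$ --- this is your ``vanishing moment'' executed by integration by parts rather than by recognizing $\int_{\R_{\xi_j}}\tfrac{8}{(1+|z|^2)^2}Z_0\,dz=0$ directly.
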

\begin{proof}
From \eqref{27}, we get
\begin{align*}
  &\int_{\Omega_\varepsilon}a(\varepsilon y)L(\tilde{Z}_{j0})\tilde{Z}_{j0}dy \\
  =& \int_{\Omega_\varepsilon}a(\varepsilon y)\left[\eta_{j1}^\xi L(Z_{j0})+\eta_2^q(1-\eta_{j1}^\xi)L(\hat{Z}_{j0})\right]\tilde{Z}_{j0}+\int_{\Omega_\varepsilon} a(\varepsilon y) \left[-\Delta \eta_{j1}^\xi (Z_{j0}-\hat{Z}_{j0})\right.\\
  &-2\nabla \eta_{j1}^\xi\cdot \nabla(Z_{j0}-\hat{Z}_{j0})-\varepsilon (\nabla\log a)(\varepsilon y)\cdot \nabla \eta_{j1}^\xi (Z_{j0}-\hat{Z}_{j0})-\Delta \eta_2^q \hat{Z}_{j0}  \\
  &\left.-2\nabla \eta_2^q\cdot \nabla \hat{Z}_{j0}-\varepsilon (\nabla \log a)(\varepsilon y)\cdot \nabla \eta_2^q \hat{Z}_{j0}\right]\tilde{Z}_{j0}dy \\
  =:&K_5+K_6.
\end{align*}
Notice if $|z_j|\leq (R+1)\mu_j$, we get
\[|z_0|\leq 2|y-q'|\leq 2\left[|y-\xi_j'|+|\xi_j'-q'|\right]\leq 2\left[2|z_j|+\frac{d}\varepsilon\right]\leq \frac{4d}\varepsilon.\]
Thus
\[\tilde{Z}_{j0}=\eta_{j1}^\xi(Z_{j0}-\hat{Z}_{j0})+\eta_2^q \hat{Z}_{j0}.\]
If $\nabla\eta_2^q\not=0$, we get $|y-\xi_j'|\geq \frac{d}{\varepsilon}$. Integrating by part, we get
\begin{align}\label{37}
\nonumber  K_6= & \int_{\Omega_\varepsilon} a(\varepsilon y)|\nabla \eta_{j1}^\xi|^2(Z_{j0}-\hat{Z}_{j0})^2dy+\int_{\Omega_\varepsilon} a(\varepsilon y)\nabla \eta_{j1}^\xi\cdot \nabla \hat{Z}_{j0}(Z_{j0}-\hat{Z}_{j0})dy \\
   &  -\int_{\Omega_\varepsilon} a(\varepsilon y)\nabla\eta_{j1}^\xi\cdot \nabla(Z_{j0}-\hat{Z}_{j0})\hat{Z}_{j0}dy+\int_{\Omega_\varepsilon} a(\varepsilon y)|\nabla \eta_2^q|^2 \hat{Z}_{j0}^2dy
\end{align}
From \eqref{89} and \eqref{90}, We have
\begin{equation*}
\int_{\Omega_\varepsilon} a(\varepsilon y)|\nabla \eta_{j1}^\xi|^2(Z_{j0}-\hat{Z}_{j0})^2dy=O\left(\frac1{\mu_j^2|\log\varepsilon|^2}\right),
\end{equation*}
\[\int_{\Omega_\varepsilon} a(\varepsilon y)|\nabla \eta_2^q|^2\hat{Z}_{j0}^2dy=O\left(\frac1{\mu_j^2|\log\varepsilon|^2}\right)\]
and
\[\int_{\Omega_\varepsilon} a(\varepsilon y)\nabla \eta_{j1}^\xi\cdot \nabla \hat{Z}_{j0}(Z_{j0}-\hat{Z}_{j0})dy=O\left(\frac1{\mu_j^2|\log\varepsilon |R}\right).\]
For the last term in \eqref{37}, we get
\begin{align*}
  &-\int_{\Omega_\varepsilon} a(\varepsilon y) \nabla \eta_{j1}^\xi \cdot \nabla(Z_{j0}-\hat{Z}_{j0})\hat{Z}_{j0} \\
  = & \varepsilon a_{j0}^\xi \int_{\Omega_\varepsilon} a(\varepsilon y) \nabla \eta_{j1}^\xi\cdot \nabla G(\varepsilon y, \xi_j)Z_{j0}+O\left(\frac1{\mu_j^2|\log\varepsilon|^2}\right) \\
  = & -\frac{4\varepsilon a_{j0}^\xi}{c_j}\int_{\Omega_\varepsilon} a(\varepsilon y)\nabla \eta_{j1}^\xi\cdot \frac{\varepsilon y-\xi_j}{|\varepsilon y-\xi_j|^2}Z_{j0} +O\left(\frac1{\mu_j^2|\log\varepsilon|^2}\right) \\
  = & -\frac{4a_{j0}^\xi}{c_j}\int_{\Omega_\varepsilon} a(\varepsilon y)\left[\frac1{\mu_j}\eta'\left(\frac{|z_j|}{\mu_j}\right)\frac{A_j^T z_j}{|z_j|}+O\left(\frac{\varepsilon }{\mu_j}|y-\xi_j'| |\eta'|\right)\right]\frac{y-\xi_j'}{|y-\xi_j'|^2}Z_{j0}dy +O\left(\frac1{\mu_j^2|\log\varepsilon|^2}\right) \\
  = & -\frac{4a_{j0}^\xi}{c_j\mu_j^2}\int_{\Omega_\varepsilon} a(\varepsilon y) \eta'\left(\frac{|z_j|}{\mu_j}\right)\frac{A_j^T z_j}{|z_j|}\cdot \frac{y-\xi_j'}{|y-\xi_j'|^2}\left(1+ O\left(\frac1{R^2}\right)\right)dy+ O\left(\frac1{\mu_j^2|\log\varepsilon|^2}\right)\\
  = & -\frac{4a_{j0}^\xi}{c_j\mu_j^2}\int_{\R_{\xi_j}} \left[a(\xi_j)+O(\varepsilon |z_j|)\right]\eta'\left(\frac{|z_j|}{\mu_j}\right)\frac{A_j^{-1}z_j+O(\varepsilon |z_j|^2)}{|A_j^{-1}z_j+O(\varepsilon |z_j|^2)|^2}\cdot \frac{A_j^T z_j}{|z_j|}(1+O(\varepsilon |z_j|))dz_j \\
  &+ O\left(\frac1{\mu_j^2|\log\varepsilon|R}\right) \\
  =&-\frac{4a_{j0}^\xi}{c_j\mu_j^2}a(\xi_j)\int_{\R_{\xi_j}} \eta'\left(\frac{|z_j|}{\mu_j}\right)\frac1{|z_j|}dz_j+O\left(\frac1{\mu_j^2| \log\varepsilon|R}\right) \\
  =&\frac{a_{j0}^\xi}{\mu_j}a(\xi_j)+O\left(\frac1{\mu_j^2|\log\varepsilon|R}\right)=\frac{c_j a(\xi_j)}{4\mu_j^2|\log(\varepsilon \mu_j R)|}+O\left(\frac1{\mu_j^2|\log\varepsilon|R}\right).
\end{align*}

For the term $K_5$, it it apparent that
\begin{align*}
  K_5= & \int_{\Omega_\varepsilon} (\eta_2^q)^2 a(\varepsilon y)\left[-\Delta Z_{j0}-\nabla(\log a(\varepsilon y))\cdot \nabla Z_{j0}+\varepsilon^2(Z_{j0}-\frac1{\mu_j})+\frac{\varepsilon^2}{\mu_j}\eta_{j1}^\xi-W Z_{j0}\right. \\
  &\left.+(1-\eta_{j1}^\xi)W\left(Z_{j0}-\hat{Z}_{j0}\right)\right]\cdot \left[Z_{j0}+(1-\eta_{j1}^\xi)(\hat{Z}_{j0}-Z_{j0})\right]dy.
\end{align*}
From direct computation, we get
\[\int_{\Omega_\varepsilon} (\eta_2^q)^2a(\varepsilon y)\left[\varepsilon^2\left(Z_{j0}-\frac1{\mu_j}\right)+\frac{\varepsilon^2}{\mu_j} \eta_{j1}^\xi\right] \left[Z_{j0}+(1-\eta_{j1}^\xi)\left(\hat{Z}_{j0}-Z_{j0}\right)\right]dy=O\left(\frac1{\mu_j^2|\log\varepsilon|^2}\right)\]
and
\[\int_{\Omega_\varepsilon} (\eta_2^q)^2a(\varepsilon y)\nabla(\log a(\varepsilon y))\cdot \nabla Z_{j0}\left[Z_{j0}+(1-\eta_{j1}^\xi)\left(\hat{Z}_{j0}-Z_{j0}\right)\right]dy=O\left(\frac{ \varepsilon}{\mu_j}\right).\]
Recall that
\[A_k=\left\{y\in \Omega_\varepsilon\;:\; |y-\xi_k'|\leq \frac1{\varepsilon|\log\varepsilon|^{2\kappa}}\right\}, \qquad \mathrm{for} \qquad k=1,2,\cdots,m,\]
\[A_0=\left\{y\in \Omega_\varepsilon\;:\; |y-q'|\leq \frac1{\varepsilon|\log\varepsilon|^{2\kappa}}\right\},\]
and $A_{m+1}=\Omega_\varepsilon\char92 \cup_{j=0}^m A_j$.

It is obvious that
\begin{align*}
   & \int_{\Omega_\varepsilon} (\eta_2^q)^2 a(\varepsilon y)\left[-\Delta Z_{j0}-WZ_{j0}+(1-\eta_{j1}^\xi) W\left(Z_{j0}-\hat{Z}_{j0}\right)\right]\left[Z_{j0}+(1-\eta_{j1}^\xi)\left(\hat{Z}_{j0}-Z_{j0}\right)\right]dy \\
  = & \int_{A_j}(\eta_2^q)^2 a(\varepsilon y)\left[-\Delta Z_{j0}-WZ_{j0}+(1-\eta_{j1}^\xi) W\left(Z_{j0}-\hat{Z}_{j0}\right)\right]\left[Z_{j0}+(1-\eta_{j1}^\xi)\left(\hat{Z}_{j0}-Z_{j0}\right)\right]dy  \\
  & +\sum_{k=0,k\not=j}^m\int_{A_k}(\eta_2^q)^2 a(\varepsilon y)\left[-\Delta Z_{j0}+W\left(\frac1{\mu_j}-Z_{j0}\right)-a_{j0}^\xi WG(\varepsilon y, \xi_j)-W(\mathring{Z}_{j0}-Z_{j0})\chi_{\{1\leq j\leq l\}})\right] \\
  &\qquad\times\left[Z_{j0}-\frac1{\mu_j}+a_{j0}^\xi G(\varepsilon y, \xi_j)+(\mathring{Z}_{j0}-Z_{j0})\chi_{\{1\leq j\leq l\}})\right]dy \\
  & \int_{A_{m+1}}(\eta_2^q)^2 a(\varepsilon y)\left[-\Delta Z_{j0}+W\left(\frac1{\mu_j}-Z_{j0}\right)-a_{j0}^\xi WG(\varepsilon y, \xi_j)-W(\mathring{Z}_{j0}-Z_{j0})\chi_{\{1\leq j\leq l\}})\right] \\
  &\qquad \times\left[Z_{j0}-\frac1{\mu_j}+a_{j0}^\xi G(\varepsilon y, \xi_j)+(\mathring{Z}_{j0}-Z_{j0})\chi_{\{1\leq j\leq l\}})\right]dy \\
  =&I_{14}+I_{15}+I_{16}.
\end{align*}

On $A_j$, we get
\begin{align}\label{45}
\nonumber   & \left|-\Delta Z_{j0}-WZ_{j0}+(1-\eta_{j1}^\xi)W\left(Z_{j0}-\hat{Z}_{j0}\right)\right| \\
\nonumber  \leq & C\left[\frac1{\mu_j}\left(\frac{8\mu_j^2}{[\mu_j^2+|z_j|^2]^2}-\frac{8\mu_j^2}{[\mu_j^2 +|y-\xi_j'|^2]^2}\frac{|\varepsilon y-q|^{2\alpha}}{|\xi_j-q|^{2\alpha}}\right) +\frac{\varepsilon \mu_j}{(\mu_j+|z_j|)^3}+\frac{\varepsilon^3}{\mu_j}\right. \\
\nonumber  &\left.\frac{8\mu_j}{[\mu_j^2+|y-\xi_j'|^2]^2}\left(|\varepsilon y-\xi_j|^\beta+\sum_{l=0}^m (\varepsilon \mu_l)^{\frac2p-1}\right)+(1-\eta_{j1}^\xi)\frac{\mu_j^2}{(\mu_j+|y-\xi_j'|)^4} \frac{1+\log|\frac{y-\xi_j'}{\mu_j}|}{\mu_j|\log\varepsilon|}\right] \\
  \leq & \frac{C}{(\mu_j+|y-\xi_j'|)^3}\left[(\varepsilon \mu_j)^\beta+\sum_{k=0}^m (\varepsilon \mu_k)^{\frac2p-1}\right]+\frac{C\varepsilon^3}{\mu_j}+(1-\eta_{j1}^\xi)\frac{ C\mu_j^2}{(\mu_j+|y-\xi_j'|)^4}\frac{1+\log|\frac{y-\xi_j'}{\mu_j}|}{\mu_j|\log\varepsilon|}.
\end{align}
However
\[\left|Z_{j0}+(1-\eta_{j1}^\xi)\left(\hat{Z}_{j0}-Z_{j0}\right)\right|\leq \frac1{\mu_j}+(1-\eta_{j1}^\xi)\frac{1+\log|\frac{y-\xi_j'}{\mu_j}|}{\mu_j|\log\varepsilon|}.\]
Hence we get
\[|I_{14}|\leq \frac{C}{\mu_j^2|\log\varepsilon|R}.\]
For $y\in A_k$ ($k\not=j,0$), we have $|y-\xi_j'|\geq \frac1{2\varepsilon|\log\varepsilon|^\kappa}$. Hence there hold
\begin{align*}
   & \left|-\Delta Z_{j0}+W\left(\frac1{\mu_j}-Z_{j0}\right)-a_{j0}^\xi W G(\varepsilon y, \xi_j)-W(\mathring{Z}_{j0}-Z_{j0})\chi_{\{1\leq j\leq l\}}\right| \\
  \leq & C\left[\frac{\mu_j}{(\mu_j+|z_j|)^4}+\frac{\varepsilon \mu_j}{(\mu_j+|z_j|)^3}+\frac{\mu_k^2}{(\mu_k+|y-\xi_k'|)^4}\frac{\mu_j}{ [\mu_j+|z_j|]^2}+\frac{\log|\log\varepsilon|}{\mu_j|\log\varepsilon|}\frac{\mu_k^2}{ (\mu_k+|y-\xi_k'|)^4}\right]
\end{align*}
and
\[\left|Z_{j0}-\frac1{\mu_j}+a_{j0}^\xi G(\varepsilon y, \xi_j)+(\mathring{Z}_{j0}-Z_{j0})\chi_{\{1\leq j\leq l\}}\right|\leq C\frac{\log|\log\varepsilon|}{\mu_j|\log\varepsilon|}.\]
Then we get
\begin{eqnarray*}
&&\int_{A_k}(\eta_2^q)^2 a(\varepsilon y)\left[-\Delta Z_{j0}+W\left(\frac1{\mu_j}-Z_{j0}\right)-a_{j0}^\xi WG(\varepsilon y, \xi_j)-W(\mathring{Z}_{j0}-Z_{j0})\chi_{\{1\leq j\leq l\}})\right] \\
&&\qquad\times\left[Z_{j0}-\frac1{\mu_j}+a_{j0}^\xi G(\varepsilon y, \xi_j)+(\mathring{Z}_{j0}-Z_{j0})\chi_{\{1\leq j\leq l\}})\right]dy =O\left(\frac{\log^2|\log \varepsilon|}{\mu_j^2 |\log\varepsilon|^2}\right).
\end{eqnarray*}
From the same argument, we also get the estimate above also hold in the case of $k=0$. Hence we have $|I_{15}|\leq C\frac{\log^2|\log\varepsilon|}{\mu_j^2|\log\varepsilon|^2}$. From the same method along with \eqref{38}, we also get $|I_{16}|\leq C\frac{\log^2|\log\varepsilon|}{\mu_j^2|\log\varepsilon|^2}$.

As a summary, we have $|K_5|\leq C\frac{1}{\mu_j^2|\log\varepsilon|R}$. Hence \eqref{35} hold.

From the same procedure, we also get \eqref{34}.

Now we prove \eqref{36}. We only prove \eqref{36} for $j,k=1,2, \cdots,m$. From \eqref{27}, we get
\begin{align*}
  &\int_{\Omega_\varepsilon}a(\varepsilon y)L(\tilde{Z}_{k0})\tilde{Z}_{j0}dy \\
  =& \int_{\Omega_\varepsilon}a(\varepsilon y)\left[\eta_{k1}^\xi L(Z_{k0})+\eta_2^q(1-\eta_{k1}^\xi)L(\hat{Z}_{k0})\right]\tilde{Z}_{j0}dy+\int_{ \Omega_\varepsilon} a(\varepsilon y) \left[-\Delta \eta_{k1}^\xi (Z_{k0}-\hat{Z}_{k0}) \right. \\
  & -2\nabla \eta_{k1}^\xi\cdot \nabla(Z_{k0}-\hat{Z}_{k0}) -\varepsilon (\nabla\log a)(\varepsilon y)\cdot \nabla \eta_{k1}^\xi (Z_{k0}-\hat{Z}_{k0})-\Delta \eta_2^q \hat{Z}_{k0}-2\nabla \eta_2^q\cdot \nabla \hat{Z}_{k0} \\
  &\left.-\varepsilon (\nabla \log a)(\varepsilon y)\cdot \nabla \eta_2^q \hat{Z}_{k0}\right]\tilde{Z}_{j0}dy \\
  =:&K_7+K_8.
\end{align*}
Integrating by part, we have
\begin{align*}
  K_8=& \int_{\Omega_\varepsilon} a(\varepsilon y)\nabla \eta_{k1}^\xi\cdot \nabla \tilde{Z}_{j0}(Z_{k0}-\hat{Z}_{k0})dy-\int_{\Omega_\varepsilon} a(\varepsilon y)\nabla \eta_{k1}^\xi\cdot \nabla (Z_{k0}-\hat{Z}_{k0})\tilde{Z}_{j0}dy \\
   & +\int_{\Omega_\varepsilon} a(\varepsilon y)\nabla \eta_2^q\cdot \nabla \tilde{Z}_{j0}\hat{Z}_{k0}dy-\int_{\Omega_\varepsilon}a(\varepsilon y) \nabla \eta_2^q\cdot \nabla \hat{Z}_{k0}\tilde{Z}_{j0}dy.
\end{align*}
From \eqref{89}, \eqref{90} and \eqref{121}, we have
\[\int_{\Omega_\varepsilon} a(\varepsilon y)\nabla \eta_{k1}^\xi\cdot \nabla \tilde{Z}_{j0}(Z_{k0}-\hat{Z}_{k0})dy=O\left(\frac1{\mu_j\mu_k|\log\varepsilon|^2}\right),\]
\[\int_{\Omega_\varepsilon} a(\varepsilon y)\nabla \eta_{k1}^\xi\cdot \nabla (Z_{k0}-\hat{Z}_{k0})\tilde{Z}_{j0}dy=O\left(\frac{\log|\log\varepsilon|}{\mu_j\mu_k| \log\varepsilon|^2}\right),\]
\[\int_{\Omega_\varepsilon} a(\varepsilon y)\nabla \eta_2^q\cdot \nabla \tilde{Z}_{j0}\hat{Z}_{k0}dy=O\left(\frac1{\mu_j\mu_k|\log\varepsilon|^2}\right)\]
and
\[\int_{\Omega_\varepsilon}a(\varepsilon y)\nabla \eta_2^q\cdot \nabla \hat{Z}_{k0}\tilde{Z}_{j0}dy=O\left(\frac1{\mu_j\mu_k|\log\varepsilon|^2}\right).\]
For the term $K_7$, we have
\begin{align*}
  K_7= & \int_{\Omega_\varepsilon} a(\varepsilon y)(\eta_2^q)^2\left[-\Delta Z_{k0}-(\nabla \log a(\varepsilon y))\cdot \nabla Z_{k0}+\varepsilon^2\left(Z_{k0}-\frac1{\mu_k}\right)- W Z_{k0}+\frac{\varepsilon^2}{\mu_k}\eta_{k1}^\xi\right.\\
   & +\left.(1-\eta_{k1}^\xi)W\left(\frac1{\mu_k}-a_{k0}^\xi G(\varepsilon y, \xi_k)-(\mathring{Z}_{k0}-Z_{k0})\chi_{\{1\leq k\leq l\}}\right)\right]\left[Z_{j0}+(1-\eta_{j1}^\xi)(\hat{Z}_{j0}-Z_{j0})\right]dy.
\end{align*}
From direct computation, we have
\[\int_{\Omega_\varepsilon} a(\varepsilon y)(\eta_2^q)^2\left[\varepsilon^2\left(Z_{k0}-\frac1{\mu_k}\right) +\frac{\varepsilon^2}{\mu_k}\eta_{k1}^\xi \right] \left[Z_{j0}+(1-\eta_{j1}^\xi)(\hat{Z}_{j0}-Z_{j0})\right]dy=O\left(\frac1{\mu_j\mu_k| \log\varepsilon|^2}\right)\]
and
\[\int_{\Omega_\varepsilon}a(\varepsilon y)(\eta_2^q)^2(\nabla\log a(\varepsilon y), \nabla Z_{k0})\left[Z_{j0}+(1-\eta_{j1}^\xi)(\hat{Z}_{j0}-Z_{j0})\right]=O\left(\frac1{\mu_j\mu_k| \log\varepsilon|^2}\right).\]
For the rest part, we get
\begin{align*}
   & \int_{\Omega_\varepsilon} a(\varepsilon y)(\eta_2^q)^2\left[-\Delta Z_{k0}- W Z_{k0}+(1-\eta_{k1}^\xi)W\left(\frac1{\mu_k}-a_{k0}^\xi G(\varepsilon y, \xi_k)-(\mathring{Z}_{k0}-Z_{k0})\chi_{\{1\leq k\leq l\}}\right)\right] \\
   &\qquad \left[Z_{j0}+(1-\eta_{j1}^\xi)(\hat{Z}_{j0}-Z_{j0})\right]dy \\
   =&\int_{A_k} a(\varepsilon y)(\eta_2^q)^2\left[-\Delta Z_{k0}-WZ_{k0}+(1-\eta_{k1}^\xi)W\left(\frac1{\mu_k}-a_{k0}^\xi G(\varepsilon y, \xi_k)-(\mathring{Z}_{k0}-Z_{k0})\chi_{\{1\leq k\leq l\}}\right)\right]\hat{Z}_{j0}dy \\
   &+\int_{A_j} a(\varepsilon y)(\eta_2^q)^2\left[-\Delta Z_{k0}+W\left(\frac1{\mu_k}-Z_{k0}\right)-a_{k0}^\xi WG(\varepsilon y, \xi_k)-W(\mathring{Z}_{k0}-Z_{k0})\chi_{\{1\leq k\leq l\}}\right] \\
   &\qquad\left[Z_{j0}+(1-\eta_{j1}^\xi)\left(a_{j0}^\xi G(\varepsilon y, \xi_j)-\frac1{\mu_j}+(\mathring{Z}_{j0}-Z_{j0})\chi_{\{1\leq j\leq l\}}\right)\right]dy \\
   &+\sum_{i=0,\atop i\not=k,j}^m \int_{A_i} a(\varepsilon y)(\eta_2^q)^2\left[-\Delta Z_{k0}+W\left(\frac1{\mu_k}-Z_{k0}\right)-a_{k0}^\xi WG(\varepsilon y, \xi_k)-W(\mathring{Z}_{k0}-Z_{k0})\chi_{\{1\leq k\leq l\}}\right]\hat{Z}_{j0}dy \\
   &+ \int_{A_{m+1}} a(\varepsilon y)(\eta_2^q)^2\left[-\Delta Z_{k0}+W\left(\frac1{\mu_k}-Z_{k0}\right)-a_{k0}^\xi WG(\varepsilon y, \xi_k)-W(\mathring{Z}_{k0}-Z_{k0})\chi_{\{1\leq k\leq l\}}\right]\hat{Z}_{j0}dy.
\end{align*}
Now we estimate each term above. On $A_k$, we have
\[|\hat{Z}_{j0}|=\left|Z_{j0}-\frac1{\mu_j}+a_{j0}^\xi G(\varepsilon y, \xi_j)+(\mathring{Z}_{k0}-Z_{k0})\chi_{\{1\leq k\leq l\}}\right|\leq C\frac{\log|\log\varepsilon|}{\mu_j|\log\varepsilon|}.\]
Using the same method as in \eqref{45}, we get
\begin{align*}
& \left|-\Delta Z_{k0}-WZ_{k0}+(1-\eta_{k1}^\xi)W\left(\frac1{\mu_k}-a_{k0}^\xi G(\varepsilon y, \xi_k)- (\mathring{Z}_{k0}-Z_{k0})\chi_{\{1\leq k\leq l\}}\right)\right| \\
\leq & \frac{C}{(\mu_k+|y-\xi_k'|)^3}\left[(\varepsilon \mu_k)^\beta+\sum_{l=0}^m (\varepsilon \mu_l)^{\frac2p-1}\right]+\frac{C\varepsilon^3}{\mu_k}+(1-\eta_{k1}^\xi)\frac{C\mu_k^2}{( \mu_k+|y-\xi_k'|)^4}\frac{1+\log|\frac{y-\xi_k'}{\mu_k}|}{\mu_k|\log\varepsilon|}.
\end{align*}
Hence
\[\int_{A_k}a(\varepsilon y)(\eta_2^q)^2\left[-\Delta Z_{k0}-WZ_{k0}+(1-\eta_{k1}^\xi)W\left(\hat{Z}_{j0}-Z_{j0}\right)\right]\hat{Z}_{j0}=O\left(\frac{\log^2|\log\varepsilon|}{\mu_j\mu_k |\log\varepsilon|^2}\right).\]
However, on $A_j$, we have
\[|z_k|\geq \frac12|y-\xi_k'|\geq \frac12\left[|\xi_j'-\xi_i'|-|y-\xi_j'|\right]\geq \frac1{4\varepsilon |\log\varepsilon|^\kappa}\]
for $\varepsilon$ small enough. Then we get
\begin{eqnarray*}
&&\int_{A_j} a(\varepsilon y)(\eta_2^q)^2\left[-\Delta Z_{k0}+W\left(\frac1{\mu_k}-Z_{k0}\right)-W(\mathring{Z}_{k0}-Z_{k0})\chi_{\{1\leq k\leq l\}}\right] \\
&&\qquad\left[Z_{j0}+(1-\eta_{j1}^\xi)(\hat{Z}_{j0} -Z_{j0})\right]dy=O\left(\frac1{\mu_j\mu_k|\log\varepsilon|^2}\right)
\end{eqnarray*}
and
\[\int_{A_j} a(\varepsilon y)(\eta_2^q)^2 a_{k0}^\xi WG(\varepsilon y, \xi_k)(1-\eta_{j1}^\xi)(\hat{Z}_{j0}-Z_{j0})dy=O\left(\frac{\log|\log\varepsilon|}{ \mu_j\mu_k|\log\varepsilon|^2}\right).\]
For the last term, we get
\begin{align*}
  & \int_{A_j} a(\varepsilon y)(\eta_2^q)^2 a_{k0}WG(\varepsilon y, \xi_k)Z_{j0}dy \\
= & \int_{A_j} a(\varepsilon y)(\eta_2^q)^2 a_{k0}\frac{8\mu_j^2}{[\mu_j^2+|z_j|^2]^2}G(\varepsilon y, \xi_k)Z_{j0}dy \\
   & +\int_{A_j} a(\varepsilon y)(\eta_2^q)^2 a_{k0}\left(W-\frac{8\mu_j^2}{[\mu_j^2+|z_j|^2]^2}\right)G(\varepsilon y, \xi_k)Z_{j0}dy.
\end{align*}
Using the estimate of $W$, we get
\[\int_{A_j} a(\varepsilon y)(\eta_2^q)^2 a_{k0}\left(W-\frac{8\mu_j^2}{[\mu_j^2+|z_j|^2]^2}\right)G(\varepsilon y, \xi_k)Z_{j0}dy =O\left(\frac1{\mu_j\mu_k|\log\varepsilon|^2}\right).\]
On $A_j$, we get $|y-\xi_j'|\leq \frac1{\varepsilon|\log\varepsilon|^{2\kappa}}$. For $\varepsilon$ small enough, we have
\[|z_0|\leq 2|y-q'|\leq 2\left[|y-\xi_j'|+|\xi_j'-q'|\right]\leq 2\left[\frac1{\varepsilon |\log\varepsilon|^{2\kappa}}+\frac d{\varepsilon}\right]\leq \frac {4d}\varepsilon.\]
Hence
\begin{align*}
   & \int_{A_j} a(\varepsilon y)(\eta_2^q)^2 a_{k0}\frac{8\mu_j^2}{[\mu_j^2+|z_j|^2]^2}G(\varepsilon y, \xi_k)Z_{j0}dy \\
  = & a(\xi_j)a_{k0}^\xi G(\xi_j,\xi_k)\int_{A_j} \frac{8\mu_j^2}{[\mu_j^2+|z_j|^2]^2}Z_{j0} +O\left(\frac1{\mu_j\mu_k|\log\varepsilon|^2}\right) \\
  = & a(\xi_j)a_{k0}^\xi G(\xi_j,\xi_k)\int_{A_j}-\Delta_{z_j} Z_{j0}dy+O\left(\frac1{\mu_j\mu_k|\log\varepsilon|^2}\right) \\
  = & a(\xi_j)a_{k0}^\xi G(\xi_j,\xi_k)\int_{A_j}-\Delta Z_{j0}dy+O\left(\frac1{\mu_j\mu_k|\log\varepsilon|^2}\right) \\
  = & -a(\xi_j)a_{k0}^\xi G(\xi_j,\xi_k)\int_{\partial A_j} \frac{\partial Z_{j0}}{\partial n}d\sigma +O\left(\frac1{\mu_j\mu_k|\log\varepsilon|^2}\right) \\
  = & O\left(\frac1{\mu_j\mu_k|\log\varepsilon|^2}\right).
\end{align*}
For $l\not=k,j,0$, we have the following estimates hold on $A_l$:
\[|y-\xi_j'|\geq \frac1{2\varepsilon|\log\varepsilon|^\kappa}, \qquad |y-\xi_k'|\geq \frac1{2\varepsilon|\log\varepsilon|^\kappa}.\]
Then we also have $\hat{Z}_{j0}=O\left(\frac{\log|\log\varepsilon|}{\mu_j|\log\varepsilon|}\right)$.
\begin{align*}
   & -\Delta Z_{k0}+W\left(\frac1{\mu_k}-Z_{k0}\right)-a_{k0}^\xi WG(\varepsilon y, \xi_k)-W(\mathring{Z}_{k0}-Z_{k0})\chi_{\{1\leq k\leq l\}} \\
  = &O\left(\left(\frac{\mu_l^2}{(\mu_l+|y-\xi_l'|)^4}+\varepsilon^2\right)\frac{\log|\log\varepsilon|}{\mu_k |\log\varepsilon|}\right).
\end{align*}
Then
\begin{eqnarray*}
&&\int_{A_l} a(\varepsilon y)(\eta_2^q)^2\left[-\Delta Z_{k0}+W\left(\frac1{\mu_k}-Z_{k0}\right)-a_{k0}^\xi WG(\varepsilon y, \xi_k)-W(\mathring{Z}_{k0}-Z_{k0})\chi_{\{1\leq k\leq l\}}\right]\hat{Z}_{j0}dy \\
&=&O\left(\frac{\log^2|\log\varepsilon|}{\mu_j\mu_k| \log\varepsilon|^2}\right).
\end{eqnarray*}
From the same method, we also get
\begin{eqnarray*}
&&\int_{A_0} a(\varepsilon y)(\eta_2^q)^2\left[-\Delta Z_{k0}+W\left(\frac1{\mu_k}-Z_{k0}\right)-a_{k0}^\xi WG(\varepsilon y, \xi_k)-W(\mathring{Z}_{k0}-Z_{k0})\chi_{\{1\leq k\leq l\}}\right]\hat{Z}_{j0}dy \\
&=&O\left(\frac{\log^2|\log\varepsilon|}{\mu_j\mu_k| \log\varepsilon|^2}\right)
\end{eqnarray*}
and
\begin{eqnarray*}
&&\int_{A_{m+1}} a(\varepsilon y)(\eta_2^q)^2\left[-\Delta Z_{k0}+W\left(\frac1{\mu_k}-Z_{k0}\right)-a_{k0}^\xi WG(\varepsilon y, \xi_k)-W(\mathring{Z}_{k0}-Z_{k0})\chi_{\{1\leq k\leq l\}}\right]\hat{Z}_{j0}dy \\
&=&O\left(\frac{\log^2|\log\varepsilon|}{\mu_j\mu_k| \log\varepsilon|^2}\right).
\end{eqnarray*}
Hence we get \eqref{36}.

\end{proof}

\noindent\textbf{Declaration:}

\textbf{Acknowledge:}  This work was supported by the National Natural Science Foundation of China (no. 12001298) and Inner Mongolia Autonomous Region Natural Science Foundation(no. 2024LHMS01001). The author thank the anonymous reviewers for his/her valuable comments and suggestion.

\textbf{Competing interests:} The author have no competing interests as defined by Springer, or other interests that might be perceived to influence the results and/or discussion reported in this paper.

\textbf{Data availability:} This paper does not report data generation or analysis.


\begin{thebibliography}{10}

\bibitem{Agudelo_Pistoia2016}
Agudelo, ~O., Pistoia,~A.:
\newblock {Boundary concentration phenomena for the higher-dimensional
  Keller-Segel system}.
\newblock {Calc. Var. Partial Differential Equations} \textbf{55}(6), Art. 132, 31
  pp. (2016)

\bibitem{Baraket_Pacard1998}
Baraket, ~S., Pacard, ~F.:
\newblock {Construction of singular limits for a semilinear elliptic equation
  in dimension 2}.
\newblock {Calc. Var. Partial Differential Equations} \textbf{6}(1), 1--38 (1998)

\bibitem{Bartolucci_Pistoia2007}
Bartolucci, ~D., Pistoia,~A.:
\newblock {Existence and qualitative properties of concentrating solutions for
  the sinh-Poisson equation}.
\newblock {IMA J. Appl. Math.} \textbf{72}(6), 706--729 (2007)

\bibitem{Bartsch_Pistoia_Weth2010}
Bartsch, ~T.,Pistoia, ~A., Weth, ~T.:
\newblock {$N$-vortex equilibria for ideal fluids in bounded planar domains and
  new nodal solutions of the sinh-Poisson and the Lane-Emden-Fowler equations}.
\newblock {Comm. Math. Phys.} \textbf{297}(3), 653--686 (2010)

\bibitem{Bartsch_Pistoia_Weth2015}
Bartsch,~T., Pistoia, ~A., Weth,~T.:
\newblock {Erratum to: $N$-vortex equilibria for ideal fluids in bounded planar
  domains and new nodal solutions of the sinh-Poisson and the Lane-Emden-Fowler
  equations}.
\newblock {Comm. Math. Phys.}  \textbf{333}(2), 1107 (2015)

\bibitem{Bethuel_Brezis_Helein1994}
Bethuel, ~F., Brezis, ~H., H\'{e}lein, ~F.:
\newblock {Ginzburg-Landau vortices}.
\newblock Number~13 in Progress in Nonlinear Differential Equations and their
  Applications. Birkh\"{a}user Boston, Inc., Boston, MA (1994)

\bibitem{Chen_Lin2002}
Chen, C.~C., Lin, C.-S:
\newblock {Sharp estimates for solutions of multi-bubbles in compact Riemann
  surfaces}.
\newblock {Comm. Pure Appl. Math.} \textbf{55}(6), 728--771 (2002)

\bibitem{Chen_Li1991}
Chen, W.~X., Li, ~C.:
\newblock {Classification of solutions of some nonlinear elliptic equations}.
\newblock {Duke Math. J.}  \textbf{63}(3), 615--622  (1991)

\bibitem{Chow_Ko_Leung_Tang1998}
Chow, K.~W., Ko, N.~W.~M., Leung, R.~C.~K., Tang, S.~K.:
\newblock {Inviscid two-dimensional vortex dynamics and a soliton expansion of
  the sinh-Poisson equation}.
\newblock { Phys. Fluids} \textbf{10}(5), 1111--1119 (1998)

\bibitem{Ciani_Skrypnik_Vespri2023}
Ciani,~S., Skrypnik,~I. and Vespri,~V.:
\newblock {On the local behavior of local weak solutions to some singular anisotropic elliptic equations}.
\newblock {Adv. in Nonlinear Anal.}  \textbf{12}(1), 237--265  (2023)

\bibitem{Clop_Gentile_PassarellidiNapoli2023}
Clop,~A., Gentile,~A. and  Passarelli di Napoli,~A.:
\newblock {Higher differentiability results for solutions to a class of non-homogeneous elliptic problems under sub-quadratic growth conditions}.
\newblock {Bull. Math. Sci.}  \textbf{13}(2), Article ID 2350008, 55 p  (2023)



\bibitem{D'Aprile2015}
D'Aprile, ~T.
\newblock {Sign-changing blow-up solutions for H\'{e}non type elliptic
  equations with exponential nonlinearity}.
\newblock {J. Funct. Anal.} \textbf{268}(8), 2067--2101 (2015)

\bibitem{delPino_Pistoia_Vaira2016}
del Pino, ~M., Pistoia,~A., Vaira, ~G.:
\newblock {Large mass boundary condensation patterns in the stationary
  Keller-Segel system}.
\newblock {J. Differential Equations} \textbf{261}(6), 3414--3462 (2016)

\bibitem{delPino_Wei2006}
del Pino, M., Wei, J.:
\newblock {Collapsing steady states of the Keller-Segel system}.
\newblock {Nonlinearity} \textbf{19}(3), 661--684 (2006)

\bibitem{Du_Mao_Wang_Xia_Zhao2023}
Du, ~F., Mao, ~J., Wang, ~Q., Xia, ~C., Zhao, ~Y.:
\newblock {Estimates for eigenvalues of the {Neumann} and {Steklov} problems}.
\newblock {Adv. Nonlinear Anal.}, \textbf{12}(12), Id/No 20220321 2023.

\bibitem{Esposito2005}
Esposito, ~P.:
\newblock {Blowup solutions for a Liouville equation with singular data}.
\newblock {SIAM J. Math. Anal.} \textbf{36}(4), 1310--1345 (2005)

\bibitem{Figueroa2021}
Figueroa, ~F,:
\newblock {A note on a sinh-Poisson type equation with variable intensities on
  pierced domains}.
\newblock {Asymptot. Anal.} \textbf{122}(3-4), 327--348 (2021)

\bibitem{Grossi_Pistoia2013}
Grossi, M., Pistoia, A.:
\newblock {Multiple blow-up phenomena for the sinh-Poisson equation}.
\newblock {Arch. Ration. Mech. Anal.}  \textbf{209}(1), 287--320 (2013)

\bibitem{Gurarie_Chow2004}
Gurarie, D., Chow, K.~W.
\newblock {Vortex arrays for sinh-Poisson equation of two-dimensional fluids:
  equilibria and stability}.
\newblock {Phys. Fluids} \textbf{16}(9), 3296--3305 (2004)

\bibitem{Jost_Wang_Ye_Zhou2008}
Jost, J., Wang, G., Ye, D., Zhou, C.:
\newblock {The blow up analysis of solutions of the elliptic sinh-Gordon
  equation}.
\newblock {Calc. Var. Partial Differential Equations} \textbf{31}(2), 263--276
  (2008)

\bibitem{Keller_Segel1970}
Keller, E.~F., Segel, L.~A.:
\newblock {Initiation of slime mold aggregation viewed as an instability}.
\newblock {J. Theoret. Biol.} \textbf{26}(3), 399-415 (1970)

\bibitem{Kuvshinov_Schep2000}
Kuvshinov, B.~N., Schep, T.~J.:
\newblock {Double-periodic arrays of vortices}.
\newblock {Phys. Fluids} \textbf{12}(12), 3282--3284 (2000)

\bibitem{Mallier_Maslowe1993}
Mallier, R.,  Maslowe, S.~A.:
\newblock {A row of counter-rotating vortices}.
\newblock {Phys. Fluids A} \textbf{5}(4), 1074--1075 (1993)

\bibitem{Nagasaki_Suzuki1990}
Nagasaki, K., Suzuki, T.:
\newblock {Asymptotic analysis for two-dimensional elliptic eigenvalue problems
  with exponentially dominated nonlinearities}.
\newblock {Asymptotic Anal.} \textbf{3}(2), 173--188 (1990)

\bibitem{Papageorgiou_Radulescu_Sun2024}
Papageorgiou, ~N.~S., R\u{a}dulescu, ~V.~D., Sun, ~X.:
\newblock {Positive solutions for nonparametric anisotropic singular solutions}.
\newblock {Opusc. Math.} \textbf{44}(3), 409--423 (2024)

\bibitem{Pistoia_Ricciardi2016}
Pistoia,A., Ricciardi, T.:
\newblock {Concentrating solutions for a Liouville type equation with variable
  intensities in 2D-turbulence}.
\newblock {Nonlinearity} \textbf{29}(2), 271--297 (2016)

\bibitem{Pistoia_Vaira2015}
Pistoia, A.,  Vaira, G.:
\newblock {Steady states with unbounded mass of the Keller-Segel system}.
\newblock {Proc. Roy. Soc. Edinburgh Sect. A} \textbf{145}(1), 203--222 (2015)

\bibitem{Prajapat_Tarantello2001}
Prajapat, J., Tarantello, G.:
\newblock {On a class of elliptic problems in $\mathbb{R}^2$: symmetry and
  uniqueness results}.
\newblock {Proc. Roy. Soc. Edinburgh Sect. A} \textbf{131}(4), 967--985 (2001)

\bibitem{Spruck1988}
Spruck, J.:
\newblock The elliptic sinh gordon equation and the construction of toroidal
  soap bubbles.
\newblock In: Hildebrandt, S., Kinderlehrer, D., Miranda,M. (eds.) {
  Calculus of Variations and Partial Differential Equations}, pp. 275--301.
  Berlin, Heidelberg (1988)

\bibitem{Wang_Zheng2022}
Wang, C.-J., Zheng,G.-F.:
\newblock {Existence of solutions to a Neumann boundary value problem with
  exponential nonlinearity}.
\newblock {J. Math. Anal. Appl.} \textbf{505}(1), Paper No. 125458, 33 pp (2022)

\bibitem{Wei_Wei_Zhou2011}
Wei, J., Wei, L., Zhou, F..
\newblock {Mixed interior and boundary nodal bubbling solutions for a
  sinh-Poisson equation}.
\newblock {Pacific J. Math.} \textbf{250}(1), 225--256 (2011)

\bibitem{Wei_Ye_Zhou2007}
Wei, J., Ye, D., Zhou F.:
\newblock {Bubbling solutions for an anisotropic Emden-Fowler equation}.
\newblock {Calc. Var. Partial Differential Equations}
\textbf{28}(2), 217--247 (2007)

\bibitem{Wei2009}
Wei, L.:
\newblock {On the number of nodal bubbling solutions to a sinh-Poisson
  equation}.
\newblock {Houston J. Math.} \textbf{35}(1), 291--326 (2009)

\bibitem{Wei2011}
Wei, L.
\newblock {Changing-sign bubble solutions for an anistropic sinh-Poisson
  equation}.
\newblock { NoDEA Nonlinear Differential Equations Appl.} \textbf{18}(6), 685--706
  (2011)

\bibitem{Wente1986}
Wente, H. C.:
\newblock {Counterexample to a conjecture of H. Hopf}.
\newblock {Pacific J. Math.} \textbf{121}(1), 193--243 (1986)

\bibitem{Yang2001}
Yang, Y.:
\newblock { Solitons in field theory and nonlinear analysis}.
\newblock Number~13 in Springer Monographs in Mathematics. Springer-Verlag, New
  York (2001)

\bibitem{Zhang2021}
Zhang, Y.:
\newblock {Boundary concentration phenomena for an anisotropic Neumann problem
  in $\mathbb{R}^2$}, https://arxiv.org/abs/2110.13378v2 (2021). Accessed 14 October 2023.

\bibitem{Zhang2022}
Zhang, Y.:
\newblock {Concentrating solutions for an anisotropic planar elliptic Neumann
  problem with Hardy-H\'{e}non weight and large exponent}.
\newblock {Topol. Methods Nonlinear Anal.} \textbf{60}(1), 33--97, (2022)

\bibitem{Zhang_Yang2013}
Zhang, Y., Yang, H.:
\newblock {Construction of nodal bubbling solutions for the weighted
  sinh-Poisson equation}.
\newblock {Abstr. Appl. Anal.}, pages Art. ID 873948, 16 pp (2013)

\end{thebibliography}

\end{document}